\documentclass{article}
\usepackage[obeyDraft, color=lightgray]{todonotes} 
\usepackage[left=1in,right=1in,top=1in,bottom=1in]{geometry}
\usepackage{amsfonts, amssymb, amsmath, amsthm}
\usepackage{authblk} 
\usepackage[section]{placeins} 
\usepackage{mathtools}
\usepackage{bbm}
\usepackage{cases} 
\usepackage{fancyhdr}
\usepackage{hyperref}
\usepackage{tikz}
\usetikzlibrary{arrows.meta}
\usetikzlibrary{decorations.pathreplacing}
\usepackage{pdfpages}
\usepackage{setspace}
\usepackage{pgffor}
\usepackage{tcolorbox}
\tcbuselibrary{raster, skins}
\usepackage{caption}
\usepackage{subcaption}
\delimitershortfall=-1pt

\usepackage[backend=biber]{biblatex}
\addbibresource{references.bib}

\newtheorem{theorem}{Theorem}
\numberwithin{theorem}{subsection}

\newtheorem{conjecture}[theorem]{Conjecture}

\newtheorem{definition}[theorem]{Definition}
\newtheorem{example}[theorem]{Example}
\newtheorem{lemma}[theorem]{Lemma}

\newtheorem{proposition}{Proposition}
\numberwithin{proposition}{subsubsection}
\newtheorem{exampleGrid}[proposition]{Illustration}

\newcommand{\RG}{\text{RG}} 
\newcommand{\SG}{\text{SG}} 
\newcommand{\CY}{\text{C}}  
\newcommand{\RT}{\text{RT}} 
\newcommand{\ST}{\text{ST}} 
\newcommand{\R}{\mathbb{R}}
\newcommand{\Z}{\mathbb{Z}}
\newcommand{\ps}[3]{\left((#1,#2),#3\right)}
\newcommand{\xyd}{{\ps xyd}}
\newcommand{\abg}{{\ps abg}}
\newcommand{\grid}{\Z/n\Z \times \Z/m\Z}
\newcommand{\cylinderSymmetries}{\Z/n\Z \rtimes R}
\newcommand{\torusSymmetries}{(\grid) \rtimes R}
\newcommand{\NA}{\multicolumn{1}{c|}{---}}
\newcommand{\orb}[2]{\mathcal{O}^{\langle #1 \rangle}_{\langle #2 \rangle}}
\newcommand{\orbid}[1]{\mathcal{O}^{\langle #1 \rangle}_{\mathbbm 1}}

\newcommand{\appendixpagenumbering}{
  \break
  \pagenumbering{arabic}
  \renewcommand{\thepage}{\thesection-\arabic{page}}
}

\newcommand{\seqTableEntry}[2]{
  \begin{tabular}[c]{@{}l@{}}
    Sequence \ref{seq:#1}\\
    OEIS: #2
  \end{tabular}
}

\newcommand{\tableTableEntry}[2]{
  \begin{tabular}[c]{@{}l@{}}
    Table \ref{tabl:#1}\\
    OEIS: #2
  \end{tabular}
}

\DeclareMathOperator{\fxpt}{fxpt}
\DeclareMathOperator{\lcm}{lcm}
\DeclareMathOperator{\id}{id}

\newcommand*{\nXnGridIllustration}[3]{
\begin{exampleGrid}
\noindent \noindent[This is shown in Sequence \ref{seq:nXnGrid_#1}.]

\noindent%
\begin{minipage}{\linewidth}\captionsetup{type=figure}%
  \includegraphics[width=\textwidth]{Assets/Grid/grid_comb_#1_#2.pdf}
  \captionof{figure}{#3}
  \label{fig:nXnGrid_#1}%
\end{minipage}%
\end{exampleGrid}
}
\newcommand*{\nXmGridIllustration}[3]{
\begin{exampleGrid}
\noindent \noindent[This is shown in Table \ref{tabl:nXmGrid_#1}.]

\noindent%
\begin{minipage}{\linewidth}\captionsetup{type=figure}%
  \includegraphics[width=\textwidth]{Assets/Grid/grid_comb_#1_#2.pdf}
  \captionof{figure}{#3}
  \label{fig:nXmGrid_#1}%
\end{minipage}%
\end{exampleGrid}
}
\newcommand*{\nXmCylinderIllustration}[3]{
\begin{exampleGrid}
\noindent \noindent[This is shown in Table \ref{tabl:nXmCyl_#1}.]

\noindent%
\begin{minipage}{\linewidth}\captionsetup{type=figure}%
  \includegraphics[width=\textwidth]{Assets/Cylinder/cyl_comb_#1_#2.pdf}
  \captionof{figure}{#3}
  \label{fig:nXmCyl_#1}%
\end{minipage}%
\end{exampleGrid}
}
\newcommand*{\nXnTorusIllustration}[3]{
\begin{exampleGrid}
\noindent \noindent[This is shown in Sequence \ref{seq:nXnTorus_#1}.]

\noindent%
\begin{minipage}{\linewidth}\captionsetup{type=figure}%
  \includegraphics[width=\textwidth]{Assets/Torus/torus_comb_#1_#2.pdf}
  \captionof{figure}{#3}
  \label{fig:nXnTorus_#1}%
\end{minipage}%
\end{exampleGrid}
}
\newcommand*{\nXmTorusIllustration}[3]{
\begin{exampleGrid}
\noindent \noindent[This is shown in Table \ref{tabl:nXmTorus_#1}.]

\noindent%
\begin{minipage}{\linewidth}\captionsetup{type=figure}%
  \includegraphics[width=\textwidth]{Assets/Torus/torus_comb_#1_#2.pdf}
  \captionof{figure}{#3}
  \label{fig:nXmTorus_#1}%
\end{minipage}%
\end{exampleGrid}
}


\newcommand*{\tileColor}[1][black]{%
  \begin{tikzpicture}[scale=0.7,baseline=6]%
    \fill[#1] (0,0) rectangle (1,1); %
    \draw[dotted] (0,0) rectangle (1,1);%
  \end{tikzpicture}%
}%
\newcommand{\tileDiag}[1][black]{%
  \begin{tikzpicture}[scale=0.7,baseline=6]%
    \draw[dotted] (0,0) rectangle (1,1);%
    \fill[#1] (0,0.5) -- (0.5,1) -- (0,1) -- cycle; 
    \fill[#1] (0.5,0) -- (1,0) -- (1,0.5) -- cycle; 
  \end{tikzpicture}%
}%
\newcommand{\tileAdiag}[1][black]{%
  \begin{tikzpicture}[scale=0.7,baseline=6]%
    \draw[dotted] (0,0) rectangle (1,1);%
    \fill[#1] (0,0) -- (0,0.5) -- (0.5,0) -- cycle; 
    \fill[#1] (0.5,1) -- (1,1) -- (1,0.5) -- cycle; 
  \end{tikzpicture}%
}%
\newcommand{\tileU}[1][black]{
  \begin{tikzpicture}[scale=0.7,baseline=6]%
    \draw[dotted] (0,0) rectangle (1,1);%
    \fill[#1] (0,0)--(1,0)--(0.5,1); %
  \end{tikzpicture}%
}%
\newcommand{\tileD}[1][black]{
  \begin{tikzpicture}[scale=0.7,baseline=6]%
    \draw[dotted] (0,0) rectangle (1,1);%
    \fill[#1] (0,1)--(0.5,0)--(1,1); %
  \end{tikzpicture}%
}%
\newcommand{\tileL}[1][black]{
  \begin{tikzpicture}[scale=0.7,baseline=6]%
    \draw[dotted] (0,0) rectangle (1,1);%
    \fill[#1] (0,0)--(0,1)--(1,0.5); %
  \end{tikzpicture}%
}%
\newcommand{\tileR}[1][black]{
  \begin{tikzpicture}[scale=0.7,baseline=6]%
    \draw[dotted] (0,0) rectangle (1,1);%
    \fill[#1] (1,0)--(1,1)--(0,0.5); %
  \end{tikzpicture}%
}%
\newcommand{\tileNE}[1][black]{%
  \begin{tikzpicture}[scale=0.7,baseline=6]%
    \draw[dotted] (0,0) rectangle (1,1);%
    \fill[#1] (0,1) -- (1,1) -- (1,0) -- cycle; 
  \end{tikzpicture}%
}%
\newcommand{\tileNW}[1][black]{%
  \begin{tikzpicture}[scale=0.7,baseline=6]%
    \draw[dotted] (0,0) rectangle (1,1);%
    \fill[#1] (0,0) -- (0,1) -- (1,1) -- cycle; 
  \end{tikzpicture}%
}%
\newcommand{\inlineTileNW}[1][black]{%
  \begin{tikzpicture}[scale=0.35,baseline=2.5]%
    \draw (0,0) rectangle (1,1);%
    \fill[#1] (0,0) -- (1,1) -- (0,1) -- cycle; 
  \end{tikzpicture}%
}%
\newcommand{\tileSE}[1][black]{%
  \begin{tikzpicture}[scale=0.7,baseline=6]%
    \draw[dotted] (0,0) rectangle (1,1);%
    \fill[#1] (0,0) -- (1,0) -- (1,1) -- cycle; 
  \end{tikzpicture}%
}%
\newcommand{\tileSW}[1][black]{%
  \begin{tikzpicture}[scale=0.7,baseline=6]%
    \draw[dotted] (0,0) rectangle (1,1);%
    \fill[#1] (0,0) -- (0,1) -- (1,0) -- cycle; 
  \end{tikzpicture}%
}%
\newcommand{\tileVert}[1][black]{%
  \begin{tikzpicture}[scale=0.7,baseline=6]%
    \draw[dotted] (0,0) rectangle (1,1);%
    \fill[#1] (0,0) rectangle (1/3,1); 
    \fill[#1] (2/3,0) rectangle (1,1); 
  \end{tikzpicture}%
}%
\newcommand{\tileHor}[1][black]{%
  \begin{tikzpicture}[scale=0.7,baseline=6]%
    \draw[dotted] (0,0) rectangle (1,1);%
    \fill[#1] (0,0) rectangle (1,1/3);%
    \fill[#1] (0,2/3) rectangle (1,1);%
  \end{tikzpicture}%
}
\newcommand{\tileRot}[2][black]{%
  \begin{tikzpicture}[scale=0.7,baseline=6]%
    \draw[dotted] (0,0) rectangle (1,1);%
    \fill[#1]%
      ({(0  )*(1 - 2*#2) + #2}, 0)   -- ({0*(1 - 2*#2) + #2}, 1/4) -- ({3/4*(1 - 2*#2) + #2}, 0) 
      ({(0  )*(1 - 2*#2) + #2}, 1/4) -- ({0*(1 - 2*#2) + #2}, 1)   -- ({1/4*(1 - 2*#2) + #2}, 1) 
      ({(1/4)*(1 - 2*#2) + #2}, 1)   -- ({1*(1 - 2*#2) + #2}, 1)   -- ({1  *(1 - 2*#2) + #2}, 3/4) 
      ({(1  )*(1 - 2*#2) + #2}, 3/4) -- ({1*(1 - 2*#2) + #2}, 0)   -- ({3/4*(1 - 2*#2) + #2}, 0) 
    ;
  \end{tikzpicture}%
}%
\newcommand{\tileRR}[2][black]{%
  \begin{tikzpicture}[scale=0.7,baseline=6]%
    \draw[dotted] (0,0) rectangle (1,1);%
    \fill[#1]%
    \ifnum#2=0%
      (0, 0)   -- (0, 1/2) -- (1, 0) 
      (1, 1)   -- (1, 1/2) -- (0, 1)
    \fi%
    \ifnum#2=1%
      (0, 0) -- (0, 1) -- (1/2, 1) 
      (1, 1) -- (1, 0) -- (1/2, 0) 
    \fi%
    \ifnum#2=2%
      (0, 1) -- (0, 0) -- (1/2, 0) 
      (1, 0) -- (1, 1) -- (1/2, 1) 
    \fi%
    \ifnum#2=3%
      (0, 1)   -- (0, 1/2) -- (1, 1)
      (1, 0)   -- (1, 1/2) -- (0, 0)
    \fi
    ;
  \end{tikzpicture}%
}%
\newcommand{\tileAsym}[2][black]{%
  \begin{tikzpicture}[scale=0.7,baseline=6]%
    \draw[dotted] (0,0) rectangle (1,1);%
    \fill[#1]%
    \ifnum#2=0 (0, 0) -- (0, 1/2) -- (1, 0)    \fi%
    \ifnum#2=1 (0, 1) -- (0, 0)   -- (1/2, 0) \fi%
    \ifnum#2=2 (1, 1) -- (1, 1/2) -- (0, 1)    \fi%
    \ifnum#2=3 (1, 0) -- (1, 1)   -- (1/2, 1) \fi%
    \ifnum#2=4 (0, 0) -- (0, 1)   -- (1/2, 1)  \fi%
    \ifnum#2=5 (0, 1) -- (0, 1/2) -- (1, 1)   \fi%
    \ifnum#2=6 (1, 1) -- (1, 0)   -- (1/2, 0)  \fi%
    \ifnum#2=7 (1, 0) -- (1, 1/2) -- (0, 0)   \fi%
    ;
  \end{tikzpicture}%
}%


\title{Counting tilings of the \texorpdfstring{$n \times m$}{n by m} grid, cylinder, and torus}
\author[1]{Peter Kagey}
\author[2]{William Keehn}
\affil[1]{Department of Mathematics, Harvey Mudd College}
\affil[2]{Prison Mathematics Project}

\begin{document}
\maketitle
\tableofcontents

\begin{abstract}
  We count tilings of the $n \times m$ rectangular grid, cylinder, and
  torus with arbitrary tile sets up to arbitrary symmetries of the square and
  rectangle, along with cyclic shifting of rows and columns.
  This provides a unifying framework for understanding a family of
  counting problems, expanding on the work by Ethier and Lee counting
  tilings of the torus by tiles of two colors.
\end{abstract}
\todo[inline]{Errata}
\todo[inline]{The only part I don't really follow well is when you demonstrate that the "outside semi-direct product" is appropriate.}
\todo[inline]{sec 1.4, I'm a little unclear on this part. What does Aut(Z/nZ) mean?}
Given some set of arbitrary patterns, we are interested in counting ways of
tiling the $n \times m$ square grid, of tiling the infinite strip in a
periodic way, and of tiling the Euclidean plane in a way that is periodic both
left-to-right and top-to-bottom, up to various symmetries.
These counting problems are of actual physical interest: if we are designing a
square table that will be tiled with four tiles that are rotations
of the tile pattern $\inlineTileNW$, then there are $43$ ways of doing so up to
rotation and reflection of the table, as illustrated in
Figure \ref{fig:squareTilingTruchet}.

However we may be interested in counting the number of tile designs up to other
symmetries: if the table is fixed, then there are $4^4 = 256$ tilings;
if two tilings are considered the same if one is a rotation of another,
then there are $70$ such tilings.
In the setting of the Euclidean plane we might imagine that we plan to tile
the floor of a building with a repeating $2\times2$ pattern of these tiles,
where we consider two tilings to be the same if one is a rotation, reflection,
or translation of another; in this case, there are $17$ such tilings.

\begin{figure}[ht]
  \centering
  \includegraphics[width=\textwidth]{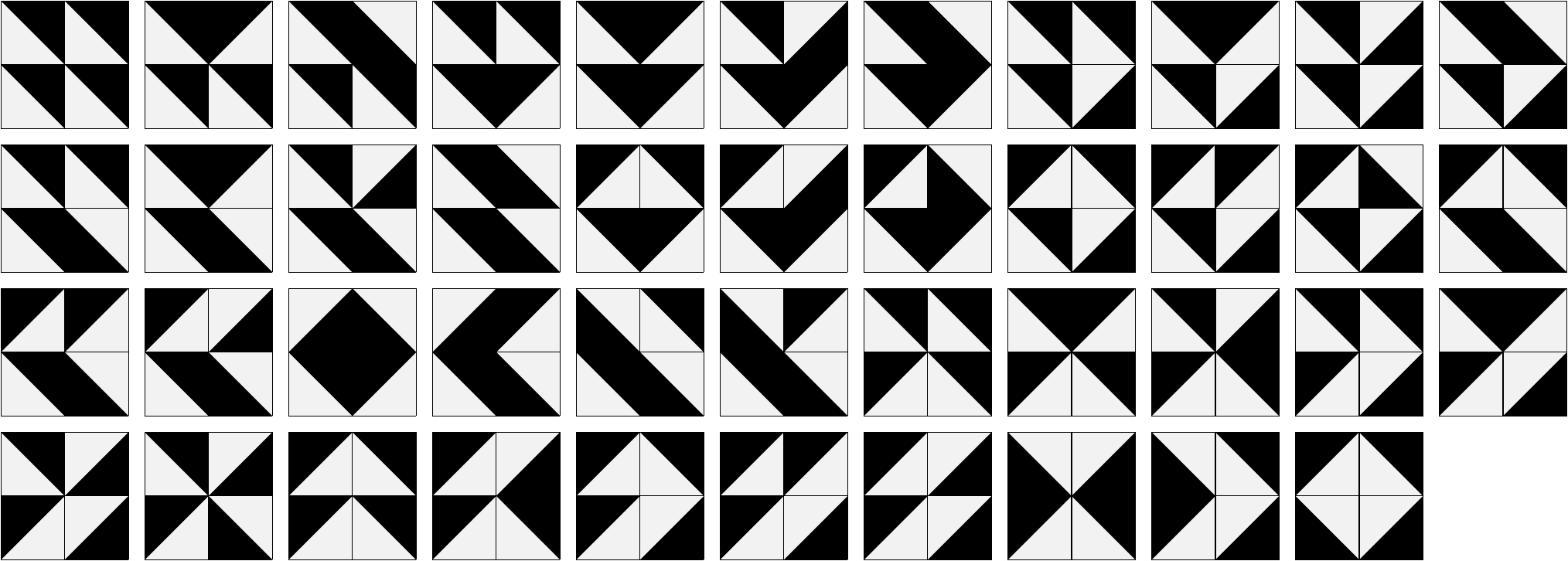}
  \caption{The $43$ ways of tiling the $2 \times 2$ grid by tiles of the form
  $\left\{\tileSW,\tileNW,\tileNE,\tileSE \right\}$}
  \label{fig:squareTilingTruchet}
\end{figure}

Ultimately, we will construct a framework for counting the number of ways of
tiling the grid up to various symmetries. This provides a unifying theory for a
family of problems that appear to have only been analyzed in an
\textit{ad hoc} manner.\footnote{This will give a unifying framework for over a dozen OEIS
sequences including but not limited to
A054247, 
A225910, 
A086675, 
A179043, 
A184271, 
A184284, 
A184277, 
A222187, 
A222188, 
A255015, 
A255016, 
A295223, 
A302484, 
A295229, 
A047937, 
A343095, 
A343096, and 
A200564. 
} This has resulted in the addition of 49 new sequences to the On-Line Encyclopedia of Integer Sequences (OEIS) \cite{OEIS}.

When we look at the grid up to symmetries of the square, we call this the
$n \times m$ grid, an example of which is illustrated in Figure \ref{fig:squareTilingTruchet}.
When we additionally allow cyclic shifting of columns, we call this the
$n \times m$ cylinder, which is illustrated in Figure \ref{fig:cylinderIdentification}.
When we allow cyclic shifting of the rows in addition to the above symmetries,
we call this the $n \times m$ torus, which is illustrated in Figure \ref{fig:torusIdentification}.

\begin{figure}
  \begin{subfigure}[b]{0.55\textwidth}
  \centering
  \begin{tikzpicture}
    \foreach \k in {0,1,2} {
    \foreach \x/\y in {0/1, 0/2, 1/0, 1/3, 2/0, 2/1, 2/3} {
      \fill[black!50!white] (\x + 3*\k, \y) rectangle (\x + 1 + 3*\k, \y + 1);
    }}
    \draw (0,0) grid (9,4);
    \draw[line width=3, white] (0,0) rectangle (3,4);
    \draw[ultra thick, dashed, red] (0,0) rectangle (3,4);
    \draw[line width=3, white] (5,0) rectangle (8,4);
    \draw[ultra thick, dotted, blue] (5,0) rectangle (8,4);
  \end{tikzpicture}
  \caption{}
\end{subfigure}
\hfill
\begin{subfigure}[b]{0.2\textwidth}
  \centering
  \begin{tikzpicture}
    \foreach \x/\y in {0/1, 0/2, 1/0, 1/3, 2/0, 2/1, 2/3} {
      \fill[black!50!white] (\x, \y) rectangle (\x + 1, \y + 1);
    }
    \draw[] (0,0) grid (3,4);
    \draw[ultra thick, white]       (0,0) rectangle (3,4);
    \draw[ultra thick, red, dashed] (0,0) rectangle (3,4);
    \draw[red, -{Triangle[length=3mm, width=3mm]}] (0,2.02)--(0,2.025);
    \draw[red, -{Triangle[length=3mm, width=3mm]}] (3,2.02)--(3,2.025);
  \end{tikzpicture}
  \caption{}
\end{subfigure}
\hfill
\begin{subfigure}[b]{0.2\textwidth}
  \centering
  \begin{tikzpicture}
    \foreach \x/\y in {1/1, 1/2, 2/0, 2/3, 0/0, 2/2, 0/3} {
      \fill[black!50!white] (\x, \y) rectangle (\x + 1, \y + 1);
    }
    \draw[] (0,0) grid (3,4);
    \draw[ultra thick, white]        (0,0) rectangle (3,4);
    \draw[ultra thick, blue, dotted] (0,0) rectangle (3,4);
    \draw[blue, -{Triangle[length=3mm, width=3mm]}] (0,2.02)--(0,2.025);
    \draw[blue, -{Triangle[length=3mm, width=3mm]}] (3,2.02)--(3,2.025);
  \end{tikzpicture}
  \caption{}
\end{subfigure}
\caption{
  Part (a) shows a $3 \times 4$ cylinder repeated three times horizontally
  with two $3 \times 4$ regions selected.
  Part (b) is one of the grid representations of this cylinder.
  Part (c) is an equivalent grid representation if $180^\circ$ rotation is
  allowed.
}
\label{fig:cylinderIdentification}
\end{figure}
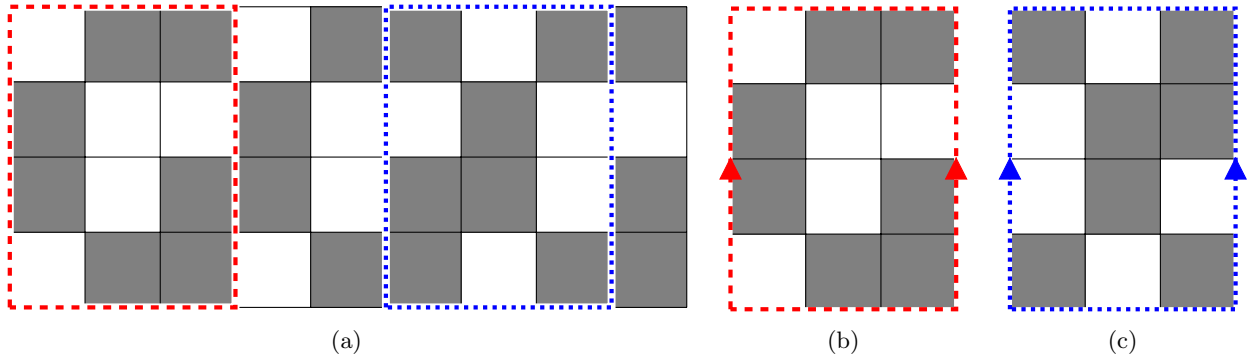
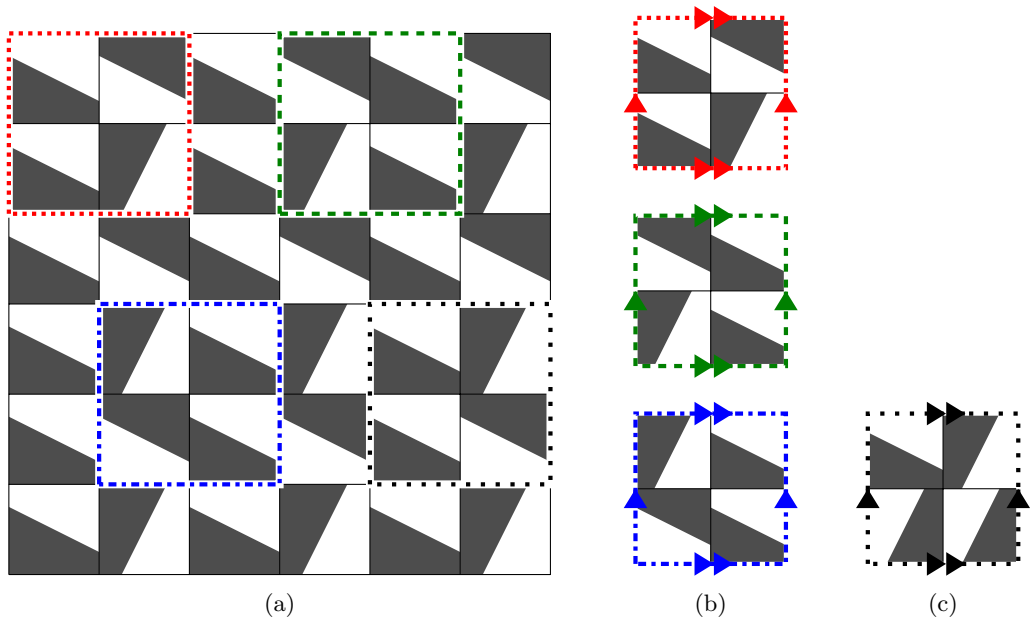
\begin{figure}
\centering
  \begin{subfigure}[b]{0.5\textwidth}
    \centering
    \begin{tikzpicture}[scale=1.2]
      \foreach \x in {0, 2, 4} { \foreach \y in {0, 2, 4} {
        \fill[black!70!white, shift={(\x,\y)}] (1,0)--(0,0)--(0,0.75)--(1,0.25);
        \fill[black!70!white, shift={(\x,\y)}] (1,1)--(1,0)--(1.25,0)--(1.75,1);
        \fill[black!70!white, shift={(\x,\y)}] (1,1)--(0,1)--(0,1.75)--(1,1.25);
        \fill[black!70!white, shift={(\x,\y)}] (1,2)--(2,2)--(2,1.25)--(1,1.75);
      }}
      \draw (0,0) grid (6,6);
      \draw[line width=3, white]      (0,4) rectangle (2,6);
      \draw[ultra thick, red, dotted] (0,4) rectangle (2,6);
      \draw[line width=3, white]                 (3,4) rectangle (5,6);
      \draw[ultra thick, green!50!black, dashed] (3,4) rectangle (5,6);
      \draw[line width=3, white]         (1,1) rectangle (3,3);
      \draw[ultra thick, blue, dash dot] (1,1) rectangle (3,3);
      \draw[line width=3, white] (4,1) rectangle (6,3);
      \draw[ultra thick, black, loosely dotted] (4,1) rectangle (6,3);
    \end{tikzpicture}
    \caption{}
  \end{subfigure}
  \begin{subfigure}[b]{0.18\textwidth}
    \centering
    \begin{tikzpicture}
      \fill[black!70!white] (1,0)--(0,0)--(0,0.75)--(1,0.25);
      \fill[black!70!white] (1,1)--(0,1)--(0,1.75)--(1,1.25);
      \fill[black!70!white] (1,2)--(2,2)--(2,1.25)--(1,1.75);
      \fill[black!70!white] (1,0)--(1,1)--(1.75,1)--(1.25,0);
      \draw (0,0) grid (2,2);
      \draw[ultra thick, white] (0,0) rectangle (2,2);
      \draw[ultra thick, red, dotted] (0,0) rectangle (2,2);
      \draw[ultra thick, red, -{Triangle}] (0,0.99)--(0,1);
      \draw[ultra thick, red, -{Triangle}] (2,0.99)--(2,1);
      \draw[ultra thick, red, -{Triangle}{Triangle}] (1.29,0)--(1.3,0);
      \draw[ultra thick, red, -{Triangle}{Triangle}] (1.29,2)--(1.3,2);
    \end{tikzpicture}\vspace{0.3cm}
    \begin{tikzpicture}
      \fill[black!70!white] (2,0)--(1,0)--(1,0.75)--(2,0.25);
      \fill[black!70!white] (2,1)--(1,1)--(1,1.75)--(2,1.25);
      \fill[black!70!white] (0,2)--(1,2)--(1,1.25)--(0,1.75);
      \fill[black!70!white] (0,0)--(0,1)--(0.75,1)--(0.25,0);
      \draw (0,0) grid (2,2);
      \draw[ultra thick, white] (0,0) rectangle (2,2);
      \draw[ultra thick, green!50!black, dashed] (0,0) rectangle (2,2);
      \draw[ultra thick, green!50!black, -{Triangle}] (0,0.99)--(0,1);
      \draw[ultra thick, green!50!black, -{Triangle}] (2,0.99)--(2,1);
      \draw[ultra thick, green!50!black, -{Triangle}{Triangle}] (1.29,0)--(1.3,0);
      \draw[ultra thick, green!50!black, -{Triangle}{Triangle}] (1.29,2)--(1.3,2);
    \end{tikzpicture}\vspace{0.3cm}
    \begin{tikzpicture}
      \fill[black!70!white] (2,0)--(1,0)--(1,0.75)--(2,0.25);
      \fill[black!70!white] (2,1)--(1,1)--(1,1.75)--(2,1.25);
      \fill[black!70!white] (0,1)--(0,2)--(0.75,2)--(0.25,1);
      \fill[black!70!white] (0,1)--(1,1)--(1,0.25)--(0,0.75);
      \draw (0,0) grid (2,2);
      \draw[ultra thick, white] (0,0) rectangle (2,2);
      \draw[ultra thick, blue, dash dot] (0,0) rectangle (2,2);
      \draw[ultra thick, blue, -{Triangle}] (0,0.99)--(0,1);
      \draw[ultra thick, blue, -{Triangle}] (2,0.99)--(2,1);
      \draw[ultra thick, blue, -{Triangle}{Triangle}] (1.29,0)--(1.3,0);
      \draw[ultra thick, blue, -{Triangle}{Triangle}] (1.29,2)--(1.3,2);
    \end{tikzpicture}
    \caption{}
  \end{subfigure}
  \begin{subfigure}[b]{0.18\textwidth}
    \centering
    \begin{tikzpicture}
      \fill[black!70!white] (1,1)--(0,1)--(0,1.75)--(1,1.25);
      \fill[black!70!white] (1,1)--(1,2)--(1.75,2)--(1.25,1);
      \fill[black!70!white] (0.25,0)--(1,0)--(1,1)--(0.75,1);
      \fill[black!70!white] (1.25,0)--(2,0)--(2,1)--(1.75,1);
      \draw (0,0) grid (2,2);
      \draw[ultra thick, white] (0,0) rectangle (2,2);
      \draw[ultra thick, black, loosely dotted] (0,0) rectangle (2,2);
      \draw[ultra thick, black, -{Triangle}] (0,0.99)--(0,1);
      \draw[ultra thick, black, -{Triangle}] (2,0.99)--(2,1);
      \draw[ultra thick, black, -{Triangle}{Triangle}] (1.29,0)--(1.3,0);
      \draw[ultra thick, black, -{Triangle}{Triangle}] (1.29,2)--(1.3,2);
    \end{tikzpicture}
    \caption{}
  \end{subfigure}
  \caption{Part (a) shows a $2 \times 2$ torus repeated three
  times horizontally and three times vertically, with three $2 \times 2$ regions
  selected. Part (b) shows three tilings of the $2 \times 2$ grid that are
  equivalent under the toroidal action $\Z/2\Z \times \Z/2\Z$.
  Part (c) shows a $2 \times 2$ torus that is equivalent to the other tori under
  the dihedral action $r^3$.}
\label{fig:torusIdentification}
\end{figure}

The number of such tilings depends on
the size of the grid,
the symmetries of the grid under consideration,
the symmetries of the tile designs, and
the number of tile designs with a given symmetry.
We formalize each of these four notions below, and use them to give a formula
that counts the number of corresponding tile designs.

\section{Notation and preliminaries}
In this section, we will
formalize the notation of a grid and its size,
the symmetries of the grid that we count up to,
the symmetries of the tile designs, and
the number of tile designs with a given symmetry.

\subsection{Tilings and the grid}
In order to talk about tilings of the $n \times m$ grid, cylinder, and torus,
it is important to first formalize what these are. All of these ideas start
with the fundamental idea of the $n \times m$ grid, which we define as
follows.
\begin{definition}
  The $n \times m$ \textbf{grid} is the set $\grid$, and the elements
  of this set are called \textbf{cells}.
\end{definition}
When illustrating grids, we use the convention that the $n \times m$ grid has
$n$ columns and $m$ rows, which are described using $0$-indexed Cartesian
coordinates, where
$(0,0)$ is the cell in the lower left corner and
$(n-1, m-1)$ is the cell in the upper right corner.

\subsection{Symmetries of the grid}
We will count grids up to various symmetries, some of which may be specified by
subgroups of the dihedral group of the square,
\(R \leq D_8 = \langle r, f \mid r^4 = f^2 = (rf)^2 = \id \rangle\).
Because this is a group of \underline{r}otations and \underline{r}eflections of
the grid, we call this subgroup $R$. When considering the $n \times m$ grid for
$n \neq m$, we will further specify that $R \leq D_4$, where
$D_4 = \langle r^2, f \mid (r^2)^2 = f^2 = \id \rangle$ is the dihedral
group of the rectangle.\footnote{We will later see that we use $D_4$ in the case of the
$n \times n$ cylinder as well.}

In all cases, we will use the convention that our symmetry groups act on the
grid via \textit{right actions},
illustrated in Figure \ref{fig:dihedralConvention}. We will also use the
conventions that
$r$ acts on the grid by $+90^\circ$ rotations and
$f$ acts on the grid by horizontal reflection (i.e. over the vertical line).
Because the group acts on the right,
$rf$ reflects the square grid over the line $y = x$ and
$r^3f$ corresponds to matrix transposition.

\begin{figure}[ht]
  $\underbrace{\begin{tikzpicture}[scale=1.5]
    \path (0,-0.1) rectangle (1,1);
    \draw (0,0) rectangle (1,1);
  \end{tikzpicture}}_{\id}$
  \hfill
  \(\underbrace{\begin{tikzpicture}[scale=1.5]
    \path (0,-0.1) rectangle (1,1);
    \draw (0,0) rectangle (1,1);
    \draw[->] (5/6,1/2) arc(0:90:1/3);
    \node at (0.5,0.5) {$90^\circ$};
  \end{tikzpicture}}_{r}\)
  \hfill
  $\underbrace{\begin{tikzpicture}[scale=1.5]
    \path (0,-0.1) rectangle (1,1);
    \draw (0,0) rectangle (1,1);
    \draw[->] (5/6,1/2) arc(0:180:1/3);
    \node at (0.5,0.5) {$180^\circ$};
  \end{tikzpicture}}_{r^2}$
  \hfill
  $\underbrace{\begin{tikzpicture}[scale=1.5]
    \path (0,-0.1) rectangle (1,1);
    \draw (0,0) rectangle (1,1);
    \draw[->] (5/6,1/2) arc(0:-90:1/3);
    \node at (0.5,0.5) {$90^\circ$};
  \end{tikzpicture}}_{r^3}$
  \hfill
  $\underbrace{\begin{tikzpicture}[scale=1.5]
    \path (0,-0.1) rectangle (1,1);
    \draw (0,0) rectangle (1,1);
    \draw[dotted] (0.5,0) -- (0.5,1);
    \draw[<->] (0.25,0.5) to[bend left] (0.75,0.5);
  \end{tikzpicture}}_{f}$
  \hfill
  $\underbrace{\begin{tikzpicture}[scale=1.5]
    \path (0,-0.1) rectangle (1,1);
    \draw (0,0) rectangle (1,1);
    \draw[dotted] (0,0) -- (1,1);
    \draw[<->] (0.3,0.7) to[bend left] (0.7,0.3);
  \end{tikzpicture}}_{rf}$
  \hfill
  $\underbrace{\begin{tikzpicture}[scale=1.5]
    \path (0,-0.1) rectangle (1,1);
    \draw (0,0) rectangle (1,1);
    \draw[dotted] (0,0.5) -- (1,0.5);
    \draw[<->] (0.5,0.25) to[bend left] (0.5,0.75);
  \end{tikzpicture}}_{r^2f}$
  \hfill
  $\underbrace{\begin{tikzpicture}[scale=1.5]
    \path (0,-0.1) rectangle (1,1);
    \draw (0,0) rectangle (1,1);
    \draw[dotted] (0,1) -- (1,0);
    \draw[<->] (0.3,0.3) to[bend left] (0.7,0.7);
  \end{tikzpicture}}_{r^3f}$
\caption{Illustrations of the eight group actions of the dihedral group of the
square $D_8$, which we call
  ``identity'',
  ``$90^\circ$ rotation'',
  ``$180^\circ$ rotation'',
  ``$-90^\circ$ rotation'',
  ``horizontal reflection'',
  ``diagonal reflection'',
  ``vertical reflection'', and
  ``antidiagonal reflection'' respectively.
}
\label{fig:dihedralConvention}
\end{figure}
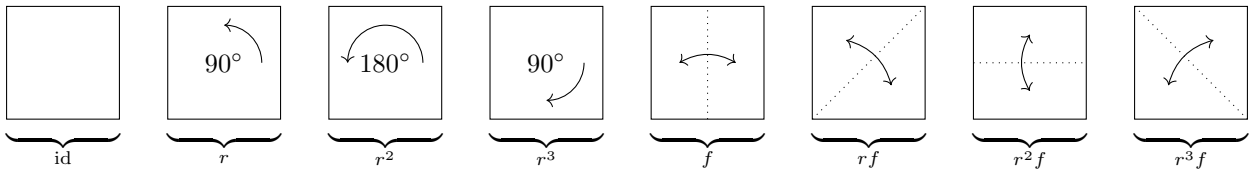

Formally, we describe the action on the cell by specifying how the generators of
$D_8$ act.
\begin{definition}
  The (right) action of an element of $D_4$ on a cell $(x, y) \in \grid$ is
  given by
  \begin{alignat*}{2}
    (x,y) &\cdot f    &&= (n-1-x,y), \qquad\text{ and} \\
    (x,y) &\cdot r^2  &&= (n-1-x,m-1-y).
  \end{alignat*}

  In the case of an $n \times n$ grid, the action of $r \in D_8$ is given by
  \begin{equation}
    (x,y) \cdot r = (n-1-y, x).
  \end{equation}
  \label{def:dihedralAction}
\end{definition}
These actions can be extended to all of $D_4$ and $D_8$ via the binary operation
of the group, since the group action is specified for the generators.

\subsection{Symmetries of the cylinder and torus}

Now that we know how the dihedral group acts on the $n \times n$ and
$n \times m$ grids, we can also look at symmetries of the grid by cyclic
shifting of rows and/or columns. When we shift just the columns\footnote{
  Shifting just the rows is equivalent to shifting just the columns.
  Here we will always use the convention of shifting just the columns.
}, we call this
a \textit{cylindrical action}, which we describe with the group $\Z/n\Z$;
when we shift the rows and columns, we call this a \textit{toroidal},
which we describe with the group $\Z/n\Z \times \Z/m\Z$.
Both of these are named in reference to the corresponding topological
identification of the square.

\begin{definition}
  The \textbf{cylindrical action} of $a \in \Z/n\Z$ on a cell
  $(x,y) \in \grid$
  corresponds to a (rightward) cyclic shift of columns: \[
    (x,y) \cdot a = (x+a,y).
  \]
  \label{def:cylindricalAction}
\end{definition}

\begin{definition}
  The \textbf{toroidal} of
  $(a,b) \in \Z/n \Z \times \Z/m \Z$ on a cell
  $(x,y) \in \grid$
  corresponds to
  a (rightward) cyclic shift of columns and
  an (upward) cyclic shift of rows: \[
    (x,y) \cdot (a,b) = (x+a,y+b).
  \]
  \label{def:torusAction}
\end{definition}
Definition \ref{def:torusAction} is illustrated in Figure \ref{fig:torusAction}.
\begin{figure}[ht]
  \[
  \begin{tikzpicture}[baseline=38]
    \foreach \a/\x/\y in {
      I/0/2, J/1/2, K/2/2, L/3/2,
      E/0/1, F/1/1, G/2/1, H/3/1,
      A/0/0, B/1/0, C/2/0, D/3/0
    } {
      \draw (\x,\y) rectangle (\x+1,\y+1);
      \node at (\x+1/2, \y+1/2) {\a};
    }
    \path (-0.1,0) rectangle (4.1,3);
    \draw[ultra thick] (0,0) rectangle (4,3);
    \draw[ultra thick, -{Triangle}] (0,1.52)--(0,1.525);
    \draw[ultra thick, -{Triangle}] (4,1.52)--(4,1.525);
    \draw[ultra thick, -{Triangle}{Triangle}] (2.29,0)--(2.3,0);
    \draw[ultra thick, -{Triangle}{Triangle}] (2.29,3)--(2.3,3);
  \end{tikzpicture}\cdot(3,1) =
  \begin{tikzpicture}[baseline=38]
    \foreach \a/\x/\y in {
      F/0/2, G/1/2, H/2/2, E/3/2,
      B/0/1, C/1/1, D/2/1, A/3/1,
      J/0/0, K/1/0, L/2/0, I/3/0
    } {
      \draw (\x,\y) rectangle (\x+1,\y+1);
      \node at (\x+1/2, \y+1/2) {\a};
    }
    \path (-0.1,0) rectangle (4.1,3);
    \draw[ultra thick] (0,0) rectangle (4,3);
    \draw[ultra thick, -{Triangle}] (0,1.52)--(0,1.525);
    \draw[ultra thick, -{Triangle}] (4,1.52)--(4,1.525);
    \draw[ultra thick, -{Triangle}{Triangle}] (2.29,0)--(2.3,0);
    \draw[ultra thick, -{Triangle}{Triangle}] (2.29,3)--(2.3,3);
  \end{tikzpicture}
  \]
  \caption{
    The $(3,1) \in \Z/4\Z \times \Z/3\Z$ acts on the
    $4 \times 3$ grid identified as a torus
    by cyclically shifting columns to the right by $3$ and
    cyclically shifting rows by $1$.
  }
\label{fig:torusAction}
\end{figure}
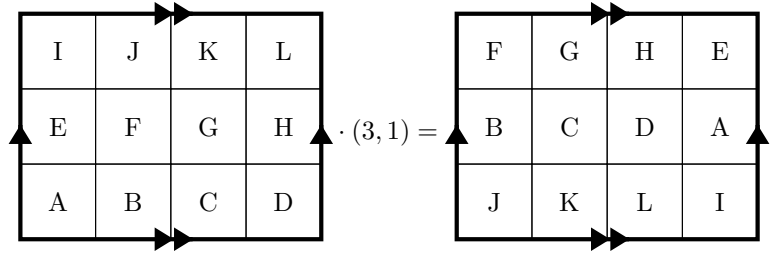

\subsection{Compatibility of grid symmetries}
Notice that we can act on the grid with both the dihedral actions and the
cylindrical/toroidal actions.
In order to make the group actions of the dihedral group compatible with the
the cylindrical action ($\Z/n\Z$)
or the toroidal action ($\grid$),
we define their (outer) semidirect product,
$\cylinderSymmetries$
or
$\torusSymmetries$ respectively.
As in the examples above, we define this as a right action,
thinking of this as first cyclically shifting the rows
and columns and then rotating or reflecting according to the element $R$.

The outer semidirect products are defined with respect to the homomorphisms
$\psi\colon D_4 \to \operatorname{Aut}(\Z/n\Z)$
and
$\phi\colon D_8 \to \operatorname{Aut}(\grid)$
respectively. (We use $D_4$ rather than $D_8$ in the case of the cylinder
because a $90^\circ$ rotation is not an isometry of the infinite strip, which
is the universal cover of the cylinder.)

\begin{definition}
Let $\psi\colon D_4 \to \operatorname{Aut}(\Z/n\Z)$ be
given by
\begin{alignat*}{2}
  \psi_f(x)     &= \psi_{r^2}(x)  &&= -x \text{ and}\\
  \psi_{\id}(x) &= \psi_{r^2f}(x) &&= x.
\end{alignat*}
Then the product of two elements in ${\Z/n\Z \rtimes D_4}$ is given by
\[
  \left(a_1,g_1\right)
  \left(a_2,g_2\right)
  =
  \left(a_1+\psi_{g_1}(a_1),g_1g_2\right).
\]
\end{definition}

In the case of the torus, the definition of the semidirect product
$\torusSymmetries$, where
$R \leq D_8$, is essentially similar.

\begin{definition}
  Let $\phi\colon D_8 \to \operatorname{Aut}(\grid)$
  be defined on the generators $r$ and $f$ by
  \begin{align*}
    \phi_f((x,y)) &= (-x,  y) \text{ and}\\
    \phi_r((x,y)) &= ( y, -x),
  \end{align*}
  and extended to the other elements of $D_8$.
  Then the binary operation of the semidirect product of
  $(\grid) \rtimes D_8$ is given by
  \[
    \ps{a_1}{b_1}{g_1}
    \ps{a_2}{b_2}{g_2}
    =
    \left((a_1,a_2)+\phi_{g_1}(a_1,a_2),g_1g_2\right).
  \]
\end{definition}
Using the facts that $r$ and $f$ generate $D_8$ and $\phi$ is a
homomorphism, together with function composition yields:
\begin{alignat*}{8}
  &\phi_{\id} ((x,y)) &&= ( x, y), \hspace{0.85cm}
  &\phi_{r}   ((x,y)) &=  ( y,-x), \hspace{0.85cm}
  &\phi_{r^2} ((x,y)) &=  (-x,-y), \hspace{0.85cm}
  &\phi_{r^3} ((x,y)) &=  (-y, x), \\
  &\phi_f     ((x,y)) &&= (-x, y), \hspace{0.85cm}
  &\phi_{rf}  ((x,y)) &=  ( y, x), \hspace{0.85cm}
  &\phi_{r^2f}((x,y)) &=  ( x,-y), \hspace{0.85cm}
  &\phi_{r^3f}((x,y)) &=  (-y,-x).
\end{alignat*}
\todo[inline]{Notice that this agrees with Definition \ref{def:groupActionOnTile} below.}

\begin{example}
  We check our work on an individual cell.
  For every choice of tile and pair of symmetries, we should have \[
    \left((x,y)\cdot\ps{a_1}{b_1}{g_1}\right) \cdot \ps{a_2}{b_2}{g_2}
    =
    (x,y) \cdot \left(\ps{a_1}{b_1}{g_1}\ps{a_2}{b_2}{g_2}\right).
  \]
  In particular, we check in the case of the $4 \times 4$ torus with
  $(x,y) = (1,0)$,
  $\ps{a_1}{b_1}{g_1} = \ps11f$
  and
  $\ps{a_2}{b_2}{g_2} = \ps20r$.

  \begin{align*}
    \left((1,0) \cdot \ps11f\right) \cdot \ps20r
    &= \left((2,1) \cdot f \right) \cdot \ps20r \\
    &= (1,1) \cdot \ps20r \\
    &= (3,1) \cdot r \\
    &= (2,3).
  \end{align*}
  Now using the semidirect product, \begin{align*}
    (1,0)    \cdot \left(\ps11f\ps20r\right)
    &= (1,0) \cdot \left((1,1) + \phi_f(2,0), fr\right) \\
    &= (1,0) \cdot \left((1,1) + (-2,0), r^3f\right) \\
    &= (1,0) \cdot \ps31{r^3f} \\
    &= (0,1) \cdot r^3f \\
    &= (2,3).
  \end{align*}
  This suggests that this semidirect product is the appropriate way to make the
  dihedral actions compatible with the toroidal action.
\end{example}

\subsection{Symmetries of tile designs}
We are now ready to start filling in our grid with tiles. Before defining
what tiles are, we introduce the following definition for convenience.
\begin{definition}
  If $X$ is a set and $G$ has a group action on $X$, then we call $X$ a $G$\textbf{-set}.
\end{definition}
\begin{definition}
  Given $R \leq D_8$, a \textbf{set of tile designs} is simply an $R$-set.
  A \textbf{tile design} is any element of such a set.
\end{definition}

We will always illustrate our tile designs with squares that have
designs in them, but any abstract $R$-set will work in place of these
illustrations. When we specify one of these tile designs together with a
cell, we get a tile.
\begin{definition}
  A \textbf{tile} in the $n \times m$ grid with the set of tile designs $T$ is
  an element of $(\grid) \times T$.
\end{definition}
Now we are ready to define a tiling of the grid, which is a specification of a
tile design for each cell.
\begin{definition}
  A \textbf{tiling} of the $n \times m$ grid with the set of tile designs $T$ is
  a map $f\colon(\grid) \to T$.
  The tiles associated with the tiling are elements of the graph of the map \[
    \{\ps xy{f(x,y)} \mid (x,y) \in \grid\}.
  \]
\end{definition}
We next give an example to illustrate all of these definitions.
\begin{example}
  Suppose that $R = D_4 = \langle r^2, f \rangle$, the dihedral group of the
  rectangle. Then \[
    T = \left\{
      \tileColor[black!90!white],
      \tileVert,
      \tileDiag,
      \tileAdiag,
      \tileAsym{0},
      \tileAsym{2},
      \tileAsym{5},
      \tileAsym{7},
      \tileRR{1},
      \tileRR{2},
      \tileU,
      \tileD
    \right\}.
  \]
  is a set of tile designs, because it is an $R$-set.
  The $3 \times 2$ tiling \[
    \begin{tikzpicture}
      \fill[black!90!white] (0,0) rectangle (1,1);
      \fill[black!90!white] (2,1) rectangle (3,2);
      \fill (0,1) rectangle (1/3,2)
            (2/3,1) rectangle (1,2);
      \fill (1,1) -- (1.5,2) -- (2,1) -- cycle;
      \fill (1,0) -- (2,0) -- (2,1/2) -- cycle;
      \fill (2,1) -- (2.5,0) -- (3,1) -- cycle;
      \draw[gray] (0,0) grid (3,2);
    \end{tikzpicture}
  \] consists of the six tiles \[
    \left\{
    \ps00{\tileColor[black!90!white]}
    \ps01{\tileVert}
    \ps10{\tileAsym{7}}
    \ps11{\tileU}
    \ps20{\tileD}
    \ps21{\tileColor[black!90!white]}
    \right\}.
  \]
\end{example}

Since we have now defined tiles, we are ready to talk about how the symmetries
of our grid, cylinder, or torus act on tiles: dihedral
actions can act on the tile design nontrivially, but cylindrical/toroidal
actions always act on a tile design by the identity.
We extend Definitions \ref{def:dihedralAction}, \ref{def:cylindricalAction}, and
\ref{def:torusAction} to this setting in a direct way, and illustrate this
in Figure \ref{fig:dihedralAction}.

\begin{definition}
  If $\xyd$ is a tile in an $n \times m$ grid, then
  $g \in D_4$ acts on $\xyd$ by \begin{alignat*}{2}
    \xyd &\cdot \id  &&= \xyd, \\
    \xyd &\cdot r^2  &&= \ps{n-1-x}{m-1-y}{d\cdot  r^2}, \\
    \xyd &\cdot f    &&= \ps{n-1-x}{    y}{d\cdot    f}, \text{ and} \\
    \xyd &\cdot r^2f &&= \ps{    x}{m-1-y}{d\cdot r^2f}.
  \end{alignat*}
  Furthermore, if $\xyd$ is a tile in an $n \times n$ grid, then
  $g \in D_8$ acts on $\xyd$ by the above actions together with
  \begin{alignat*}{2}
    \xyd &\cdot r    &&= \ps{n-1-y}{    x}{d\cdot    r}, \\
    \xyd &\cdot r^3  &&= \ps{    y}{n-1-x}{d\cdot  r^3}, \\
    \xyd &\cdot rf   &&= \ps{    y}{    x}{d\cdot   rf}, \text{ and} \\
    \xyd &\cdot r^3f &&= \ps{n-1-y}{n-1-x}{d\cdot r^3f}.
  \end{alignat*}
  \label{def:groupActionOnTile}
\end{definition}

\begin{figure}[ht]
  \centering
  \begin{subfigure}[b]{0.5\textwidth}
  \[
  \begin{tikzpicture}[baseline=23]
    \fill[black!90!white] (0,0) rectangle (1,1);
    \fill[black!90!white] (2,1) rectangle (3,2);
    \fill (0,1) rectangle (1/3,2)
          (2/3,1) rectangle (1,2);
    \fill (1,1) -- (1.5,2) -- (2,1) -- cycle;
    \fill (1,0) -- (2,0) -- (2,1/2) -- cycle;
    \fill (2,1) -- (2.5,0) -- (3,1) -- cycle;
    \draw[gray] (0,0) grid (3,2);
  \end{tikzpicture}\cdot r^2f =
  \begin{tikzpicture}[baseline=23]
    \fill[black!90!white] (0,1) rectangle (1,2);
    \fill[black!90!white] (2,0) rectangle (3,1);
    \fill (0,0) rectangle (1/3,1)
          (2/3,0) rectangle (1,1);
    \fill (1,1) -- (1.5,0) -- (2,1) -- cycle;
    \fill (1,2) -- (2,2) -- (2,3/2) -- cycle;
    \fill (2,1) -- (2.5,2) -- (3,1) -- cycle;
    \draw[gray] (0,0) grid (3,2);
  \end{tikzpicture}
  \]
  \caption{}
\end{subfigure}
\hfill
\begin{subfigure}[b]{0.4\textwidth}
  \centering
  \[\ps10{\tileAsym{7}}\cdot r^2f = \ps11{\tileAsym{2}}\]
  \caption{}
\end{subfigure}
  \caption{An example of the action of $r^2f$ (a vertical reflection)
    on
    (a) a tiling of the $3 \times 2$ grid
    and on
    (b) a specific tile.
  }
  \label{fig:dihedralAction}
\end{figure}
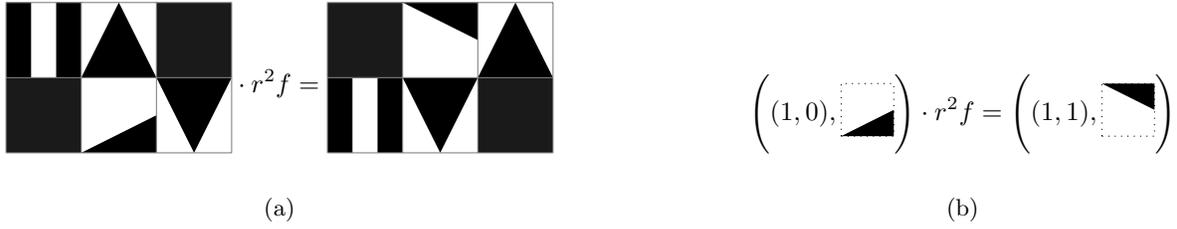

In order to understand what makes two tiles \textit{essentially different} with
respect to counting tilings, it is useful to define the notion of a stabilizer
subgroup.
\begin{definition}
  Let $X$ be a $G$-set. Then the \textbf{stabilizer subgroup} of an element
  $x \in X$ is the subgroup \[
    G_x = \{g \in G \mid x \cdot g = x\} \leq G.
  \]
\end{definition}

Because our set of tile designs $T$ is an $R$-set, the relevant difference
between different tile designs for the purpose of counting tilings is their
stabilizer subgroups.

\subsection{Classifying sets of tile designs}
In order to describe the essential features of a set of tile designs, we
partition it into orbits with respect to $R$. By counting up the number of
orbits and classifying each orbit by the (conjugacy class of the) stabilizer
subgroup of one of its representatives we can understand the combinatorics
of the set of tile designs completely.

\begin{definition}
  Let $R \subseteq D_8$ and let $T$ be a set of tile designs (with respect to $R$).
  Then for each (conjugacy class of) a subgroup $S \leq R$, let
  $\mathcal{O}^R_{S}$ denote the number of orbits that contain a tile
  whose stabilizer subgroup is conjugate to $S$.
\end{definition}

Notice that we classify up to conjugacy class because if $d$ is stable under
$S$, then $d \cdot g$ is stable under $g^{-1}Sg$
since $(d \cdot g)\cdot g^{-1}Sg = d \cdot Sg = d \cdot g$.

\begin{example}
  Suppose we are counting tilings of the grid, cylinder, or torus up to
  horizontal and vertical reflection with a set of tile designs given by
  \[
    T = \left\{
      \tileColor[black!90!white],
      \tileVert,
      \tileRR{3},
      \tileRR{0},
      \tileNW,
      \tileNE,
      \tileSW,
      \tileSE,
      \tileRR{1},
      \tileRR{2},
      \tileU,
      \tileD
    \right\}.
  \]
  Since the symmetry group $D_4 = \langle r^2, f \ |\ (r^2)^2 = f^2 = \id\rangle$
  has $5$ conjugacy classes of subgroups, there are five type of orbits:
  \begin{alignat*}{2}
    \orb{r^2,f}{r^2,f} &= 2 &&\qquad\text{via}\quad\left\{\tileColor[black!90!white]\right\} \text{ and } \left\{\tileVert\right\} \\
    \orb{r^2,f}{f}     &= 1 &&\qquad\text{via}\quad\left\{\tileU, \tileD\right\} \\
    \orb{r^2,f}{r^2}   &= 2 &&\qquad\text{via}\quad\left\{\tileRR{1}, \tileRR{2}\right\} \text{ and } \left\{\tileRR{3}, \tileRR{0}\right\} \\
    \orb{r^2,f}{r^2f}  &= 0 && \\
    \orbid{r^2,f}      &= 1 &&\qquad\text{via}\quad\left\{\tileNW,\tileNE,\tileSW,\tileSE\right\}.
  \end{alignat*}
  \end{example}


\begin{lemma}
The number of tilings for a given $R$-tiling $T$ only depends on the tuple
\[
  \bigoplus_{S \in \operatorname{conj}(R)} \mathcal{O}^R_{S},
\] where $\operatorname{conj}(R)$ is the set of equivalence classes of subgroups
of $R$ up to conjugacy.
\end{lemma}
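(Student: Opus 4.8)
\emph{The plan is} to reduce the statement to a fact about isomorphisms of $R$-sets rather than to compute any tiling counts directly. Write $G$ for whichever symmetry group is in force: $R$ itself for the grid, $\cylinderSymmetries$ for the cylinder, or $\torusSymmetries$ for the torus. In each case the quantity to be counted is the number of orbits of the $G$-action on the set of tilings $f\colon(\grid)\to T$, where $g\in G$ acts by sending each tile $\ps xy{f(x,y)}$ to $\ps xy{f(x,y)}\cdot g$. Recall that the cylindrical and toroidal actions fix every tile design, so the only part of $g$ acting on $T$ is its image $\pi(g)\in R$ under the canonical projection $G\to R$ onto the dihedral factor (this projection is the identity when $G=R$, and is a homomorphism because $G$ is a semidirect product with $R$). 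I will show that two sets of tile designs with the same tuple produce isomorphic $G$-sets of tilings, and hence the same number of orbits.

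First I would record the standard classification of $R$-sets: every $R$-set is a disjoint union of orbits, each orbit with stabilizer $S$ is isomorphic as an $R$-set to the coset space $S\backslash R$ via $Sg\mapsto d\cdot g$, and two such transitive $R$-sets are isomorphic exactly when their stabilizers are conjugate, with an explicit isomorphism furnished by the conjugating element (precisely as in the remark following the definition of $\mathcal{O}^R_S$). Consequently the isomorphism type of the $R$-set $T$ is completely determined by the number of orbits of each stabilizer-conjugacy type, that is, by the tuple $\bigoplus_{S\in\operatorname{conj}(R)}\mathcal{O}^R_S$. So if $T$ and $T'$ are two sets of tile designs sharing the same tuple, I may fix an $R$-equivariant bijection $\beta\colon T\to T'$.

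Next I would push $\beta$ forward to tilings by $f\mapsto\beta\circ f$, giving a bijection between $\{f\colon\grid\to T\}$ and $\{f'\colon\grid\to T'\}$, and check that it is $G$-equivariant. Unwinding the action, $(f\cdot g)(c)=f(c\cdot g^{-1})\cdot\pi(g)$, so on one hand $\beta\circ(f\cdot g)$ sends $c$ to $\beta\!\left(f(c\cdot g^{-1})\cdot\pi(g)\right)$, while on the other hand $(\beta\circ f)\cdot g$ sends $c$ to $\beta\!\left(f(c\cdot g^{-1})\right)\cdot\pi(g)$; these agree precisely because $\beta$ commutes with the $R$-action and $\pi(g)\in R$. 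A $G$-equivariant bijection carries orbits to orbits bijectively, so $T$ and $T'$ yield the same number of tilings up to $G$, which is the claim.

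I expect the only real subtlety to lie in the equivariance check of the third paragraph: one must be careful that the semidirect-product structure makes $\pi\colon G\to R$ a homomorphism, so that the design-relabeling interacts with the symmetry action only through $R$ — the part $\beta$ was built to respect — and that the cylindrical and toroidal translations contribute nothing on the $T$ side. Everything else, namely the orbit classification and the fact that an equivariant bijection preserves orbit counts, is routine once this compatibility is in place. An alternative route would invoke Burnside's lemma and show that each fixed-point count $\lvert\fxpt(g)\rvert$ is a product, over the cell-orbits of $\langle g\rangle$, of numbers of tile designs fixed by a cyclic subgroup of $R$, each such number being a function of the tuple; I would keep this in reserve but prefer the bijective argument, since it avoids computing fixed-point counts altogether.
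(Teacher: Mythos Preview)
Your proof is correct and follows the same approach as the paper: construct a structure-preserving bijection $T\to T'$ and push it forward to a bijection on (equivalence classes of) tilings. Your version is in fact more careful than the paper's brief sketch, since you explicitly establish $R$-equivariance of $\beta$ (rather than merely stabilizer preservation) and verify $G$-equivariance of the induced map on tilings, including the role of the projection $\pi\colon G\to R$ in the cylinder and torus cases.
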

\begin{proof}
  Suppose that we have two sets of tile designs $T$ and $T'$ with the same
  number of orbits for each stabilizer conjugacy class.
  There exists a bijection
  $f \colon T \to T'$ such that
  $f(d) = d'$ whenever $d$ and $d'$ have the same
  stabilizer subgroup in $R$, that is, $R_d = R_{d'}$.
  Then the induced map of $f$ to the tilings is also bijection of tilings.
\end{proof}
In Appendix \ref{apss:nXnTorusSequences}, we explicitly enumerate all of the
$R$-sets of tile designs that consist of a single orbit
for each subgroup $R \leq D_4$ or $R \leq D_8$.

\subsection{Counting strategy}
In order to count how many tilings exist up to various symmetries, we will use
Burnside's lemma.
\begin{theorem}[Burnside's lemma]
  Let $X$ be a $G$-set. Then the size of $X$ up to the action of $G$ is \[
    |X/G| = \frac{1}{|G|}\sum_{g \in G}|X^g|,
  \] where $|X^g|$ is the number of elements of $X$ that are fixed under the
  action of $g \in G$.
\end{theorem}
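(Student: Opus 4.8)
The plan is to prove Burnside's lemma by the standard double-counting argument applied to the set of fixed pairs, combined with the orbit-stabilizer theorem. First I would form the set $F = \{(g,x) \in G \times X \mid x \cdot g = x\}$ of pairs consisting of a group element together with a point it fixes, and compute $|F|$ in two ways. Partitioning $F$ by its first coordinate gives $|F| = \sum_{g \in G} |X^g|$, which is exactly the sum on the right-hand side of the statement. Partitioning $F$ by its second coordinate gives $|F| = \sum_{x \in X} |G_x|$, where $G_x$ is the stabilizer subgroup defined earlier in the excerpt. Equating the two counts yields the identity $\sum_{g \in G} |X^g| = \sum_{x \in X} |G_x|$.

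The second ingredient is the orbit-stabilizer theorem, which I would use to rewrite the right-hand sum one orbit at a time. For a fixed $x$, I claim the assignment $G_x g \mapsto x \cdot g$ is a well-defined bijection from the space of right cosets $G_x \backslash G$ onto the orbit $x \cdot G$; this gives $|x \cdot G| = |G|/|G_x|$, equivalently $|G_x| = |G|/|x \cdot G|$. Summing over a single orbit $\mathcal{O}$ then produces $\sum_{x \in \mathcal{O}} |G_x| = |\mathcal{O}| \cdot \bigl(|G|/|\mathcal{O}|\bigr) = |G|$, since every point of $\mathcal{O}$ has an orbit of the same size $|\mathcal{O}|$.

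Finally I would sum over the orbits. Since the orbits partition $X$, we get $\sum_{x \in X} |G_x| = \sum_{\mathcal{O} \in X/G}\, \sum_{x \in \mathcal{O}} |G_x| = \sum_{\mathcal{O} \in X/G} |G| = |G| \cdot |X/G|$. Combining this with the double-counting identity gives $\sum_{g \in G} |X^g| = |G| \cdot |X/G|$, and dividing through by $|G|$ yields the claimed formula.

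The hard part will be the orbit-stabilizer bijection, and in particular getting the convention right: because this paper acts on the \emph{right}, from $x \cdot g = x \cdot g'$ one obtains $g g'^{-1} \in G_x$, so the relevant coset space is the set of \emph{right} cosets $G_x \backslash G$ rather than left cosets. Verifying that $G_x g \mapsto x \cdot g$ is both well-defined and injective hinges precisely on this computation; surjectivity onto the orbit is immediate from the definition of the orbit. Everything else is elementary bookkeeping with the partition of $X$ into orbits.
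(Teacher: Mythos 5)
Your proof is correct. Note first that the paper offers no proof of this statement at all: Burnside's lemma is quoted as a classical tool and immediately applied, so there is no in-paper argument to compare yours against. What you give is the standard double-counting proof --- counting the fixed-pair set $F = \{(g,x) \in G \times X \mid x \cdot g = x\}$ by rows and by columns to get $\sum_{g \in G} |X^g| = \sum_{x \in X} |G_x|$, then collapsing the right-hand side orbit by orbit via orbit--stabilizer --- and all three steps check out. You also correctly identified the one spot where the paper's conventions matter: since the action is a right action, $x \cdot g = x \cdot g'$ yields $x \cdot (gg'^{-1}) = x$, so $gg'^{-1} \in G_x$ and the orbit is in bijection with the right-coset space $G_x \backslash G$ via $G_x g \mapsto x \cdot g$; well-definedness and injectivity both reduce to exactly this computation, as you say. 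The only hypothesis left implicit (in the paper's statement as much as in your proof) is finiteness: $G$ must be finite for the division by $|G|$, and $X$ finite so that the double count and the orbit partition are sums of finite quantities --- harmless here, since in every application in the paper $G$ is a subgroup of a finite dihedral or semidirect-product group and $X$ is a finite set of tilings.
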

We want to understand how many tilings are fixed under various symmetries. To
do this it is necessary to categorize the symmetries of various tiles.
\begin{definition}
  Let $T$ be a set of tile designs, where $R \leq D_8$.
  For each $g \in R$, the set of tiles that are fixed by $g$ is denoted
  \[
    T^g = \{d \in T \mid d \cdot g = d\},
  \]
  and the size of this set is denoted
  \[
    t_g = |T^g|.
  \]
\end{definition}

Note that $T^{\id} = T$, so $t_{\id}$ is the total number of tiles designs.

The following theorem gives us a strategy for counting the number of tilings
that are fixed under a given symmetry, which is
illustrated in Figure \ref{fig:cellOrbits}.

\begin{theorem}
  Suppose that $s = g \in R$,
  $s = (a,g) \in \cylinderSymmetries$, or
  $s = \abg \in \torusSymmetries$.

  Since the set of cells $\grid$ form an $\langle s \rangle$-set, we
  partition the cells into orbits with respect to the cyclic subgroup
  $\langle s \rangle$, which we call $\Theta_s$, so that
  $\bigsqcup_{\vartheta \in \Theta_s} \vartheta = \grid$.

  Then if $X^s$ is the set of tilings of the $n \times m$ grid that are
  stable under $s$,
  \[
    |X^s| = \prod_{\vartheta \in \Theta_s} t_{g^{|\vartheta|}}.
  \]
\end{theorem}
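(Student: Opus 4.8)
The plan is to use the fact that the orbits in $\Theta_s$ partition the cells, so that a tiling stable under $s$ can be assembled from independent choices made on each orbit. First I would translate membership in $X^s$ into a pointwise condition. A tiling $f\colon \grid \to T$ corresponds to its set of tiles $\{\xyd : (x,y)\in\grid,\ d = f(x,y)\}$, and $s$ permutes these tiles, sending $\xyd$ to $\bigl((x,y)\cdot s,\ d\cdot g\bigr)$, where $g\in R$ is the dihedral component of $s$; by Definition \ref{def:groupActionOnTile} the cylindrical and toroidal parts act trivially on designs, so the design is always acted on by $g$ alone. Since $s$ acts bijectively on cells, the image is again the graph of a function, and comparing designs cell-by-cell shows that stability $f\cdot s = f$ is equivalent to
\[
  f\bigl((x,y)\cdot s\bigr) = f(x,y)\cdot g \qquad \text{for every } (x,y)\in\grid.
\]

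Next I would analyze a single orbit $\vartheta = \{c_0, c_1, \dots, c_{k-1}\}\in\Theta_s$, where $c_{i+1} = c_i\cdot s$, the cells $c_0,\dots,c_{k-1}$ are distinct, $k = |\vartheta|$, and $c_{k-1}\cdot s = c_0$. Iterating the pointwise condition gives $f(c_i) = f(c_0)\cdot g^i$ for each $i$, so the value on the entire orbit is determined by the single value $f(c_0)$. The closure of the orbit, $c_{k-1}\cdot s = c_0$, then forces $f(c_0) = f(c_0)\cdot g^{k}$, that is, $f(c_0)\in T^{g^{|\vartheta|}}$. Conversely, any $f(c_0)\in T^{g^{|\vartheta|}}$ extends to $\vartheta$ by the rule $f(c_i) = f(c_0)\cdot g^i$: this is well defined because $c_0,\dots,c_{k-1}$ are distinct, and it satisfies the stability condition at every cell of $\vartheta$, including the wrap-around step. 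Hence the restrictions to $\vartheta$ of tilings in $X^s$ are in bijection with $T^{g^{|\vartheta|}}$, giving exactly $t_{g^{|\vartheta|}}$ of them.

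Finally, because $\bigsqcup_{\vartheta\in\Theta_s}\vartheta = \grid$, a stable tiling is precisely a choice of such compatible values on each orbit, and these choices are independent across distinct orbits; multiplying the per-orbit counts yields $|X^s| = \prod_{\vartheta\in\Theta_s} t_{g^{|\vartheta|}}$. The step I expect to be the main obstacle is justifying that traversing an orbit once accumulates exactly the design action $g^{|\vartheta|}$, rather than some conjugate of it or a quantity entangled with the translation part of $s$. This rests on two facts about the semidirect-product structure: translations act trivially on designs, and the dihedral component of $s^i$ is $g^i$, so the design action of $s^i$ is $d\mapsto d\cdot g^i$. The compatibility computation carried out in the worked example preceding this theorem is exactly what certifies that the cell action and the design action assemble correctly, and I would lean on it to make this step rigorous.
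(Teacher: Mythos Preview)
Your proof is correct and follows essentially the same approach as the paper: partition the cells into $\langle s\rangle$-orbits, observe that a stable tiling is determined on each orbit by a single design which must lie in $T^{g^{|\vartheta|}}$, and multiply. You are more explicit than the paper about the pointwise stability condition and about why the design action of $s^i$ is exactly $g^i$, which the paper compresses into the single equality $\xyd\cdot s^{|\vartheta|} = \ps{x}{y}{d\cdot g^{|\vartheta|}}$.
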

\begin{proof}
  Because $\Theta_s$ partitions the cells into orbits with respect to the cyclic
  subgroup $\langle s \rangle$, the number of tilings that are fixed under $s$
  is equal to the product of the number of tilings of each orbit of cells
  under $\langle s \rangle$.

  The tiling of an orbit of cells can be specified by a single tile $d$, which
  then  determines the rest of the orbit by $\xyd \cdot s^k$ for
  $0 \leq k < |\vartheta|$. The only requirement for a valid tiling of a orbit
  is that \[
    \xyd = \xyd \cdot s^{|\vartheta|} = \ps{x}{y}{d \cdot g^{|\vartheta|}},
  \] thus $d$ must be fixed by $g^{|\vartheta|}$,
  and so $d \in T^{g^{|\vartheta|}}$.
  Therefore there are $t_{g^{|\vartheta|}}$ choices for $d$ and thus for the
  orbit of cells containing $(x,y)$.
\end{proof}

Thus, this reduces the problem to a matter of counting the orbits of cells
under each symmetry $s$ along with counting the sizes of each of these
orbits.

We proceed with Sections \ref{sec:grid}, \ref{sec:cylinder}, and
\ref{sec:torus}, which all implement the above strategy.
Each section consists broadly of fixed point counting theorems,
which count tilings of the grid that are fixed under the actions of
$D_4$ or $D_8$ for arbitrary sets of tile designs.

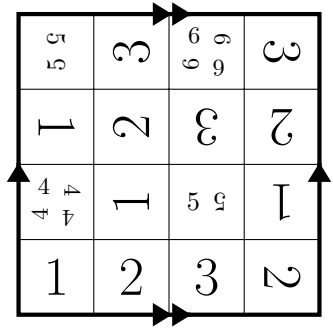
\begin{figure}[ht]
  \centering
  \begin{tikzpicture}
    \draw (0,0) grid (4,4);
    \draw[ultra thick] (0,0) rectangle (4,4);
    \draw[ultra thick, -{Triangle}] (0,2.02)--(0,2.025);
    \draw[ultra thick, -{Triangle}] (4,2.02)--(4,2.025);
    \draw[ultra thick, -{Triangle}{Triangle}] (2.29,0)--(2.3,0);
    \draw[ultra thick, -{Triangle}{Triangle}] (2.29,4)--(2.3,4);
    \node             at (0.5,0.5) {\huge 1};
    \node[rotate=90]  at (1.5,1.5) {\huge 1};
    \node[rotate=180] at (3.5,1.5) {\huge 1};
    \node[rotate=270] at (0.5,2.5) {\huge 1};
    \node             at (1.5,0.5) {\huge 2};
    \node[rotate=90]  at (1.5,2.5) {\huge 2};
    \node[rotate=180] at (3.5,2.5) {\huge 2};
    \node[rotate=270] at (3.5,0.5) {\huge 2};
    \node             at (2.5,0.5) {\huge 3};
    \node[rotate=90]  at (1.5,3.5) {\huge 3};
    \node[rotate=180] at (2.5,2.5) {\huge 3};
    \node[rotate=270] at (3.5,3.5) {\huge 3};
    \node[align=left]             at (0.5,1.5) {\small 4~~~~\\~};
    \node[align=left, rotate=90]  at (0.5,1.5) {\small 4~~~~\\~};
    \node[align=left, rotate=180] at (0.5,1.5) {\small 4~~~~\\~};
    \node[align=left, rotate=270] at (0.5,1.5) {\small 4~~~~\\~};
    \node[align=left]             at (2.5,1.5) {5~~~};
    \node[align=left, rotate=90]  at (0.5,3.5) {5~~~};
    \node[align=left, rotate=180] at (2.5,1.5) {5~~~};
    \node[align=left, rotate=270] at (0.5,3.5) {5~~~};
    \node[align=left]             at (2.5,3.5) {\small 6~~~~\\~};
    \node[align=left, rotate=90]  at (2.5,3.5) {\small 6~~~~\\~};
    \node[align=left, rotate=180] at (2.5,3.5) {\small 6~~~~\\~};
    \node[align=left, rotate=270] at (2.5,3.5) {\small 6~~~~\\~};
  \end{tikzpicture}
  \caption{An illustration showing a tiling of the $4 \times 4$ torus that is
  fixed under $\ps12r$, and the six orbits of its cells with
  respect to the subgroup generated by this symmetry.
  There are three orbits of size $4$ (whose tiles are stable under $r^4=\id$),
  one orbit of size $2$ (whose tiles are stable under $r^2$), and
  two orbits of size $1$ (whose tiles are stable under $r$).
  }
  \label{fig:cellOrbits}
\end{figure}

\section{Grid}
\label{sec:grid}
For counting tilings of the $n \times m$ rectangular grid or $n \times n$ square
grid under subgroups $R \leq D_4$ and $R \leq D_8$ respectively, we
count the number of tilings that are fixed under each element of $R$.

In the following two subsections, we denote the rectangular grid by $\RG$
and the square grid $\SG$.
\subsection{The \texorpdfstring{$n \times m$}{n by m} grid}
We begin by specifying the number of tilings that are fixed under each symmetry.
\begin{definition}
  For a given set of tile designs $T$, and an element
  \(
    g \in R \leq D_4,
  \)
  The number of tilings of the $n \times m$ grid by tile designs in $T$
  that are fixed by $g$ is denoted $\fxpt^\RG_g(n,m)$.
\end{definition}

\begin{theorem}
  For a given set of tile designs $T$ and an element $g \in R \leq D_4$,
  the number of tilings of the $n \times m$ grid by tile designs in $T$
  that are fixed by $g$ is
  \begin{align}
    \fxpt^\RG_{\id}(n,m) &= t_{\id}^{nm}.
    \label{eq:fxpt_R_id}
    \\
    \fxpt^\RG_{r^2}(n,m) &= \begin{cases}
      t_{\id}^{nm/2}            & nm \text{ even} \\
      t_{\id}^{(nm-1)/2}t_{r^2} & nm \text{ odd}.
    \end{cases}
    \label{eq:fxpt_R_rr}
    \\
    \fxpt^\RG_{f}(n,m) &= \begin{cases}
      t_{\id}^{nm/2}            & n \text{ even} \\
      t_{\id}^{(m(n-1))/2}t_f^m & n \text{ odd}.
    \end{cases}
    \label{eq:fxpt_R_f}
    \\
    \fxpt^\RG_{r^2f}(n,m) &= \begin{cases}
      t_{\id}^{nm/2}                 & m \text{ even} \\
      t_{\id}^{(n(m-1))/2}t_{r^2f}^n & m \text{ odd}.
    \end{cases}
    \label{eq:fxpt_R_rrf}
  \end{align}
  Then the number of distinct tilings of the $n \times m$ grid up to action of
  $R$ is given by
  \begin{equation}
    \frac{1}{|R|}\sum_{g \in R} \fxpt^\RG_g(n,m).
    \label{eq:distinctTilingsR}
  \end{equation}
  \label{thm:nXmGridFormulae}
\end{theorem}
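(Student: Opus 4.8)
The plan is to obtain each of the four fixed-point formulas as a direct application of the preceding orbit-counting theorem, which asserts that $|X^g| = \prod_{\vartheta \in \Theta_g} t_{g^{|\vartheta|}}$, and then to assemble them via Burnside's lemma. Since every non-identity element of $D_4$ has order $2$, the only data I need for each $g$ is the partition of the $nm$ cells into orbits under $\langle g \rangle$: each orbit is either a fixed cell (size $1$, contributing a factor $t_{g}$ because $g^1 = g$) or a free pair (size $2$, contributing a factor $t_{g^2} = t_{\id}$). Thus every formula reduces to counting the fixed cells of $g$ and splitting the remaining cells into pairs.

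First I would dispatch $\id$: every cell is its own orbit of size $1$, so the product has $nm$ factors of $t_{\id}$, giving \eqref{eq:fxpt_R_id}. Next, for $r^2$ I use the action $(x,y) \cdot r^2 = (n-1-x,\, m-1-y)$ from Definition \ref{def:dihedralAction}; a cell is fixed exactly when $2x = n-1$ and $2y = m-1$, which admits a (unique, central) solution precisely when both $n$ and $m$ are odd, i.e. when $nm$ is odd. In the even case there are no fixed cells, so all $nm/2$ orbits are pairs; in the odd case the lone central fixed cell contributes $t_{r^2}$ and the remaining $(nm-1)/2$ pairs contribute $t_{\id}$ apiece, yielding the two branches of \eqref{eq:fxpt_R_rr}.

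The reflections $f$ and $r^2f$ are then handled identically up to swapping the roles of the two coordinates. Using $(x,y) \cdot f = (n-1-x,\, y)$, a cell is fixed iff $2x = n-1$, so the fixed set is the central column (present iff $n$ is odd), consisting of $m$ cells; pairing the remaining $m(n-1)$ cells gives \eqref{eq:fxpt_R_f}. Symmetrically, $(x,y) \cdot r^2f = (x,\, m-1-y)$ fixes the central row of $n$ cells exactly when $m$ is odd, giving \eqref{eq:fxpt_R_rrf}. Finally, the closing formula \eqref{eq:distinctTilingsR} is immediate from Burnside's lemma applied to the set $X$ of all tilings regarded as an $R$-set, since by definition $|X^g| = \fxpt^\RG_g(n,m)$.

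I expect no genuine obstacle here: the content is entirely parity bookkeeping, and the single point requiring care is checking that the fixed cells together with the free pairs exhaust all $nm$ cells, so that the exponents in each branch sum correctly. This is automatic because $\langle g \rangle$ has order at most $2$, forcing every orbit to have size $1$ or $2$, and the count of fixed cells in each case is exactly what makes the two exponents add to $nm$.
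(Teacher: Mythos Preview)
Your proposal is correct and follows essentially the same approach as the paper: partition the cells into $\langle g\rangle$-orbits, observe that each nontrivial $g\in D_4$ has order $2$ so orbits have size $1$ or $2$, locate the fixed cells by solving $2x=n-1$ and/or $2y=m-1$, and finish with Burnside. The only cosmetic difference is that you explicitly invoke the preceding orbit-product theorem $|X^g|=\prod_\vartheta t_{g^{|\vartheta|}}$ at the outset, whereas the paper re-derives the same bookkeeping inline for each case.
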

\begin{proof} The proof will consist of three cases.

  \begin{description}
    \item[Equation \eqref{eq:fxpt_R_id}.]
    This follows from the fact that $t_{\id}$ is the
    number of distinct tiles, and every tiling is fixed under $\id \in D_4$.
    \item[Equation \eqref{eq:fxpt_R_rr}.]
    This follows from the fact that the (right) action of $r^2$ on the cell
    $(x,y)$ is \[
      (x,y) \cdot r^2 = (n-x-1, m-y-1).
    \] Since $r^2$ has order $2$, each cell is in an orbit of size $1$ or $2$.
    The cell $(x,y)$ is fixed under the action of $r^2$ if and only if
    $n$ and $m$ are both odd and $(x,y) = (\frac{n-1}{2}, \frac{m-1}{2})$.

    Therefore when $nm$ is even, the grid is partitioned into $nm/2$ orbits of
    size $2$, so any fixed tiling can be specified by choosing any tile design
    in $T$ for each orbit.
    When $nm$ is even, the grid is partitioned into one orbit of size $1$
    together with $(nm-1)/2$ orbits of size $2$,
    so any fixed tiling can be specified by choosing a tile design in $T^{r^2}$
    for the fixed point and any tile design in $T$ for each orbit.
    \item[Equations \eqref{eq:fxpt_R_f} and \eqref{eq:fxpt_R_rrf}.]
    These two equations are essentially the
    same, so without loss of generality, we will prove the case of
    Equation \eqref{eq:fxpt_R_f}. The right action of $f$ on $\xyd$ is \[
      \xyd \cdot f = ((n - x - 1, m), d \cdot f).
    \] Because $f$ is order $2$, we can conclude that $(x,y)$ is either a fixed
    point or a $2$-cycle with respect to $f$. It follows that $(x,y)$ is a fixed
    point if and only if $n$ is odd and $x = (n - 1)/2$, therefore when $n$ is
    odd the tiling has $m$ fixed cells.
    The fixed cells can be specified by any tile design in $T^f$, and the cells
    in orbits of size $2$ can be specified by any tile design in $T$.
    \item[Equation \eqref{eq:distinctTilingsR}] Finally, this is a direct
    application of Burnside's lemma.
  \end{description}
\end{proof}

\subsection{The \texorpdfstring{$n \times n$}{n by n} grid}
There are more symmetries and some specializations in the case of the
$n \times n$ grid, which we denote $\SG$ for ``square grid''.

\begin{definition}
  For a given set of tile designs $T$, and an element
  \(
    g \in R \leq D_8,
  \)
  the number of tilings of the $n \times n$ grid by tile designs in $T$
  that are fixed by $g$ is denoted $\fxpt^\SG_g(n)$.
\end{definition}

\begin{theorem}
  For a given set of tile designs $T$ and an element $g \in R \leq D_8$,
  the number of tilings of the $n \times m$ grid by tile designs in $T$
  that are fixed by $g$ is \begin{align}
    \fxpt^{\SG}_{\id}(n)
    &= \fxpt^\RG_{\id}(n,n) = t_{\id}^{n^2}
    \label{eq:fxpt_S_id}
    \\
    \fxpt^{\SG}_{r^2}(n)
    &= \fxpt^\RG_{r^2}(n,n) = \begin{cases}
      t_{\id}^{n^2/2} & n \text{ even} \\
      t_{\id}^{(n^2-1)/2}t_{r^2} & n \text{ odd}
    \end{cases}
    \label{eq:fxpt_S_rr}
    \\
    \fxpt^{\SG}_{f}(n)
    &= \fxpt^\RG_f(n,n) = \begin{cases}
      t_{\id}^{n^2/2} & n \text{ even} \\
      t_{\id}^{(n^2-n)/2}t_{f}^n & n \text{ odd}
    \end{cases}
    \label{eq:fxpt_S_f}
    \\
    \fxpt^{\SG}_{r^2f}(n)
    &= \fxpt^\RG_{r^2f}(n,n) = \begin{cases}
      t_{\id}^{n^2/2} & n \text{ even} \\
      t_{\id}^{(n^2-n)/2}t_{r^2f}^n & n \text{ odd}
    \end{cases}
    \label{eq:fxpt_S_rrf}
    \\
    \fxpt^{\SG}_r(n)
    &= \fxpt^{\SG}_{r^3}(n) = \begin{cases}
      t_{\id}^{n^2/4} & n \text{ even} \\
      t_{\id}^{(n^2-1)/4}t_{r} & n \text{ odd}
    \end{cases}
    \label{eq:fxpt_S_r}
    \\
    \fxpt^{\SG}_{rf}(n)
    &= t_{\id}^{(n^2-n)/2}t_{rf}^n.
    \label{eq:fxpt_S_rf}
    \\
    \fxpt^{\SG}_{r^3f}(n)
    &= t_{\id}^{(n^2-n)/2}t_{r^3f}^n.
    \label{eq:fxpt_S_rrrf}
  \end{align}
  Then the number of distinct tilings of the $n \times n$ grid up to the
  dihedral action of the square is given by
  \begin{equation}
    \frac{1}{|R|}\sum_{g \in R} \fxpt^{\SG}_g(n).
    \label{eq:distinctTilingsS}
  \end{equation}
  \label{thm:nXnGridFormulae}
\end{theorem}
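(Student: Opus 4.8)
The plan is to invoke the general fixed-point counting theorem (the one giving $|X^s| = \prod_{\vartheta \in \Theta_s} t_{g^{|\vartheta|}}$) for each of the eight elements of $D_8$, so that the whole computation reduces to determining, for each symmetry $g$, the sizes of the orbits into which $\langle g \rangle$ partitions the $n^2$ cells of the square grid. For the four elements $\id, r^2, f, r^2f$ already lying in $D_4$, no new work is needed: the square grid is the rectangular grid with $m = n$, and the cell actions of Definition \ref{def:groupActionOnTile} agree, so Equations \eqref{eq:fxpt_S_id}--\eqref{eq:fxpt_S_rrf} follow immediately by setting $m = n$ in Theorem \ref{thm:nXmGridFormulae}.

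The genuinely new cases are $r, r^3, rf, r^3f$. For the two reflections $rf$ and $r^3f$ (order $2$), I would use the actions $(x,y)\cdot rf = (y,x)$ and $(x,y)\cdot r^3f = (n-1-y, n-1-x)$. A cell is fixed exactly when it lies on the main diagonal ($x = y$) or the antidiagonal ($x+y = n-1$) respectively; in either case there are precisely $n$ fixed cells regardless of parity, and the remaining $n^2 - n$ cells split into $(n^2-n)/2$ orbits of size $2$. The general theorem then contributes $t_{\id}^{(n^2-n)/2}$ from the $2$-orbits (since $(rf)^2 = (r^3f)^2 = \id$) times $t_{rf}^n$ or $t_{r^3f}^n$ from the fixed cells, which is exactly Equations \eqref{eq:fxpt_S_rf} and \eqref{eq:fxpt_S_rrrf}.

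For the rotations $r$ and $r^3$ (order $4$), using $(x,y)\cdot r = (n-1-y, x)$, each $\langle r \rangle$-orbit has size $1$, $2$, or $4$. The step I expect to require the most care is ruling out orbits of size $2$: such an orbit would consist of a cell fixed by $r^2$ but not by $r$, yet solving $(x,y)\cdot r^2 = (x,y)$ forces $x = y = (n-1)/2$, which is also the unique solution of $(x,y)\cdot r = (x,y)$. Hence when $n$ is even there are no fixed cells and all $n^2/4$ orbits have size $4$, while when $n$ is odd the single central cell forms the only orbit of size $1$ and the remaining $n^2-1$ cells form $(n^2-1)/4$ orbits of size $4$. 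Since a size-$4$ orbit contributes $t_{r^4} = t_{\id}$ and the central cell contributes $t_r$, this yields Equation \eqref{eq:fxpt_S_r}; the identical orbit structure for $\langle r^3 \rangle = \langle r \rangle$, together with the elementary observation that $T^r = T^{r^3}$ and hence $t_r = t_{r^3}$, gives the equality $\fxpt^{\SG}_{r^3} = \fxpt^{\SG}_r$.

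Finally, Equation \eqref{eq:distinctTilingsS} is a direct application of Burnside's lemma to the action of $R$ on the set of tilings, summing the fixed-point counts just established over $g \in R$ and dividing by $|R|$. The only arithmetic worth verifying along the way is that all exponents are integers: $n^2/4$ and $(n^2-1)/4$ are integral by the parity of $n$, and $(n^2-n)/2 = \binom{n}{2}$ is always integral.
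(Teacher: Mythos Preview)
Your proposal is correct and follows essentially the same approach as the paper: reduce the $D_4$ cases to Theorem~\ref{thm:nXmGridFormulae} with $m=n$, analyze the orbit structure of $\langle r\rangle$ and $\langle rf\rangle$ on cells exactly as you do, and finish with Burnside's lemma. Your handling of the ``no $2$-cycles under $r$'' step is in fact slightly cleaner than the paper's phrasing, and your explicit remark that $T^r = T^{r^3}$ (justifying $t_r = t_{r^3}$) and the integrality check are welcome additions the paper leaves implicit.
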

\begin{proof} This proof proceeds with four cases.
  \begin{description}
    \item[Equations \eqref{eq:fxpt_S_id}, \eqref{eq:fxpt_S_rr}, \eqref{eq:fxpt_S_f}, and \eqref{eq:fxpt_S_rrf}.]
    These follow directly from Theorem \ref{thm:nXmGridFormulae}, by specifying $m = n$.
    \item[Equation \eqref{eq:fxpt_S_r}.]
    Firstly, notice that the tilings that are fixed under $r$ are identically
    those that are fixed under $r^{-1} = r^3$.
    The (right) action of $r \in D_8$ on a cell $(x,y)$ is \[
      (x,y) \cdot r = (n - y - 1, x)
    \] therefore $(a,b)$ is a fixed point if and only if it satisfies the
    system of equations \begin{align}
      a &= n - b - 1 \\
      b &= a,
    \end{align} which has an integer solution only when $n$ is odd and when
    $(a,b) = (\frac{n - 1}{2}, \frac{n - 1}{2})$.
    Also, there are no cells that occur in $2$-cycles. This can be seen by
    noticing that cells that occur in $2$-cycles are also fixed points under
    $r^2$, and by the proof of Theorem \ref{thm:nXmGridFormulae}, we know that
    this occurs under the same conditions as the fixed points under $f$.
    Therefore all other cells occur in $4$-cycles.

    Therefore when $n$ is even, the grid is partitioned into $n^2/4$ orbits of
    size $4$, each of which can be specified by any tile design in $T$;
    when $n$ is odd, the grid has one fixed point, which must be tiled with a
    tile design in $T^r$, and the remaining cells can be partitioned
    into $(n^2 - 1)/4$ orbits of size $4$,
    each of which can be specified by any tile design in $T$.
    \item[Equations \eqref{eq:fxpt_S_rf} and \eqref{eq:fxpt_S_rrrf}.]
    Because $rf$ and $r^3f$ are conjugate, these are essentially similar, so
    without loss of generality, we will prove Equation \eqref{eq:fxpt_S_rf}.

    The (right) action of $rf \in D_8$ on a cell $(x,y)$ is \[
      (x,y) \cdot rf = (y,x).
    \] Thus, $(x,y)$ is a fixed point if and only if $x = y$, otherwise it is
    a part of a $2$-cycle. Therefore there are $n$ fixed points, which can be
    specified by a tile design in $T^{rf}$ and $(n^2 - n)/2$ $2$-cycles, which
    can be specified with any tile design in $T$.
    \item[Equation \eqref{eq:distinctTilingsS}.]
    The final equation follows by a direct application of Burnside's lemma.
  \end{description}
\end{proof}

\section{Cylinder}
\label{sec:cylinder}
Here we use the convention that the $n \times m$ cylinder is identified along
its left and right sides, as illustrated in Figure \ref{fig:cylinderIdentification}.

Even in the case of $n \times n$ grids, we only consider tilings of cylinders up
to subgroups of the dihedral group of the rectangle,
because other symmetries of the square would result in swapping
the pair of identified sides (the right and left side) of the grid with the pair
of non-identified sides (the top and bottom).

In both the case of the cylinder and the torus, we will repeatedly use the
following observation.
\begin{lemma}
  For fixed values of $n$ and $a$, the equation \begin{equation}
    x \equiv -1 - x - a \pmod n
    \label{eq:flip_solution}
  \end{equation} has solutions that depend on the parity of $n$ and $a$.

  \noindent
  When $n$ is odd, there is one solution: \begin{equation}
    x \equiv \frac{n+1}{2}(-1-a) \pmod{n}.
    \label{eq:flip_solution_odd_n}
  \end{equation}
  When $n$ is even and $a$ is odd, there are two solutions: \begin{align}
    x &\equiv \frac{-1-a}{2} \pmod n
    \label{eq:flip_solution_even_n1}
    \\[10pt]
    x &\equiv \frac{n-1-a}{2} \pmod n.
    \label{eq:flip_solution_even_n2}
  \end{align}
  When $n$ and $a$ are both even, there are no solutions.
  \label{lem:flip_solution}
\end{lemma}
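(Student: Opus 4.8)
The plan is to rewrite the congruence as a standard linear congruence in one variable and then apply the elementary theory of such congruences, splitting into cases according to $\gcd(2,n)$. Starting from $x \equiv -1 - x - a \pmod n$, I would add $x$ to both sides to obtain
\[
  2x \equiv -1 - a \pmod n,
\]
so the entire lemma reduces to counting and describing the solutions of $2x \equiv -1-a \pmod n$. By the standard criterion, this congruence has a solution if and only if $\gcd(2,n) \mid (-1-a)$, and when it does there are exactly $\gcd(2,n)$ solutions modulo $n$. Since $\gcd(2,n)$ equals $1$ for $n$ odd and $2$ for $n$ even, this already accounts for why the number of solutions is $1$, $2$, or $0$ across the three stated regimes.

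For the case $n$ odd, I would note that $2$ is invertible modulo $n$ with inverse $\tfrac{n+1}{2}$, which is an integer because $n$ is odd, since $2 \cdot \tfrac{n+1}{2} = n+1 \equiv 1 \pmod n$. Multiplying through then gives the unique solution $x \equiv \tfrac{n+1}{2}(-1-a) \pmod n$, matching \eqref{eq:flip_solution_odd_n}.

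For the case $n$ even, the element $2$ is no longer invertible, so I would instead analyze divisibility directly. Solutions exist precisely when $-1-a$ is even, i.e. when $a$ is odd; when $a$ is even, $-1-a$ is odd and no solution exists, giving the final claim. When $n$ is even and $a$ is odd, I would write $-1-a = 2k$ with $k = \tfrac{-1-a}{2} \in \Z$, so that $2x \equiv 2k \pmod n$ is equivalent to $x \equiv k \pmod{n/2}$, obtained by dividing both the congruence and the modulus by $2$. This yields exactly the two residues $x \equiv \tfrac{-1-a}{2}$ and $x \equiv \tfrac{-1-a}{2} + \tfrac n2 = \tfrac{n-1-a}{2} \pmod n$, matching \eqref{eq:flip_solution_even_n1} and \eqref{eq:flip_solution_even_n2}.

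There is no serious obstacle here, as the lemma is elementary number theory; the only points requiring care are bookkeeping ones. Specifically, I would verify that each displayed fraction is a genuine integer in the regime where it appears ($\tfrac{n+1}{2}$ when $n$ is odd, $\tfrac{-1-a}{2}$ when $a$ is odd, and $\tfrac n2$ when $n$ is even), and that the passage between the congruence modulo $n$ and the reduced congruence modulo $n/2$ in the even case is handled so that the two solutions are counted exactly once each.
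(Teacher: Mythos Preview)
Your proof is correct and follows essentially the same approach as the paper: both rewrite the congruence as $2x \equiv -1-a \pmod n$, invoke the inverse $\tfrac{n+1}{2}$ when $n$ is odd, and handle even $n$ by parity of $-1-a$ together with division by $2$. Your version adds the general $\gcd$-based solvability criterion as framing, but the substance is identical.
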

\begin{proof}
  In both cases, we write equation \eqref{eq:flip_solution} as \[
    2x \equiv -1 - a \pmod{n}.
  \]
  \begin{description}
    \item[Odd $n$.] When $n$ is odd, $2$ has a multiplicative inverse of
    $(n+1)/2$, so multiplying gives the unique solution described in
    equation \eqref{eq:flip_solution_odd_n}.
    \item[Even $n$ and odd $a$.] When $n$ is even and $a$ is odd,
    $-1-a$ is even. Dividing by $2$ gives the solution given in
    equation \eqref{eq:flip_solution_even_n1}, and
    adding $n$ and dividing by $2$ gives the solution in equation
    \eqref{eq:flip_solution_even_n2}.
    \item[Even $n$ and $a$.] When both $n$ and $a$ are even,
    $2x$ is even and $-1-a$ is odd, so there are no solutions.
  \end{description}
\end{proof}
Similarly, we will repeatedly use the following lemma when counting fixed points
for both the cylinder and the torus.
\begin{lemma}[\cite{MacMahon}]
  Given some $a \in \Z/n\Z$, if $d$ is a minimal solution to the equation \[
    da \equiv 0 \pmod n
  \] then $d \mid n$.
  Moreover, when $d$ is a divisor of $n$, there are $\varphi(d)$ choices for
  $a$ that result in $d$ being a minimal solution, where $\varphi$ is
  Euler's totient function.
  \label{lem:eulerPhi}
\end{lemma}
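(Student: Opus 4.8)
The plan is to identify the minimal solution $d$ with the additive order of $a$ in the cyclic group $\Z/n\Z$ and then to compute it explicitly. First I would observe that $da \equiv 0 \pmod n$ means exactly $n \mid da$. Writing $e = \gcd(n,a)$, I would factor out the gcd: since $n/e$ and $a/e$ are coprime, $n \mid da$ holds if and only if $(n/e) \mid d(a/e)$, which by coprimality is equivalent to $(n/e) \mid d$. Hence the least positive $d$ is exactly $d = n/\gcd(n,a)$, and in particular $d \mid n$, which establishes the first claim.

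For the counting claim, I would fix a divisor $d \mid n$ and count the $a \in \{0, 1, \dots, n-1\}$ whose minimal solution equals $d$. By the formula just derived, this happens precisely when $\gcd(n,a) = n/d$. Setting $k = n/d$ and writing $a = kb$, the condition $\gcd(n, kb) = k$ becomes $\gcd(n/k, b) = \gcd(d, b) = 1$, and as $a$ ranges over $\{0, \dots, n-1\}$ the cofactor $b$ ranges over $\{0, \dots, d-1\}$. The number of such $b$ coprime to $d$ is by definition $\varphi(d)$, which gives the desired count.

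The argument is elementary, so there is no single deep obstacle; the only care required is in the gcd manipulation—correctly dividing out $\gcd(n,a)$ to reduce to a coprimality condition—and in the bookkeeping of the admissible range for $b$, including the boundary value $a = 0$, which yields $d = 1$ and is consistent with $\varphi(1) = 1$. As a sanity check, I would note that summing over all divisors recovers $\sum_{d \mid n} \varphi(d) = n$, confirming that every residue $a$ is accounted for exactly once.
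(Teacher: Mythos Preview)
Your proof is correct and follows essentially the same approach as the paper: both derive the explicit formula $d = n/\gcd(n,a)$ (you via a direct coprimality argument, the paper via $da = \lcm(a,n)$) and then count the admissible $a$ by parametrizing them as multiples $a = (n/d)\cdot k$ with $\gcd(k,d)=1$. Your version is slightly more careful about the boundary case $a=0$ and adds the sanity check $\sum_{d\mid n}\varphi(d)=n$, but the substance is the same.
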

\begin{proof}
  First, we can see that the least value of $da$ will occur when
  $da = \lcm(a,n)$, and thus \[d = \lcm(a,n)/a = n/\gcd(a,n).\]
  Therefore $d$ must be a divisor of $n$.
  The $\varphi(d)$ choices for $a$ such that $d$ is a minimal solution to
  ${da \equiv 0 \pmod n}$ are
  $a \in \left\{kn/d \mid 1 \leq k \leq d\ \text{and}\ \gcd(k,d) = 1\right\}$.
\end{proof}
\subsection{The \texorpdfstring{$n \times m$}{n by m} cylinder}
We denote the $n \times m$ cylinder by the superscript $\CY$.
\begin{definition}
  For a given set of tile designs $T$, and an element
  \(g \in R \leq D_4\)
  the sum over all cyclic shifts of the number of tilings of the $n \times m$
  cylinder by tile designs in $T$ that are fixed by
  $(a,g) \in \cylinderSymmetries$
  is denoted \[
    \fxpt^\CY_g(n,m) = \sum_{a \in \Z/n\Z} X^{(a,g)},
  \] where $X^{(a,g)}$ is the number of tilings fixed by $(a,g)$.
\end{definition}

\begin{theorem}
  The sum over all cyclic shifts of the number of tilings of the $n \times m$
  cylinder by tile designs in $T$ that are fixed by
  $(a,\id) \in \cylinderSymmetries$
  is given by
  \begin{equation}
    \fxpt^\CY_{\id}(n,m) = \sum_{d | n} \varphi(d)t_{\id}^{nm/d}.
    \label{eq:fxpt_C_id}
  \end{equation}
\end{theorem}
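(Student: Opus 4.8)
The plan is to apply the orbit-counting theorem (the one expressing $|X^s|$ as a product $\prod_{\vartheta \in \Theta_s} t_{g^{|\vartheta|}}$ over the cell orbits $\Theta_s$) to the specific symmetry $s = (a,\id) \in \cylinderSymmetries$, and then to sum over all shifts $a \in \Z/n\Z$, grouping them by their order via Lemma \ref{lem:eulerPhi}.

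First I would specialize the orbit formula to the case at hand. Since the dihedral component of $(a,\id)$ is $g = \id$, every power $g^{|\vartheta|}$ is again $\id$, so each factor in the product is simply $t_{\id}$. Hence $|X^{(a,\id)}| = t_{\id}^{|\Theta_{(a,\id)}|}$, and the problem reduces to counting the number of cell orbits under the column shift $(x,y) \mapsto (x+a,y)$.

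Next I would count these orbits. Because the shift fixes the row coordinate $y$ and only permutes the column coordinate, the $m$ rows are acted on independently, and within a single fixed row the columns $\Z/n\Z$ are partitioned by the cyclic subgroup generated by the map $x \mapsto x+a$. The size of each such orbit equals the order of $a$ in $\Z/n\Z$, which by Lemma \ref{lem:eulerPhi} is the minimal $d$ satisfying $da \equiv 0 \pmod n$, and this $d$ divides $n$. Each row therefore splits into $n/d$ orbits, giving $|\Theta_{(a,\id)}| = mn/d$ and so $|X^{(a,\id)}| = t_{\id}^{nm/d}$.

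Finally I would assemble the sum. By Lemma \ref{lem:eulerPhi}, as $a$ ranges over $\Z/n\Z$ the order $d$ ranges over the divisors of $n$, and for each divisor $d$ there are exactly $\varphi(d)$ values of $a$ of that order. Collecting equal terms yields $\fxpt^\CY_{\id}(n,m) = \sum_{a \in \Z/n\Z} t_{\id}^{nm/d} = \sum_{d \mid n} \varphi(d)\, t_{\id}^{nm/d}$, as claimed. The only point requiring care is the bookkeeping in this last step, namely translating cleanly between the order $d = n/\gcd(a,n)$ of a shift and the per-row orbit count $\gcd(a,n) = n/d$; since this translation is precisely what Lemma \ref{lem:eulerPhi} packages, I expect no genuine obstacle, and the argument will also serve as the template for the remaining, less symmetric cylinder cases.
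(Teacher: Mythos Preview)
Your proposal is correct and follows essentially the same approach as the paper's proof: identify the orbit size as the order $d$ of $a$ in $\Z/n\Z$, count $nm/d$ orbits, and invoke Lemma~\ref{lem:eulerPhi} to group the sum over $a$ by divisors of $n$. Your write-up is more explicit (spelling out why $g^{|\vartheta|}=\id$ and treating the rows separately), but the underlying argument is identical.
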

\begin{proof}
  For each element $(a, \id) \in \cylinderSymmetries$, the size of the orbits
  is the least $d$ such that \(
      da \equiv 0 \pmod n.
  \)
  By Lemma \ref{lem:eulerPhi}, when $d \mid n$ there are $\varphi(d)$ choices
  for $a$ that result in orbits of size $d$, and each choice partitions the
  $n \times m$ grid into $nm/d$ orbits.
\end{proof}

The next theorem concerns the action of $(a,r^2) \in \cylinderSymmetries$, which is illustrated in Figure \ref{fig:cylinderShiftAndRotate}.
\begin{theorem}
  The sum over all cyclic shifts of the number of tilings of the $n \times m$
  cylinder by tile designs in $T$ that are fixed by
  $(a,r^2) \in \cylinderSymmetries$
  is given by
  \begin{subnumcases}{\fxpt^\CY_{r^2}(n,m) =}
      nt_{\id}^{nm/2}
      & $m$ is even \label{eq:fxpt_C_rr_1}
      \\[10pt]
      \displaystyle
      n\!\left(
        \frac12 t_{\id}^{nm/2} +
        \frac12 t_{\id}^{(nm-2)/2}t_{r^2}^{2}
      \right)
      & $m$ is odd, $n$ is even \label{eq:fxpt_C_rr_2}
      \\[10pt]
      nt_{\id}^{(nm-1)/2}t_{r^2}
      & $m$ and $n$ are odd. \label{eq:fxpt_C_rr_3}
  \end{subnumcases}
\end{theorem}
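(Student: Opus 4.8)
The plan is to apply the fixed-point counting theorem (the one giving $|X^s| = \prod_{\vartheta} t_{g^{|\vartheta|}}$) to the symmetry $s = (a, r^2)$ and then sum the result over all shifts $a \in \Z/n\Z$, since by definition $\fxpt^\CY_{r^2}(n,m) = \sum_a X^{(a,r^2)}$. The crucial first step is to pin down the orbit structure of the cells under $\langle (a,r^2)\rangle$. Reading off the semidirect-product convention (first shift columns by $a$, then apply $r^2$) gives the cell action $(x,y)\cdot(a,r^2) = (n-1-x-a,\, m-1-y)$, with the first coordinate taken mod $n$. Applying this a second time returns $(x,y)$ exactly, so $(a,r^2)$ acts as an involution (equivalently, $(a,r^2)^2 = (a+\psi_{r^2}(a),\id) = \id$ in the group), and hence every cell-orbit has size $1$ or $2$. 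By the fixed-point theorem, a size-$2$ orbit forces its tile into $T^{(r^2)^2} = T^{\id}$, contributing a factor $t_{\id}$, whereas a fixed cell forces its tile into $T^{r^2}$, contributing $t_{r^2}$.

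Next I would count the fixed cells. A cell $(x,y)$ is fixed precisely when $y = m-1-y$ and $x \equiv n-1-x-a \pmod n$. The row equation $2y = m-1$ is solvable if and only if $m$ is odd, in which case it pins down the single row $y = (m-1)/2$; this is what separates the even-$m$ case from the odd-$m$ cases. The column equation is exactly equation \eqref{eq:flip_solution}, so Lemma \ref{lem:flip_solution} applies verbatim: it has one solution when $n$ is odd, two when $n$ is even and $a$ is odd, and none when $n$ and $a$ are both even. Combining the two equations, the number $k$ of fixed cells is $0$ whenever $m$ is even, and when $m$ is odd it is $1$ (if $n$ is odd), $2$ (if $n$ is even and $a$ is odd), or $0$ (if $n$ and $a$ are both even).

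Finally, for each fixed $a$ I assemble $X^{(a,r^2)} = t_{r^2}^{\,k}\, t_{\id}^{(nm-k)/2}$ and sum over $a$. When $m$ is even, every shift contributes $t_{\id}^{nm/2}$, and summing over the $n$ values of $a$ yields \eqref{eq:fxpt_C_rr_1}. When $m$ and $n$ are both odd, every shift contributes $t_{\id}^{(nm-1)/2}t_{r^2}$, giving \eqref{eq:fxpt_C_rr_3}. The only case requiring genuine care is $m$ odd with $n$ even: here exactly $n/2$ of the shifts $a$ are odd (yielding two fixed cells) and $n/2$ are even (yielding none), so the sum splits as $\frac{n}{2}\,t_{\id}^{(nm-2)/2}t_{r^2}^2 + \frac{n}{2}\,t_{\id}^{nm/2}$, which rearranges into \eqref{eq:fxpt_C_rr_2}.

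I expect the main obstacle to be bookkeeping rather than any deep computation: the key conceptual move is recognizing that $(a,r^2)$ squares to the identity, so that no orbits of size exceeding $2$ ever arise, after which everything reduces to tracking the fixed-cell count $k$ against the parities of $m$, $n$, and $a$ simultaneously. The sum over $a$ only genuinely splits in the $n$-even/$m$-odd case, and correctly counting the odd versus even residues of $a$ (namely $n/2$ of each) is precisely what produces the factor of $\frac12$ in \eqref{eq:fxpt_C_rr_2}.
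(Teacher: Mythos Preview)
Your proposal is correct and follows essentially the same approach as the paper: observe that $(a,r^2)$ is an involution so cell-orbits have size $1$ or $2$, identify the fixed cells via the row equation $2y = m-1$ together with Lemma~\ref{lem:flip_solution} for the column equation, and then split into the three parity cases when summing over $a$. Your write-up is, if anything, slightly more explicit than the paper's in invoking the general fixed-point theorem and in verifying the involution via the semidirect product.
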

\begin{proof}
  The right action of $(a,r^2)$ on $\xyd$ is \begin{equation}
    \xyd \cdot (a,r^2) = ((n - 1 - x - a, m - 1 - y), d \cdot r^2).
  \end{equation}
  By applying this map twice, we see that $\xyd^2 = \id$, so each
  orbit is either size $1$ or size $2$. Orbits are size $1$ precisely when
  \begin{align}
    x &\equiv -1 - x - a \pmod n \\
    y &= m - y - 1.
  \end{align}
  \begin{description}
    \item[Equation \eqref{eq:fxpt_C_rr_1}.]
    When $m$ is even, $2y \neq m - 1$, so there are no solutions to this
    system of equations. Thus all orbits have size $2$ totaling $nm/2$ orbits.
    We can choose any tile design in $T$ to start this orbit. Then we sum this
    over all choices $n$ choices of $a \in \Z/n\Z$.
    \item[Equation \eqref{eq:fxpt_C_rr_2}.]
    When $m$ is odd, the second equation has the unique solution of
    $y = (m-1)/2$, which represents the middle row.
    The solutions for the second equation follow directly from Lemma
    \ref{lem:flip_solution}, which states that when $n$ is even, the
    equation has two solutions when $x$ is odd and none otherwise;
    when $n$ is odd the second equation has a single solution.

    Therefore when $m$ is odd and $n$ is even, half of the choices of
    $a \in \Z/n\Z$ result in no orbits of size $1$, and the other half of
    choices of $x$ result in two orbits of size $1$.
    In the former case, the grid decomposes into $nm/2$ orbits of size $2$,
    each of which can be filled with any tile design.
    In the latter case, there are two orbits of size $1$, which must be filled
    with a tile that is fixed under $r^2$, and the rest of the grid decomposes
    into $(nm-2)/2$ orbits all of size $2$, which can be filled with any tile
    design.
    \item[Equation \eqref{eq:fxpt_C_rr_3}.]
    Lastly, when both $m$ and $n$ are odd, there is a single orbit of size $1$
    that must be filled with a tile that is fixed under $r^2$, the remaining
    $nm-1$ cells are partitioned into $(nm-1)/2$ orbits of size $2$ that can
    be filled with any tile design.
  \end{description}
\end{proof}

\begin{figure}[ht]
  \centering
  \begin{tikzpicture}[baseline=35]
    \foreach \x/\y/\a in {
      0/2/A, 1/2/B, 2/2/C, 3/2/D, 4/2/E,
      0/1/F, 1/1/G, 2/1/H, 3/1/I, 4/1/J,
      0/0/K, 1/0/L, 2/0/M, 3/0/N, 4/0/O
    } {
      \node at (\x+0.5,\y+0.5) {\color{lightgray}\Huge \a};
    }
    \draw[thin,lightgray] (0,0) grid (5,3);
    \draw[ultra thick] (0,0) rectangle (5,3);
    \node[opacity=0,yshift=-0.8cm] at (2,0) {\Large $n - a$};
  \end{tikzpicture}
  {\large $\cdot(a,r^2) =$}
  \begin{tikzpicture}[baseline=35]
    \begin{scope}[rotate around={180:(2.5,1.5)}]
    \foreach \x/\y/\a in {
      0/2/D, 1/2/E, 2/2/A, 3/2/B, 4/2/C, %
      0/1/I, 1/1/J, 2/1/F, 3/1/G, 4/1/H, %
      0/0/N, 1/0/O, 2/0/K, 3/0/L, 4/0/M  %
    } {
      \node[rotate around={180:(0,0)},shift={(\x+1/2,\y+1/2)},] at (0,0) {\color{lightgray}\Huge \a};
    }
    \end{scope}
    \draw[thin,lightgray] (0,0) grid (5,3);
    \draw[ultra thick, dashed] (3,0) -- (3,3);
    \draw[ultra thick] (0,0) rectangle (5,3);
    \draw[-{Triangle[length=2mm]}, ultra thick] (2,3/2) arc(0:180:0.5);
    \draw[-{Triangle[length=2mm]}, ultra thick] (4.5,3/2) arc(0:180:0.5);
    \draw[ultra thick, decorate,decoration={brace,amplitude=10pt},xshift=0pt,yshift=-0.15cm] (2.9,0)--(0.1,0);
    \draw[ultra thick, decorate,decoration={brace,amplitude=10pt},xshift=0pt,yshift=-0.15cm] (4.9,0)--(3.1,0);
    \node[yshift=-0.8cm] at (1.5,0) {\Large $n - a$};
    \node[yshift=-0.8cm] at (4,0) {\Large $a$};
  \end{tikzpicture}
  \caption{The action of $(a,r^2) \in \cylinderSymmetries$ on a tiling is
    equivalent to a $180^\circ$ rotation of both the leftmost $n-a \times m$
    sub-grid and
    the rightmost $a \times m$ sub-grid.
  }
  \label{fig:cylinderShiftAndRotate}
\end{figure}
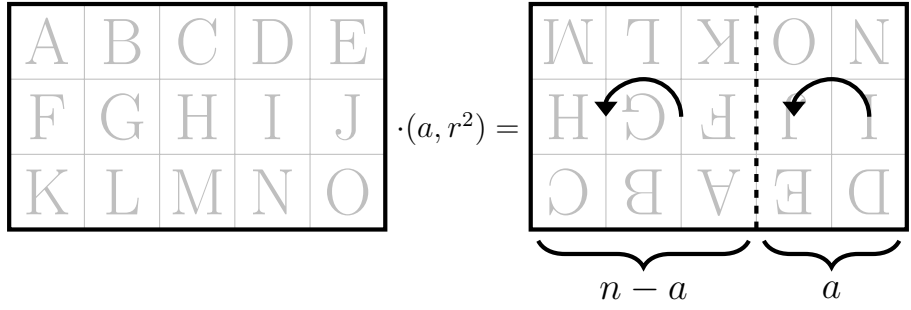

The next theorem concerns the action of $(a,f) \in \cylinderSymmetries$, which is illustrated in Figure \ref{fig:cylinderShiftAndFlip}.
\begin{theorem}
  The sum over all cyclic shifts of the number of tilings of the $n \times m$
  cylinder by tile designs in $T$ that are fixed by
  $(a,f) \in \cylinderSymmetries$
  is given by
  \begin{subnumcases}{\fxpt^\CY_{f}(n,m) =}
    \displaystyle
    n\!\left(
      \frac12 t_{\id}^{nm/2} +
      \frac12 t_{\id}^{(nm-2m)/2}t_f^{2m}
    \right)
    & $n$ is even \label{eq:fxpt_C_f_1}
    \\[10pt]
    nt_{\id}^{(nm-m)/2}t_f^m
    & $n$ is odd. \label{eq:fxpt_C_f_2}
  \end{subnumcases}
\end{theorem}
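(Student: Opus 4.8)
The plan is to mirror the structure of the preceding proof for $\fxpt^\CY_{r^2}(n,m)$, applying the orbit-counting strategy of the fixed-point-counting theorem together with Lemma \ref{lem:flip_solution}. First I would record the right action of $(a,f)$ on a tile. Since $f$ fixes the row index and sends $x \mapsto n-1-x$, composing with the shift by $a$ gives $\xyd \cdot (a,f) = ((n-1-x-a,\, y),\, d \cdot f)$. Applying this map a second time returns the cell $(x,y)$ and acts on the design by $f^2 = \id$, so every cell lies in an orbit of size $1$ or $2$ under $\langle (a,f) \rangle$. Consequently, a size-$1$ orbit forces its tile to be fixed by $f$ (contributing a factor of $t_f$), while a size-$2$ orbit admits any tile design (contributing a factor of $t_{\id}$).

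Next I would locate the size-$1$ orbits. A cell $(x,y)$ is fixed precisely when $x \equiv -1 - x - a \pmod n$, which is exactly equation \eqref{eq:flip_solution} of Lemma \ref{lem:flip_solution}, while $y$ is left completely unconstrained. The key observation is that, because $f$ acts trivially on the row index, the solution count in $x$ provided by the lemma is realized identically in each of the $m$ rows. Thus if Lemma \ref{lem:flip_solution} yields $k$ solutions in $x$ for a given pair $(n,a)$, the tiling has $km$ fixed cells forming $km$ size-$1$ orbits, together with $(nm-km)/2$ size-$2$ orbits, so the fixed-point-counting theorem gives $t_{\id}^{(nm-km)/2}\, t_f^{km}$ tilings fixed by $(a,f)$.

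I would then split on the parity of $n$ and sum over all $a \in \Z/n\Z$. When $n$ is odd, Lemma \ref{lem:flip_solution} gives exactly $k=1$ solution for every $a$, so each of the $n$ shifts contributes $t_{\id}^{(nm-m)/2}\, t_f^m$, and summing over the $n$ choices of $a$ produces equation \eqref{eq:fxpt_C_f_2}. When $n$ is even, the lemma gives $k=2$ solutions when $a$ is odd and $k=0$ solutions when $a$ is even; since there are $n/2$ shifts of each parity, the odd shifts each contribute $t_{\id}^{(nm-2m)/2}\, t_f^{2m}$ and the even shifts each contribute $t_{\id}^{nm/2}$, and collecting these yields equation \eqref{eq:fxpt_C_f_1}.

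I do not anticipate a serious obstacle, as the argument runs parallel to the $(a,r^2)$ case already proved. The single point requiring care is precisely the way in which $f$ differs from $r^2$: here the reflection acts trivially on rows, so the one-variable congruence of Lemma \ref{lem:flip_solution} must be applied once per row and its solution count multiplied by $m$, whereas in the $r^2$ computation the additional constraint $y = m-1-y$ selected only the single middle row. Keeping this ``solutions in $x$ times $m$ rows'' bookkeeping straight, and correctly matching the parity cases of the lemma to the two branches of the \texttt{subnumcases}, is the one place where an error could plausibly arise.
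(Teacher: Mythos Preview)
Your proposal is correct and follows essentially the same approach as the paper: use that $(a,f)$ has order $2$, identify the fixed cells via the congruence $x \equiv -1-x-a \pmod n$ from Lemma~\ref{lem:flip_solution}, observe that the row index is unconstrained so each solution in $x$ yields $m$ fixed cells, and then split on the parity of $n$ (and of $a$ when $n$ is even). If anything, your write-up is more explicit than the paper's in carrying out the final sum over $a$ and assembling the factors of $t_{\id}$ and $t_f$.
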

\begin{proof}
  Since $(a,f)^2 = \id$, every tile is either a fixed point or
  appears in a $2$-cycle under $(a,f)$.
  The right action of $(a,f)$ on $\xyd$ is \[
    \xyd \cdot (a,f) = \ps{-x-a-1}{y}{d \cdot f}
  \] so the tiles that appear as fixed points are those that satisfy \[
    x = - 1 - x - a \pmod n.
  \]
  \begin{description}
    \item[Equation \eqref{eq:fxpt_C_f_1}.]
    When $n$ and $a$ are both even, there are no fixed points.
    When $n$ is even and $a$ is odd, there are two fixed points in each row:
    \begin{align}
      x &\equiv (n - a - 1)/2 \pmod n \qquad \text{and}\\
      x &\equiv (2n - a - 1)/2 \pmod n.
    \end{align}
    \item[Equation \eqref{eq:fxpt_C_f_2}.]
    When $n$ is odd, there is one fixed cell in each row, which occurs when
    $x \equiv (-a-1)(n+1)/2 \pmod n$.
  \end{description}
\end{proof}

\begin{figure}[ht]
  \centering
  \begin{tikzpicture}[baseline=35]
    \foreach \x/\y/\a in {
      0/2/A, 1/2/B, 2/2/C, 3/2/D, 4/2/E,
      0/1/F, 1/1/G, 2/1/H, 3/1/I, 4/1/J,
      0/0/K, 1/0/L, 2/0/M, 3/0/N, 4/0/O
    } {
      \node at (\x+0.5,\y+0.5) {\color{lightgray}\Huge \a};
    }
    \draw[thin,lightgray] (0,0) grid (5,3);
    \draw[ultra thick] (0,0) rectangle (5,3);
    \node[opacity=0,yshift=-0.8cm] at (2,0) {\Large $n - a$};
  \end{tikzpicture}
  {\large $\cdot(a,f) =$}
  \begin{tikzpicture}[baseline=35]
    \begin{scope}[xshift=5cm, xscale=-1]
    \foreach \x/\y/\a in {
      0/2/D, 1/2/E, 2/2/A, 3/2/B, 4/2/C, %
      0/1/I, 1/1/J, 2/1/F, 3/1/G, 4/1/H, %
      0/0/N, 1/0/O, 2/0/K, 3/0/L, 4/0/M  %
    } {
      \node[xscale=-1] at (\x+0.5,\y+0.5) {\color{lightgray}\Huge \a};
    }
    \end{scope}
    \draw[thin,lightgray] (0,0) grid (5,3);
    \draw[ultra thick, dashed] (3,0) -- (3,3);
    \draw[ultra thick] (0,0) rectangle (5,3);
    \draw[{Triangle[length=3mm]}-{Triangle[length=3mm]}, ultra thick] (1,1.5) to[bend right] (2,1.5);
    \draw[{Triangle[length=3mm]}-{Triangle[length=3mm]}, ultra thick] (3.5,1.5) to[bend right] (4.5,1.5);
    \draw[ultra thick, decorate,decoration={brace,amplitude=10pt},xshift=0pt,yshift=-0.15cm] (2.9,0)--(0.1,0);
    \draw[ultra thick, decorate,decoration={brace,amplitude=10pt},xshift=0pt,yshift=-0.15cm] (4.9,0)--(3.1,0);
    \node[yshift=-0.8cm] at (1.5,0) {\Large $n - a$};
    \node[yshift=-0.8cm] at (4,0) {\Large $a$};
  \end{tikzpicture}
  \caption{The action of $(a,f) \in \cylinderSymmetries$ on a tiling is
    equivalent to a horizontal flip of both the leftmost $n-a \times m$
    sub-grid and
    the rightmost $a \times m$ sub-grid.
  }
  \label{fig:cylinderShiftAndFlip}
\end{figure}

\begin{theorem}
  The sum over all cyclic shifts of the number of tilings of the $n \times m$
  cylinder by tile designs in $T$ that are fixed by
  $(a,r^2f) \in \cylinderSymmetries$
  is given by
  \begin{subnumcases}{\fxpt^\CY_{r^2f}(n,m) =}
    \sum_{d | n} \varphi(d) t_{\id}^{nm/\lcm(d,2)}
    & $m$ is even \label{eq:fxpt_C_rrf_1}
    \\[10pt]
    \sum_{d | n} \varphi(d) t_{\id}^{(nm-n)/\lcm(d,2)}t_{(r^2f)^d}^{n/d}
    & $m$ is odd. \label{eq:fxpt_C_rrf_2}
  \end{subnumcases}
\end{theorem}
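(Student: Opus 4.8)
The plan is to reduce to the general product formula for $|X^s|$ by computing the cell action of $(a,r^2f)$, classifying the resulting orbits of cells, and summing over $a$ grouped by the order of the underlying horizontal shift via Lemma \ref{lem:eulerPhi}.

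First I would record the action on cells, $(x,y) \cdot (a,r^2f) = (x+a,\,m-1-y)$ with the first coordinate taken mod $n$, while the design is simultaneously acted on by $r^2f$. The decisive feature, in contrast with the $(a,r^2)$ and $(a,f)$ cases, is that this map is \emph{not} an involution: applying it twice sends $(x,y)$ to $(x+2a,\,y)$ and acts on the design by $(r^2f)^2 = \id$, i.e.\ it is a pure horizontal shift by $2a$ that fixes every design. Consequently orbits of cells can be larger than size $2$, with sizes governed by the order of the shift by $a$. By Lemma \ref{lem:eulerPhi}, for each $d \mid n$ there are exactly $\varphi(d)$ values of $a$ for which $d$ is the least positive integer with $da \equiv 0 \pmod n$; I would fix such an $a$, compute $X^{(a,r^2f)}$, and then multiply by $\varphi(d)$ and sum over $d \mid n$.

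The heart of the argument is the orbit count, which splits according to whether a cell lies in the middle row $y=(m-1)/2$ (which exists precisely when $m$ is odd). For a cell with $y \neq m-1-y$, the $y$-coordinate returns to its starting value only after an even number of steps, so the orbit closes at the least even multiple of $d$, namely $\lcm(d,2)$; having even length, such an orbit forces its design to be fixed by $(r^2f)^{\lcm(d,2)} = \id$, so it may be filled by any of the $t_{\id}$ designs. When $m$ is even every cell is of this type, yielding $nm/\lcm(d,2)$ orbits and hence the factor $t_{\id}^{nm/\lcm(d,2)}$; summing over $d$ gives \eqref{eq:fxpt_C_rrf_1}. When $m$ is odd, the $n$ middle-row cells behave differently: there $m-1-y=y$, so $(a,r^2f)$ merely shifts the cell horizontally by $a$ while still acting on its design by $r^2f$; these form $n/d$ orbits of size $d$, each forcing its design to be fixed by $(r^2f)^d$ and hence fillable in $t_{(r^2f)^d}$ ways, while the remaining $n(m-1)$ cells split into orbits of size $\lcm(d,2)$ as before. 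This gives $X^{(a,r^2f)} = t_{\id}^{(nm-n)/\lcm(d,2)}\,t_{(r^2f)^d}^{n/d}$, and summing $\varphi(d)$ copies over $d \mid n$ produces \eqref{eq:fxpt_C_rrf_2}.

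The step I expect to demand the most care is pinning down the orbit sizes rigorously: in particular, verifying that for odd $d$ the even steps really do traverse $d$ distinct columns (which holds because $\gcd(2,d)=1$ forces the shift by $2a$ to have the same order $d$ as the shift by $a$), and confirming that an orbit never closes prematurely through an accidental coincidence of the row-swap with the shift off the middle row. Once the orbit sizes and their tile constraints are established, the result follows by the general product formula together with Lemma \ref{lem:eulerPhi}.
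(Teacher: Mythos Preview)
Your proposal is correct and follows essentially the same route as the paper: both compute the cell action of $(a,r^2f)$, observe that it decouples into a horizontal shift by $a$ and an involution $y\mapsto m-1-y$ on rows, group the values of $a$ by the order $d$ of the shift via Lemma~\ref{lem:eulerPhi}, and then read off orbit sizes $\lcm(d,2)$ off the middle row and $d$ on the middle row when $m$ is odd. Your write-up is in fact somewhat more careful than the paper's about verifying that orbits cannot close prematurely.
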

\begin{proof}

  We can see that $(a,r^2f)$ acts on the coordinates of $(x,y)$ separately,
  that is, \[
    (x,y)\cdot(a,r^2f)^k = \begin{cases}
      (x + ka, m-1-y) & k \text{ odd}\\
      (x + ka, y)     & k \text{ even}.
    \end{cases}
  \]

  Notice that the orbits of the $y$-coordinates have size $2$, so it is enough
  to determine the size of the orbits of the $x$-coordinates.
  By Lemma \ref{lem:eulerPhi}, for each divisor $d \mid n$, there are
  $\varphi(d)$ choices for $a$ such that the size of the orbits of the
  $x$-coordinate is $d$.

  \begin{description}
    \item[Equation \eqref{eq:fxpt_C_rrf_1}.]
    When $m$ is even, we see that $y \neq m - y - 1$ has no solutions,
    the orbit of every $y$-coordinate has size $2$.
    For each $x$-orbit size $d \mid n$,
    each cell must be in an orbit of size $\lcm(d,2)$, and there must
    $nm/\lcm(d,2)$ of them. Each can be specified by any tile design, since all
    tile designs are stable under $(r^2f)^{\lcm(d,2)} = \id$.
    \item[Equation \eqref{eq:fxpt_C_rrf_2}.]
    When $m$ is odd, we see that $y = m - y - 1$ has a solutions precisely when
    $y = (m-1)/2$.
    For each $x$-orbit size $d \mid n$, if $y = (m-1)/2$, then the orbit has
    size $d$, otherwise it has size $\lcm(d,2)$, as in the case above.
    Therefore there are $n$ cells that are in orbits of size $d$ resulting
    in $n/d$ orbits that can be specified by any tile design that is stable
    under $(r^2f)^{\lcm(d,2)}$. The remaining $n^2-n$ cells are partitioned
    into $(n^2-n)/\lcm(d,2)$ orbits of size $\lcm(d,2)$ that can be specified
    by any tile design.
  \end{description}
\end{proof}

\begin{theorem}
  For
  a given set of tile designs $T$,
  a symmetry group $R \leq D_4$, and
  an element $g \in R$,
  the number of distinct tilings of the $n \times m$ cylinder is
  \begin{equation}
    \frac{1}{n|R|}\sum_{g \in R} \fxpt^\CY_g(n,m).
    \label{eq:distinctTilingsC}
  \end{equation}
\end{theorem}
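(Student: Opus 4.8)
The plan is to recognize this final formula as a direct instance of Burnside's lemma applied to the group $\cylinderSymmetries$ acting on the set $X$ of all tilings of the $n \times m$ grid by designs in $T$. All the real work has already been done: the earlier compatibility section established that $\cylinderSymmetries$ is a genuine group (with the semidirect product structure chosen precisely so that first cyclically shifting columns and then applying a dihedral element $g \in R$ is associative), and the fixed-point-counting theorems of this subsection computed, for each $g$, the quantity $\fxpt^\CY_g(n,m) = \sum_{a \in \Z/n\Z} |X^{(a,g)}|$. So the theorem should follow by assembling these pieces.

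First I would observe that, as a set, $\cylinderSymmetries$ is $\Z/n\Z \times R$, so $|\cylinderSymmetries| = n|R|$. Two tilings represent the same cylinder tiling exactly when they lie in the same orbit of $X$ under $\cylinderSymmetries$, so the number of distinct cylinder tilings is $|X / (\cylinderSymmetries)|$. Burnside's lemma then gives
\[
  |X / (\cylinderSymmetries)| = \frac{1}{n|R|} \sum_{s \in \cylinderSymmetries} |X^s|.
\]

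Next I would reindex the sum. Writing each $s$ uniquely as $(a,g)$ with $a \in \Z/n\Z$ and $g \in R$, I split the single sum over $s$ into a double sum, summing first over $g \in R$ and then over $a \in \Z/n\Z$:
\[
  \sum_{s \in \cylinderSymmetries} |X^s| = \sum_{g \in R}\ \sum_{a \in \Z/n\Z} |X^{(a,g)}| = \sum_{g \in R} \fxpt^\CY_g(n,m),
\]
where the last equality is exactly the definition of $\fxpt^\CY_g(n,m)$. Substituting this into the Burnside expression yields equation \eqref{eq:distinctTilingsC}.

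I do not expect a serious obstacle in the calculation itself, since it is bookkeeping on top of Burnside's lemma; the one point that genuinely needs the earlier development is the claim that orbits of $X$ under $\cylinderSymmetries$ coincide with \emph{distinct cylinder tilings}. This rests on the semidirect product being the correct symmetry group — that combining a cyclic column shift with a dihedral symmetry is captured faithfully by $\cylinderSymmetries$ acting on the right — which was verified in the compatibility subsection and illustrated there on an individual cell. Granting that, the proof is simply Burnside's lemma together with the reindexing above.
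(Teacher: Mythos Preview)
Your proof is correct and takes essentially the same approach as the paper: apply Burnside's lemma to the action of $\cylinderSymmetries$ on the set of tilings, then reindex the sum over $(a,g)$ as an outer sum over $g\in R$ and an inner sum over $a\in\Z/n\Z$, which by definition collapses to $\fxpt^\CY_g(n,m)$. The paper's own proof is the same bookkeeping, written in the reverse direction (starting from the claimed formula and unwinding to the Burnside expression).
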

\begin{proof}
  We will use the convention that
  when we index over $g$, implicitly $g \in R$;
  when we index over $a$, implicitly $a \in \Z/n\Z$; and
  when we index over $(a,g)$, implicitly $(a,g) \in \cylinderSymmetries$.

  Since $\fxpt^\CY_g(n,m) = \displaystyle\sum_a X^{(a,g)}$, we can see that
  \begin{align*}
    \frac{1}{n|R|}\sum_{g} \fxpt^\CY_g(n,m)
    &= \frac{1}{n|R|}\sum_{g} \sum_{a} X^{(a,g)} \\
    &= \frac{1}{|\cylinderSymmetries|}\sum_{(a,g)} X^{(a,g)},
  \end{align*}
  which counts the number of distinct tilings by a direct application of
  Burnside's lemma.
\end{proof}

\section{Torus}
\label{sec:torus}
This section builds on the work of
Ethier \cite{Ethier} and
Ethier and Lee\cite{EthierLee}. By
specializing to the set of tile designs
$T = \left\{\tileColor[black!80],\tileColor[black!20]\right\}$, we recover
their work.
Irvine \cite{Irvine} generalized this in the specific context of a set
of tile designs of size $n$ in the specific case that no rotation or reflection
is allowed (only cyclic shifting of rows and columns), which we recover in
Theorem \ref{thm:fxpt_Rt_id}.

We distinguish between the rectangular torus, which we denoted by \RT, and
the square torus, which we denote by \ST.
\subsection{The \texorpdfstring{$n \times m$}{n by m} torus}
\begin{definition}
  For a given set of tile designs $T$, and an element
  \(g \in R \leq D_4\)
  the sum over all cyclic shifts of the number of tilings of the $n \times m$
  cylinder by tile designs in $T$ that are fixed by
  $\abg \in (\Z/n \Z \times \Z/m \Z) \rtimes R$
  is denoted \[
    \fxpt^\RT_g(n,m) = \sum_{(a,b) \in \Z/n\Z\times\Z/m\Z} X^{\abg},
  \] where $X^{\abg}$ is the number of tilings fixed by $\abg$.
\end{definition}
\begin{theorem}
  The sum over all cyclic shifts of the number of tilings of the $n \times m$
  torus by tile designs in $T$ that are fixed by
  $(a,\id) \in \torusSymmetries$
  is given by
  \begin{equation}
  \fxpt^\RT_{\id}(n,m) = \sum_{c \mid m}\sum_{d \mid n}\varphi(c)\varphi(d) t_{\id}^{mn/\lcm(c,d)}.
  \label{eq:fxpt_TR_id}
  \end{equation}
  \label{thm:fxpt_Rt_id}
\end{theorem}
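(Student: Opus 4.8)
The plan is to reduce the claim to an application of the fixed-point counting theorem (the one asserting $|X^s| = \prod_{\vartheta \in \Theta_s} t_{g^{|\vartheta|}}$) followed by Lemma \ref{lem:eulerPhi}. Here $g = \id$, so for any shift $(a,b)$ the element $s = \ps{a}{b}{\id}$ acts on the grid purely by translation. Consequently every orbit $\vartheta \in \Theta_s$ contributes the factor $t_{g^{|\vartheta|}} = t_{\id}$, independent of $|\vartheta|$, and therefore $X^{\ps{a}{b}{\id}} = t_{\id}^{|\Theta_s|}$, where $|\Theta_s|$ is the number of orbits of $\langle (a,b) \rangle$ acting on $\grid$. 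The remaining work is to compute $|\Theta_s|$ for each $(a,b)$ and to tally how many $(a,b)$ produce each value.

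First I would show that all orbits share a common size. Translation by $(a,b)$ fixes a cell $(x,y)$ only when $a \equiv 0 \pmod n$ and $b \equiv 0 \pmod m$, so every nonidentity element of the cyclic group $\langle (a,b) \rangle$ acts without fixed points; hence the action is free, every stabilizer is trivial, and every orbit has size equal to the order $\ell$ of $(a,b)$ in $\grid$. This gives $|\Theta_s| = nm/\ell$ and so $X^{\ps{a}{b}{\id}} = t_{\id}^{nm/\ell}$.

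Next I would identify $\ell$ in terms of the component orders. Writing $d$ for the order of $a$ in $\Z/n\Z$ and $c$ for the order of $b$ in $\Z/m\Z$, the order of the pair is $\ell = \lcm(c,d)$, whence $X^{\ps{a}{b}{\id}} = t_{\id}^{nm/\lcm(c,d)}$. By Lemma \ref{lem:eulerPhi}, the order $d$ of $a$ is always a divisor of $n$, and for each such $d$ there are exactly $\varphi(d)$ choices of $a$; symmetrically $c \mid m$ with exactly $\varphi(c)$ choices of $b$. Grouping the sum $\fxpt^\RT_{\id}(n,m) = \sum_{(a,b)} X^{\ps{a}{b}{\id}}$ according to the pair $(c,d)$ then collapses it to $\sum_{c \mid m}\sum_{d \mid n} \varphi(c)\varphi(d)\, t_{\id}^{nm/\lcm(c,d)}$, as claimed.

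I expect the only genuinely delicate step to be establishing the common orbit size, i.e. the freeness of the translation action; everything else is bookkeeping once Lemma \ref{lem:eulerPhi} is invoked twice. I would make the freeness precise by noting that $\langle (a,b)\rangle \cong \Z/\ell\Z$ and that a nonzero translation of the finite abelian group $\grid$ has no fixed points, forcing every point-stabilizer to be trivial and hence every orbit to have exactly $\ell$ elements.
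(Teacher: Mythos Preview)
Your proposal is correct and follows essentially the same approach as the paper: both compute the order of $(a,b)$ in $\grid$ as $\lcm(c,d)$, deduce that every orbit has this size and hence there are $nm/\lcm(c,d)$ orbits, and then invoke Lemma~\ref{lem:eulerPhi} twice to count the $\varphi(c)\varphi(d)$ pairs $(a,b)$ with prescribed component orders. Your explicit freeness argument for the common orbit size is a bit more detailed than the paper's presentation, but the substance is the same.
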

\begin{proof}
  The size of an orbit of a tile under $\abg^k$ is the set of solutions to
  \begin{align*}
    ka &\equiv 0 \pmod n \\
    kb &\equiv 0 \pmod m.
  \end{align*}

  For each individual equation, the minimal choice for $k$ must be a divisor of
  $n$.
  For a given divisor $d \mid n$, there are $\varphi(d)$ choices for $a \in \Z/n\Z$ so
  that $da \equiv 0 \pmod n$. Namely if $i$ is coprime to $d$, then
  $a = i\left(n/d\right)$ will be a minimal solution.
  An analogous argument holds for the second equation.

  Therefore if $d \mid n$ and $c \mid m$, there are $\varphi(d)\varphi(c)$ pairs
  $(a,b) \in \grid$ where $a$ has order $d$ and $b$ has order
  $c$, and thus $(a,b)$ has order $\lcm(d,c)$. Therefore, each orbit of cells
  has size $\lcm(d,c)$, and so the number of orbits of cells is $nm/\lcm(d,c)$.

  Thus, the sum of the number of orbits over each pair
  $(a,b) \in \grid$ gives equation \eqref{eq:fxpt_TR_id}, as desired.
\end{proof}

The next theorem concerns the action of $\ps ab{r^2} \in \torusSymmetries$, which is illustrated in Figure \ref{fig:torusShiftAndRotate180}.

\begin{theorem}
  The sum over all cyclic shifts of the number of tilings of the $n \times m$
  torus by tile designs in $T$ that are fixed by
  $(a,r^2) \in \torusSymmetries$
  is given by
  \begin{subnumcases}{\fxpt^\RT_{r^2}(n,m) =}
    \displaystyle
    nm\left(\frac34t_{\id}^{nm/2} + \frac14t_{\id}^{nm/2-2}t_{r^2}^4\right)
    & $n$ and $m$ even \label{eq:fxpt_TR_rr_a}
    \\[10pt]
    \displaystyle
    nm t_{\id}^{(nm-1)/2}t_{r^2}
    & $n$ and $m$ odd \label{eq:fxpt_TR_rr_b}
    \\[10pt]
    \displaystyle \label{eq:fxpt_TR_rr_c}
    nm\left(\frac12 t_{\id}^{nm/2} + \frac12 t_{\id}^{nm/2-1}t_{r^2}^2\right)
    & otherwise.
  \end{subnumcases}
  \label{thm:fxpt_Rt_rr}
\end{theorem}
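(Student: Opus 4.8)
The plan is to mirror the cylinder analysis for $\fxpt^\CY_{r^2}$, but now tracking both coordinates. First I would record the right action of $\ps{a}{b}{r^2}$ on a tile, which from the semidirect product together with $\phi_{r^2}((x,y)) = (-x,-y)$ comes out to
\[
  \xyd \cdot \ps{a}{b}{r^2} = \ps{n-1-x-a}{m-1-y-b}{d \cdot r^2}.
\]
Applying this twice returns $\xyd$, so $\ps{a}{b}{r^2}$ is an involution and every cell orbit has size $1$ or $2$. A cell is a fixed point exactly when
\[
  x \equiv -1-x-a \pmod n \quad\text{and}\quad y \equiv -1-y-b \pmod m,
\]
two independent congruences of the form handled by Lemma \ref{lem:flip_solution}. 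Writing $F$ for the number of fixed cells, each fixed cell must be tiled by a design in $T^{r^2}$ and each of the remaining $(nm-F)/2$ size-two orbits by any design, so $X^{\ps{a}{b}{r^2}} = t_{r^2}^{F}\, t_{\id}^{(nm-F)/2}$.

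The key point is that $F$ factors as the product of the number of solutions of the $x$-congruence and the number of solutions of the $y$-congruence. By Lemma \ref{lem:flip_solution} each of these factors is $1$ when its modulus is odd, is $2$ when the modulus is even and the corresponding shift is odd, and is $0$ when both the modulus and shift are even. I would then evaluate $\fxpt^\RT_{r^2}(n,m) = \sum_{(a,b) \in \grid} X^{\ps{a}{b}{r^2}}$ by the three parity cases. When $n$ and $m$ are both even, $F = 4$ on the $nm/4$ pairs with $a$ and $b$ both odd and $F = 0$ on the other $3nm/4$ pairs, giving \eqref{eq:fxpt_TR_rr_a}; when both are odd, $F = 1$ uniformly, giving \eqref{eq:fxpt_TR_rr_b}; and when exactly one is even (say $n$, with the other case symmetric), the odd modulus contributes a factor $1$ while the even one contributes $2$ on half the shifts and $0$ on the other half, so $F = 2$ on $nm/2$ pairs and $F = 0$ on $nm/2$, giving \eqref{eq:fxpt_TR_rr_c}.

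The main obstacle is purely the bookkeeping of how many shift pairs $(a,b)$ yield each value of $F$ in each parity case; because $F$ separates as a product over the two coordinates, this reduces to multiplying the per-coordinate solution counts from Lemma \ref{lem:flip_solution}, after which each case is an elementary sum.
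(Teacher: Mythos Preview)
Your proposal is correct and matches the paper's own proof almost step for step: both compute the action, observe $\ps ab{r^2}$ is an involution so orbits have size $1$ or $2$, reduce the fixed-cell count to the two independent congruences handled by Lemma~\ref{lem:flip_solution}, and then split into the three parity cases with the same bookkeeping on how many shift pairs yield each value of $F$. The only cosmetic difference is that you name the fixed-cell count $F$ and factor it explicitly as a product over coordinates, which is a nice way to organize the casework.
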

\begin{proof}[Proof of Theorem \ref{thm:fxpt_Rt_rr}]
  The orbits of cells under the group generated by $\ps ab{r^2}$ have size
  either $1$ or $2$, because by how we defined the semidirect product,
  $\ps ab{r^2}$ has order $2$: \begin{equation}
    \ps ab{r^2}^2
    = \ps ab{r^2}\ps ab{r^2}
    = \left((a,b)+(-a,-b),r^2r^2\right)
    = \ps00{\id}.
    \label{eq:abrr_order_2}
  \end{equation}
  Therefore, it is enough to count how many cells are stable under
  $\ps ab{r^2}$, which depends on the parity of $n$, $m$, $a$, and $b$.

  The element $\ps ab{r^2}$ fixes a cell $(x,y)$ when \[
    (x,y)\cdot\ps ab{r^2} = (n-1-(x+a),n-1-(y+b)).
  \]
  This corresponds to the system of equations \begin{align}
    x \equiv - 1 - x - a &\pmod n \label{eq:fxpt_TR_rr_d}
    \\
    y \equiv - 1 - y - b &\pmod m, \label{eq:fxpt_TR_rr_e}
  \end{align} whose solutions are given by Lemma \ref{lem:flip_solution}.

  Therefore we proceed by each case
  \begin{description}
    \item[Equation \eqref{eq:fxpt_TR_rr_a}.]
    When $n$ and $m$ are even, there are fixed cells only when both $a$ and $b$
    are odd, by Lemma \ref{lem:flip_solution}; in this case, there are exactly
    $4$ fixed cells, because each equation in the system of equations has two
    solutions. When this occurs, it partitions the cells into
    $4$ orbits of size $1$,
    which can be filled with tile designs that are fixed under $r^2$, and
    $(nm-4)/2$ orbits of size $2$,
    which can be filled with any tile design.

    When either $a$ or $b$ is even, there are no fixed cells, which partitions
    the cells into $nm/2$ orbits of size $2$, each of which can be filled with
    any choice of tile design.

    Since the $4$ fixed cells occur for exactly one quarter of the pairs
    $(a,b) \in \grid$, this results in the desired equation.
    \item[Equation \eqref{eq:fxpt_TR_rr_b}.]
    When $n$ and $m$ are both odd, Lemma \ref{lem:flip_solution} states that
    equations \eqref{eq:fxpt_TR_rr_d} and \eqref{eq:fxpt_TR_rr_e} have one
    solution.

    When this occurs, it partitions the cells into $1$ orbit of size $1$, which
    can be filled only with a tile design that is fixed by $r^2$,
    and
    $(nm-1)/2$ orbits of size $2$, which can be specified with any tile design.
    \item[Equation \eqref{eq:fxpt_TR_rr_c}.]
    Without loss of generality, we can assume that $n$ is even and $m$ is odd,
    because the proof is essentially similar in the opposite case.
    Lemma \ref{lem:flip_solution} states
    that equation \eqref{eq:fxpt_TR_rr_d} has no solutions when $a$ is even and
    $2$ solutions when $a$ is odd;
    it also states that \eqref{eq:fxpt_TR_rr_e} has one solution.

    Thus for half of the pairs $(a,b)$, there are no fixed cells, and so there
    are $nm/2$ orbits, each of which can be specified by any tile design.

    For the other half of the pairs, there are $2$ fixed cells, which can be
    specified by any tile design that is fixed under $r^2$ and $(nm-2)/2$ orbits
    of size $2$ that can be specified by any tile design.
  \end{description}
\end{proof}

\begin{figure}[ht]
  \centering
  \begin{tikzpicture}[baseline=35]
    \foreach \x/\y/\a in {
      0/2/A, 1/2/B, 2/2/C, 3/2/D, 4/2/E,
      0/1/F, 1/1/G, 2/1/H, 3/1/I, 4/1/J,
      0/0/K, 1/0/L, 2/0/M, 3/0/N, 4/0/O
    } {
      \node at (\x+0.5,\y+0.5) {\color{lightgray}\Huge \a};
    }
    \draw[thin,lightgray] (0,0) grid (5,3);
    \draw[ultra thick] (0,0) rectangle (5,3);
    \node[opacity=0,yshift=-0.8cm] at (2,0) {\Large $n - a$};
  \end{tikzpicture}
  {\large $\ps ab{r^2} =$}
  \begin{tikzpicture}[baseline=35]
    \begin{scope}[rotate around={180:(2.5,1.5)}]
    \foreach \x/\y/\a in {
      0/2/N, 1/2/O, 2/2/K, 3/2/L, 4/2/M,  %
      0/1/D, 1/1/E, 2/1/A, 3/1/B, 4/1/C, %
      0/0/I, 1/0/J, 2/0/F, 3/0/G, 4/0/H %
    } {
      \node[rotate around={180:(0,0)},shift={(\x+1/2,\y+1/2)},] at (0,0) {\color{lightgray}\Huge \a};
    }
    \end{scope}
    \draw[thin,lightgray] (0,0) grid (5,3);
    \draw[ultra thick, dashed] (3,0) -- (3,3);
    \draw[ultra thick, dashed] (0,1) -- (5,1);
    \draw[ultra thick] (0,0) rectangle (5,3);
    \draw[-{Triangle[length=2mm]}, ultra thick] (2.0,5/3) arc(0:180:0.5);
    \draw[-{Triangle[length=2mm]}, ultra thick] (4.5,5/3) arc(0:180:0.5);
    \draw[-{Triangle[length=2mm]}, ultra thick] (2.0,1/3) arc(0:180:0.5);
    \draw[-{Triangle[length=2mm]}, ultra thick] (4.5,1/3) arc(0:180:0.5);
    \draw[ultra thick, decorate,decoration={brace,amplitude=10pt},xshift=0pt,yshift=-0.15cm] (2.9,0)--(0.1,0);
    \draw[ultra thick, decorate,decoration={brace,amplitude=10pt},xshift=0pt,yshift=-0.15cm] (4.9,0)--(3.1,0);
    \draw[ultra thick, decorate,decoration={brace,amplitude=5pt},xshift=0.15cm,yshift=0] (5.0,2.9)--(5.0,1.1);
    \draw[ultra thick, decorate,decoration={brace,amplitude=5pt},xshift=0.15cm,yshift=0] (5.0,0.9)--(5.0,0.1);
    \node[yshift=-0.8cm] at (1.5,0) {\Large $n - a$};
    \node[yshift=-0.8cm] at (4,0) {\Large $a$};
    \node[xshift=1.2cm] at (5,0.5) {\Large $m - b$};
    \node[xshift=0.7cm] at (5,2.0) {\Large $b$};
  \end{tikzpicture}
  \caption{The action of $((a,b),r^2) \in \torusSymmetries$ on a tiling is
    equivalent to a $180^\circ$ rotations of
    the lower left  $(n-a) \times (m-b)$ sub-grid,
    the lower right $    a \times (m-b)$ sub-grid,
    the upper left  $(n-a) \times b$     sub-grid, and
    the upper right $    a \times b$     sub-grid.
  }
  \label{fig:torusShiftAndRotate180}
\end{figure}

\begin{theorem}
  The sum over all cyclic shifts of the number of tilings of the $n \times m$
  torus by tile designs in $T$ that are fixed by
  $(a,f) \in \torusSymmetries$
  is given by
  \begin{subnumcases}{\fxpt^\RT_{f}(n,m) =}
      \displaystyle
      n \sum_{c \mid m} \varphi(c) \left(
        \frac12 t_{\id}^{nm/\lcm(2,c)} +
        \frac12 t_{\id}^{(n-2)m/\lcm(2,c)}t_{f^c}^{2m/c}
      \right)
      & $n$ even \label{eq:fxpt_Rt_f_even}
      \\[10pt]
      \displaystyle
      n \sum_{c \mid m} \varphi(c) t_{\id}^{(n-1)m/\lcm(2,c)}t_{f^c}^{m/c}
      & $n$ odd. \label{eq:fxpt_Rt_f_odd}
  \end{subnumcases}
  and
  \begin{equation*}
    \fxpt^\RT_{r^2f}(n,m) = \begin{cases}
      \displaystyle
      m \sum_{d \mid n} \varphi(d) \left(
        \frac12 t_{\id}^{nm/\lcm(d,2)} +
        \frac12 t_{\id}^{n(m-2)/\lcm(d,2)}t_{(r^2f)^d}^{2n/d}
      \right)                                                      & m \text{ even} \\[10pt]
      \displaystyle
      m \sum_{d \mid n} \varphi(d) t_{\id}^{n(m-1)/\lcm(d,2)}t_{(r^2f)^d}^{n/d}  & m \text{ odd.}
    \end{cases}
    \label{eq:fxpt_TR_rrf}
  \end{equation*}
  \label{thm:fxpt_Rt_f}
\end{theorem}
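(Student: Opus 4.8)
The plan is to follow the cylinder strategy but to confront the one genuinely new feature: on the torus $\ps ab f$ is \emph{not} an involution. Computing in the semidirect product gives $\ps ab f^2 = \ps{0}{2b}{\id}$, a pure vertical translation, so the order of $\ps ab f$ is controlled by the order of $b$ in $\Z/m\Z$. First I would record the action on a tile, $\xyd \cdot \ps ab f = \ps{n-1-x-a}{y+b}{d\cdot f}$, and then its powers: for $k$ even, $\ps ab f^{k}$ sends $(x,y)\mapsto(x,y+kb)$ and acts on the design class by $f^{k}=\id$, while for $k$ odd it sends $(x,y)\mapsto(n-1-x-a,\,y+kb)$ and applies $f$ to the design. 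This is the data needed to feed the orbit-product formula $|X^{s}| = \prod_{\vartheta} t_{g^{|\vartheta|}}$.

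Next I would organize the outer sum $\sum_{(a,b)} X^{\ps ab f}$ by the order $c$ of $b$. By Lemma \ref{lem:eulerPhi} there are exactly $\varphi(c)$ choices of $b$ of order $c$ for each $c\mid m$, which is the source of both the $\varphi(c)$ weights and the sum over $c\mid m$. With such a $b$ fixed, the $y$-coordinate of every cell cycles with period $c$, so a cell orbit can only close at a multiple of $c$. The orbit of $(x,y)$ closes at the odd multiple $k=c$ precisely when the flip equation $x\equiv -1-x-a \pmod n$ of Lemma \ref{lem:flip_solution} holds, which forces $c$ odd; otherwise it closes at the next even multiple. Hence when $c$ is even every orbit has size $c$, and when $c$ is odd the cells in the columns solving the flip equation form orbits of size $c$ (each confined to a single column $\{x_0\}\times(y+\langle b\rangle)$) while all remaining cells pair up into orbits of size $2c$.

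The orbit-product formula then assigns each orbit of size $\ell$ the factor $t_{f^{\ell}}$, and here $f^{c}=\id$ for $c$ even, $f^{c}=f$ for $c$ odd, and $f^{2c}=\id$ always. Using Lemma \ref{lem:flip_solution} to count the solution columns — one when $n$ is odd, and two or none when $n$ is even according to the parity of $a$ — I would tally the size-$c$ and size-$2c$ orbits for each $a$ and sum over $a\in\Z/n\Z$. For $n$ odd every $a$ yields one solution column, giving $m/c$ orbits of size $c$ and $(n-1)m/(2c)$ of size $2c$, which assembles into Equation \eqref{eq:fxpt_Rt_f_odd}; for $n$ even, half the values of $a$ give two columns (hence $2m/c$ small orbits) and half give none, producing the two-term average of Equation \eqref{eq:fxpt_Rt_f_even}. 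A reassuring check is that for $c$ even the identity $t_{f^{c}}=t_{\id}$ collapses the two-term expression to the single power $t_{\id}^{nm/\lcm(2,c)}$, so the stated closed forms are uniform across all $c\mid m$. The $r^2f$ statement then follows by the transpose symmetry: since $\phi_{r^2f}(x,y)=(x,-y)$, the coordinate $x$ becomes the freely shifted one and $y$ the reflected one, so exchanging the roles of $(n,x,a,f)$ with $(m,y,b,r^2f)$ and of $c$ with $d\mid n$ carries the entire argument over verbatim.

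The hard part will be bookkeeping rather than any single idea. Because $\ps ab f$ has order $\lcm(2,c)$ instead of $2$, I must simultaneously track the period $c$ coming from $b$ and the involutive flip on $x$, and then verify that the multiplicities of size-$c$ versus size-$2c$ orbits produce exactly the exponents $2m/c$, $(n-2)m/\lcm(2,c)$, and $(n-1)m/\lcm(2,c)$ that appear. Getting these counts and the averaging over $a$ precisely right, and confirming that the even-$c$ case degenerates into the same closed form, is the delicate step; everything else is a direct application of the results already established.
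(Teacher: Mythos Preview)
Your proposal is correct and follows essentially the same approach as the paper's proof. Both arguments compute $\ps abf^2=\ps{0}{2b}{\id}$, separate the action into the reflected $x$-coordinate and the shifted $y$-coordinate, organize the sum over $b$ by its order $c\mid m$ via Lemma~\ref{lem:eulerPhi}, invoke Lemma~\ref{lem:flip_solution} to count the columns solving $x\equiv -1-x-a\pmod n$, and then tally the resulting orbits of sizes $c$ and $\lcm(2,c)$; the $r^2f$ formula is obtained in both by the evident swap of coordinates.
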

\begin{proof}
  Since $f$ and $r^2f$ are conjugate as elements of $D_8$, these fixed point
  formulas have essentially the same proof, so we will prove
  equations \eqref{eq:fxpt_Rt_f_even} and \eqref{eq:fxpt_Rt_f_odd} specifically.

  Since $(x,y) \cdot \ps abf = (n-1-(x+a), y+b)$, we can view
  $\ps abf$ as acting on each coordinate separately. Since
  $\ps abf^2 = ((0,2b),\id)$, we can see that the orbits
  \textit{of the first coordinate} have either size $1$ or $2$.
  Moreover, by Lemma \ref{lem:flip_solution},
  there are no  fixed cells when $n$ is even and $a$ is even,
  there are $2$ fixed cells when $n$ is even and $a$ is odd, and
  there is $1$  fixed cell when $n$ is odd.

  Since $\ps abf$ acts on the second coordinate by shifting by $b$, we see
  that Lemma \ref{lem:eulerPhi} applies. Thus for each divisor $c \mid m$,
  there are $\varphi(c)$ choices of $b$ that produce orbits
  \textit{of the second coordinate} with size $c$.

  \begin{description}
    \item[Equation \eqref{eq:fxpt_Rt_f_even}.]
    When $n$ is even, then half of the values of $a \in \Z/n\Z$ are even, and each
    orbit has size $\lcm(2,c)$ and these can be specified by any tile design.

    The other half of values of $a$ are odd, which results in $2$ fixed points for
    the first coordinate, each of which results in an orbit of size $c$
    that can be specified by any tile design that is fixed by $f^c$.
    The remaining $(n-2)m$ cells then are partitioned into orbits of size
    $\lcm(2,c)$, which can be specified by any tile design.
    \item[Equation \eqref{eq:fxpt_Rt_f_odd}.]
    When $n$ is odd, then there is one fixed point for the first coordinate,
    resulting in $m$ cells where the first coordinate is fixed under the action
    of $\ps abf$. For each divisor $c \mid m$, there is a partition these $m$
    cells into orbits of size $c$. The resulting $m/c$ orbits
    can be specified by any tile design that fixes $f^c$.
    The remaining $n(m-1)$ cells are partitioned into orbits of size
    $\lcm(2,c)$, resulting in $n(m-1)/\lcm(2,c)$ orbits
    which can be specified by any tile design.
  \end{description}
\end{proof}
\begin{theorem}
  Then the number of distinct tilings of the $n \times m$ torus up to
  $R \subseteq D_4$ is given by
  \begin{equation}
    \frac{1}{nm|R|}\sum_{g \in R} \fxpt^\RT_g(n,m).
    \label{eq:distinctTilingsTR}
  \end{equation}
  \label{thm:nXmTorusFormula}
\end{theorem}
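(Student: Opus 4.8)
The plan is to mirror the Burnside-lemma argument used for the cylinder in the proof of Equation \eqref{eq:distinctTilingsC}, replacing the cylindrical symmetry group $\cylinderSymmetries$ with the toroidal symmetry group $\torusSymmetries$. As there, I would first fix the convention that indexing over $g$ means $g \in R$, indexing over $(a,b)$ means $(a,b) \in \grid$, and indexing over $\abg$ means $\abg \in \torusSymmetries$. Since the hard work of evaluating each $\fxpt^\RT_g(n,m)$ was already carried out in the preceding theorems, this final statement is purely a matter of reassembling those partial sums into a single Burnside average.

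The key step is to recall that by definition $\fxpt^\RT_g(n,m) = \sum_{(a,b)} X^{\abg}$, so that summing over $g \in R$ collects exactly the fixed-point counts $X^{\abg}$ as $\abg$ ranges over every element of the semidirect product. Concretely,
\begin{align*}
  \frac{1}{nm|R|}\sum_{g} \fxpt^\RT_g(n,m)
  &= \frac{1}{nm|R|}\sum_{g}\sum_{(a,b)} X^{\abg} \\
  &= \frac{1}{|\torusSymmetries|}\sum_{\abg} X^{\abg}.
\end{align*}
The only point requiring care is the normalizing constant: because $\torusSymmetries$ is an \emph{outer} semidirect product, its underlying set is the Cartesian product $(\grid) \times R$, so its order is $|\grid| \cdot |R| = nm|R|$, matching the denominator. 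Under this identification the double sum over $g$ and $(a,b)$ visits each group element exactly once, and the right-hand side is precisely the Burnside average $\frac{1}{|G|}\sum_{s \in G}|X^s|$ for $G = \torusSymmetries$ acting on the set $X$ of all tilings.

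I expect no genuine obstacle here, so the proof should be short. The one thing I would make sure to justify before invoking Burnside's lemma is that the prescribed action of $\torusSymmetries$ on tilings is a bona fide right group action; this is exactly what the semidirect-product multiplication established earlier guarantees, as confirmed on an individual cell in the worked Example. With that in hand, Burnside's lemma applies directly and counts the orbits, which are precisely the distinct tilings of the $n \times m$ torus up to the combined toroidal and dihedral action of $R \subseteq D_4$.
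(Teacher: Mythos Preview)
Your proposal is correct and follows essentially the same approach as the paper: unfold the definition of $\fxpt^\RT_g(n,m)$ as a sum over $(a,b)$, recognize the resulting double sum as a single sum over all of $\torusSymmetries$, and apply Burnside's lemma with the observation that $|\torusSymmetries| = nm|R|$. The paper's own proof is slightly terser, omitting your explicit remarks about the order of the semidirect product and the well-definedness of the action, but the argument is identical.
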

\begin{proof}
  We will use the convention that
  when we index over $g$, implicitly $g \in R$;
  when we index over $(a,b)$, implicitly $(a,b) \in \Z/n\Z\times\Z/m\Z$; and
  when we index over $\abg$, implicitly $\abg \in \torusSymmetries$.

  Since $\fxpt^\RT_g(n,m) = \displaystyle\sum_{(a,b)} X^{\abg}$, we can see that
  \begin{align*}
    \frac{1}{nm|R|}\sum_{g} \fxpt^\RT_g(n,m)
    &= \frac{1}{nm|R|}\sum_{g} \sum_{(a,b)} X^{\abg} \\
    &= \frac{1}{|\torusSymmetries|}\sum_{\abg} X^{\abg},
  \end{align*}
  which counts the number of distinct tilings by a direct application of
  Burnside's lemma.
\end{proof}

\subsection{The \texorpdfstring{$n \times n$}{n by n} torus}
\begin{theorem}
  The sum over all cyclic shifts of the number of tilings of the $n \times n$
  square torus (denoted \ST) by tile designs in $T$ that are fixed by
  $\abg \in \torusSymmetries$ is given by \(\fxpt^\ST_g(n)\) where
  \begin{align}
    \fxpt^\ST_{\id}(n)    &= \fxpt^\RT_{\id}(n,n) = \sum_{d_1|n}\sum_{d_2|n}\varphi(d_1)\varphi(d_2)t_{\id}^{n^2/\lcm(d_1,d_2)},
    \\
    \fxpt^\ST_{r^2}(n)  &= \fxpt^\RT_{r^2}(n,n) = \begin{cases}
      n^2t_{\id}^{(n^2-1)/2}t_{r^2} & n \text{ odd} \\
    \displaystyle
    n^2\!\left(
      \frac34 t_{\id}^{n^2/2} +
      \frac14 t_{\id}^{n^2/2-2}t_{r^2}^4
    \right) & n \text{ even.}
    \end{cases}
    \\
    \fxpt^\ST_{f}(n)    &= \fxpt^\RT_{f}(n,n) = \begin{cases}
      \displaystyle
      n \sum_{d \mid n} \varphi(d) \left(
        \frac12 t_{\id}^{n^2/\lcm(2,d)} +
        \frac12 t_{\id}^{(n^2-2n)/\lcm(2,d)}t_{f^d}^{2n/d}
      \right)                                                      & n \text{ even} \\[10pt]
      \displaystyle
      n \sum_{d \mid n} \varphi(d) t_{\id}^{(n^2-n)/\lcm(2,d)}t_{f^d}^{n/d}  & n \text{ odd.}
    \end{cases}
    \\
    \fxpt^\ST_{r^2f}(n) &= \fxpt^\RT_{r^2f}(n,n) = \begin{cases}
      \displaystyle
      n \sum_{d \mid n} \varphi(d) \left(
        \frac12 t_{\id}^{n^2/\lcm(2,d)} +
        \frac12 t_{\id}^{(n^2-2n)/\lcm(2,d)}t_{(r^2f)^d}^{2n/d}
      \right)                                                      & n \text{ even} \\[10pt]
      \displaystyle
      n \sum_{d \mid n} \varphi(d) t_{\id}^{(n^2-n)/\lcm(2,d)}t_{(r^2f)^d}^{n/d}  & n \text{ odd.}
    \end{cases}
  \end{align}
\end{theorem}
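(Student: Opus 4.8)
The plan is to observe that every equation in this theorem asserts an identity of the form $\fxpt^\ST_g(n) = \fxpt^\RT_g(n,n)$, so the entire statement is a specialization of the rectangular torus fixed-point counts to the diagonal $m = n$. Concretely, I would invoke Theorem \ref{thm:fxpt_Rt_id}, Theorem \ref{thm:fxpt_Rt_rr}, and Theorem \ref{thm:fxpt_Rt_f} in turn and substitute $m = n$, checking in each case that the resulting expression matches the claimed right-hand side. No new group-theoretic work is required, since the elements $g \in \{\id, r^2, f, r^2f\}$ all lie in $D_4$ and the square torus is acted on by $(\grid)\rtimes R$ with exactly the same semidirect-product structure used for the rectangle; the genuinely new $D_8$ symmetries ($r$, $r^3$, $rf$, $r^3f$) are not part of this statement and would be handled separately.

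For the identity element, substituting $m = n$ into equation \eqref{eq:fxpt_TR_id} and renaming the summation indices $c, d$ to $d_1, d_2$ gives the claimed double sum immediately. For $f$ and $r^2f$, the case split in Theorem \ref{thm:fxpt_Rt_f} is governed by the parity of $n$ (for $f$) and the parity of $m$ (for $r^2f$); setting $m = n$ turns both into a split on the parity of $n$, and substituting $m = n$ into each branch reproduces the stated formulas after the routine simplifications $nm \mapsto n^2$ and $(n-2)m \mapsto n^2 - 2n$.

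The one step that requires a moment's care --- and the closest thing to an obstacle --- is the $r^2$ case. Theorem \ref{thm:fxpt_Rt_rr} has three branches, keyed to ($n$ and $m$ both even), ($n$ and $m$ both odd), and a mixed-parity \emph{otherwise} branch \eqref{eq:fxpt_TR_rr_c}. The key observation is that along the diagonal $m = n$ the integers $n$ and $m$ always share the same parity, so the mixed-parity branch is vacuous and only the two extreme branches survive. This collapses the three-case formula to the two-case formula stated for $\fxpt^\ST_{r^2}(n)$, and substituting $m = n$ into \eqref{eq:fxpt_TR_rr_a} and \eqref{eq:fxpt_TR_rr_b} yields exactly the even and odd branches claimed. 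I would present this collapse explicitly so that the reader sees precisely why the square torus loses a case relative to the rectangular one, rather than leaving it implicit in the substitution.
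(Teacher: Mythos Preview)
Your proposal is correct and matches the paper's own proof, which consists of the single sentence ``These equations follow directly from Theorems \ref{thm:fxpt_Rt_id}, \ref{thm:fxpt_Rt_rr} and \ref{thm:fxpt_Rt_f} by specifying $m = n$.'' Your additional remark explaining why the three-case formula for $r^2$ collapses to two cases along the diagonal $m=n$ is more detail than the paper provides, but it is exactly the right observation.
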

\begin{proof}
  These equations follow directly from Theorems \ref{thm:fxpt_Rt_id},
  \ref{thm:fxpt_Rt_rr} and \ref{thm:fxpt_Rt_f} by specifying $m = n$.
\end{proof}
\begin{theorem}
  The sum over all cyclic shifts of the number of tilings of the $n \times n$
  torus by tile designs in $T$ that are fixed by
  $(a,r) \in \torusSymmetries$
  is given by
  \begin{subnumcases}{\fxpt^\ST_{r}(n) = \fxpt^\ST_{r^3}(n) =}
    n^2 t_{\id}^{(n^2-1)/4}t_r
    & $n$ odd \label{eq:fxpt_St_r_odd}
    \\
    \displaystyle
    n^2\!\left(
      \frac12 t_{\id}^{n^2/4} +
      \frac12 t_{\id}^{(n^2-4)/4}t_r^2t_{r^2}
    \right)
    & $n$ even. \label{eq:fxpt_St_r_even}
  \end{subnumcases}
\end{theorem}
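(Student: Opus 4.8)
The plan is to apply the product formula for $|X^s|$ proved earlier (the theorem giving $|X^s| = \prod_{\vartheta \in \Theta_s} t_{g^{|\vartheta|}}$) with $s = \ps ab r$, and then sum over all $(a,b) \in \Z/n\Z \times \Z/n\Z$. First I would record that $\ps ab r$ has order $4$: since the rotation part is $r$ and $\phi$ is a homomorphism, a direct computation gives $\ps ab r^4 = \ps 00 \id$, so every cell orbit has size $1$, $2$, or $4$. By the product formula these orbits contribute $t_r$, $t_{r^2}$, and $t_{r^4} = t_{\id}$ respectively, so that
\[
  X^{\ps ab r} = t_{\id}^{N_4}\, t_{r^2}^{N_2}\, t_r^{N_1},
\]
where $N_1, N_2, N_4$ count the orbits of each size. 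Thus the whole problem reduces to counting $N_1, N_2, N_4$ as functions of $(a,b)$.

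Next I would compute the cell action $(x,y) \cdot \ps ab r = (n-1-(y+b),\, x+a)$ and extract the two relevant fixed-cell conditions. A cell lies in a size-$1$ orbit exactly when it is fixed by $\ps ab r$; substituting $y = x+a$ into the first coordinate yields the single congruence $2x \equiv -1-(a+b) \pmod n$, which is precisely the shape handled by Lemma \ref{lem:flip_solution} with $a+b$ in place of $a$. A cell lies in an orbit of size $1$ or $2$ exactly when it is fixed by the square $\ps ab r^2 = \ps{a+b}{b-a}{r^2}$; by the $r^2$ analysis of Theorem \ref{thm:fxpt_Rt_rr} this amounts to the decoupled pair $2x \equiv -1-(a+b)$ and $2y \equiv -1-(b-a) \pmod n$, again governed by Lemma \ref{lem:flip_solution}. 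Writing $F$ for the number of fixed cells and $G$ for the number of cells fixed by the square, we have $N_1 = F$, $N_2 = (G-F)/2$, and $N_4 = (n^2-G)/4$.

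The count then splits on the parity of $n$, and---when $n$ is even---on the parity of $a+b$; here I would use the key observation that $a+b$ and $b-a$ always share the same parity, so the two decoupled congruences for $G$ behave identically. When $n$ is odd, Lemma \ref{lem:flip_solution} gives exactly one solution to each congruence regardless of $(a,b)$, so $F = 1$ and $G = 1$, hence $N_1 = 1$, $N_2 = 0$, $N_4 = (n^2-1)/4$; this value of $X^{\ps ab r}$ is independent of $(a,b)$, and summing over all $n^2$ pairs gives $n^2 t_{\id}^{(n^2-1)/4} t_r$. When $n$ is even and $a+b$ is odd, each congruence has two solutions, so $F = 2$ and $G = 4$, giving $N_1 = 2$, $N_2 = 1$, $N_4 = (n^2-4)/4$ and contribution $t_{\id}^{(n^2-4)/4} t_r^2 t_{r^2}$; when $a+b$ is even both congruences are unsolvable, so $F = G = 0$ and the contribution is $t_{\id}^{n^2/4}$. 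Since exactly half of the $n^2$ pairs have $a+b$ odd, summing yields $n^2\!\left(\tfrac12 t_{\id}^{n^2/4} + \tfrac12 t_{\id}^{(n^2-4)/4} t_r^2 t_{r^2}\right)$.

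Finally, the identity $\fxpt^\ST_r(n) = \fxpt^\ST_{r^3}(n)$ follows because $\ps ab r^{-1} = \ps b{-a}{r^3}$ and a tiling is fixed by an element if and only if it is fixed by its inverse; since $(a,b) \mapsto (b,-a)$ is a bijection of $\Z/n\Z \times \Z/n\Z$, the two sums agree term by term. I expect the main obstacle to be the bookkeeping that separates genuine fixed cells (size-$1$ orbits) from cells fixed only by the square (size-$2$ orbits)---that is, justifying $N_2 = (G-F)/2$ cleanly---together with tracking the parity coupling of $a+b$ and $b-a$ so that the even-$n$ case collapses into exactly the two stated sub-cases.
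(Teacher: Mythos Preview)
Your proposal is correct and follows essentially the same approach as the paper: both arguments determine the orbit structure by first solving for the cells fixed by $\ps abr$ (via the substitution $y=x+a$ and Lemma~\ref{lem:flip_solution} applied to $2x\equiv -1-(a+b)$), then for those fixed by its square, and finally read off the orbit counts by the parity of $n$ and of $a+b$. Your explicit observation that $a+b$ and $b-a$ always share the same parity is a clean way to collapse the even-$n$ case that the paper leaves somewhat implicit when it asserts the $r^2$-fixed system has $4$ solutions exactly when $a+b$ is odd.
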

\begin{proof}
  First, note that the first equality comes from the fact that tilings that are
  stable under $g$ are stable under $g^{-1}$.

  Next, note that $\ps abr$ is an element of order $4$, which follows from
  observing that $\ps abr = ((a',b'),r^2)$ together with
  equation \eqref{eq:abrr_order_2}.
  Therefore cells appear in orbits of size $1$, $2$, or $4$ under $\ps abr$,
  and we will count how many cells appear in each.

  We begin by counting cells $(x,y)$ that are fixed by $\ps abr$, that is
  they satisfy the system of equations
  \begin{alignat}{2}
    x &\equiv -y - b - 1 &\pmod n \label{eq:fxpt1} \\
    y &\equiv x + a      &\pmod n \label{eq:fxpt2},
  \end{alignat}
  where we can substitute $y$ with $x+a$ in the first equation to get
  \begin{equation}
    x \equiv -x - a - b - 1 \pmod n.
    \label{eq:fxpt3}
  \end{equation}

  Next we count cells $(x,y)$ that are fixed by $\ps abr^2$, but are not
  solutions to the above system of equations. These cells satisfy the system of
  equations \begin{alignat}{2}
    x &\equiv -1 - x - a - b &\pmod n \label{eq:twoCycle1} \\
    y &\equiv -1 - y - b + a &\pmod n \label{eq:twoCycle2}.
  \end{alignat}
  \begin{description}
    \item[Equation \eqref{eq:fxpt_St_r_odd}.]
    When $n$ is odd, we can solve equation \eqref{eq:fxpt3} using
    Lemma \ref{lem:flip_solution}.
    We can see that this has one solution when $n$ is odd,
    so in this case there is one fixed cell, which can be
    specified by any tile design that is stable under $r$.

    We can add equations \eqref{eq:twoCycle1} and \eqref{eq:twoCycle2} and use
    Lemma \ref{lem:flip_solution} to see that this system has a single solution
    when $n$ is odd. However, this is identically the solution that specifies
    the fixed point, so this does not describe an orbit of size $2$.

    Thus there are $n^2 - 1$ cells that are partitioned into $(n^2-1)/4$ orbits
    of size $4$, which can be specified by any tile design.

    \item[Equation \eqref{eq:fxpt_St_r_even}] When $n$ is even, we can see again
    by Lemma \ref{lem:flip_solution}, that there are two fixed cells when
    $a + b$ is odd and none when $a + b$ is even.

    When we check the number of orbits of size $2$, Lemma
    \ref{lem:flip_solution} shows that we have $4$ solutions when $a + b$ is odd
    and none when $a + b$ is even, $2$ of which were the fixed cells, resulting
    in a single orbit of size $2$.

    Therefore, we can specify a fixed tiling by
    specifying tile designs that are fixed under $r$ for each of the two fixed cells,
    specifying a tile design that is fixed under $r^2$ for the orbit of size $2$, and
    specifying any tile designs for each of the $(n^2 - 4)/4$ orbits of length $4$.
  \end{description}
\end{proof}

\begin{theorem}
  \begin{subnumcases}{\fxpt^\ST_{rf}(n) = n\sum_{d \mid n}}
    \varphi(d) t_{\id}^{(n^2-n)/(2d)}t_{rf}^{n/d}
    & $d$ odd \label{eq:fxpt_St_rf_odd}
    \\
    \varphi(d) t_{\id}^{n^2/(2d)}
    & $d$ even. \label{eq:fxpt_St_rf_even}
  \end{subnumcases}
  and
  \begin{align}
    \fxpt^\ST_{r^3f}(n)   &= n\sum_{d \mid n}\begin{cases}
      \varphi(d) t_{\id}^{(n^2-n)/(2d)}t_{r^3f}^{n/d} & d \text{ odd} \\
      \varphi(d) t_{\id}^{n^2/(2d)}                   & d \text{ even}.
    \end{cases}
  \end{align}
\end{theorem}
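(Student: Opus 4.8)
The plan is to analyze the cell-orbit structure of $s = \ps ab{rf}$ and then apply the orbit--product formula $|X^s| = \prod_{\vartheta \in \Theta_s} t_{(rf)^{|\vartheta|}}$, exactly as in the proofs of Theorems \ref{thm:fxpt_Rt_rr} and \ref{thm:fxpt_Rt_f}. First I would compute the cell action $(x,y)\cdot s = (y+b,\,x+a)$ and square it to obtain $(x,y)\cdot s^2 = (x+c,\,y+c)$ with $c = a+b$; as in equation \eqref{eq:abrr_order_2}, this shows $s^2$ is the pure translation by $(c,c)$, whose order is $d := n/\gcd(c,n)$. Since no odd power of $s$ can be the identity (each carries the nontrivial group component $rf$), $s$ has order $2d$, and every cell-orbit size divides $2d$.

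Next I would determine the orbit sizes. A direct computation gives $(x,y)\cdot s^{2m} = (x+mc,\,y+mc)$, so an even power $s^{2m}$ fixes a cell exactly when $d \mid m$, \emph{independently of the cell}; hence the least even return time is $2d$ for every cell. For odd powers, $(x,y)\cdot s^{2m+1} = (y+mc+b,\,x+mc+a)$, and setting this equal to $(x,y)$ forces both $(2m+1)c \equiv 0 \pmod n$ and the single linear relation $y - x \equiv \tfrac{d-1}{2}c + a \pmod n$ (the two coordinate equations become equivalent once the consistency condition $dc\equiv 0$ holds). The first requirement is solvable only when $d$ is odd, in which case the smallest admissible odd power is $s^d$; the second is one linear equation in $(x,y)$ and therefore has \emph{exactly $n$} solutions.

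From here the two cases fall out. When $d$ is odd, those $n$ cells split into $n/d$ orbits of size $d$, each of which must be tiled by a design fixed under $(rf)^d = rf$, while the remaining $n^2-n$ cells form $(n^2-n)/(2d)$ orbits of size $2d$, tiled freely since $(rf)^{2d}=\id$; the orbit--product formula then gives $X^{\ps ab{rf}} = t_{\id}^{(n^2-n)/(2d)} t_{rf}^{n/d}$. When $d$ is even, no odd power fixes any cell, so all $n^2$ cells lie in orbits of size $2d$, giving $X^{\ps ab{rf}} = t_{\id}^{n^2/(2d)}$. Finally I would sum over $(a,b)$ by grouping pairs according to $c = a+b$: there are exactly $n$ pairs with a given value of $c$, and by Lemma \ref{lem:eulerPhi} exactly $\varphi(d)$ values of $c$ of order $d$, which produces the prefactor $n\sum_{d\mid n}\varphi(d)$ and hence the stated formula. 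The $r^3f$ case is identical after replacing $s^2$ by the anti-diagonal translation $(x,y)\cdot s^2 = (x+(a-b),\,y-(a-b))$ and grouping pairs by $a-b$ instead of $a+b$, yielding the same expression with $t_{rf}$ replaced by $t_{r^3f}$.

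The step I expect to be the main obstacle is pinning down the fixed-cell condition for odd powers correctly: it is tempting to only subtract the two coordinate equations and obtain $2(y-x)\equiv a-b\pmod n$, which for even $n$ would spuriously yield $2n$ solutions and the wrong exponent $2n/d$. The point is that the individual coordinate equations are \emph{strictly stronger} and collapse to a single linear equation with exactly $n$ solutions regardless of parity, which is precisely what makes the odd-orbit count $n/d$ rather than $2n/d$.
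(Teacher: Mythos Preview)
Your proposal is correct and follows essentially the same approach as the paper's proof: both compute the iterates of $s=\ps ab{rf}$, observe that $s^2$ is the diagonal translation by $c=a+b$ of order $d=n/\gcd(c,n)$, separate the even and odd return times, and count the $n$ cells on the ``diagonal'' line $y-x\equiv \tfrac{d-1}{2}c+a$ as the $d$-orbits when $d$ is odd, before summing over $(a,b)$ via Lemma~\ref{lem:eulerPhi}. Your final paragraph about avoiding the spurious $2n$ solutions from merely subtracting the coordinate equations is a nice clarification that the paper handles implicitly by remarking ``for each choice of $x$, there is a unique choice of $y$ that satisfies both equations.''
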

\begin{proof}
  These situations are essentially the same because $rf$ and $r^3f$ are
  conjugate in $D_8$, so we prove the case for $\fxpt^\ST_{rf}(n)$.

  We can see that \begin{align*}
    (x,y) &\cdot \ps ab{rf}^{2k}   = (x+k(a+b), y+k(a+b)) \\
    (x,y) &\cdot \ps ab{rf}^{2k+1} = (y+k(a+b)+b, x+k(a+b)+a)
  \end{align*} and we can ask: what is the least $k$ such that either
  \begin{align}
    x &\equiv x + k(a+b) \pmod n \label{syseq:fxpt_St_rf_even_1}
    \\
    y &\equiv y + k(a+b) \pmod n \label{syseq:fxpt_St_rf_even_2}
  \end{align} or
  \begin{align}
    x &\equiv y + k(a+b) + b \pmod n \label{syseq:fxpt_St_rf_odd_1}
    \\
    y &\equiv x + k(a+b) + a \pmod n. \label{syseq:fxpt_St_rf_odd_2}
  \end{align}
  In the first case, we want to know when $k(a+b) \equiv 0 \pmod n$, which
  occurs first when $k = n/\gcd(a+b,n)$. We call this $d$ and note that
  $d \mid n$.
  In the second case, we can add equations \eqref{syseq:fxpt_St_rf_odd_1} and
  \eqref{syseq:fxpt_St_rf_odd_2} to get \begin{equation}
    (2k + 1)(a + b) = 0 \pmod n,
  \end{equation} which occurs when $2k+1 = n/\gcd(a+b,n)$.
  Again, this is a divisor of $n$, so we say $2k+1 = d$
  and note that this solution occurs only when $d$ is odd.

  \begin{description}
    \item[Equation \eqref{eq:fxpt_St_rf_odd}] Thus, when $d$ is odd,
    we have the system of equations \begin{align*}
      x &\equiv y + \frac{d-1}{2}(a+b) + b \pmod n \\
      y &\equiv x + \frac{d-1}{2}(a+b) + a \pmod n,
    \end{align*} which has $n$ solutions:
    for each choice of $x$, there is a unique choice of $y$ that satisfies both
    equations.
    Each of these solutions correspond to a cell in one of the $n/d$ orbits
    of length $d$, each of which can be specified by any tile design
    that is stabilized by $(rf)^d = rf$, since $d$ is odd.

    The other $n^2 - n$ cells occur in one of the $(n^2-n)/(2d)$ orbits of length
    $2d$ that are solutions to the first system of equations. Each of these
    orbits can be specified by any tile design at all.

    The $\varphi(d)$ comes from the fact that for any choice of $a$ there are
    precisely $\varphi(d)$ choices for $b$ such that $n/\gcd(a+b,n) = d$.
    \item[Equation \eqref{eq:fxpt_St_rf_even}] When $d$ is even, there are no
    choices of $(x,y)$ that simultaneously satisfy equations
    \eqref{syseq:fxpt_St_rf_odd_1} and \eqref{syseq:fxpt_St_rf_odd_2},
    so all $n^2$ of the tiles $(x,y)$ occur in orbits of size $2d$.
    Each of these $n^2/(2d)$ orbits can be specified by any tile design.
  \end{description}
\end{proof}

\begin{theorem}
  The number of distinct tilings of the $n \times n$ torus up to
  $R \leq D_8$ is given by
  \begin{equation}
    \frac{1}{n^2|R|}\sum_{g \in R} \fxpt^\ST_g(n).
  \end{equation}
  \label{thm:nXnTorusFormula}
\end{theorem}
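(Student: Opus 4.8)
The plan is to recognize this as the exact analogue of Theorem~\ref{thm:nXmTorusFormula}, now specialized to $m = n$ but with the larger symmetry group $R \leq D_8$ in place of $R \leq D_4$, so that the proof is once again a direct invocation of Burnside's lemma. First I would set $G = \torusSymmetries$ with $m = n$ and observe that the set of tilings $X$ is a $G$-set: the semidirect-product structure verified earlier guarantees that the toroidal shifts in $\grid$ and the dihedral symmetries in $R$ compose into a genuine right action, so Burnside's lemma applies with $|G| = n^2|R|$.

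Next I would carry out the same bookkeeping as in the rectangular case. Writing each element of $G$ uniquely as $\abg$ with $(a,b) \in \grid$ and $g \in R$ (the semidirect product is $\grid \times R$ as a set, so this decomposition is a bijection), Burnside's lemma gives
\[
  |X/G| = \frac{1}{|G|}\sum_{s \in G} |X^s| = \frac{1}{n^2|R|}\sum_{g \in R}\sum_{(a,b)} \left|X^{\abg}\right|.
\]
I would then appeal to the square-torus specialization of the defining identity $\fxpt^\ST_g(n) = \sum_{(a,b)} X^{\abg}$ to collapse the inner sum, which yields exactly the claimed formula $\frac{1}{n^2|R|}\sum_{g \in R}\fxpt^\ST_g(n)$.

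There is no genuine obstacle in this final step, since all of the content has already been discharged: the preceding theorems supply $\fxpt^\ST_g(n)$ for every $g \in D_8$, namely $\id, r^2, f, r^2f$ (inherited from the rectangular formulas by setting $m = n$) together with the honestly square symmetries $r, r^3, rf, r^3f$. The only points meriting care are confirming that $s \mapsto \abg$ is a bijection $G \to \grid \times R$, so that the single Burnside sum over $G$ legitimately factors as the iterated sum over $g \in R$ and $(a,b) \in \grid$, and noting that the summation is restricted to whichever subgroup $R \leq D_8$ is under consideration, so that only the correspondingly relevant fixed-point counts are invoked. I would close by remarking that restricting to $R \leq D_4$ recovers Theorem~\ref{thm:nXmTorusFormula} at $m = n$, and restricting to the trivial subgroup recovers the count of tilings up to cyclic shifting of rows and columns alone.
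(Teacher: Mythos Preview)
Your proposal is correct and follows essentially the same approach as the paper: the paper's proof simply states that it is ``essentially identical to the proof of Theorem~\ref{thm:nXmTorusFormula}, which follows by definition together with Burnside's lemma,'' and your writeup is a faithful (and more explicit) unpacking of exactly that argument.
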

\begin{proof}
  The proof of this theorem is essentially identical to the proof of
  Theorem \ref{thm:nXmTorusFormula}, which follows by definition together with
  Burnside's lemma.
\end{proof}

Thus for any arbitrary $R \subseteq D_8$ and set of tile designs, we have
a formula to count the number of tilings of the $n \times n$ torus up to $R$.
A formula for each choice of $R$ together with each $R$-set generated by a
single tile design can be found in Appendix \ref{apss:nXnTorusSequences};
the corresponding illustrations can be found in
Appendix \ref{apss:nXnTorusIllustrations}.

\section{Next steps}
In this section, we propose several different settings for studying similar
kinds of problems. Many of these may be subtle research problems,
many may be good undergraduate research problems, and many may be good homework
problems for a combinatorics class.
Many of them would make for interesting additions to the On-Line Encyclopedia
of Integer Sequences.
\subsection{Rectangular tori under \texorpdfstring{$90^\circ$}{90 degree} rotation}
We have used the $n \times m$ torus as a model for a repeating tiling of the
plane. However, in the case that $n \neq m$, we have only analyzed the case
where we count tilings up to $D_4$, the dihedral group of the rectangle.
However, for a given tile set, it is possible that a tiling of a
$n \times m$ and a tiling of a $m \times n$ torus describe the same
tiling of the plane; an example of this is given in Figure
\ref{fig:rectangularTorusRotation}.

\begin{conjecture}
  If a plane tiling described by an $n \times m$ torus is fixed under
  $\ps abr$,
  $\ps ab{r^3}$,
  $\ps ab{rf}$, or
  $\ps ab{r^3f}$,
  then it is equivalent to the tiling of a
  $\gcd(n,m) \times \gcd(n,m)$ torus.
\end{conjecture}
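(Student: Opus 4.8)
The plan is to pass from the torus to the plane and study the full isometry-plus-design symmetry group of the lifted tiling. A tiling of the $n \times m$ torus is a map $F \colon \grid \to T$, and since the toroidal identification merely records periodicity (translations act trivially on designs), $F$ lifts to a plane tiling $\widetilde F \colon \Z^2 \to T$ with $\widetilde F(x+n,y) = \widetilde F(x,y)$ and $\widetilde F(x,y+m) = \widetilde F(x,y)$; equivalently, the pure translations $((n,0),\id)$ and $((0,m),\id)$ belong to the symmetry group of $\widetilde F$ inside $(\Z^2) \rtimes D_8$. The key structural observation is that the four listed elements are precisely $D_8 \setminus D_4 = \{r, r^3, rf, r^3f\}$, and the displayed formulas for $\phi$ show that each of these has linear part interchanging the two coordinate axes up to sign, e.g.\ $\phi_r(x,y) = (y,-x)$ and $\phi_{rf}(x,y) = (y,x)$, whereas every element of $D_4$ preserves each axis separately.

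The core of the argument is a single conjugation inside the semidirect product. Suppose $\widetilde F$ is fixed by $s = \ps abr$ (the other three cases are identical). Writing $s^{-1} = ((b,-a),r^3)$ and applying the multiplication rule $\ps{a_1}{b_1}{g_1}\ps{a_2}{b_2}{g_2} = ((a_1,b_1)+\phi_{g_1}(a_2,b_2),g_1g_2)$ twice gives
\[
  s\,((n,0),\id)\,s^{-1} = ((0,-n),\id)
  \quad\text{and}\quad
  s\,((0,m),\id)\,s^{-1} = ((m,0),\id).
\]
Both conjugates are \emph{pure} translations: the $D_8$-component is $r\cdot\id\cdot r^{-1} = \id$, so no design transformation is introduced, and the shift $(a,b)$ (the center of rotation) cancels, leaving only the linear image of the original period. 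Since conjugates of symmetries are symmetries, $\widetilde F$ now admits the vertical translation $((0,n),\id)$ and the horizontal translation $((m,0),\id)$ in addition to its original periods. Replacing $\phi_r$ by $\phi_{r^3}$, $\phi_{rf}$, or $\phi_{r^3f}$ changes only signs and yields the same pair of new translations.

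It then remains to read off the period lattice. The translation subgroup $\Lambda \leq \Z^2$ of $\widetilde F$ now contains $(n,0)$, $(m,0)$, $(0,n)$, and $(0,m)$; since $\gcd(n,m)$ is an integer combination of $n$ and $m$, the horizontal generators force $(\gcd(n,m),0) \in \Lambda$ and the vertical generators force $(0,\gcd(n,m)) \in \Lambda$, so $\gcd(n,m)\Z \times \gcd(n,m)\Z \subseteq \Lambda$. Writing $g = \gcd(n,m)$, this means $\widetilde F$ is $g$-periodic in both directions and is therefore the periodic extension of its restriction to a $g \times g$ fundamental domain, which is exactly a tiling of the $g \times g$ torus, as claimed. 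I expect the main obstacle to be bookkeeping rather than insight: one must make precise how an $s$-fixed torus tiling determines an honest $s$-invariant plane lift (including how $d \mapsto d\cdot g$ acts on the lifted designs), and one must confirm the conjugation identity directly from the semidirect-product multiplication rather than from the geometric picture. The only genuinely substantive point is the triviality of the design component of each conjugated translation, which the computation above settles; everything else reduces to the linear-algebra fact that $\phi_r$, $\phi_{r^3}$, $\phi_{rf}$, and $\phi_{r^3f}$ swap the horizontal and vertical axes.
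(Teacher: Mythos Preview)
The paper does not prove this statement; it is presented explicitly as a \emph{conjecture} in the ``Next steps'' section with no argument given. Your proposal is therefore not competing with a paper proof but rather settling an open problem the authors pose.

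Your argument is correct. Conjugating the period translations $((n,0),\id)$ and $((0,m),\id)$ by the assumed symmetry $s$ is exactly the right mechanism: the $D_8$-component cancels to $\id$, and because each of $\phi_r$, $\phi_{r^3}$, $\phi_{rf}$, $\phi_{r^3f}$ swaps the coordinate axes (up to sign), the conjugates are the pure translations $((0,\pm n),\id)$ and $((\pm m,0),\id)$. B\'ezout then places $(\gcd(n,m),0)$ and $(0,\gcd(n,m))$ in the translation lattice of $\widetilde F$, which is precisely the claim. The one point requiring explicit care---which you already flag---is that when $n \neq m$ the element $\ps abr$ does not act on the $n \times m$ torus in the paper's own framework (Definition~\ref{def:dihedralAction} defines the $r$-action only for square grids), so the hypothesis ``fixed under $\ps abr$'' must be read at the level of the lifted plane tiling, with $(a,b) \in \Z^2$ rather than $\grid$. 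Once that interpretation is made explicit, your argument is complete and proves the conjecture.
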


Similarly, it might be interesting to count \textit{irreducible} plane tilings:
tilings of the plane corresponding to a tiling of the $n \times m$ torus
that don't correspond to a smaller torus.

\begin{figure}[ht]
  \centering
  \begin{tikzpicture}
    \newcommand{\tile}[3]{
      \fill[black!80, xshift=#1 cm,yshift=#2 cm,rotate around={#3:(0.5,0.5)}] (0,0) arc(-90:90:1/2.507) (1,1) arc(90:270:1/2.507)
    }
    \foreach \x in {0,...,5} {
      \foreach \y in {0,...,3} {
        \tile{\x}{\y}{270*mod(\x,2) + 90*mod(\y,2) - 180*mod(\x*\y,2)};
      }
    }
    \draw (0,0) grid (6,4);
  \end{tikzpicture}
  \caption{
    A periodic tiling of the plane arising from
    a $6 \times 4$ torus tiling that is fixed under $90^\circ$ rotation.
    Notice that this tiling of the plane can also come from a $2 \times 2$
    torus.
  }
  \label{fig:rectangularTorusRotation}
\end{figure}


\subsection{Other regions of the square grid.}
We also are interested in counting the number of ways of tiling shapes like
Aztec diamonds or centered square numbers, as shown in
Figure \ref{fig:centeredSquare}.

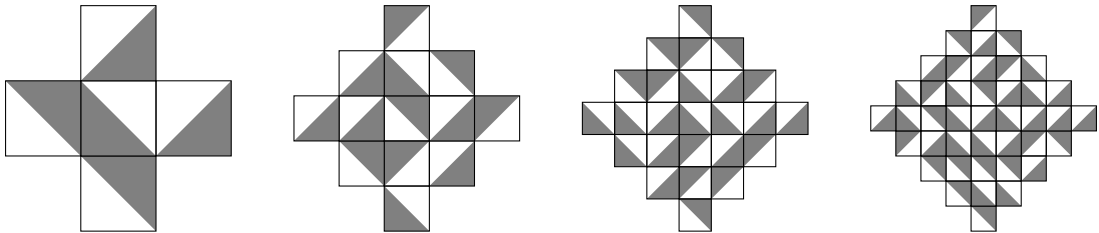
\begin{figure}[ht]
  \centering
  \begin{tikzpicture}
    \pgfmathsetmacro{\n}{1}
    \foreach \y in {-\n,...,\n} {
      \pgfmathtruncatemacro{\startValue}{\n - abs(\y)}
      \pgfmathtruncatemacro{\endValue}{-\n + abs(\y)}
      \foreach \x in {\startValue,...,\endValue} {
        \pgfmathtruncatemacro{\r}{rand*4}
        \fill[gray, shift={(\x,\y)},rotate around={\r*90:(1/2,1/2)}] (0,0) -- (0,1) -- (1,1) -- cycle;
        \draw[shift={(\x,\y)}] (0,0) rectangle (1,1);
      }
    }
  \end{tikzpicture}
  \qquad
  \begin{tikzpicture}[scale=3/5]
    \pgfmathsetmacro{\n}{2}
    \foreach \y in {-\n,...,\n} {
      \pgfmathtruncatemacro{\startValue}{\n - abs(\y)}
      \pgfmathtruncatemacro{\endValue}{-\n + abs(\y)}
      \foreach \x in {\startValue,...,\endValue} {
        \pgfmathtruncatemacro{\r}{rand*4}
        \fill[gray, shift={(\x,\y)},rotate around={\r*90:(1/2,1/2)}] (0,0) -- (0,1) -- (1,1) -- cycle;
        \draw[shift={(\x,\y)}] (0,0) rectangle (1,1);
      }
    }
  \end{tikzpicture}
  \qquad
  \begin{tikzpicture}[scale=3/7]
    \pgfmathsetmacro{\n}{3}
    \foreach \y in {-\n,...,\n} {
      \pgfmathtruncatemacro{\startValue}{\n - abs(\y)}
      \pgfmathtruncatemacro{\endValue}{-\n + abs(\y)}
      \foreach \x in {\startValue,...,\endValue} {
        \pgfmathtruncatemacro{\r}{rand*4}
        \fill[gray, shift={(\x,\y)},rotate around={\r*90:(1/2,1/2)}] (0,0) -- (0,1) -- (1,1) -- cycle;
        \draw[shift={(\x,\y)}] (0,0) rectangle (1,1);
      }
    }
  \end{tikzpicture}
  \qquad
  \begin{tikzpicture}[scale=3/9]
    \pgfmathsetmacro{\n}{4}
    \foreach \y in {-\n,...,\n} {
      \pgfmathtruncatemacro{\startValue}{\n - abs(\y)}
      \pgfmathtruncatemacro{\endValue}{-\n + abs(\y)}
      \foreach \x in {\startValue,...,\endValue} {
        \pgfmathtruncatemacro{\r}{rand*4}
        \fill[gray, shift={(\x,\y)},rotate around={\r*90:(1/2,1/2)}] (0,0) -- (0,1) -- (1,1) -- cycle;
        \draw[shift={(\x,\y)}] (0,0) rectangle (1,1);
      }
    }
  \end{tikzpicture}
  \caption{Order $1$, $2$, $3$, and $4$ centered square figures in the square
  tiling of the plane.}
  \label{fig:centeredSquare}
\end{figure}

\subsection{The M\"obius strip and Klein bottle}
Since we've looked at the orientable identifications of the grid, we are also
interested in the non-orientable identifications. The M\"obius strip has a
universal cover that is $[0,1] \times \R$, and the Klein bottle has a universal
cover of $\R \times \R$, so we can visualize them analogously to how we
visualized the cylinder and torus respectively.
An illustration of a tiling of the M\"obius strip in Figure \ref{fig:mobiusStrip}.
An illustration of a tiling of the Klein bottle in Figure \ref{fig:kleinBottle}.

We are also interested counting tilings of the real projective plane, but
because the universal cover is not the Euclidean plane, it cannot be
illustrated in the same manner as the Klein bottle and the torus.

\todo[inline]{Let's include examples for each of these two situations,
together with brute-forced numbers.}

\begin{figure}[ht]
  \centering
  \begin{subfigure}[b]{0.56\textwidth}
    \centering
    \begin{tikzpicture}[scale=1]
      \fill[gray] (0+1,0+1)--(0,0+1)--(0+1,0);
      \fill[gray] (2+1,1)  --(2+1,1+1)--(2,1);
      \fill[gray] (4+1,0+1)--(4,0+1)--(4+1,0);
      \fill[gray] (6+1,1)  --(6+1,1+1)--(6,1);
      \fill[gray] (0+1,1+1)--(0,1+1)--(0+1,1);
      \fill[gray] (2+1,0)  --(2+1,0+1)--(2,0);
      \fill[gray] (4+1,1+1)--(4,1+1)--(4+1,1);
      \fill[gray] (6+1,0)  --(6+1,0+1)--(6,0);
      \fill[gray] (1,0)    --(1+1,0)--(1,0+1);
      \fill[gray] (3,1+1)  --(3,1)--(3+1,1+1);
      \fill[gray] (5,0)    --(5+1,0)--(5,0+1);
      \fill[gray] (7,1+1)  --(7,1)--(7+1,1+1);
      \fill[gray] (1+1,1)  --(1+1,1+1)--(1,1);
      \fill[gray] (3+1,0+1)--(3,0+1)--(3+1,0);
      \fill[gray] (5+1,1)  --(5+1,1+1)--(5,1);
      \fill[gray] (7+1,0+1)--(7,0+1)--(7+1,0);
      \draw (0,0) grid (8,2);
      \draw[red, ultra thick] (0,0) rectangle (2,2);
      \draw[white, ultra thick] (3,0) rectangle (5,2);
      \draw[green!50!black, ultra thick, dotted] (3,0) rectangle (5,2);
      \draw[white, ultra thick] (6,0) rectangle (8,2);
      \draw[blue, ultra thick, dashed] (6,0) rectangle (8,2);
    \end{tikzpicture}
    \caption{}
  \end{subfigure}
  \begin{subfigure}[b]{0.14\textwidth}
    \centering
    \begin{tikzpicture}[scale=1]
      \fill[gray] (0+1,0+1)--(0,0+1)--(0+1,0);
      \fill[gray] (0+1,1+1)--(0,1+1)--(0+1,1);
      \fill[gray] (1,0)    --(1+1,0)--(1,0+1);
      \fill[gray] (1+1,1)  --(1+1,1+1)--(1,1);
      \draw (0,0) grid (2,2);
      \draw[red, ultra thick] (0,0) rectangle (2,2);
      \draw[red, -{Triangle[length=3mm, width=3mm]}] (0,1)--(0,1.3);
      \draw[red, -{Triangle[length=3mm, width=3mm]}] (2,1.3)--(2,1);
    \end{tikzpicture}
    \caption{}
  \end{subfigure}
  \begin{subfigure}[b]{0.14\textwidth}
    \centering
    \begin{tikzpicture}[scale=1]
      \fill[gray] (3,1+1)  --(3,1)--(3+1,1+1);
      \fill[gray] (3+1,0+1)--(3,0+1)--(3+1,0);
      \fill[gray] (4+1,0+1)--(4,0+1)--(4+1,0);
      \fill[gray] (4+1,1+1)--(4,1+1)--(4+1,1);
      \draw (3,0) grid (5,2);
      \draw[white, ultra thick] (3,0) rectangle (5,2);
      \draw[green!50!black, ultra thick, dotted] (3,0) rectangle (5,2);
      \draw[green!50!black, -{Triangle[length=3mm, width=3mm]}] (3,1)--(3,1.3);
      \draw[green!50!black, -{Triangle[length=3mm, width=3mm]}] (5,1.3)--(5,1);
    \end{tikzpicture}
    \caption{}
  \end{subfigure}
  \begin{subfigure}[b]{0.14\textwidth}
    \centering
    \begin{tikzpicture}[scale=1]
      \fill[gray] (6+1,1)  --(6+1,1+1)--(6,1);
      \fill[gray] (6+1,0)  --(6+1,0+1)--(6,0);
      \fill[gray] (7,1+1)  --(7,1)--(7+1,1+1);
      \fill[gray] (7+1,0+1)--(7,0+1)--(7+1,0);
      \draw (6,0) grid (8,2);
      \draw[white, ultra thick] (6,0) rectangle (8,2);
      \draw[blue, ultra thick, dashed] (6,0) rectangle (8,2);
      \draw[blue, -{Triangle[length=3mm, width=3mm]}] (6,1)--(6,1.3);
      \draw[blue, -{Triangle[length=3mm, width=3mm]}] (8,1.3)--(8,1);

    \end{tikzpicture}
    \caption{}
  \end{subfigure}
  \caption{
    (a) A $2 \times 2$ M\"obius strip repeated four times horizontally.
    Parts (b), (c), and (d) show equivalent tilings under this symmetry.}
  \label{fig:mobiusStrip}
\end{figure}
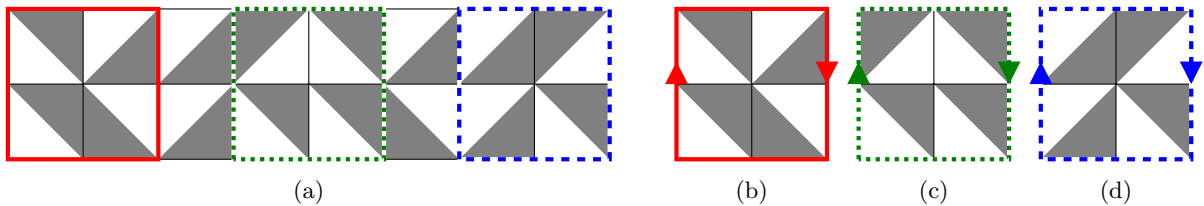
\begin{figure}[ht]
  \centering
  \begin{subfigure}[b]{0.5\textwidth}
    \centering
    \begin{tikzpicture}[scale=1.2]
      \foreach \x in {0, 2, 4} { \foreach \y in {0, 2, 4} {
        \begin{scope}[shift={({1+\x-(-1)^(\y/2)},\y)}, xscale={(-1)^(\y/2)}]
          \fill[black!70!white] (1,0)--(0,0)--(0,0.75)--(1,0.25);
          \fill[black!70!white] (1,1)--(1,0)--(1.25,0)--(1.75,1);
          \fill[black!70!white] (1,1)--(0,1)--(0,1.75)--(1,1.25);
          \fill[black!70!white] (1,2)--(2,2)--(2,1.25)--(1,1.75);
        \end{scope}
      }}
      \draw (0,0) grid (6,6);
      \draw[line width=3, white]      (0,4) rectangle (2,6);
      \draw[ultra thick, red, dotted] (0,4) rectangle (2,6);
      \draw[line width=3, white]                 (3,4) rectangle (5,6);
      \draw[ultra thick, green!50!black, dashed] (3,4) rectangle (5,6);
      \draw[line width=3, white]         (1,1) rectangle (3,3);
      \draw[ultra thick, blue, dash dot] (1,1) rectangle (3,3);
      \draw[line width=3, white] (4,1) rectangle (6,3);
      \draw[ultra thick, black, loosely dotted] (4,1) rectangle (6,3);
    \end{tikzpicture}
    \caption{}
  \end{subfigure}
  \begin{subfigure}[b]{0.18\textwidth}
    \centering
    \begin{tikzpicture}
      \fill[black!70!white] (1,0)--(0,0)--(0,0.75)--(1,0.25);
      \fill[black!70!white] (1,1)--(0,1)--(0,1.75)--(1,1.25);
      \fill[black!70!white] (1,2)--(2,2)--(2,1.25)--(1,1.75);
      \fill[black!70!white] (1,0)--(1,1)--(1.75,1)--(1.25,0);
      \draw (0,0) grid (2,2);
      \draw[ultra thick, white] (0,0) rectangle (2,2);
      \draw[ultra thick, red, dotted] (0,0) rectangle (2,2);
      \draw[ultra thick, red, -{Triangle}] (0,0.99)--(0,1);
      \draw[ultra thick, red, -{Triangle}] (2,0.99)--(2,1);
      \draw[ultra thick, red, -{Triangle}{Triangle}] (0.71,0)--(0.7,0);
      \draw[ultra thick, red, -{Triangle}{Triangle}] (1.29,2)--(1.3,2);
    \end{tikzpicture}\vspace{0.3cm}
    \begin{tikzpicture}
      \fill[black!70!white] (2,0)--(1,0)--(1,0.75)--(2,0.25);
      \fill[black!70!white] (2,1)--(1,1)--(1,1.75)--(2,1.25);
      \fill[black!70!white] (0,2)--(1,2)--(1,1.25)--(0,1.75);
      \fill[black!70!white] (0,0)--(0,1)--(0.75,1)--(0.25,0);
      \draw (0,0) grid (2,2);
      \draw[ultra thick, white] (0,0) rectangle (2,2);
      \draw[ultra thick, green!50!black, dashed] (0,0) rectangle (2,2);
      \draw[ultra thick, green!50!black, -{Triangle}] (0,0.99)--(0,1);
      \draw[ultra thick, green!50!black, -{Triangle}] (2,0.99)--(2,1);
      \draw[ultra thick, green!50!black, -{Triangle}{Triangle}] (0.71,0)--(0.7,0);
      \draw[ultra thick, green!50!black, -{Triangle}{Triangle}] (1.29,2)--(1.3,2);
    \end{tikzpicture}\vspace{0.3cm}
    \begin{tikzpicture}
      \fill[black!70!white] (2,1)--(2,2)--(1.25,2)--(1.75,1);
      \fill[black!70!white] (0,1)--(1,1)--(1,0.25)--(0,0.75);
      \fill[black!70!white] (2,0)--(1,0)--(1,0.75)--(2,0.25);
      \fill[black!70!white] (0,1)--(1,1)--(1,1.75)--(0,1.25);
      \draw (0,0) grid (2,2);
      \draw[ultra thick, white] (0,0) rectangle (2,2);
      \draw[ultra thick, blue, dash dot] (0,0) rectangle (2,2);
      \draw[ultra thick, blue, -{Triangle}] (0,0.99)--(0,1);
      \draw[ultra thick, blue, -{Triangle}] (2,0.99)--(2,1);
      \draw[ultra thick, blue, -{Triangle}{Triangle}] (0.71,0)--(0.7,0);
      \draw[ultra thick, blue, -{Triangle}{Triangle}] (1.29,2)--(1.3,2);
    \end{tikzpicture}
    \caption{}
  \end{subfigure}
  \begin{subfigure}[b]{0.18\textwidth}
    \centering
    \begin{tikzpicture}
      \fill[black!70!white] (0,0.25)--(0,1)--(1,1)--(1,0.75)--cycle;
      \fill[black!70!white] (0,1.75)--(0,1)--(1,1)--(1,1.25)--cycle;
      \fill[black!70!white] (1,2)--(2,2)--(2,1.25)--(1,1.75)--cycle;
      \fill[black!70!white] (1,0)--(1,1)--(1.25,1)--(1.75,0)--cycle;
      \draw (0,0) grid (2,2);
      \draw[ultra thick, white] (0,0) rectangle (2,2);
      \draw[ultra thick, black, loosely dotted] (0,0) rectangle (2,2);
      \draw[ultra thick, black, -{Triangle}] (0,0.99)--(0,1);
      \draw[ultra thick, black, -{Triangle}] (2,0.99)--(2,1);
      \draw[ultra thick, black, -{Triangle}{Triangle}] (0.71,0)--(0.7,0);
      \draw[ultra thick, black, -{Triangle}{Triangle}] (1.29,2)--(1.3,2);
    \end{tikzpicture}
    \caption{}
  \end{subfigure}
  \caption{
    Part (a) shows a $2 \times 2$ Klein bottle repeated three
  times horizontally and three times vertically, with three $2 \times 2$ regions
  selected. Part (b) shows three tilings of the $2 \times 2$ grid that are
  equivalent under the torus action $\Z/2\Z \times \Z/2\Z$.
  Part (c) shows a $2 \times 2$ Klein that is equivalent to the other Klein
  bottles under $180^\circ$ rotation.
  }
  \label{fig:kleinBottle}
\end{figure}
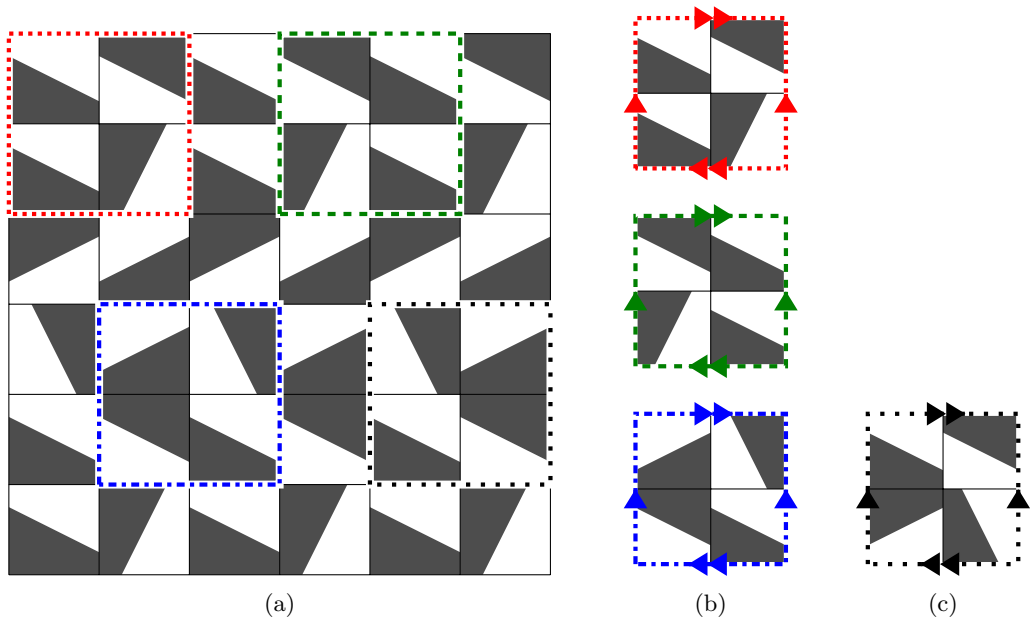

\subsection{Tilings of the triangular and hexagonal grids}
While this paper explored tilings of the square grid and related settings,
it is equally natural to ask about tilings of the triangular grid and hexagonal
grid.
In particular, it would be interesting to explore the number of tilings of
(1) triangular regions of the triangular grid,
(2) hexagonal regions of the triangular grid,
(3) triangular regions of the hexagonal grid, or
(4) hexagonal regions of the hexagonal grid,
all of which are illustrated in Figure \ref{fig:hexagonTriangle}.

In the cases of tiling hexagons and triangles in the triangular grid, each can
be extended to a tiling of the plane, as described in
Figure \ref{fig:triangleHexagonTiling}.

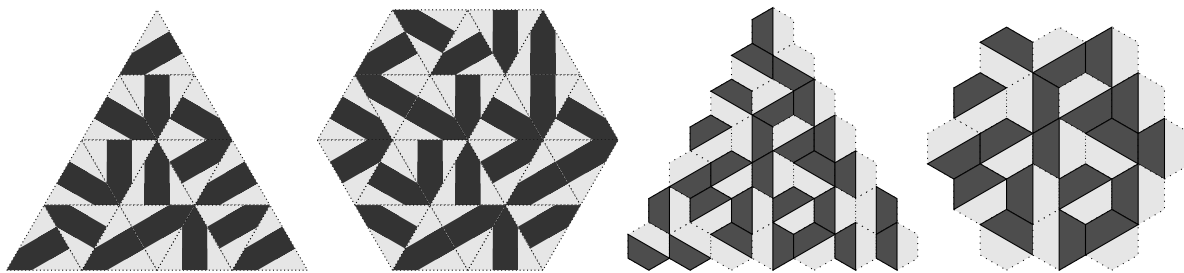
\begin{figure}[ht]
  \begin{tikzpicture}
    \foreach \a/\b/\r [evaluate=\r as \rr using {mod(\r + 1, 3)}] in {
                 0/3/3,
             0/2/1, 1/2/4,
           0/1/1, 1/1/5, 2/1/9,
        0/0/0, 1/0/6, 2/0/2, 3/0/0
    }{
      \pgfmathsetmacro{\x}{\a + \b/2}
      \pgfmathsetmacro{\y}{\b * sqrt(3)/2}
      \draw[xshift=\x cm, yshift=\y cm, densely dotted, fill=black!10] (0,0)--(1,0)--(1/2,{sqrt(3)/2})--cycle;
      \begin{scope}[xshift=\x cm, yshift=\y cm]
        \clip (0,0)--(1,0)--(1/2,{sqrt(3)/2})--cycle;
        \draw[line width=9.5, black!80] (1/2,{sqrt(3)/6}) -- ++({120*\r+30}:{sqrt(3)/6}) -- ++({120*\r-150}:{sqrt(3)/2});
      \end{scope}
    }

    \foreach \a/\b/\r [evaluate=\r as \rr using {mod(\r + 1, 3)}] in {
                 0/2/5,
              0/1/8, 1/1/9,
          0/0/7, 1/0/9, 2/0/3
    }{
      \pgfmathsetmacro{\x}{1 + \a + \b/2}
      \pgfmathsetmacro{\y}{\b * sqrt(3)/2}
      \draw[xshift=\x cm, yshift=\y cm, densely dotted, fill=black!10] (0,0)--(-1/2,{sqrt(3)/2})--(1/2,{sqrt(3)/2})--cycle;
      \begin{scope}[xshift=\x cm, yshift=\y cm]
        \clip (0,0)--(-1/2,{sqrt(3)/2})--(1/2,{sqrt(3)/2})--cycle;
        \draw[line width=9.5, black!80] (0,{sqrt(3)/3}) -- ++({120*\r+30}:{sqrt(3)/3}) -- ++({120*\r-150}:{sqrt(3)/2});
      \end{scope}
    }
  \end{tikzpicture}
  \begin{tikzpicture}
    \foreach \a/\b/\r [evaluate=\r as \rr using {mod(\r + 1, 3)}] in {
         -1/3/3, 0/3/3, 1/3/2,
      -1/2/1, 0/2/1, 1/2/4, 2/2/4,
           0/1/1, 1/1/5, 2/1/9,
             1/0/6, 2/0/5
    }{
      \pgfmathsetmacro{\x}{\a + \b/2}
      \pgfmathsetmacro{\y}{\b * sqrt(3)/2}
      \draw[xshift=\x cm, yshift=\y cm, densely dotted, fill=black!10] (0,0)--(1,0)--(1/2,{sqrt(3)/2})--cycle;
      \begin{scope}[xshift=\x cm, yshift=\y cm]
        \clip (0,0)--(1,0)--(1/2,{sqrt(3)/2})--cycle;
        \draw[line width=9.5, black!80] (1/2,{sqrt(3)/6}) -- ++({120*\r+30}:{sqrt(3)/6}) -- ++({120*\r-150}:{sqrt(3)/2});
      \end{scope}
    }

    \foreach \a/\b/\r [evaluate=\r as \rr using {mod(\r + 1, 3)}] in {
              -1/3/4, 0/3/5,
          -1/2/4, 0/2/5, 1/2/5,
      -1/1/3, 0/1/8, 1/1/9, 2/1/0,
          0/0/7, 1/0/9, 2/0/3
    }{
      \pgfmathsetmacro{\x}{1 + \a + \b/2}
      \pgfmathsetmacro{\y}{\b * sqrt(3)/2}
      \draw[xshift=\x cm, yshift=\y cm, densely dotted, fill=black!10] (0,0)--(-1/2,{sqrt(3)/2})--(1/2,{sqrt(3)/2})--cycle;
      \begin{scope}[xshift=\x cm, yshift=\y cm]
        \clip (0,0)--(-1/2,{sqrt(3)/2})--(1/2,{sqrt(3)/2})--cycle;
        \draw[line width=9.5, black!80] (0,{sqrt(3)/3}) -- ++({120*\r+30}:{sqrt(3)/3}) -- ++({120*\r-150}:{sqrt(3)/2});
      \end{scope}
    }
  \end{tikzpicture}
  \begin{tikzpicture}[scale=0.55]
  \foreach \a/\b\r in {
    0/3/3,
    0/2/1,  1/2/4,
    0/1/1,  1/1/5,  2/1/9,
    0/0/2,  1/0/6,  2/0/5,  3/0/3,
    0/-1/5, 1/-1/8, 2/-1/9, 3/-1/7, 4/-1/9,
    0/-2/3, 1/-2/2, 2/-2/3, 3/-2/8, 4/-2/3, 5/-2/6,
    0/-3/4, 1/-3/4, 2/-3/1, 3/-3/6, 4/-3/2, 5/-3/3, 6/-3/3
  } {
    \pgfmathsetmacro{\x}{\a+\b/2}
    \pgfmathsetmacro{\y}{sqrt(3)/2*\b}
    \draw[xshift=\x cm, yshift=\y cm, fill=white!90!black, dotted] (0, 0)
      foreach \i in {0,...,5} {
        -- ++({30 + 60 * \i}:{sqrt(3)/3})
      };
    \fill[xshift=\x cm, yshift=\y cm, white!30!black, draw=black]
      (0, {sqrt(3)/3}) -- ++({30 + 60 * (\r-2)}:{sqrt(3)/3})
      foreach \i in {0,...,2} {
        -- ++({30 + 60 * (\i + \r)}:{sqrt(3)/3})
      } -- cycle;
  }
  \end{tikzpicture}
  \begin{tikzpicture}[scale=0.7]
    \foreach \a/\b\r in {
      0/1/1,  1/1/5,  2/1/9,
      0/0/2,  1/0/6,  2/0/5,  3/0/3,
      0/-1/5, 1/-1/8, 2/-1/9, 3/-1/7, 4/-1/9,
              1/-2/2, 2/-2/3, 3/-2/8, 4/-2/3,
                      2/-3/1, 3/-3/6, 4/-3/2
    } {
      \pgfmathsetmacro{\x}{\a+\b/2}
      \pgfmathsetmacro{\y}{sqrt(3)/2*\b}
      \draw[xshift=\x cm, yshift=\y cm, fill=white!90!black, dotted] (0, 0)
        foreach \i in {0,...,5} {
          -- ++({30 + 60 * \i}:{sqrt(3)/3})
        };
      \fill[xshift=\x cm, yshift=\y cm, white!30!black, draw=black]
        (0, {sqrt(3)/3}) -- ++({30 + 60 * (\r-2)}:{sqrt(3)/3})
        foreach \i in {0,...,2} {
          -- ++({30 + 60 * (\i + \r)}:{sqrt(3)/3})
        } -- cycle;
    }
  \end{tikzpicture}
  \caption{
    An illustration of a triangle in a triangular grid, a hexagon in a
    triangular grid, a triangle in a hexagonal grid, and a hexagon in a
    hexagonal grid.
  }
  \label{fig:hexagonTriangle}
\end{figure}

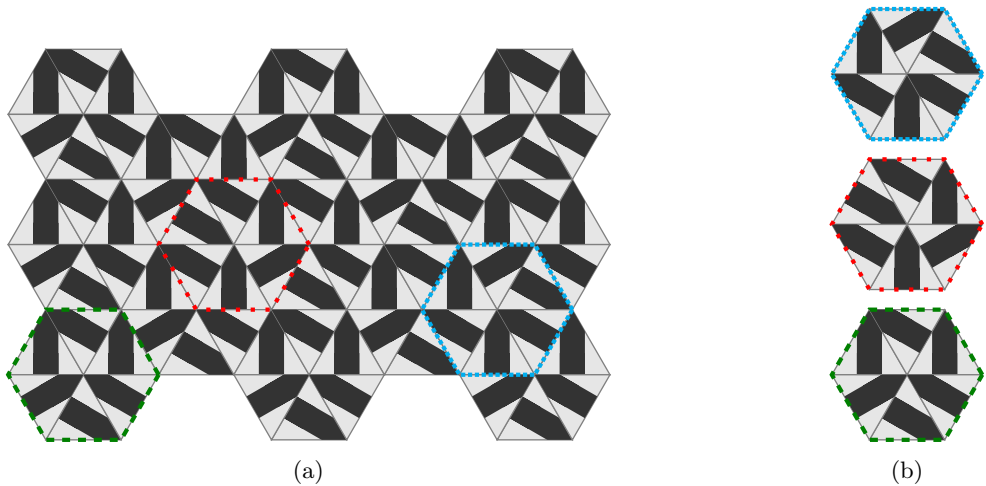
\begin{figure}[ht]
  \centering
  \begin{subfigure}[b]{0.7\textwidth}
    \centering
    \begin{tikzpicture}
      \foreach \i/\j in {
        -2/4, -1/4,  0/4,
        -1/3,  0/3,
        -1/2,  0/2,  1/2,
          0/1,  1/1,
          0/0,  1/0,  2/0
      } {\begin{scope}[shift={({3*(\i+\j/2)},{sqrt(3)/2*\j})}]
        \foreach \a/\b/\r [evaluate=\r as \rr using {mod(\r + 1, 3)}] in {
            -1/3/2, 0/3/2,
                  0/2/1
        }{
          \pgfmathsetmacro{\x}{\a + \b/2}
          \pgfmathsetmacro{\y}{\b * sqrt(3)/2}
          \fill[shift={(\x,\y)}, black!10, line width=0.5, draw=gray] (0,0)--(1,0)--(1/2,{sqrt(3)/2})--cycle;
          \begin{scope}[shift={(\x,\y)}]
            \clip (0,0)--(1,0)--(1/2,{sqrt(3)/2})--cycle;
            \draw[line width=9.5, black!80] (1/2,{sqrt(3)/6}) -- ++({120*\r+30}:{sqrt(3)/6}) -- ++({120*\r-150}:{sqrt(3)/2});
          \end{scope}

        }

        \foreach \a/\b/\r [evaluate=\r as \rr using {mod(\r + 1, 3)}] in {
                  -1/3/1,
              -1/2/0, 0/2/1
        }{
          \pgfmathsetmacro{\x}{1 + \a + \b/2}
          \pgfmathsetmacro{\y}{\b * sqrt(3)/2}
          \fill[shift={(\x,\y)}, black!10, line width=0.5, draw=gray] (0,0)--(-1/2,{sqrt(3)/2})--(1/2,{sqrt(3)/2})--cycle;
          \begin{scope}[shift={(\x,\y)}]
            \clip (0,0)--(-1/2,{sqrt(3)/2})--(1/2,{sqrt(3)/2})--cycle;
            \draw[line width=9.5, black!80] (0,{sqrt(3)/3}) -- ++({120*\r+30}:{sqrt(3)/3}) -- ++({120*\r-150}:{sqrt(3)/2});
          \end{scope}
        }
        \end{scope}}
        \draw[green!50!black, ultra thick, dashed] (1.5+1,{3*sqrt(3)/2}) \foreach \v in {0,60,120,180,240} { -- ++(120+\v:1cm) } -- cycle;
        \draw[red, ultra thick, loosely dotted] (3.5+1,{5*sqrt(3)/2}) \foreach \v in {0,60,120,180,240} { -- ++(120+\v:1cm) } -- cycle;
        \draw[cyan, ultra thick, densely dotted] (7+1,{4*sqrt(3)/2}) \foreach \v in {0,60,120,180,240} { -- ++(120+\v:1cm) } -- cycle;
    \end{tikzpicture}
    \caption{}
    \end{subfigure}
    \begin{subfigure}[b]{0.25\textwidth}
    \centering
    \begin{tikzpicture}
      \begin{scope}[yshift=0cm]
        \foreach \a/\b/\r [evaluate=\r as \rr using {mod(\r + 1, 3)}] in {
            -1/3/2, 0/3/2,
                  0/2/1
        }{
          \pgfmathsetmacro{\x}{\a + \b/2}
          \pgfmathsetmacro{\y}{\b * sqrt(3)/2}
          \fill[shift={(\x,\y)}, black!10, line width=0.5, draw=gray] (0,0)--(1,0)--(1/2,{sqrt(3)/2})--cycle;
          \begin{scope}[shift={(\x,\y)}]
            \clip (0,0)--(1,0)--(1/2,{sqrt(3)/2})--cycle;
            \draw[line width=9.5, black!80] (1/2,{sqrt(3)/6}) -- ++({120*\r+30}:{sqrt(3)/6}) -- ++({120*\r-150}:{sqrt(3)/2});
          \end{scope}

        }

        \foreach \a/\b/\r [evaluate=\r as \rr using {mod(\r + 1, 3)}] in {
                  -1/3/1,
              -1/2/0, 0/2/1
        }{
          \pgfmathsetmacro{\x}{1 + \a + \b/2}
          \pgfmathsetmacro{\y}{\b * sqrt(3)/2}
          \fill[shift={(\x,\y)}, black!10, line width=0.5, draw=gray] (0,0)--(-1/2,{sqrt(3)/2})--(1/2,{sqrt(3)/2})--cycle;
          \begin{scope}[shift={(\x,\y)}]
            \clip (0,0)--(-1/2,{sqrt(3)/2})--(1/2,{sqrt(3)/2})--cycle;
            \draw[line width=9.5, black!80] (0,{sqrt(3)/3}) -- ++({120*\r+30}:{sqrt(3)/3}) -- ++({120*\r-150}:{sqrt(3)/2});
          \end{scope}
        }
        \draw[green!50!black, ultra thick, dashed] (1.5+1,{3*sqrt(3)/2}) \foreach \v in {0,60,120,180,240} { -- ++(120+\v:1cm) } -- cycle;
      \end{scope}
      \begin{scope}[yshift=2cm]
      \foreach \a/\b/\r [evaluate=\r as \rr using {mod(\r + 1, 3)}] in {
          -1/3/1, 0/3/2,
                0/2/2
      }{
        \pgfmathsetmacro{\x}{\a + \b/2}
        \pgfmathsetmacro{\y}{\b * sqrt(3)/2}
        \fill[shift={(\x,\y)}, black!10, line width=0.5, draw=gray] (0,0)--(1,0)--(1/2,{sqrt(3)/2})--cycle;
        \begin{scope}[shift={(\x,\y)}]
          \clip (0,0)--(1,0)--(1/2,{sqrt(3)/2})--cycle;
          \draw[line width=9.5, black!80] (1/2,{sqrt(3)/6}) -- ++({120*\r+30}:{sqrt(3)/6}) -- ++({120*\r-150}:{sqrt(3)/2});
        \end{scope}

      }

      \foreach \a/\b/\r [evaluate=\r as \rr using {mod(\r + 1, 3)}] in {
                -1/3/1,
            -1/2/1, 0/2/0
      }{
        \pgfmathsetmacro{\x}{1 + \a + \b/2}
        \pgfmathsetmacro{\y}{\b * sqrt(3)/2}
        \fill[shift={(\x,\y)}, black!10, line width=0.5, draw=gray] (0,0)--(-1/2,{sqrt(3)/2})--(1/2,{sqrt(3)/2})--cycle;
        \begin{scope}[shift={(\x,\y)}]
          \clip (0,0)--(-1/2,{sqrt(3)/2})--(1/2,{sqrt(3)/2})--cycle;
          \draw[line width=9.5, black!80] (0,{sqrt(3)/3}) -- ++({120*\r+30}:{sqrt(3)/3}) -- ++({120*\r-150}:{sqrt(3)/2});
        \end{scope}
      }
      \draw[red, ultra thick, loosely dotted] (1.5+1,{3*sqrt(3)/2}) \foreach \v in {0,60,120,180,240} { -- ++(120+\v:1cm) } -- cycle;
    \end{scope}
    \begin{scope}[yshift=4cm]
      \foreach \a/\b/\r [evaluate=\r as \rr using {mod(\r + 1, 3)}] in {
          -1/3/2, 0/3/1,
                0/2/2
      }{
        \pgfmathsetmacro{\x}{\a + \b/2}
        \pgfmathsetmacro{\y}{\b * sqrt(3)/2}
        \fill[shift={(\x,\y)}, black!10, line width=0.5, draw=gray] (0,0)--(1,0)--(1/2,{sqrt(3)/2})--cycle;
        \begin{scope}[shift={(\x,\y)}]
          \clip (0,0)--(1,0)--(1/2,{sqrt(3)/2})--cycle;
          \draw[line width=9.5, black!80] (1/2,{sqrt(3)/6}) -- ++({120*\r+30}:{sqrt(3)/6}) -- ++({120*\r-150}:{sqrt(3)/2});
        \end{scope}

      }

      \foreach \a/\b/\r [evaluate=\r as \rr using {mod(\r + 1, 3)}] in {
                -1/3/0,
            -1/2/1, 0/2/1
      }{
        \pgfmathsetmacro{\x}{1 + \a + \b/2}
        \pgfmathsetmacro{\y}{\b * sqrt(3)/2}
        \fill[shift={(\x,\y)}, black!10, line width=0.5, draw=gray] (0,0)--(-1/2,{sqrt(3)/2})--(1/2,{sqrt(3)/2})--cycle;
        \begin{scope}[shift={(\x,\y)}]
          \clip (0,0)--(-1/2,{sqrt(3)/2})--(1/2,{sqrt(3)/2})--cycle;
          \draw[line width=9.5, black!80] (0,{sqrt(3)/3}) -- ++({120*\r+30}:{sqrt(3)/3}) -- ++({120*\r-150}:{sqrt(3)/2});
        \end{scope}
      }
      \draw[cyan, ultra thick, densely dotted] (1.5+1,{3*sqrt(3)/2}) \foreach \v in {0,60,120,180,240} { -- ++(120+\v:1cm) } -- cycle;
    \end{scope}
    \end{tikzpicture}
    \caption{}
  \end{subfigure}
  \caption{
    (a) A triangular tiling of the plane tiled with repeating patterns of size
    $1$ hexagons. (b) Three equivalent tilings of the triangular hexagon under
    this symmetry.
  }
  \label{fig:triangleHexagonTiling}
\end{figure}

\subsection{Other tilings of the Euclidean plane}
Extending this idea even further, we might be interested in ways of placing
multiple shapes of tiles on various tilings of the Euclidean plane by convex
polygons, such as the
truncated trihexagonal tiling,
the snub square tiling, or the
triakis triangular tiling.

Figure \ref{fig:truncatedSquareTiling} shows an example of such a setup on a
region of the truncated square grid.
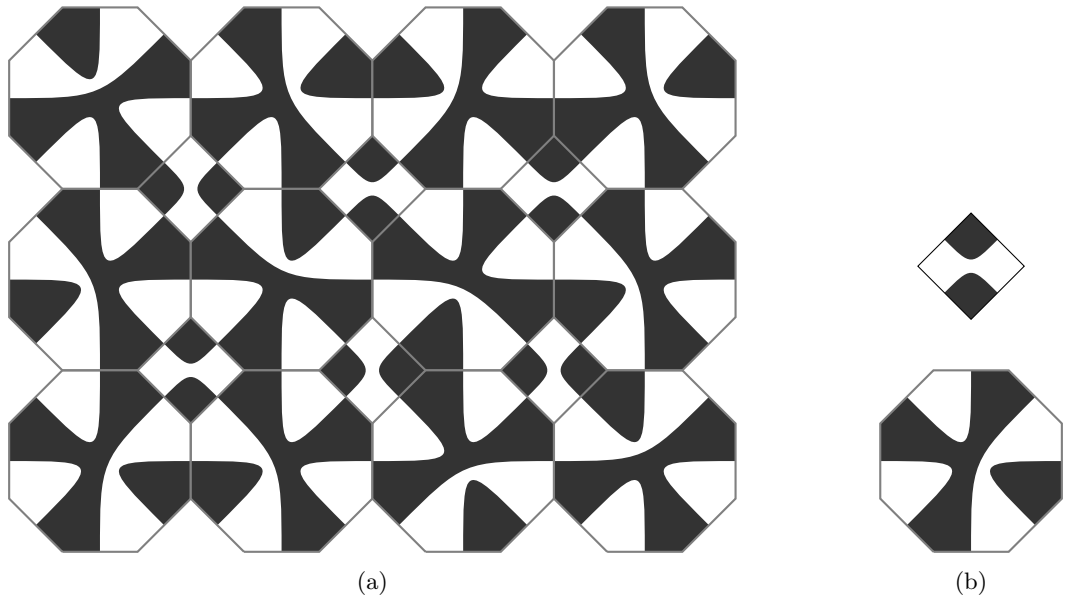
\begin{figure}[ht]
  \centering
  \begin{subfigure}[b]{0.7\textwidth}
    \centering
    \begin{tikzpicture}
      \foreach \x/\y/\j in {
        0/1/5, 1/1/2, 2/1/6,
        0/0/0, 1/0/5, 2/0/7
      }{
        \fill[
          black!80!white,
          shift={({(\x+1)*(1+sqrt(2))},{(\y+1)*(1+sqrt(2))})},
          rotate around={90*\j:({-sqrt(2)/2},0)}
        ]
          ({-sqrt(2)/4},{sqrt(2)/4}) .. controls ({-sqrt(2)/2},0) .. ({-3*sqrt(2)/4},{sqrt(2)/4}) -- ({-sqrt(2)/2},{sqrt(2)/2})
          ({-3*sqrt(2)/4},{-sqrt(2)/4}) .. controls ({-sqrt(2)/2},0) .. ({-sqrt(2)/4},{-sqrt(2)/4}) -- ({-sqrt(2)/2},{-sqrt(2)/2})
        ;
      }
      \foreach \x/\y/\j in {
        0/2/3, 1/2/1, 2/2/4, 3/2/1,
        0/1/5, 1/1/2, 2/1/6, 3/1/5,
        0/0/0, 1/0/5, 2/0/7, 3/0/3
      } {
        \fill[
          black!80!white,
          ultra thick,
          shift={({\x+\x*sqrt(2)},{\y+\y*sqrt(2)})},
          rotate around={45*\j:(1/2,{1/2 + 1/2*sqrt(2)})}
        ]
          ({1+sqrt(2)/4},{sqrt(2)/4}) .. controls (1/2,{1/2 + 1/2*sqrt(2)}) .. ({1+sqrt(2)/2},{1/2+sqrt(2)/2}) -- ({1+sqrt(2)/2},{sqrt(2)/2})
          (1/2,0) .. controls (1/2,{1/2 + 1/2*sqrt(2)}) .. ({1+sqrt(2)/4},{1+3*sqrt(2)/4})
          -- (1,{1+sqrt(2)}) --
          (1/2,{1+sqrt(2)}) .. controls (1/2,{1/2 + 1/2*sqrt(2)}) .. ({-sqrt(2)/4},{1+3*sqrt(2)/4})
          -- ({-sqrt(2)/2},{1+sqrt(2)/2}) --
          ({-sqrt(2)/2},{1/2+sqrt(2)/2}) .. controls (1/2,{1/2 + 1/2*sqrt(2)}) .. ({-sqrt(2)/4},{sqrt(2)/4})
          -- (0,0) -- cycle
        ;
        \draw[gray, thick, shift={({\x+\x*sqrt(2)},{\y+\y*sqrt(2)})}] (0,0) \foreach \v in {0,45,...,315} { -- ++(0+\v:1cm) };
      }
    \end{tikzpicture}
    \caption{}
  \end{subfigure}
  \begin{subfigure}[b]{0.25\textwidth}
    \centering
    \begin{tikzpicture}
      \begin{scope}[shift={({1/2+sqrt(2)/2},3.8)}]
        \fill[black!80!white]
          ({-sqrt(2)/4},{sqrt(2)/4}) .. controls ({-sqrt(2)/2},0) .. ({-3*sqrt(2)/4},{sqrt(2)/4}) -- ({-sqrt(2)/2},{sqrt(2)/2})
          ({-3*sqrt(2)/4},{-sqrt(2)/4}) .. controls ({-sqrt(2)/2},0) .. ({-sqrt(2)/4},{-sqrt(2)/4}) -- ({-sqrt(2)/2},{-sqrt(2)/2})
        ;
        \draw (0,0) -- ({-sqrt(2)/2},{sqrt(2)/2}) -- ({-sqrt(2)},0) -- ({-sqrt(2)/2},{-sqrt(2)/2}) -- cycle;

      \end{scope}
      \fill[
        black!80!white
      ]
        ({1+sqrt(2)/4},{sqrt(2)/4}) .. controls (1/2,{1/2 + 1/2*sqrt(2)}) .. ({1+sqrt(2)/2},{1/2+sqrt(2)/2}) -- ({1+sqrt(2)/2},{sqrt(2)/2})
        (1/2,0) .. controls (1/2,{1/2 + 1/2*sqrt(2)}) .. ({1+sqrt(2)/4},{1+3*sqrt(2)/4})
        -- (1,{1+sqrt(2)}) --
        (1/2,{1+sqrt(2)}) .. controls (1/2,{1/2 + 1/2*sqrt(2)}) .. ({-sqrt(2)/4},{1+3*sqrt(2)/4})
        -- ({-sqrt(2)/2},{1+sqrt(2)/2}) --
        ({-sqrt(2)/2},{1/2+sqrt(2)/2}) .. controls (1/2,{1/2 + 1/2*sqrt(2)}) .. ({-sqrt(2)/4},{sqrt(2)/4})
        -- (0,0) -- cycle
      ;
      \draw[gray, thick] (0,0) \foreach \v in {0,45,...,315} { -- ++(0+\v:1cm) };
    \end{tikzpicture}
    \caption{}
  \end{subfigure}
  \caption{(a) A $4 \times 3$ section of the truncated square tiling, and (b) the square and octagonal tile designs.}
  \label{fig:truncatedSquareTiling}
\end{figure}
\subsection{Polyhedra}
We are also interested in settings related to polyhedra.
For instance, one could use various tile designs to count the number of distinct
tilings of a $2 \times 2 \times 2$ Rubik's cube-like object, as illustrated in
Figure \ref{fig:TruchetCube}.

Similarly, one could do this analysis on other polyhedra, not limited to the
Platonic solids, Archimedean solids, Johnson solids, prisms, antiprisms, and
even polyhedra whose faces are not regular polygons, such as the
rhombic dodecahedron.

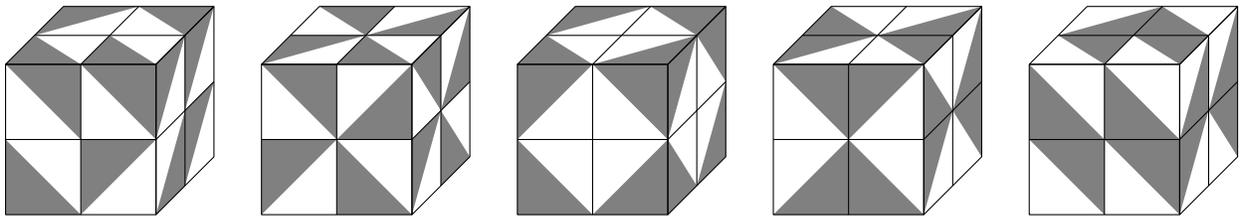
\begin{figure}[ht]
  \begin{tikzpicture}[line join=bevel]
  \fill[black!50]
  (2,2,2)--
  (2,2,1)--
  (2,1,2)--
  cycle;
  \fill[black!50]
  (2,1,1)--
  (2,0,1)--
  (2,1,0)--
  cycle;
  \fill[black!50]
  (2,1,1)--
  (2,2,0)--
  (2,2,1)--
  cycle;
  \fill[black!50]
  (2,1,1)--
  (2,0,2)--
  (2,0,1)--
  cycle;

  \fill[black!50]
  (1,2,1)--
  (2,2,2)--
  (1,2,2)--
  cycle;
  \fill[black!50]
  (0,2,0)--
  (1,2,0)--
  (0,2,1)--
  cycle;
  \fill[black!50]
  (2,2,0)--
  (1,2,0)--
  (2,2,1)--
  cycle;
  \fill[black!50]
  (0,2,2)--
  (1,2,2)--
  (0,2,1)--
  cycle;

  \fill[black!50]
  (2,2,2)--
  (1,2,2)--
  (2,1,2)--
  cycle;
  \fill[black!50]
  (0,0,2)--
  (1,0,2)--
  (0,1,2)--
  cycle;
  \fill[black!50]
  (1,1,2)--
  (1,0,2)--
  (2,1,2)--
  cycle;
  \fill[black!50]
  (1,1,2)--
  (0,2,2)--
  (1,2,2)--
  cycle;

  \draw (2,1,2)--(2,1,0);
  \draw (2,2,1)--(2,0,1);

  \draw (1,2,2)--(1,2,0);
  \draw (2,2,1)--(0,2,1);

  \draw (1,2,2)--(1,0,2);
  \draw (2,1,2)--(0,1,2);

  \draw (2,2,2) -- (2,2,0) -- (2,0,0) -- (2,0,2) -- cycle;
  \draw (2,2,2) -- (2,2,0) -- (0,2,0) -- (0,2,2) -- cycle;
  \draw (2,2,2) -- (2,0,2) -- (0,0,2) -- (0,2,2) -- cycle;
\end{tikzpicture}
  \hfill
  \begin{tikzpicture}[line join=bevel]
  \fill[black!50] (2,1,1)--(2,2,2)--
  (2,2,1)
  --cycle;
  \fill[black!50] (2,1,1)--(2,0,0)--
  (2,0,1)
  --cycle;
  \fill[black!50] (2,1,1)--(2,2,0)--
  (2,1,0)
  --cycle;
  \fill[black!50] (2,1,1)--(2,0,2)--
  (2,1,2)
  --cycle;

  \fill[black!50] (1,2,1)--(2,2,2)--
  (1,2,2)
  --cycle;
  \fill[black!50] (1,2,1)--(0,2,0)--
  (1,2,0)
  --cycle;
  \fill[black!50] (1,2,1)--(2,2,0)--
  (2,2,1)
  --cycle;
  \fill[black!50] (1,2,1)--(0,2,2)--
  (0,2,1)
  --cycle;

  \fill[black!50] (1,1,2)--(2,2,2)--
  (2,1,2)
  --cycle;
  \fill[black!50] (1,1,2)--(0,0,2)--
  (0,1,2)
  --cycle;
  \fill[black!50] (1,1,2)--(2,0,2)--
  (1,0,2)
  --cycle;
  \fill[black!50] (1,1,2)--(0,2,2)--
  (1,2,2)
  --cycle;

  \draw (2,1,2)--(2,1,0);
  \draw (2,2,1)--(2,0,1);

  \draw (1,2,2)--(1,2,0);
  \draw (2,2,1)--(0,2,1);

  \draw (1,2,2)--(1,0,2);
  \draw (2,1,2)--(0,1,2);

  \draw (2,2,2) -- (2,2,0) -- (2,0,0) -- (2,0,2) -- cycle;
  \draw (2,2,2) -- (2,2,0) -- (0,2,0) -- (0,2,2) -- cycle;
  \draw (2,2,2) -- (2,0,2) -- (0,0,2) -- (0,2,2) -- cycle;
\end{tikzpicture}
  \hfill
  \begin{tikzpicture}[line join=bevel]
  \fill[black!50]
  (2,2,2)--
  (2,2,1)--
  (2,1,2)
  --cycle;
  \fill[black!50]
  (2,0,0)--
  (2,0,1)--
  (2,1,0)
  --cycle;
  \fill[black!50]
  (2,2,0)--
  (2,1,0)--
  (2,2,1)
  --cycle;
  \fill[black!50]
  (2,0,2)--
  (2,0,1)--
  (2,1,2)
  --cycle;

  \fill[black!50]
  (2,2,2)--
  (1,2,2)--
  (2,2,1)
  --cycle;
  \fill[black!50]
  (0,2,0)--
  (1,2,0)--
  (0,2,1)
  --cycle;
  \fill[black!50]
  (2,2,0)--
  (1,2,0)--
  (2,2,1)
  --cycle;
  \fill[black!50]
  (0,2,2)--
  (1,2,2)--
  (0,2,1)
  --cycle;

  \fill[black!50]
  (2,2,2)--
  (1,2,2)--
  (2,1,2)
  --cycle;
  \fill[black!50]
  (0,0,2)--
  (1,0,2)--
  (0,1,2)
  --cycle;
  \fill[black!50]
  (2,0,2)--
  (1,0,2)--
  (2,1,2)
  --cycle;
  \fill[black!50]
  (0,2,2)--
  (0,1,2)--
  (1,2,2)
  --cycle;

  \draw (2,1,2)--(2,1,0);
  \draw (2,2,1)--(2,0,1);

  \draw (1,2,2)--(1,2,0);
  \draw (2,2,1)--(0,2,1);

  \draw (1,2,2)--(1,0,2);
  \draw (2,1,2)--(0,1,2);

  \draw (2,2,2) -- (2,2,0) -- (2,0,0) -- (2,0,2) -- cycle;
  \draw (2,2,2) -- (2,2,0) -- (0,2,0) -- (0,2,2) -- cycle;
  \draw (2,2,2) -- (2,0,2) -- (0,0,2) -- (0,2,2) -- cycle;
\end{tikzpicture}
  \hfill
  \begin{tikzpicture}[line join=bevel]
  \fill[black!50] (2,1,1)--(2,2,2)--
  (2,1,2)
  --cycle;
  \fill[black!50] (2,1,1)--(2,0,0)--
  (2,1,0)
  --cycle;
  \fill[black!50] (2,1,1)--(2,2,0)--
  (2,1,0)
  --cycle;
  \fill[black!50] (2,1,1)--(2,0,2)--
  (2,1,2)
  --cycle;

  \fill[black!50] (1,2,1)--(2,2,2)--
  (2,2,1)
  --cycle;
  \fill[black!50] (1,2,1)--(0,2,0)--
  (0,2,1)
  --cycle;
  \fill[black!50] (1,2,1)--(2,2,0)--
  (2,2,1)
  --cycle;
  \fill[black!50] (1,2,1)--(0,2,2)--
  (0,2,1)
  --cycle;

  \fill[black!50] (1,1,2)--(2,2,2)--
  (1,2,2)
  --cycle;
  \fill[black!50] (1,1,2)--(0,0,2)--
  (1,0,2)
  --cycle;
  \fill[black!50] (1,1,2)--(2,0,2)--
  (1,0,2)
  --cycle;
  \fill[black!50] (1,1,2)--(0,2,2)--
  (1,2,2)
  --cycle;

  \draw (2,1,2)--(2,1,0);
  \draw (2,2,1)--(2,0,1);

  \draw (1,2,2)--(1,2,0);
  \draw (2,2,1)--(0,2,1);

  \draw (1,2,2)--(1,0,2);
  \draw (2,1,2)--(0,1,2);

  \draw (2,2,2) -- (2,2,0) -- (2,0,0) -- (2,0,2) -- cycle;
  \draw (2,2,2) -- (2,2,0) -- (0,2,0) -- (0,2,2) -- cycle;
  \draw (2,2,2) -- (2,0,2) -- (0,0,2) -- (0,2,2) -- cycle;
\end{tikzpicture}
  \hfill
  \begin{tikzpicture}[line join=bevel]
  \fill[black!50]
  (2,1,1)--
  (2,2,1)--
  (2,1,2)--
  cycle;
  \fill[black!50]
  (2,1,1)--
  (2,0,0)--
  (2,1,0)--
  cycle;
  \fill[black!50]
  (2,1,1)--
  (2,2,0)--
  (2,1,0)--
  cycle;
  \fill[black!50]
  (2,1,1)--
  (2,0,2)--
  (2,1,2)--
  cycle;

  \fill[black!50]
  (1,2,1)--
  (2,2,2)--
  (2,2,1)--
  cycle;
  \fill[black!50]
  (1,2,1)--
  (1,2,0)--
  (0,2,1)--
  cycle;
  \fill[black!50]
  (1,2,1)--
  (1,2,0)--
  (2,2,1)--
  cycle;
  \fill[black!50]
  (1,2,1)--
  (1,2,2)--
  (0,2,1)--
  cycle;

  \fill[black!50]
  (1,1,2)--
  (1,2,2)--
  (2,1,2)--
  cycle;
  \fill[black!50]
  (1,1,2)--
  (1,0,2)--
  (0,1,2)--
  cycle;
  \fill[black!50]
  (1,1,2)--
  (2,0,2)--
  (2,1,2)--
  cycle;
  \fill[black!50]
  (1,1,2)--
  (0,2,2)--
  (0,1,2)--
  cycle;

  \draw (2,1,2)--(2,1,0);
  \draw (2,2,1)--(2,0,1);

  \draw (1,2,2)--(1,2,0);
  \draw (2,2,1)--(0,2,1);

  \draw (1,2,2)--(1,0,2);
  \draw (2,1,2)--(0,1,2);

  \draw (2,2,2) -- (2,2,0) -- (2,0,0) -- (2,0,2) -- cycle;
  \draw (2,2,2) -- (2,2,0) -- (0,2,0) -- (0,2,2) -- cycle;
  \draw (2,2,2) -- (2,0,2) -- (0,0,2) -- (0,2,2) -- cycle;
\end{tikzpicture}
  \caption{Five illustrations of $2 \times 2 \times 2$ cubes tiled with
  Truchet tiles.}
  \label{fig:TruchetCube}
\end{figure}

\subsection{Hyperbolic plane}
In addition to the settings with no curvature (the plane)
and positive curvature (polyhedra)
it is also interesting to look at this in the negative curvature setting of the
hyperbolic plane, as illustrated in Figure \ref{fig:hyperbolicTiling}.

\begin{figure}[ht]
  \includegraphics[width=0.45\textwidth]{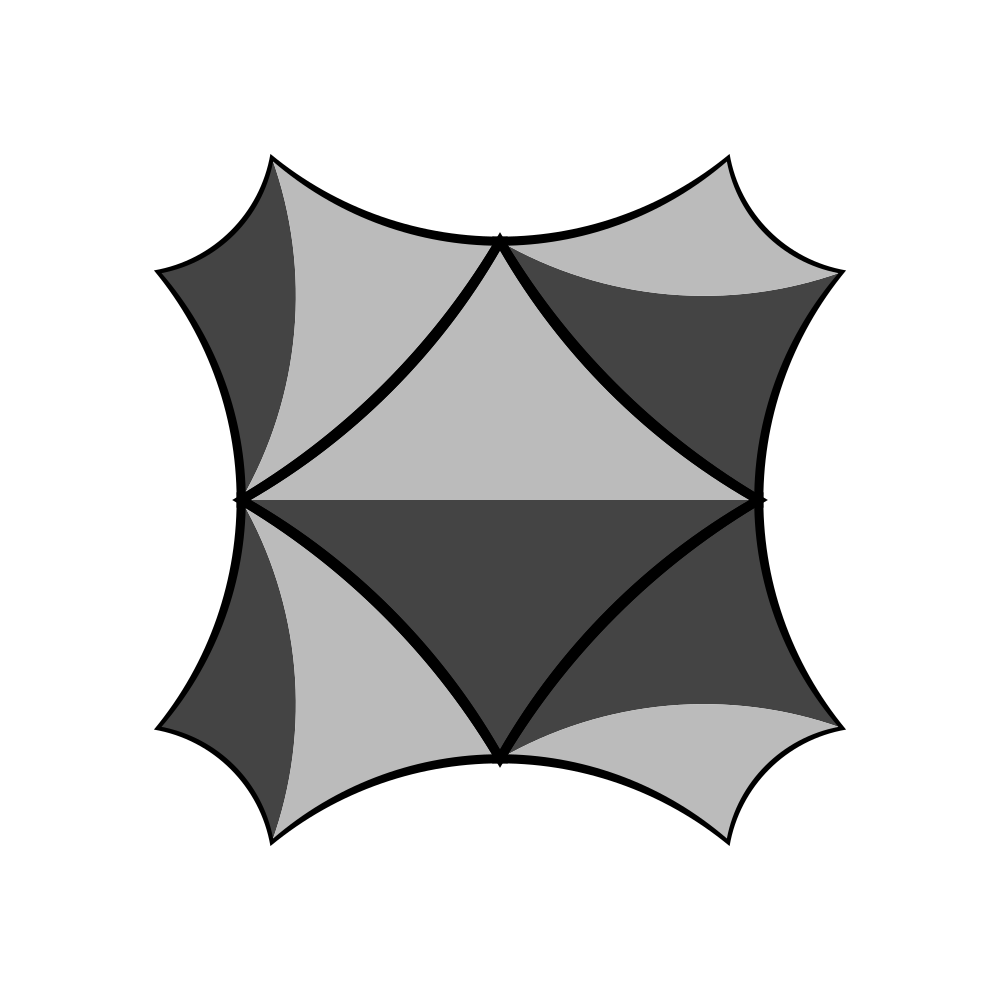}
  \hfill
  \includegraphics[width=0.45\textwidth]{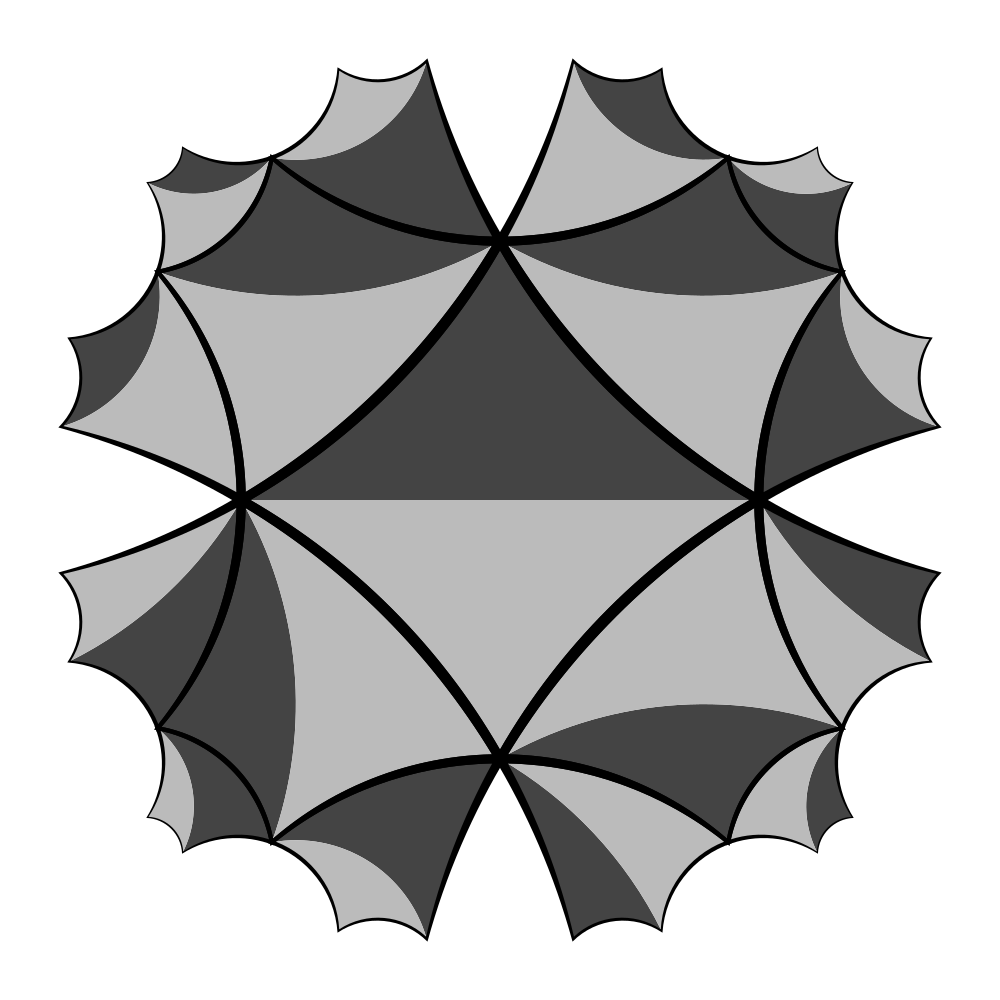}
  \caption{An illustration showing tilings of the size $2$ and size $3$
  iterations of the order-$5$ square tiling of the hyperbolic plane.}
  \label{fig:hyperbolicTiling}
\end{figure}

\subsection{Higher dimensional objects}
We are also interested in computing higher-dimensional analogs, such as where
the tile designs are space-filling polyhedra.

\subsection{Permuting tile colors}
In some illustrations, one may observe that two tilings are equivalent up to
swapping the colors of the tiles, as illustrated in
Figure \ref{fig:PermuteColors}. In Appendix \ref{apss:nXnTorusIllustrations},
you may notice that for each
tiling in Figures \ref{fig:nXnTorus_r_f__rf} and
\ref{fig:nXnTorus_r_f__id}, swapping the colors of the tiling is equivalent to
a $180^\circ$ rotation, a property that we would like to understand better.

\begin{figure}[ht]
  \noindent
  ~ \hfill
  \begin{tikzpicture}
    \fill[white] (0,0) rectangle (4,2);
    \fill[gray] (0,0) rectangle (1,1);
    \fill[black] (0,1) -- (0,2) -- (1,2);
    \fill[black] (1,0) -- (0,0) -- (0,1);

    \fill[black] (1,2) -- (2,2) -- (1,1);
    \fill[gray] (1,0) -- (2,0) -- (2,1);

    \fill[gray] (2,1) rectangle (3,2);
    \fill[black] (2,2) -- (3,2) -- (3,1);
    \fill[gray] (2,1) -- (3,0) -- (3,1);

    \fill[gray] (3,2) -- (4,2) -- (4,1);
    \fill[black] (3,0) -- (4,0) -- (4,1);
    \draw[gray] (0,0) grid (4,2);
    \draw[black, dash pattern= on 2pt off 4pt,dash phase=0pt] (0,0) grid (4,2);
    \draw[white, dash pattern= on 2pt off 4pt,dash phase=2pt] (0,0) grid (4,2);
    \draw[gray, dash pattern= on 2pt off 4pt,dash phase=4pt] (0,0) grid (4,2);
  \end{tikzpicture}
  \hfill
  \begin{tikzpicture}
    \fill[black] (0,0) rectangle (4,2);
    \fill[white] (0,0) rectangle (1,1);
    \fill[gray] (0,1) -- (0,2) -- (1,2);
    \fill[gray] (1,0) -- (0,0) -- (0,1);

    \fill[gray] (1,2) -- (2,2) -- (1,1);
    \fill[white] (1,0) -- (2,0) -- (2,1);

    \fill[white] (2,1) rectangle (3,2);
    \fill[gray] (2,2) -- (3,2) -- (3,1);
    \fill[white] (2,1) -- (3,0) -- (3,1);

    \fill[white] (3,2) -- (4,2) -- (4,1);
    \fill[gray] (3,0) -- (4,0) -- (4,1);
    \draw[gray] (0,0) grid (4,2);
    \draw[gray, dash pattern= on 2pt off 4pt,dash phase=0pt] (0,0) grid (4,2);
    \draw[black, dash pattern= on 2pt off 4pt,dash phase=2pt] (0,0) grid (4,2);
    \draw[white, dash pattern= on 2pt off 4pt,dash phase=4pt] (0,0) grid (4,2);
  \end{tikzpicture}
  \hfill
  \begin{tikzpicture}
    \fill[gray] (0,0) rectangle (4,2);
    \fill[black] (0,0) rectangle (1,1);
    \fill[white] (0,1) -- (0,2) -- (1,2);
    \fill[white] (1,0) -- (0,0) -- (0,1);

    \fill[white] (1,2) -- (2,2) -- (1,1);
    \fill[black] (1,0) -- (2,0) -- (2,1);

    \fill[black] (2,1) rectangle (3,2);
    \fill[white] (2,2) -- (3,2) -- (3,1);
    \fill[black] (2,1) -- (3,0) -- (3,1);

    \fill[black] (3,2) -- (4,2) -- (4,1);
    \fill[white] (3,0) -- (4,0) -- (4,1);
    \draw[white, dash pattern= on 2pt off 4pt,dash phase=0pt] (0,0) grid (4,2);
    \draw[gray, dash pattern= on 2pt off 4pt,dash phase=2pt] (0,0) grid (4,2);
    \draw[black, dash pattern= on 2pt off 4pt,dash phase=4pt] (0,0) grid (4,2);
  \end{tikzpicture}
  \hfill ~

  ~

  \noindent
  ~ \hfill
  \begin{tikzpicture}
    \fill[black] (0,0) rectangle (4,2);
    \fill[gray] (0,0) rectangle (1,1);
    \fill[white] (0,1) -- (0,2) -- (1,2);
    \fill[white] (1,0) -- (0,0) -- (0,1);

    \fill[white] (1,2) -- (2,2) -- (1,1);
    \fill[gray] (1,0) -- (2,0) -- (2,1);

    \fill[gray] (2,1) rectangle (3,2);
    \fill[white] (2,2) -- (3,2) -- (3,1);
    \fill[gray] (2,1) -- (3,0) -- (3,1);

    \fill[gray] (3,2) -- (4,2) -- (4,1);
    \fill[white] (3,0) -- (4,0) -- (4,1);
    \draw[gray] (0,0) grid (4,2);
    \draw[white, dash pattern= on 2pt off 4pt,dash phase=0pt] (0,0) grid (4,2);
    \draw[black, dash pattern= on 2pt off 4pt,dash phase=2pt] (0,0) grid (4,2);
    \draw[gray, dash pattern= on 2pt off 4pt,dash phase=4pt] (0,0) grid (4,2);
  \end{tikzpicture}
  \hfill
  \begin{tikzpicture}
    \fill[gray] (0,0) rectangle (4,2);
    \fill[white] (0,0) rectangle (1,1);
    \fill[black] (0,1) -- (0,2) -- (1,2);
    \fill[black] (1,0) -- (0,0) -- (0,1);

    \fill[black] (1,2) -- (2,2) -- (1,1);
    \fill[white] (1,0) -- (2,0) -- (2,1);

    \fill[white] (2,1) rectangle (3,2);
    \fill[black] (2,2) -- (3,2) -- (3,1);
    \fill[white] (2,1) -- (3,0) -- (3,1);

    \fill[white] (3,2) -- (4,2) -- (4,1);
    \fill[black] (3,0) -- (4,0) -- (4,1);
    \draw[gray] (0,0) grid (4,2);
    \draw[black, dash pattern= on 2pt off 4pt,dash phase=0pt] (0,0) grid (4,2);
    \draw[gray, dash pattern= on 2pt off 4pt,dash phase=2pt] (0,0) grid (4,2);
    \draw[white, dash pattern= on 2pt off 4pt,dash phase=4pt] (0,0) grid (4,2);
  \end{tikzpicture}
  \hfill
  \begin{tikzpicture}
    \fill[white] (0,0) rectangle (4,2);
    \fill[black] (0,0) rectangle (1,1);
    \fill[gray] (0,1) -- (0,2) -- (1,2);
    \fill[gray] (1,0) -- (0,0) -- (0,1);

    \fill[gray] (1,2) -- (2,2) -- (1,1);
    \fill[black] (1,0) -- (2,0) -- (2,1);

    \fill[black] (2,1) rectangle (3,2);
    \fill[gray] (2,2) -- (3,2) -- (3,1);
    \fill[black] (2,1) -- (3,0) -- (3,1);

    \fill[black] (3,2) -- (4,2) -- (4,1);
    \fill[gray] (3,0) -- (4,0) -- (4,1);
    \draw[gray, dash pattern= on 2pt off 4pt,dash phase=0pt] (0,0) grid (4,2);
    \draw[white, dash pattern= on 2pt off 4pt,dash phase=2pt] (0,0) grid (4,2);
    \draw[black, dash pattern= on 2pt off 4pt,dash phase=4pt] (0,0) grid (4,2);
  \end{tikzpicture}
  \hfill ~
  \caption{The following six tilings would be considered equivalent under
  permuting colors.}
  \label{fig:PermuteColors}
  \end{figure}
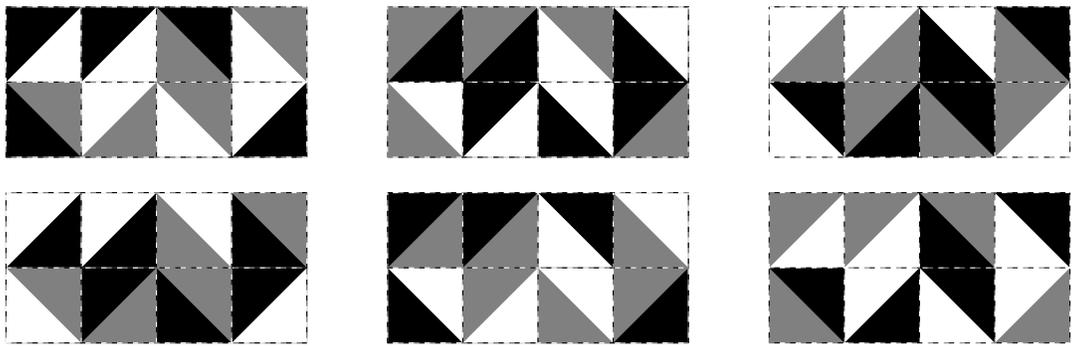

\printbibliography

\appendix
\appendixpagenumbering
\section{Sequences}
\label{apn:sequencesAndTables}
This section of the appendix gives examples of all of the different sequences
and tables of
integers that count tilings of the $n\times m$ grid, cylinder, and torus
for all valid choices of $R \leq D_8$ and all sets of tile designs
consisting of a single orbit.
\subsection{The \texorpdfstring{$n \times m$}{n by m} grid}     %
This section gives examples of every choice of symmetry of the $n \times m$
grid together with every essentially different set of tile designs that
consists of a single orbit (or two orbits, in the case of a fully symmetric tile).
Each sequence is annotated with its corresponding entry in the
On-Line Encyclopedia of Integer Sequences.
A table of all such sequences is given in Table \ref{tabl:nXmGridIndex}.
\begin{table}[ht]
\centering
\begin{tabular}{l|l|l|l|}
                                      & $\langle r^2, f \rangle$                      & $\langle f \rangle \cong C_2$            & $\langle r^2 \rangle \cong C_2$ \\
\hline
$\mathcal O_{\langle r^2, f \rangle}$ & \tableTableEntry{nXmGrid_rr_f__rr_f}{A225910} & \NA                                      & \NA                                       \\[10pt]
$\mathcal O_{\langle f \rangle}$      & \tableTableEntry{nXmGrid_rr_f__f}{A368218}    & \tableTableEntry{nXmGrid_f__f}{A368221}  & \NA                                       \\[10pt]
$\mathcal O_{\langle r^2 \rangle}$    & \tableTableEntry{nXmGrid_rr_f__rr}{A368219}   & \NA                                      & \tableTableEntry{nXmGrid_rr__rr}{A368223} \\[10pt]
$\mathcal O_\mathbbm{1}$              & \tableTableEntry{nXmGrid_rr_f__id}{A368220}   & \tableTableEntry{nXmGrid_f__id}{A368222} & \tableTableEntry{nXmGrid_rr__id}{A368224}
\end{tabular}
\caption{An index of tables that describe the number of tilings of the $n \times m$ grid.}
\label{tabl:nXmGridIndex}
\end{table}
\subsubsection{Under horizontal and vertical reflection}
When counting tilings of the grid up to ${\langle r^2, f \rangle}$, we have that
\begin{align}
  t_{\id} &=
        \orb{r^2,f}{r^2, f}
    + 2 \orb{r^2,f}{f}
    + 2 \orb{r^2,f}{r^2f}
    + 2 \orb{r^2,f}{r^2}
    + 4 \orbid{r^2,f} \\
  t_f      &= \orb{r^2,f}{r^2, f} + 2 \orb{r^2,f}{f} \\
  t_{r^2f} &= \orb{r^2,f}{r^2, f} + 2 \orb{r^2,f}{r^2f} \\
  t_{r^2}  &= \orb{r^2,f}{r^2, f} + 2 \orb{r^2,f}{r^2}
\end{align}
\begin{proposition}
When $\orb{r^2, f}{r^2, f} = 2$, such as when
\[
T = \left\{\tileColor[black!90!white], \tileColor[white!90!black]\right\},
\]
the number of tilings of the $n \times m$ grid up to horizontal/vertical
reflection
by tile designs that are fixed horizontal/vertical reflection
is given by the following table:
\[
\begin{array}{r}
n=1 \\ n=2 \\ n=3 \\ n=4 \\ n=5 \\ n=6
\end{array}
\left|
\begin{array}{rrrrrr}
2 & 3 & 6 & 10 & 20 & 36 \\
3 & 7 & 24 & 76 & 288 & 1072 \\
6 & 24 & 168 & 1120 & 8640 & 66816 \\
10 & 76 & 1120 & 16576 & 263680 & 4197376 \\
20 & 288 & 8640 & 263680 & 8407040 & 268517376 \\
36 & 1072 & 66816 & 4197376 & 268517376 & 17180065792 \\
\end{array}
\right.
\label{tabl:nXmGrid_rr_f__rr_f}
\]
\end{proposition}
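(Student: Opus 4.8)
The plan is to read this proposition as a direct specialization of Theorem~\ref{thm:nXmGridFormulae}: once the values $t_g$ forced by the hypothesis are pinned down, the entire table is produced mechanically by the Burnside count \eqref{eq:distinctTilingsR} with $|R| = 4$. So the first step is to translate the hypothesis $\orb{r^2,f}{r^2,f} = 2$ into data about the $t_g$. Since $T = \left\{\tileColor[black!90!white], \tileColor[white!90!black]\right\}$ consists of exactly two tile designs, each of which is a singleton orbit with full stabilizer $\langle r^2, f\rangle$, every other orbit count vanishes. Substituting into the expansions of $t_{\id}, t_f, t_{r^2f}, t_{r^2}$ in terms of the $\orb{r^2,f}{S}$ then yields $t_g = 2$ for \emph{every} $g \in \langle r^2, f\rangle$; equivalently, each tile design is fixed by every symmetry, which is exactly what being fully symmetric means.

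Next I would substitute $t_{\id} = t_f = t_{r^2f} = t_{r^2} = 2$ into equations \eqref{eq:fxpt_R_id}--\eqref{eq:fxpt_R_rrf}. The case splits on parity collapse into a uniform ceiling notation:
\[
  \fxpt^\RG_{\id}(n,m) = 2^{nm}, \qquad
  \fxpt^\RG_{r^2}(n,m) = 2^{\lceil nm/2\rceil}, \qquad
  \fxpt^\RG_{f}(n,m)   = 2^{\lceil n/2\rceil\, m}, \qquad
  \fxpt^\RG_{r^2f}(n,m)= 2^{n\,\lceil m/2\rceil},
\]
where for instance $n$ odd gives $2^{m(n-1)/2}t_f^m = 2^{m(n+1)/2} = 2^{\lceil n/2\rceil m}$, matching the ceiling. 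Applying \eqref{eq:distinctTilingsR} then gives the closed form
\[
  \frac{1}{4}\left(2^{nm} + 2^{\lceil nm/2\rceil} + 2^{\lceil n/2\rceil m} + 2^{n\lceil m/2\rceil}\right),
\]
which I would confirm is symmetric under $n \leftrightarrow m$ (the last two terms swap), consistent with the symmetry of the displayed table.

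Finally I would verify that this expression reproduces the tabulated entries; for example $(n,m) = (3,3)$ gives $\tfrac14(512 + 32 + 64 + 64) = 168$, and $(n,m) = (2,3)$ gives $\tfrac14(64 + 8 + 8 + 16) = 24$, agreeing with the table. There is no genuine conceptual obstacle here: all the mathematical content lives in Theorem~\ref{thm:nXmGridFormulae}, and the only care required is the parity bookkeeping and the sanity check that the four fixed-point formulas specialize correctly when $t_g = 2$. The step most prone to error is therefore the uniform handling of the parity cases, which is why I would make the ceiling reformulation explicit rather than carry four separate parity branches through the final Burnside average.
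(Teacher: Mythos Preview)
Your proposal is correct and is exactly the approach the paper intends: the proposition is stated in the appendix as a direct specialization of Theorem~\ref{thm:nXmGridFormulae}, with no separate proof given, and your reduction to $t_g = 2$ for all $g \in \langle r^2, f\rangle$ followed by the Burnside average \eqref{eq:distinctTilingsR} is precisely that specialization. The ceiling reformulation is a clean way to unify the parity cases, and your spot checks against the table are accurate.
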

This is OEIS sequence A225910.
\begin{proposition}
When $\orb{r^2, f}{f} = 1$, such as when
\[
T = \left\{\tileU, \tileD\right\},
\]
the number of tilings of the $n \times m$ grid up to horizontal and vertical
reflection by tiles that are fixed under horizontal reflection but not
vertical reflection is given by the following table:
\[
\begin{array}{r}
n=1 \\ n=2 \\ n=3 \\ n=4 \\ n=5 \\ n=6
\end{array}
\left|
\begin{array}{rrrrrr}
1  & 3   & 4      & 10      & 16        & 36 \\
2  & 7   & 20     & 76      & 272       & 1072 \\
3  & 24  & 144    & 1120    & 8448      & 66816 \\
6  & 76  & 1056   & 16576   & 262656    & 4197376 \\
10 & 288 & 8320   & 263680  & 8396800   & 268517376 \\
20 & 1072 & 65792 & 4197376 & 268451840 & 17180065792 \\
\end{array}
\right.
\label{tabl:nXmGrid_rr_f__f}
\]
The transpose of this table is the number of tilings by tiles fixed under
vertical reflection but not horizontal reflection.
\end{proposition}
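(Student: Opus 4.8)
The plan is to reduce this to a direct application of Theorem~\ref{thm:nXmGridFormulae}, once the relevant tile counts $t_g$ have been read off from the hypothesis. The key point is that the combinatorial content of a set of tile designs enters the grid formulas only through the four numbers $t_{\id}$, $t_f$, $t_{r^2f}$, $t_{r^2}$, and by the orbit relations displayed just before this proposition these four numbers are completely determined by the orbit data. So the first step is to specialize those relations to the case at hand.

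Under the hypothesis $\orb{r^2,f}{f} = 1$ with every other orbit count equal to zero, the displayed relations give
\begin{align*}
  t_{\id}  &= 2\orb{r^2,f}{f} = 2, &\qquad t_f &= 2\orb{r^2,f}{f} = 2, \\
  t_{r^2f} &= 0, &\qquad t_{r^2} &= 0.
\end{align*}
Concretely, the single orbit $\{\tileU,\tileD\}$ has size two, with each element fixed by $f$ but by no other nonidentity element of the (abelian) group $\langle r^2,f\rangle$. This is the heart of the matter: the vanishing of $t_{r^2}$ and $t_{r^2f}$ is what makes the answer interesting, since it forces several fixed-point counts to collapse to $0$.

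Next I would substitute these values into equations~\eqref{eq:fxpt_R_id}--\eqref{eq:fxpt_R_rrf}. One reads off $\fxpt^\RG_{\id}(n,m) = 2^{nm}$ immediately, while the other three counts depend on parity. Because $t_{r^2}=0$, equation~\eqref{eq:fxpt_R_rr} gives $2^{nm/2}$ when $nm$ is even and $0$ when $nm$ is odd; because $t_{r^2f}=0$, equation~\eqref{eq:fxpt_R_rrf} gives $2^{nm/2}$ when $m$ is even and $0$ when $m$ is odd; and since $t_f=2$, equation~\eqref{eq:fxpt_R_f} gives $2^{nm/2}$ when $n$ is even and $t_{\id}^{m(n-1)/2}t_f^m = 2^{m(n+1)/2}$ when $n$ is odd. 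Feeding these into the Burnside average~\eqref{eq:distinctTilingsR} with $|R|=4$ then produces a closed form in each of the four parity classes of $(n,m)$; for instance, when both $n$ and $m$ are odd the two parity-killed terms vanish and the count is $\tfrac14\bigl(2^{nm}+2^{m(n+1)/2}\bigr)$, which matches the odd–odd entries of the table.

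The only genuine obstacle is the bookkeeping: one must track which of the three parity-dependent summands survives in each of the four cases and verify that the exponents align, in particular that the factor $t_f^m$ contributes $2^m$ and is not silently absorbed. After producing the four closed forms, the final step is simply to evaluate them for $1 \le n,m \le 6$ and confirm agreement with the tabulated values. The closing transpose remark then follows from the symmetry of the construction under interchanging rows and columns, which swaps $n \leftrightarrow m$ and exchanges the roles of $f$ (horizontal reflection) and $r^2f$ (vertical reflection); applying the same computation with $t_{r^2f}=2$ and $t_f=0$ reproduces exactly the transposed array.
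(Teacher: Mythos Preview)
Your proposal is correct and follows exactly the intended approach: the paper does not give a separate proof of this proposition, since it is simply the specialization of Theorem~\ref{thm:nXmGridFormulae} to the tile counts $t_{\id}=t_f=2$, $t_{r^2}=t_{r^2f}=0$ obtained from the orbit relations displayed at the start of the subsection. Your parity bookkeeping and the transpose argument are both accurate.
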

This has been added to the OEIS as sequence A368218.
\begin{proposition}
When $\orb{r^2, f}{r^2} = 1$, such as when
\[
T = \left\{\tileDiag, \tileAdiag\right\},
\]
the number of tilings of the $n \times m$ grid up to horizontal and vertical
reflection by tiles that are fixed under $180^\circ$ rotation, but not
horizontal or vertical reflection is given by the following table:
\[
\begin{array}{r}
n=1 \\ n=2 \\ n=3 \\ n=4 \\ n=5 \\ n=6
\end{array}
\left|
\begin{array}{rrrrrr}
1 & 2 & 3 & 6 & 10 & 20 \\
2 & 7 & 20 & 76 & 272 & 1072 \\
3 & 20 & 136 & 1056 & 8256 & 65792 \\
6 & 76 & 1056 & 16576 & 262656 & 4197376 \\
10 & 272 & 8256 & 262656 & 8390656 & 268451840 \\
20 & 1072 & 65792 & 4197376 & 268451840 & 17180065792 \\
\end{array}
\right.
\label{tabl:nXmGrid_rr_f__rr}
\]
This table is symmetric across its main diagonal.
\end{proposition}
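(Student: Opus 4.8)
The plan is to reduce everything to the rectangular-grid fixed-point formulas of Theorem \ref{thm:nXmGridFormulae}, since the relevant symmetry group is $R = \langle r^2, f\rangle = D_4$. First I would read off the tile-symmetry counts from the hypothesis. Feeding $\orb{r^2,f}{r^2} = 1$ together with all other orbit counts equal to $0$ into the relations expressing each $t_g$ in terms of the orbit numbers $\orb{r^2,f}{S}$ gives $t_{\id} = 2$, $t_{r^2} = 2$, and $t_f = t_{r^2f} = 0$. These are concretely realized by $T = \left\{\tileDiag, \tileAdiag\right\}$, a single orbit of size $2$ whose stabilizer is $\langle r^2\rangle$, matching the description of tiles fixed under $180^\circ$ rotation but not under either reflection.

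Next I would substitute these values into the four cases of Theorem \ref{thm:nXmGridFormulae}. This yields $\fxpt^\RG_{\id}(n,m) = 2^{nm}$; then $\fxpt^\RG_{r^2}(n,m) = 2^{nm/2}$ when $nm$ is even and $2^{(nm+1)/2}$ when $nm$ is odd; then $\fxpt^\RG_{f}(n,m) = 2^{nm/2}$ when $n$ is even and $0$ when $n$ is odd (the factor $t_f^m$ vanishes); and symmetrically $\fxpt^\RG_{r^2f}(n,m) = 2^{nm/2}$ when $m$ is even and $0$ when $m$ is odd. The one point requiring care is that the vanishing of $t_f$ and $t_{r^2f}$ collapses the odd-$n$ and odd-$m$ subcases to $0$, which I would confirm holds for all $m,n \geq 1$. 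Averaging these four quantities as in Equation \eqref{eq:distinctTilingsR} with $|R| = 4$ then produces the closed form from which the tabulated values follow; I would verify agreement on a few entries, for instance checking that $(n,m) = (3,3)$ gives $(2^9 + 2^4 \cdot 2)/4 = 136$.

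Finally, to establish symmetry of the table across its main diagonal, I would argue directly from the Burnside average rather than constructing a bijection. The terms $\fxpt^\RG_{\id}$ and $\fxpt^\RG_{r^2}$ depend on $n$ and $m$ only through the product $nm$, so each is invariant under exchanging $n$ and $m$. The remaining pair is not individually symmetric, but a short parity split shows that $\fxpt^\RG_{f}(n,m) + \fxpt^\RG_{r^2f}(n,m)$ equals $2 \cdot 2^{nm/2}$, $2^{nm/2}$, or $0$ according to whether two, one, or zero of $n,m$ are even, a quantity unchanged when $n$ and $m$ are swapped. Hence the entire sum, and therefore the count, is invariant under $n \leftrightarrow m$. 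I expect the only real friction to lie in the bookkeeping of this last parity case analysis; a conceptual alternative would be to exhibit the grid-transpose bijection and check that it commutes with the $D_4$-action on $T$, but the computational route is self-contained and avoids that verification.
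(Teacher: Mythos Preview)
Your proposal is correct and follows exactly the approach the paper intends: these appendix propositions are direct specializations of Theorem~\ref{thm:nXmGridFormulae}, and you carry out that specialization cleanly, including the correct reading of $t_{\id}=t_{r^2}=2$ and $t_f=t_{r^2f}=0$ from the orbit data. Your explicit parity argument for the diagonal symmetry of the table is a nice addition, as the paper merely asserts that symmetry without proof.
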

This has been added to the OEIS as sequence A368219.
\begin{proposition}
When $\orbid{r^2, f} = 1$, such as when
\[
T = \left\{\tileNW, \tileNE, \tileSW, \tileSE\right\},
\]
the number of tilings of the $n \times m$ grid up to horizontal and vertical
reflection by tiles that are fixed only under $\id \in D_4$ is given by the
following table:
\[
\begin{array}{r}
n=1 \\ n=2 \\ n=3 \\ n=4 \\ n=5 \\ n=6
\end{array}
\left|
\begin{array}{rrrrr}
1 & 6 & 16 & 72 & 256 \\
6 & 76 & 1056 & 16576 & 262656 \\
16 & 1056 & 65536 & 4196352 & 268435456 \\
72 & 16576 & 4196352 & 1073790976 & 274878431232 \\
256 & 262656 & 268435456 & 274878431232 & 281474976710656 \\
1056 & 4197376 & 17180000256 & 70368756760576 & 288230376688582656 \\
\end{array}
\right.
\label{tabl:nXmGrid_rr_f__id}
\]
\end{proposition}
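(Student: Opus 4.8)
The plan is to reduce the entire proposition to a direct substitution into the fixed-point formulas of Theorem~\ref{thm:nXmGridFormulae}, followed by the Burnside count in Equation~\eqref{eq:distinctTilingsR}. First I would read off the tile-symmetry data forced by the hypothesis. Working with $R = \langle r^2, f\rangle$, the assumption $\orbid{r^2,f} = 1$ together with every other orbit count being zero, fed into the four identities
\begin{align*}
  t_{\id}  &= \orb{r^2,f}{r^2,f} + 2\orb{r^2,f}{f} + 2\orb{r^2,f}{r^2f} + 2\orb{r^2,f}{r^2} + 4\orbid{r^2,f}, \\
  t_f      &= \orb{r^2,f}{r^2,f} + 2\orb{r^2,f}{f}, \\
  t_{r^2f} &= \orb{r^2,f}{r^2,f} + 2\orb{r^2,f}{r^2f}, \\
  t_{r^2}  &= \orb{r^2,f}{r^2,f} + 2\orb{r^2,f}{r^2},
\end{align*}
immediately yields $t_{\id} = 4$ and $t_f = t_{r^2f} = t_{r^2} = 0$. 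This matches the exhibited example $T = \{\tileNW,\tileNE,\tileSW,\tileSE\}$, which is a single free orbit of four designs, each with trivial stabilizer.

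Next I would substitute these values into Theorem~\ref{thm:nXmGridFormulae}. Because $t_f$, $t_{r^2f}$, and $t_{r^2}$ all vanish, every odd-parity branch of Equations~\eqref{eq:fxpt_R_rr}--\eqref{eq:fxpt_R_rrf} collapses to $0$, while the even-parity branches and the identity term become pure powers of $4$:
\begin{align*}
  \fxpt^\RG_{\id}(n,m)  &= 4^{nm}, \\
  \fxpt^\RG_{r^2}(n,m)  &= 4^{nm/2}\,[\,nm \text{ even}\,], \\
  \fxpt^\RG_{f}(n,m)    &= 4^{nm/2}\,[\,n \text{ even}\,], \\
  \fxpt^\RG_{r^2f}(n,m) &= 4^{nm/2}\,[\,m \text{ even}\,],
\end{align*}
where $[P]$ denotes $1$ if $P$ holds and $0$ otherwise.

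Finally I would feed these into Equation~\eqref{eq:distinctTilingsR} with $|R| = 4$, splitting into the parity cases of $(n,m)$. When both $n$ and $m$ are odd, only the identity term survives, giving $4^{nm-1}$; when exactly one of them is even, two of the nonidentity terms survive, giving $\tfrac14\!\left(4^{nm} + 2\cdot 4^{nm/2}\right)$; and when both are even, all four terms survive, giving $\tfrac14\!\left(4^{nm} + 3\cdot 4^{nm/2}\right)$. I would close by checking a handful of entries against the tabulated values; for instance $(n,m)=(2,2)$ gives $\tfrac14(256+48)=76$, and $(n,m)=(3,3)$ gives $4^{8}=65536$, which together with the remaining parity cases reproduces the entire table. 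The argument is entirely mechanical, so there is no genuine obstacle; the only thing demanding care is correctly tracking the parity branches, since an off-by-one slip in an exponent of $4$ would propagate through every table entry.
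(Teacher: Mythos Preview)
Your proposal is correct and is exactly the approach the paper intends: the propositions in Appendix~\ref{apn:sequencesAndTables} are not given standalone proofs in the paper but are meant to follow by direct substitution of the relevant $t_g$ values into Theorem~\ref{thm:nXmGridFormulae} and Equation~\eqref{eq:distinctTilingsR}, which is precisely what you do. Your derivation of $t_{\id}=4$, $t_{r^2}=t_f=t_{r^2f}=0$, the parity case split, and the sample verifications are all sound.
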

This has been added to the OEIS as sequence A368220.
\subsubsection{Under horizontal (equivalently vertical) reflection}
When counting tilings of the grid up to ${\langle f \rangle}$
(equivalently ${\langle r^2f \rangle}$), we have that
\begin{align}
  t_{\id}  &= \orb{f}{f} + 2\orbid{f} \\
  t_f      &= \orb{f}{f}
\end{align}
\begin{proposition}
When $\orb{f}{f} = 2$, such as when
\[
T = \left\{\tileD[white!90!black], \tileD[black!90!white]\right\},
\]
the number of tilings of the $n \times m$ grid up to horizontal reflection by
two tiles that are fixed under horizontal reflection
is given by the following table:
\[
\begin{array}{r}
n=1 \\ n=2 \\ n=3 \\ n=4 \\ n=5 \\ n=6
\end{array}
\left|
\begin{array}{rrrrrr}
2 & 3 & 6 & 10 & 20 & 36 \\
4 & 10 & 40 & 136 & 544 & 2080 \\
8 & 36 & 288 & 2080 & 16640 & 131328 \\
16 & 136 & 2176 & 32896 & 526336 & 8390656 \\
32 & 528 & 16896 & 524800 & 16793600 & 536887296 \\
64 & 2080 & 133120 & 8390656 & 537001984 & 34359869440 \\
\end{array}
\right.
\label{tabl:nXmGrid_f__f}
\]
\end{proposition}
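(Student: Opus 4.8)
The plan is to obtain this proposition as an immediate corollary of Theorem~\ref{thm:nXmGridFormulae} specialized to the group $R = \langle f \rangle \cong C_2$. The first step is to convert the orbit data into the tile-fixed-point counts $t_g$ demanded by the fixed-point formulas. The displayed set $T$ realizes $\orb{f}{f} = 2$ and $\orbid{f} = 0$: each of its two tiles is fixed by $f$, so each forms its own $\langle f \rangle$-orbit with stabilizer $\langle f \rangle$, and there are no free orbits. Feeding these into the relations $t_{\id} = \orb{f}{f} + 2\orbid{f}$ and $t_f = \orb{f}{f}$ recorded for this symmetry group yields $t_{\id} = 2$ and $t_f = 2$. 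By the earlier lemma asserting that the tiling count depends only on the orbit tuple $(\orb{f}{f}, \orbid{f})$, it is enough to establish the table for these two values.

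Next I would substitute $t_{\id} = t_f = 2$ directly into the two relevant fixed-point formulas: equation~\eqref{eq:fxpt_R_id} gives $\fxpt^\RG_{\id}(n,m) = 2^{nm}$, and equation~\eqref{eq:fxpt_R_f} gives $\fxpt^\RG_f(n,m) = 2^{nm/2}$ when $n$ is even and $\fxpt^\RG_f(n,m) = 2^{m(n-1)/2}\,2^{m} = 2^{m(n+1)/2}$ when $n$ is odd. Applying Burnside's lemma in the form of equation~\eqref{eq:distinctTilingsR}, with $|R| = 2$ and the sum taken over $R = \{\id, f\}$, then produces
\[
  \frac{1}{2}\left(\fxpt^\RG_{\id}(n,m) + \fxpt^\RG_f(n,m)\right)
  = \begin{cases}
      2^{nm-1} + 2^{nm/2 - 1}     & n \text{ even}, \\
      2^{nm-1} + 2^{m(n+1)/2 - 1} & n \text{ odd}.
    \end{cases}
\]
A spot check against the tabulated values confirms the closed form: for instance $n = m = 2$ gives $2^{3} + 2^{1} = 10$, and $n = m = 3$ gives $2^{8} + 2^{5} = 288$, both matching the diagonal of the table.

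Because every ingredient is already proved in Theorem~\ref{thm:nXmGridFormulae}, there is no genuinely difficult step; the proof is pure bookkeeping. The two places that demand care are the translation of $(\orb{f}{f}, \orbid{f}) = (2, 0)$ into $t_{\id}$ and $t_f$, and the parity split inherited from equation~\eqref{eq:fxpt_R_f}, which is controlled by the parity of the reflected dimension, namely the number of columns $n$ over which $f$ acts. I would also verify the orientation of the displayed table against the $(n,m)$ ordering of Theorem~\ref{thm:nXmGridFormulae}: since $\fxpt^\RG_f$ is not symmetric in $n$ and $m$, this is the one spot where an accidental transposition of the table could occur, so comparing a genuinely off-diagonal entry (say, the value at $n=2$, $m=3$ versus $n=3$, $m=2$) against the formula is worthwhile.
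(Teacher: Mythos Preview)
Your proposal is correct and follows the paper's intended approach: these appendix propositions are not given separate proofs in the paper but are meant as direct specializations of Theorem~\ref{thm:nXmGridFormulae}, exactly as you carry out. Your caution about the table orientation is well placed---indeed, the displayed table matches your closed form only if the rows are read as $m$ and the columns as $n$ (e.g.\ the entry $40$ in the row labeled $n=2$ agrees with your formula at $(n,m)=(3,2)$, not $(2,3)$), so the labeling in the paper appears transposed; your formula and argument are not affected.
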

This has been added to the OEIS as sequence A368221.
\begin{proposition}
When $\orbid{f} = 1$, such as when
\[
T = \left\{\tileSW, \tileSE\right\},
\]
the number of tilings of the $n \times m$ grid up to horizontal reflection by
tiles that are fixed only under $\id \in \langle f \rangle$ is given by the
following table:
\[
\begin{array}{r}
n=1 \\ n=2 \\ n=3 \\ n=4 \\ n=5 \\ n=6
\end{array}
\left|
\begin{array}{rrrrrr}
1 & 2 & 4 & 8 & 16 & 32 \\
3 & 10 & 36 & 136 & 528 & 2080 \\
4 & 32 & 256 & 2048 & 16384 & 131072 \\
10 & 136 & 2080 & 32896 & 524800 & 8390656 \\
16 & 512 & 16384 & 524288 & 16777216 & 536870912 \\
36 & 2080 & 131328 & 8390656 & 536887296 & 34359869440 \\
\end{array}
\right.
\label{tabl:nXmGrid_f__id}
\]
\end{proposition}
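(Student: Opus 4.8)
The plan is to specialize the machinery of Theorem \ref{thm:nXmGridFormulae} to this particular tile set and then apply Burnside's lemma in the form \eqref{eq:distinctTilingsR}. First I would pin down the only two tile statistics the argument needs. The hypothesis $\orbid{f} = 1$, read together with the orbit relations preceding the proposition (so that $\orb{f}{f} = 0$), means the set of tile designs is a single free orbit of $\langle f \rangle$, namely $T = \{d, d \cdot f\}$ with $d \cdot f \neq d$. The displayed identities for this symmetry group then give $t_{\id} = \orb{f}{f} + 2\orbid{f} = 2$ and $t_f = \orb{f}{f} = 0$. These two values are the entire input to what follows.

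Next I would write out the Burnside count for $R = \langle f \rangle$, which is
\[
  \frac{1}{2}\left(\fxpt^\RG_{\id}(n,m) + \fxpt^\RG_f(n,m)\right).
\]
By Equation \eqref{eq:fxpt_R_id} the identity term is $\fxpt^\RG_{\id}(n,m) = t_{\id}^{nm} = 2^{nm}$, and by Equation \eqref{eq:fxpt_R_f} the reflection term splits on the parity of $n$: for $n$ even it equals $t_{\id}^{nm/2} = 2^{nm/2}$, while for $n$ odd it equals $t_{\id}^{m(n-1)/2} t_f^m$. The key observation is that this last factor $t_f^m = 0^m$ vanishes for every $m \geq 1$. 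Geometrically this is exactly the statement that an odd $n$ produces a full central column of cells fixed by the horizontal reflection, each of which would have to carry a tile design in $T^f = \varnothing$; hence no tiling is fixed by $f$ when $n$ is odd.

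Combining these, I would conclude
\[
  \frac{1}{2}\left(2^{nm} + \fxpt^\RG_f(n,m)\right)
  = \begin{cases}
      \tfrac12\left(2^{nm} + 2^{nm/2}\right) & n \text{ even},\\[4pt]
      2^{nm-1}                               & n \text{ odd},
    \end{cases}
\]
and then verify these two closed forms against the tabulated values: the odd rows $n = 1,3,5$ are the pure powers $2^{nm-1}$, and the even rows $n = 2,4,6$ carry the additive correction $2^{nm/2}$. I do not expect a genuine obstacle here, since once $t_{\id} = 2$ and $t_f = 0$ are established the result is a direct substitution into already-proven fixed-point formulas; the only point requiring care is the correct reading of the hypothesis, namely that $\orbid{f} = 1$ is accompanied by $\orb{f}{f} = 0$, so that $t_f$ is genuinely zero and the reflection term collapses entirely for odd $n$.
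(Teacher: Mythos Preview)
Your proposal is correct and is exactly the intended route: the paper does not supply a separate proof for this proposition but presents it as a direct specialization of Theorem~\ref{thm:nXmGridFormulae} and Equation~\eqref{eq:distinctTilingsR} with $t_{\id}=2$ and $t_f=0$, which is precisely what you do. Your reading of the hypothesis (that $\orbid{f}=1$ implicitly means $\orb{f}{f}=0$, so $t_f=0$) and the resulting closed forms $2^{nm-1}$ for odd $n$ and $\tfrac12(2^{nm}+2^{nm/2})$ for even $n$ are both correct.
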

This has been added to the OEIS as sequence A368222.
\subsubsection{Under \texorpdfstring{$180^\circ$}{180 degree} rotation}
When counting tilings of the grid up to $180^\circ$ rotation ($S = \langle r^2 \rangle$),
\begin{align}
  t_{\id} &= \orb{r^2}{r^2} + 2 \orbid{r^2} \\
  t_{r^2} &= \orb{r^2}{r^2}
\end{align}
\begin{proposition}
When $\orb{r^2}{r^2} = 2$ such as when
\[
T = \left\{\tileDiag[white!90!black], \tileDiag[black!90!white]\right\},
\]
the number of tilings of the $n \times m$ grid up to $180^\circ$ rotation by
tiles that are fixed under $180^\circ$ rotation
is given by the following table:
\[
\begin{array}{r}
n=1 \\ n=2 \\ n=3 \\ n=4 \\ n=5 \\ n=6
\end{array}
\left|
\begin{array}{rrrrrr}
2 & 3 & 6 & 10 & 20 & 36 \\
3 & 10 & 36 & 136 & 528 & 2080 \\
6 & 36 & 272 & 2080 & 16512 & 131328 \\
10 & 136 & 2080 & 32896 & 524800 & 8390656 \\
20 & 528 & 16512 & 524800 & 16781312 & 536887296 \\
36 & 2080 & 131328 & 8390656 & 536887296 & 34359869440 \\
\end{array}
\right.
\label{tabl:nXmGrid_rr__rr}
\]
\end{proposition}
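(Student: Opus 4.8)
The plan is to recognize this proposition as a direct specialization of the general machinery in Theorem \ref{thm:nXmGridFormulae}, so that the only genuine work is to read off the tile-design invariants $t_{\id}$ and $t_{r^2}$ from the hypothesis and then simplify the Burnside sum.

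First I would translate the orbit data into fixed-tile counts. The relevant symmetry group is $R = \langle r^2 \rangle \cong C_2$, whose only subgroups are $\langle r^2 \rangle$ and the trivial group $\mathbbm 1$. By hypothesis $\orb{r^2}{r^2} = 2$, and since both tile designs $\tileDiag[white!90!black]$ and $\tileDiag[black!90!white]$ are fixed under $180^\circ$ rotation, each forms its own orbit with full stabilizer; hence there are no free orbits and $\orbid{r^2} = 0$. Using the two relations $t_{\id} = \orb{r^2}{r^2} + 2\orbid{r^2}$ and $t_{r^2} = \orb{r^2}{r^2}$ stated immediately before the proposition, I obtain $t_{\id} = 2$ and $t_{r^2} = 2$.

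Next I would substitute these into the fixed-point formulas. Equations \eqref{eq:fxpt_R_id} and \eqref{eq:fxpt_R_rr} give
\[
  \fxpt^\RG_{\id}(n,m) = 2^{nm}, \qquad
  \fxpt^\RG_{r^2}(n,m) = \begin{cases} 2^{nm/2} & nm \text{ even},\\ 2^{(nm+1)/2} & nm \text{ odd}, \end{cases}
\]
where in the odd case I have simplified $t_{\id}^{(nm-1)/2}\, t_{r^2} = 2^{(nm-1)/2}\cdot 2 = 2^{(nm+1)/2}$. Applying Burnside's formula \eqref{eq:distinctTilingsR} with $\lvert R\rvert = 2$ then yields the count $\tfrac12\bigl(\fxpt^\RG_{\id}(n,m) + \fxpt^\RG_{r^2}(n,m)\bigr)$, and I would confirm that this closed form reproduces the tabulated values; for example $n = m = 3$ (odd $nm$) gives $\tfrac12(2^9 + 2^5) = 272$, and $n = m = 4$ (even $nm$) gives $\tfrac12(2^{16} + 2^8) = 32896$.

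I do not expect any substantive obstacle: the argument is entirely a matter of reading off the invariants correctly and bookkeeping the parity of $nm$. The single point deserving a moment of care is the claim that $\orbid{r^2} = 0$, i.e.\ that each of the two tile designs has stabilizer exactly $\langle r^2 \rangle$ rather than a strict subgroup. This is immediate because $R$ contains no reflections, so $180^\circ$ symmetry is the only nontrivial symmetry available to a tile, and both given designs possess it.
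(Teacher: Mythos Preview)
Your proposal is correct and is precisely the intended argument: the paper does not supply a separate proof for this proposition, as it is simply a direct specialization of Theorem~\ref{thm:nXmGridFormulae} with $R=\langle r^2\rangle$ and $t_{\id}=t_{r^2}=2$, exactly as you describe.
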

This has been added to the OEIS as sequence A368223.
\begin{proposition}
When $\orbid{r^2} = 1$, such as when
\[
T = \left\{\tileNW, \tileSE\right\},
\]
the number of tilings of the $n \times m$ grid up to $180^\circ$ rotation by
tiles that are fixed only under $\id \in \langle r^2 \rangle$.
\[
\begin{array}{r}
n=1 \\ n=2 \\ n=3 \\ n=4 \\ n=5 \\ n=6
\end{array}
\left|
\begin{array}{rrrrrr}
1 & 3 & 4 & 10 & 16 & 36 \\
3 & 10 & 36 & 136 & 528 & 2080 \\
4 & 36 & 256 & 2080 & 16384 & 131328 \\
10 & 136 & 2080 & 32896 & 524800 & 8390656 \\
16 & 528 & 16384 & 524800 & 16777216 & 536887296 \\
36 & 2080 & 131328 & 8390656 & 536887296 & 34359869440 \\
\end{array}
\right.
\label{tabl:nXmGrid_rr__id}
\]
\end{proposition}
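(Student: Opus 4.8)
The plan is to treat this as a direct specialization of Theorem~\ref{thm:nXmGridFormulae} to the group $R = \langle r^2 \rangle \cong C_2$ together with the given set of tile designs. First I would record the relevant tile statistics for $T = \left\{\tileNW, \tileSE\right\}$. Since $\tileNW \cdot r^2 = \tileSE$, the two designs form a single orbit under $\langle r^2 \rangle$, and because neither design is fixed by $r^2$, that orbit has trivial stabilizer. Hence $\orbid{r^2} = 1$ and $\orb{r^2}{r^2} = 0$, which by the relations
\begin{align*}
  t_{\id} &= \orb{r^2}{r^2} + 2\,\orbid{r^2}, \\
  t_{r^2} &= \orb{r^2}{r^2}
\end{align*}
yield $t_{\id} = 2$ and $t_{r^2} = 0$.

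Next I would substitute these values into the fixed-point formulas of Theorem~\ref{thm:nXmGridFormulae}. Equation~\eqref{eq:fxpt_R_id} gives $\fxpt^\RG_{\id}(n,m) = t_{\id}^{nm} = 2^{nm}$, while equation~\eqref{eq:fxpt_R_rr} gives $\fxpt^\RG_{r^2}(n,m) = 2^{nm/2}$ when $nm$ is even and $2^{(nm-1)/2}\,t_{r^2} = 0$ when $nm$ is odd, the latter vanishing precisely because $t_{r^2} = 0$. The collapse of the odd case is the one point worth flagging: a tiling fixed by $r^2$ would have to place an $r^2$-stable design in the unique central cell, which is impossible for this $T$.

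Finally I would apply the Burnside count \eqref{eq:distinctTilingsR} with $|R| = 2$ and $R = \{\id, r^2\}$, giving
\[
  \frac{1}{2}\left(\fxpt^\RG_{\id}(n,m) + \fxpt^\RG_{r^2}(n,m)\right)
  = \begin{cases}
    \tfrac12\!\left(2^{nm} + 2^{nm/2}\right) & nm \text{ even},\\
    2^{nm-1} & nm \text{ odd}.
  \end{cases}
\]
I would close by spot-checking a handful of table entries against this closed form: for example $(n,m) = (1,1)$ gives $2^0 = 1$, the entry $(4,4)$ gives $\tfrac12\!\left(2^{16} + 2^8\right) = 32896$, and $(6,6)$ gives $\tfrac12\!\left(2^{36} + 2^{18}\right) = 34359869440$, matching the tabulated values. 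Since every ingredient is already established in Theorem~\ref{thm:nXmGridFormulae}, the only genuine work is the bookkeeping across the two parity cases; there is no substantive obstacle beyond confirming that the exhibited $T$ really does realize $\orbid{r^2} = 1$ with $t_{r^2} = 0$.
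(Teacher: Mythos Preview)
Your proof is correct and follows exactly the approach implicit in the paper: the appendix propositions are not given separate proofs but are meant to be read as direct specializations of Theorem~\ref{thm:nXmGridFormulae}, using the tile statistics $t_{\id} = \orb{r^2}{r^2} + 2\,\orbid{r^2}$ and $t_{r^2} = \orb{r^2}{r^2}$ recorded at the start of that subsection. Your computation of $t_{\id}=2$, $t_{r^2}=0$, the resulting Burnside count, and the spot-checks are all accurate.
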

This has been added to the OEIS as sequence A368224.

\subsection{The \texorpdfstring{$n \times n$}{n by n} grid}     %
This section gives examples of every choice of symmetry of the $n \times n$
grid together with every essentially different set of tile designs that
consists of a single orbit (or two orbits, in the case of a fully symmetric tile).
Each sequence is annotated with its corresponding entry in the
On-Line Encyclopedia of Integer Sequences.
A table of all such sequences is given in Table \ref{tabl:nXnGridIndex}.
\begin{table}[ht]
\begin{tabular}{l|l|l|l|l|}
  & $\langle r, f \rangle$ & $\langle r^2, rf \rangle$     & $\langle r \rangle$             & $\langle rf \rangle$        \\
\hline
$\mathcal O_{\langle r, f \rangle}$    & \seqTableEntry{nXnGrid_r_f__r_f}{A054247}   & \NA                                           & \NA                                    & \NA                                     \\[10pt]
$\mathcal O_{\langle r^2, f \rangle}$  & \seqTableEntry{nXnGrid_r_f__rr_f}{A367522}  & \NA                                           & \NA                                    & \NA                                     \\[10pt]
$\mathcal O_{\langle r^2, rf \rangle}$ & \seqTableEntry{nXnGrid_r_f__rr_rf}{A295229} & \seqTableEntry{nXnGrid_rr_rf__rr_rf}{A367526} & \NA                                    & \NA                                     \\[10pt]
$\mathcal O_{\langle r \rangle}$       & \seqTableEntry{nXnGrid_r_f__r}{A367523}     & \NA                                           & \seqTableEntry{nXnGrid_r__r}{A047937}  & \NA                                     \\[10pt]
$\mathcal O_{\langle f \rangle}$       & \seqTableEntry{nXnGrid_r_f__f}{A367524}     & \NA                                           & \NA                                    & \NA                                     \\[10pt]
$\mathcal O_{\langle rf \rangle}$      & \seqTableEntry{nXnGrid_r_f__rf}{A302484}    & \seqTableEntry{nXnGrid_rr_rf__rf}{A367527}    & \NA                                    & \seqTableEntry{nXnGrid_rf__rf}{A200564} \\[10pt]
$\mathcal O_{\langle r^2 \rangle}$     & \seqTableEntry{nXnGrid_r_f__rr}{A367524}    & \seqTableEntry{nXnGrid_rr_rf__rr}{A367528}    & \seqTableEntry{nXnGrid_r__rr}{A367531} & \NA                                     \\[10pt]
$\mathcal O_{\mathbbm 1}$              & \seqTableEntry{nXnGrid_r_f__id}{A367525}    & \seqTableEntry{nXnGrid_rr_rf__id}{A367529}    & \seqTableEntry{nXnGrid_r__id}{A367532} & \seqTableEntry{nXnGrid_rf__id}{A103488}
\end{tabular}
\caption{An index of tables that describe the number of tilings of the $n \times n$ grid.}
\label{tabl:nXnGridIndex}
\end{table}
\subsubsection{Under symmetries of the square}
When counting tilings of the grid up to ${\langle r, f \rangle}$, we have that
\begin{align}
  t_{\id}  &= \orb{r,f}{r, f}
          + 2 \orb{r,f}{r^2, f}
          + 2 \orb{r,f}{r^2, rf}
          + 2 \orb{r,f}{r}
          + 4 \orb{r,f}{f}
          + 4 \orb{r,f}{rf}
          + 4 \orb{r,f}{r^2}
          + 8 \orbid{r,f} \\
  t_f = t_{r^2f} &= \orb{r,f}{r, f} + 2 \orb{r,f}{r^2, f} + 4 \orb{r,f}{f} \\
  t_{r^2}        &= \orb{r,f}{r, f} + 2 \orb{r,f}{r^2, f} + 2 \orb{r,f}{r^2, rf} + 2 \orb{r,f}{r} + 4 \orb{r,f}{r^2}.
\end{align}
\begin{proposition}
When $\orb{r, f}{r, f} = 2$, such as when
\[
  T = \left\{\tileColor[black!90!white], \tileColor[white!90!black]\right\},
\]
the number of tilings of the $n \times n$ grid up to $D_8$ action by
two distinct tile designs which are fixed under all elements of $D_8$ is given
by
\[
  2, 6, 102, 8548, 4211744, 8590557312, 70368882591744, 2305843028004192256, \dots
  \label{seq:nXnGrid_r_f__r_f}
\]
\end{proposition}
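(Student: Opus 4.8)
The plan is to recognize this proposition as an immediate specialization of Theorem~\ref{thm:nXnGridFormulae}. The hypothesis $\orb{r,f}{r,f} = 2$, together with the fact that $T$ consists of exactly two tile designs, forces every other orbit count to vanish, so both tile designs have stabilizer equal to all of $D_8$. Consequently each tile design is fixed by every element $g \in D_8$, and hence $t_g = |T^g| = 2$ for all eight group elements. First I would record this observation explicitly, since it is the only place where the hypothesis on the orbit structure enters; everything afterward is substitution into the formulas already proved.

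Next I would substitute $t_g = 2$ into each of the seven fixed-point formulas of Theorem~\ref{thm:nXnGridFormulae}, namely equations \eqref{eq:fxpt_S_id} through \eqref{eq:fxpt_S_rrrf}. Because every $t_g$ equals $2$, each formula collapses to a single power of $2$ whose exponent depends only on $n$ and its parity. For instance, $\fxpt^{\SG}_{\id}(n) = 2^{n^2}$, while $\fxpt^{\SG}_r(n) = \fxpt^{\SG}_{r^3}(n)$ equals $2^{n^2/4}$ when $n$ is even and $2^{(n^2+3)/4}$ when $n$ is odd, and $\fxpt^{\SG}_{rf}(n) = \fxpt^{\SG}_{r^3f}(n) = 2^{(n^2+n)/2}$ in all cases. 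I would tabulate all eight exponents, handling the even-$n$ and odd-$n$ cases separately so that the half-integer exponents appearing in the rotation and reflection formulas become genuine integers.

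Finally I would apply Burnside's lemma in the form of equation \eqref{eq:distinctTilingsS} with $R = D_8$ and $|R| = 8$, summing the eight fixed-point counts and dividing by $8$. Evaluating the resulting expression at $n = 1, 2, 3, \dots$ produces the sequence $2, 6, 102, 8548, \dots$. I would confirm the first few terms directly: for $n = 1$ the eight counts are all $2$, giving $16/8 = 2$; for $n = 2$ they are $16, 2, 2, 4, 4, 4, 8, 8$, giving $48/8 = 6$; and for $n = 3$ they are $512, 8, 8, 32, 64, 64, 64, 64$, giving $816/8 = 102$. There is no genuine obstacle here, as the result follows mechanically from the established theorem; the only point requiring care is the parity bookkeeping, ensuring that each exponent $n^2/4$, $n^2/2$, $(n^2-1)/4$, and so forth is read in the correct parity regime so that the integrality of the final count, which Burnside's lemma guarantees a priori, is manifest.
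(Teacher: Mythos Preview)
Your proposal is correct and matches the paper's approach: the proposition is simply an instance of Theorem~\ref{thm:nXnGridFormulae} with $t_g = 2$ for all $g \in D_8$, followed by Burnside's lemma via equation~\eqref{eq:distinctTilingsS}. The paper does not spell out a separate proof for this proposition (it merely cites the OEIS entry), so your explicit substitution and small-$n$ verification are, if anything, more detailed than what the paper provides.
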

This is OEIS sequence A054247.
\begin{proposition}
When $\orb{r, f}{r^2, f} = 1$, such as when
\[
  T = \left\{\tileVert, \tileHor\right\},
\]
the number of tilings of the $n \times n$ grid up to $D_8$ action by
tiles that are stable under horizontal and vertical reflections is given by
\[
  1, 4, 84, 8292, 4203520, 8590033024, 70368815480832, 2305843010824323072, \dots
  \label{seq:nXnGrid_r_f__rr_f}
\]
\end{proposition}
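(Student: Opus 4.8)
The plan is to apply Theorem~\ref{thm:nXnGridFormulae} together with Burnside's lemma in the form of Equation~\eqref{eq:distinctTilingsS}, taking $R = \langle r, f\rangle = D_8$, so that the answer for each $n$ is $\tfrac18\sum_{g \in D_8}\fxpt^{\SG}_g(n)$. By the earlier lemma showing that the tiling count depends only on the orbit--stabilizer tuple, it suffices to work with the displayed representative $T = \{\tileVert, \tileHor\}$. The first step is to read off the values $t_g$. Using the relations recorded just above this proposition with $\orb{r,f}{r^2,f} = 1$ and every other orbit count equal to $0$, I obtain $t_{\id} = t_f = t_{r^2f} = t_{r^2} = 2$. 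For the four remaining symmetries I would argue directly from the stabilizer: both tile designs have stabilizer exactly $\langle r^2, f\rangle$, and this subgroup together with each of its conjugates contains none of $r, r^3, rf, r^3f$, so $t_r = t_{r^3} = t_{rf} = t_{r^3f} = 0$.

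The second step is to substitute these values into the seven fixed-point formulas of Theorem~\ref{thm:nXnGridFormulae}. Because $t_{rf} = t_{r^3f} = 0$, Equations~\eqref{eq:fxpt_S_rf} and~\eqref{eq:fxpt_S_rrrf} vanish for every $n \geq 1$, and because $t_r = 0$, the odd case of Equation~\eqref{eq:fxpt_S_r} also vanishes. The surviving contributions then depend only on the parity of $n$: for $n$ even one gets $\fxpt^{\SG}_{\id} = 2^{n^2}$, $\fxpt^{\SG}_{r} = \fxpt^{\SG}_{r^3} = 2^{n^2/4}$, and $\fxpt^{\SG}_{r^2} = \fxpt^{\SG}_f = \fxpt^{\SG}_{r^2f} = 2^{n^2/2}$; for $n$ odd one gets $\fxpt^{\SG}_{\id} = 2^{n^2}$, $\fxpt^{\SG}_{r^2} = 2^{(n^2+1)/2}$, and $\fxpt^{\SG}_f = \fxpt^{\SG}_{r^2f} = 2^{(n^2+n)/2}$, with the rotational terms zero.

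Summing these and dividing by $8$ yields the closed forms
\[
  \tfrac18\left(2^{n^2} + 3\cdot 2^{n^2/2} + 2\cdot 2^{n^2/4}\right)
  \quad (n \text{ even}),
  \qquad
  \tfrac18\left(2^{n^2} + 2^{(n^2+1)/2} + 2\cdot 2^{(n^2+n)/2}\right)
  \quad (n \text{ odd}),
\]
which I would then evaluate at $n = 1, \dots, 8$ to confirm they reproduce $1, 4, 84, 8292, 4203520, \dots$. The computation is essentially mechanical once the $t_g$ are fixed, so there is no deep obstacle; the only points requiring genuine care are justifying $t_r = t_{rf} = 0$ from the stabilizer $\langle r^2, f\rangle$ (rather than from the displayed orbit-count identities, which determine only $t_{\id}, t_f, t_{r^2f}, t_{r^2}$), and tracking the even/odd parity split so that each exponent $n^2/4$, $(n^2+1)/2$, and $(n^2+n)/2$ remains an integer.
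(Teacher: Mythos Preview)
Your proposal is correct and is exactly the approach the paper intends: these appendix propositions are direct applications of Theorem~\ref{thm:nXnGridFormulae} and Equation~\eqref{eq:distinctTilingsS}, with the $t_g$ determined by the chosen orbit type. Your extra care in justifying $t_r = t_{rf} = t_{r^3f} = 0$ from the stabilizer $\langle r^2, f\rangle$ (which is normal in $D_8$, hence equal to its conjugates, and contains none of $r, r^3, rf, r^3f$) is appropriate, since the paper's displayed relations indeed omit these; the resulting parity-split closed forms and numerical checks are all correct.
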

This has been added to the OEIS as sequence A367522.
\begin{proposition}
When $\orb{r, f}{r^2, rf} = 1$, such as when
\[
  T = \left\{\tileDiag, \tileAdiag\right\},
\]
the number of tilings of the $n \times n$ grid up to $D_8$ action by
tiles that are stable under diagonal and antidiagonal reflections is given by
\[
  1, 6, 84, 8548, 4203520, 8590557312, 70368815480832, 2305843028004192256, \dots
  \label{seq:nXnGrid_r_f__rr_rf}
\]
\end{proposition}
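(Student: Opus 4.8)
The plan is to reduce the count to the fixed-point formulas of Theorem~\ref{thm:nXnGridFormulae} and a single application of Burnside's lemma, so the first task is to pin down the numbers $t_g$ for every $g \in D_8$. Since $T$ consists of a single orbit whose representative has stabilizer conjugate to $\langle r^2, rf\rangle$, and since this subgroup has index $2$ in $D_8$ and is therefore normal, every element of the (size-$2$) orbit has exactly this same stabilizer. I would first make the subgroup explicit: using the relation $fr = r^3 f$ that follows from $(rf)^2 = \id$, one computes $r^2\cdot rf = r^3f$ and $(r^3f)^2 = \id$, so that $\langle r^2, rf\rangle = \{\id, r^2, rf, r^3f\}$, a Klein four-group. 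Hence each of the two tile designs is fixed by precisely $\id, r^2, rf, r^3f$ and moved by the remaining four elements, giving
\[
  t_{\id} = t_{r^2} = t_{rf} = t_{r^3f} = 2, \qquad t_r = t_{r^3} = t_f = t_{r^2f} = 0.
\]

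Next I would substitute these values directly into equations~\eqref{eq:fxpt_S_id}--\eqref{eq:fxpt_S_rrrf}. The four vanishing values collapse several terms: the factors $t_f^n$, $t_{r^2f}^n$, and $t_r$ force $\fxpt^{\SG}_f = \fxpt^{\SG}_{r^2f} = \fxpt^{\SG}_r = \fxpt^{\SG}_{r^3} = 0$ whenever $n$ is odd, while the surviving off-diagonal reflections contribute $\fxpt^{\SG}_{rf} = \fxpt^{\SG}_{r^3f} = 2^{(n^2-n)/2}\cdot 2^n = 2^{(n^2+n)/2}$. Collecting terms by the parity of $n$ then yields
\[
  \sum_{g \in D_8}\fxpt^{\SG}_g(n) = \begin{cases}
    2^{n^2} + 3\cdot 2^{n^2/2} + 2\cdot 2^{n^2/4} + 2\cdot 2^{(n^2+n)/2} & n \text{ even},\\
    2^{n^2} + 2^{(n^2+1)/2} + 2\cdot 2^{(n^2+n)/2} & n \text{ odd}.
  \end{cases}
\]

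Finally I would apply Burnside's lemma in the form of equation~\eqref{eq:distinctTilingsS} with $|R| = |D_8| = 8$, dividing each case above by $8$ to obtain the closed form for the number of distinct tilings, and then evaluate it at $n = 1, \ldots, 6$ to confirm the values $1, 6, 84, 8548, 4203520, 8590557312$. I expect no genuinely hard step: the entire argument is substitution into an already-proved theorem followed by one division. The only place that demands real care is the explicit identification of $\langle r^2, rf\rangle$ and the resulting vanishing pattern of the $t_g$, because an error there would leave the wrong terms surviving in each parity case; once the $t_g$ are correct, the remainder is routine bookkeeping.
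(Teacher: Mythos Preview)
Your proposal is correct and follows exactly the approach the paper intends: the proposition is presented in the appendix without its own proof, as a direct specialization of Theorem~\ref{thm:nXnGridFormulae} and equation~\eqref{eq:distinctTilingsS}. Your explicit identification of $\langle r^2, rf\rangle = \{\id, r^2, rf, r^3f\}$, the resulting values $t_g$, and the parity-dependent sum are precisely the bookkeeping the paper leaves to the reader.
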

This is OEIS sequence A295229.
\begin{proposition}
When $\orb{r, f}{r} = 1$, such as when
\[
  T = \left\{\tileRot{0}, \tileRot{1}\right\},
\]
the number of tilings of the $n \times n$ grid up to $D_8$ action by
tiles that are stable under $90^\circ$ rotations is given by
\[
  1, 4, 70, 8292, 4195360, 8590033024, 70368748374016, 2305843010824323072, \dots
  \label{seq:nXnGrid_r_f__r}
\]
\end{proposition}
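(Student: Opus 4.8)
The plan is to read the tile-design counts $t_g$ off the orbit data, substitute them into the fixed-point formulas of Theorem~\ref{thm:nXnGridFormulae}, and finish by averaging over $D_8$ via \eqref{eq:distinctTilingsS}. The hypothesis is that $T$ consists of a single orbit whose representative has stabilizer $\langle r\rangle$ (all other orbit counts being zero). Since $|\langle r\rangle|=4$, this orbit has size $|D_8|/4=2$, so $t_{\id}=2$. Because every element of $\langle r\rangle=\{\id,r,r^2,r^3\}$ fixes both tiles (and $\langle r\rangle$ is normal in $D_8$, so no conjugation subtlety arises), I get $t_r=t_{r^2}=t_{r^3}=2$; and since no reflection lies in $\langle r\rangle$, the set $T^f$ and its three reflection analogues are empty, giving $t_f=t_{rf}=t_{r^2f}=t_{r^3f}=0$. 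These are exactly consistent with the displayed expansions for $t_{\id}$, $t_f=t_{r^2f}$, and $t_{r^2}$ stated at the head of this subsection, and the sample tiles $\tileRot{0},\tileRot{1}$ confirm the picture (four-fold rotational symmetry, no reflective symmetry).

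Next I substitute these eight values into \eqref{eq:fxpt_S_id}--\eqref{eq:fxpt_S_rrrf}. The symmetries $rf$ and $r^3f$ contribute $\fxpt^{\SG}_{rf}(n)=\fxpt^{\SG}_{r^3f}(n)=0$ for all $n$, since their formulas carry a factor $t_{rf}^n=t_{r^3f}^n=0$. For $f$ and $r^2f$ the count vanishes only when $n$ is odd: for odd $n$ the formula has a factor $t_f^n=0$ (geometrically, the $n$ fixed cells on the reflection axis would have to be filled from the empty set $T^f$), whereas for even $n$ there are no axis-fixed cells and the count is $t_{\id}^{n^2/2}=2^{n^2/2}$. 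The computation therefore splits cleanly on the parity of $n$. For even $n$ the surviving terms are $\fxpt^{\SG}_{\id}=2^{n^2}$, the three terms $\fxpt^{\SG}_{r^2}=\fxpt^{\SG}_{f}=\fxpt^{\SG}_{r^2f}=2^{n^2/2}$, and $\fxpt^{\SG}_{r}=\fxpt^{\SG}_{r^3}=2^{n^2/4}$. For odd $n$ only the rotations survive: $\fxpt^{\SG}_{\id}=2^{n^2}$, $\fxpt^{\SG}_{r^2}=2^{(n^2+1)/2}$, and $\fxpt^{\SG}_{r}=\fxpt^{\SG}_{r^3}=2^{(n^2+3)/4}$, where the odd-$n$ parity facts ($n^2\equiv 1\pmod 8$, so $(n^2-1)/4$ is an integer) guarantee the exponents are integral.

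Averaging over $|D_8|=8$ through \eqref{eq:distinctTilingsS} then yields the closed forms
\[
\frac18\bigl(2^{n^2}+3\cdot 2^{n^2/2}+2\cdot 2^{n^2/4}\bigr)\ (n\text{ even}),\qquad
\frac18\bigl(2^{n^2}+2^{(n^2+1)/2}+2\cdot 2^{(n^2+3)/4}\bigr)\ (n\text{ odd}).
\]
I would then confirm agreement with the listed sequence on small cases: $n=1$ gives $8/8=1$, $n=2$ gives $32/8=4$, $n=3$ gives $560/8=70$, and $n=4$ gives $66336/8=8292$. There is no serious obstacle here; the only points requiring care are the odd-$n$ vanishing of the reflection contributions (driven by $T^f=\emptyset$) and the parity bookkeeping in the $90^\circ$-rotation count, after which the result is a direct application of the already-established fixed-point theorem and Burnside's lemma.
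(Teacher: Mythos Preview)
Your proof is correct and follows exactly the approach the paper intends: the appendix propositions are stated without separate proofs because they are direct specializations of Theorem~\ref{thm:nXnGridFormulae} with the appropriate $t_g$ values read off the orbit data, then averaged over $D_8$ via \eqref{eq:distinctTilingsS}. Your parity bookkeeping, vanishing of the reflection terms from $t_f=t_{rf}=t_{r^2f}=t_{r^3f}=0$, and numerical checks are all in order.
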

This has been added to the OEIS as sequence A367523.
\begin{proposition}
When $\orb{r, f}{f} = 1$, (resp, $\orb{r, f}{r^2f} = 1$) such as when
\[
  T = \left\{\tileU, \tileD, \tileL, \tileR\right\},
\]
the number of tilings of the $n \times n$ grid up to $D_8$ action by
tiles that are stable under horizontal (resp. vertical) reflections is given
by
\[
  1, 39, 32896, 536895552, 140737496743936, 590295810384475521024, \dots
  \label{seq:nXnGrid_r_f__f}
\]
\end{proposition}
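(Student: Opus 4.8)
The plan is to reduce the statement to a direct application of Theorem \ref{thm:nXnGridFormulae}, so that the only genuine work is to pin down the fixed-tile counts $t_g$ for the given $R$-set and then to collapse the Burnside sum \eqref{eq:distinctTilingsS} into the closed forms that generate the listed sequence.

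First I would verify that $T = \left\{\tileU, \tileD, \tileL, \tileR\right\}$ is a single $\langle r,f\rangle$-orbit with stabilizer conjugate to $\langle f\rangle$, thereby confirming $\orb{r,f}{f} = 1$ and that every other orbit-type count vanishes. Concretely, $\tileU$ is stabilized by exactly $\{\id, f\}$: the horizontal reflection fixes an upward triangle, while $r^2$ sends it to $\tileD$; hence its orbit has size $|D_8|/2 = 4$ and is all of $T$. The decisive bookkeeping step is to record which orbit elements are fixed by which reflection. Since $\langle f\rangle$ and $\langle r^2f\rangle$ are conjugate in $D_8$ (via $r$), as one runs through coset representatives the four tiles carry stabilizers $\langle f\rangle, \langle r^2f\rangle, \langle f\rangle, \langle r^2f\rangle$, so geometrically exactly two tiles ($\tileU, \tileD$) are fixed by $f$ and exactly two ($\tileL, \tileR$) are fixed by $r^2f$. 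This gives $t_{\id} = 4$, $t_f = t_{r^2f} = 2$, and $t_{r^2} = t_r = t_{r^3} = t_{rf} = t_{r^3f} = 0$. The same conjugacy explains why the ``horizontal'' and ``vertical'' cases produce one sequence: replacing the generating tile by one symmetric under vertical reflection merely interchanges the roles of $f$ and $r^2f$, leaving the symmetrized sum untouched.

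Next I would substitute these values into the fixed-point formulas \eqref{eq:fxpt_S_id}--\eqref{eq:fxpt_S_rrrf} and collect terms by the parity of $n$. The counts $\fxpt^\SG_{r^2}, \fxpt^\SG_{rf}, \fxpt^\SG_{r^3f}$ vanish because $t_{r^2} = t_{rf} = t_{r^3f} = 0$, and $\fxpt^\SG_{r}, \fxpt^\SG_{r^3}$ survive only in the even case through their $t_{\id}^{n^2/4}$ pieces. Feeding this into \eqref{eq:distinctTilingsS} yields, after elementary simplification,
\begin{equation*}
  \frac{1}{8}\sum_{g \in D_8}\fxpt^\SG_g(n) =
  \begin{cases}
    \dfrac{1}{8}\left(4^{n^2} + 3\cdot 4^{n^2/2} + 2\cdot 4^{n^2/4}\right) & n \text{ even},\\[8pt]
    \dfrac{1}{8}\left(2^{2n^2} + 2^{n^2+1}\right)                          & n \text{ odd},
  \end{cases}
\end{equation*}
where in the odd case the two equal contributions from $f$ and $r^2f$, namely $t_{\id}^{(n^2-n)/2}t_f^{n} = 2^{n^2}$ each, combine to $2^{n^2+1}$. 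I would then check that $n = 1,\dots,6$ reproduce $1, 39, 32896, 536895552, 140737496743936, 590295810384475521024$, matching the stated listing and the OEIS entry.

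The arithmetic is routine once the $t_g$ are fixed, so the only real obstacle lies in the second paragraph: correctly distributing the four orbit tiles across the two conjugate reflection-stabilizers so as to obtain $t_f = t_{r^2f} = 2$ rather than $4$, and confirming that all remaining counts genuinely vanish so that the odd-$n$ contributions reduce to the single clean term $2^{n^2+1}$. Everything downstream is substitution into Theorem \ref{thm:nXnGridFormulae} followed by Burnside's lemma.
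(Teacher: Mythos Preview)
Your approach is correct and is exactly what the paper intends: the proposition is stated in the appendix without a separate proof, since it is an immediate specialization of Theorem~\ref{thm:nXnGridFormulae} once the values $t_{\id}=4$, $t_f=t_{r^2f}=2$, $t_{r^2}=t_r=t_{rf}=t_{r^3f}=0$ are read off from the orbit.

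There is, however, one slip in your prose that contradicts your (correct) formula. You write that ``$\fxpt^\SG_{r^2}$ vanishes because $t_{r^2}=0$,'' but for even $n$ equation~\eqref{eq:fxpt_S_rr} gives $\fxpt^\SG_{r^2}(n)=t_{\id}^{n^2/2}=4^{n^2/2}$, which does not involve $t_{r^2}$ at all. Indeed your even-$n$ expression $\tfrac18\bigl(4^{n^2}+3\cdot 4^{n^2/2}+2\cdot 4^{n^2/4}\bigr)$ would be off by one summand of $4^{n^2/2}$ without the $r^2$ contribution (e.g.\ at $n=2$ one gets $37$ instead of $39$). So the claim about vanishing is only valid for odd $n$; for even $n$ the term $\fxpt^\SG_{r^2}$ is one of the three equal contributions $4^{n^2/2}$ alongside $\fxpt^\SG_f$ and $\fxpt^\SG_{r^2f}$. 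Once you correct that sentence, the argument is complete.
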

This has been added to the OEIS as sequence A367524.
\begin{proposition}
When $\orb{r, f}{rf} = 1$, (resp. $\orb{r, f}{r^3f} = 1$) such as when
\[
  T = \left\{\tileNW, \tileSW, \tileNE, \tileSE\right\},
\]
the number of tilings of the $n \times n$ grid up to $D_8$ action by
tiles that are stable under antidiagonal (resp. diagonal) reflections is given
by
\[
  1, 43, 32896, 536911936, 140737496743936, 590295810401655390208, \dots
  \label{seq:nXnGrid_r_f__rf}
\]
\end{proposition}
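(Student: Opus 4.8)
The plan is to reduce the statement to a direct substitution into Theorem \ref{thm:nXnGridFormulae} followed by Burnside's lemma in the form of equation \eqref{eq:distinctTilingsS}, taking $R = \langle r, f\rangle = D_8$ so that $|R| = 8$. By the earlier lemma that the number of tilings depends only on the orbit–stabilizer tuple $\bigoplus_{S}\mathcal{O}^{R}_{S}$, it suffices to carry out the computation for the displayed representative $T = \left\{\tileNW, \tileSW, \tileNE, \tileSE\right\}$; any other single orbit with stabilizer conjugate to $\langle rf\rangle$ produces identical counts.

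First I would record the fixed-tile counts $t_g = |T^g|$ for each $g \in D_8$ directly from the cell actions in Definition \ref{def:groupActionOnTile}. The diagonal reflection $rf$ fixes exactly $\tileSW$ and $\tileNE$, while the antidiagonal reflection $r^3f$ fixes exactly $\tileNW$ and $\tileSE$; every other nonidentity symmetry permutes the four tiles without fixed points. This yields $t_{\id} = 4$, $t_{rf} = t_{r^3f} = 2$, and $t_g = 0$ for all remaining $g$. The splitting of the orbit into two tiles stabilized by $\langle rf\rangle$ and two by the conjugate subgroup $\langle r^3f\rangle$ is precisely why this orbit registers as $\orb{r,f}{rf} = 1$, and it also explains why the ``resp.'' case is literally the same count.

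Next I would substitute these values into equations \eqref{eq:fxpt_S_id}--\eqref{eq:fxpt_S_rrrf}. The vanishing of $t_{r^2}$, $t_r$, $t_f$, and $t_{r^2f}$ collapses most terms: for odd $n$ every formula whose $n$-odd branch carries one of these factors evaluates to $0$, while for even $n$ the remaining rotation and reflection formulas reduce to powers of two via $t_{\id}^{\,n^2/2} = 2^{n^2}$ and $t_{\id}^{\,n^2/4} = 2^{n^2/2}$. The two surviving glide-type terms give $\fxpt^{\SG}_{rf}(n) = \fxpt^{\SG}_{r^3f}(n) = t_{\id}^{(n^2-n)/2}\,t_{rf}^{\,n} = 2^{n^2}$ for every $n$, using $t_{rf} = 2$.

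Finally I would assemble the Burnside sum over $D_8$ and divide by $8$, obtaining the two-branch closed form $\tfrac18\bigl(2^{2n^2} + 2^{\,n^2+1}\bigr)$ for odd $n$ and $\tfrac18\bigl(2^{2n^2} + 5\cdot 2^{n^2} + 2^{\,n^2/2+1}\bigr)$ for even $n$, and then check the leading values $1, 43, 32896, \dots$ against the stated sequence. I do not anticipate a genuine obstacle: the only care required is the parity bookkeeping, namely confirming that the factors $t_g = 0$ annihilate exactly the odd-$n$ contributions of $r^2$, $r$, $f$, and $r^2f$, so that the argument is a mechanical specialization of the general fixed-point theorems already proved.
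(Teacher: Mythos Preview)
Your proposal is correct and follows exactly the approach the paper intends: the proposition is stated in the appendix as a direct specialization of Theorem~\ref{thm:nXnGridFormulae} together with equation~\eqref{eq:distinctTilingsS}, and you have carried out that specialization in full, correctly computing the $t_g$ values and the resulting closed forms for both parities of $n$. The paper itself gives no separate proof for this proposition beyond the implicit appeal to the general theorem, so your write-up is precisely the missing justification.
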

This is OEIS Sequence A302484.
\begin{proposition}
When $\orb{r, f}{r^2} = 1$, such as when
\[
  T = \left\{\tileRR{0}, \tileRR{1}, \tileRR{2}, \tileRR{3}\right\},
\]
the number of tilings of the $n \times n$ grid up to $D_8$ action by
tiles that are stable under $180^\circ$ rotation is given
by
\[
  1, 39, 32896, 536895552, 140737496743936, 590295810384475521024, \dots
  \label{seq:nXnGrid_r_f__rr}
\]
\end{proposition}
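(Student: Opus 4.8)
The plan is to read off the tile-fixing counts $t_g$ from the hypothesis that $T$ consists of a single orbit of type $\orb{r,f}{r^2}$, feed these into the fixed-point formulas of Theorem \ref{thm:nXnGridFormulae}, and then assemble the answer via Burnside's lemma, Equation \eqref{eq:distinctTilingsS}. First I would determine the $t_g$: since the orbit has stabilizer conjugate to $\langle r^2 \rangle$ and $r^2$ is central in $D_8$, all four tiles are fixed by exactly $\{\id, r^2\}$, so $t_{\id} = t_{r^2} = 4$ while $t_f = t_{r^2f} = t_r = t_{r^3} = t_{rf} = t_{r^3f} = 0$. (Equivalently, substitute $\orb{r,f}{r^2} = 1$ and all other orbit counts $0$ into the displayed expressions for $t_{\id}$, $t_f$, and $t_{r^2}$ opening this subsubsection.)

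Next I would substitute these values into the seven fixed-point formulas \eqref{eq:fxpt_S_id}--\eqref{eq:fxpt_S_rrrf}. The vanishing of most $t_g$ collapses the computation, and the key point is to track which symmetries force a cell that must be tiled by a nonexistent tile. The diagonal reflections contribute zero for every $n$: since $\fxpt^{\SG}_{rf}(n) = t_{\id}^{(n^2-n)/2} t_{rf}^n$ with $t_{rf} = 0$ (and likewise for $r^3f$), the $n$ cells on the diagonal $x = y$ can never be filled. The remaining terms split on the parity of $n$. For even $n$ the reflections $f, r^2f$ and the rotations $r, r^2, r^3$ partition the grid into orbits with no fixed cells, so $\fxpt^{\SG}_f = \fxpt^{\SG}_{r^2f} = \fxpt^{\SG}_{r^2} = 4^{n^2/2}$ and $\fxpt^{\SG}_r = \fxpt^{\SG}_{r^3} = 4^{n^2/4}$. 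For odd $n$ each of $f, r^2f, r, r^3$ has a single fixed cell demanding a tile fixed under that element, of which there are none, so all four contributions vanish, while $\fxpt^{\SG}_{r^2}(n) = 4^{(n^2-1)/2}\cdot 4 = 4^{(n^2+1)/2}$ survives because $t_{r^2} = 4 \neq 0$.

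Finally, applying Equation \eqref{eq:distinctTilingsS} with $|R| = 8$ yields the two closed forms
\[
  \frac{4^{n^2} + 4^{(n^2+1)/2}}{8} \quad (n \text{ odd}), \qquad
  \frac{4^{n^2} + 3\cdot 4^{n^2/2} + 2\cdot 4^{n^2/4}}{8} \quad (n \text{ even}),
\]
and I would verify these against $n = 1, \dots, 6$ to confirm they reproduce $1, 39, 32896, 536895552, 140737496743936, 590295810384475521024$. I expect the only genuine obstacle to be the bookkeeping of which symmetries impose a forbidden fixed cell: the contrast between the axis reflections $f, r^2f$ (which have fixed cells only for odd $n$) and the diagonal reflections $rf, r^3f$ (which have $n$ fixed cells for all $n$) is exactly what makes the even and odd cases differ so sharply, and it would be easy to misattribute a factor $t_f$ versus $t_{rf}$ if one is careless about which $t_g$ multiplies each term.
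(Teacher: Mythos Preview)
Your proof is correct and follows exactly the approach implicit in the paper: specialize the $t_g$ values for a single $\langle r^2\rangle$-orbit, plug into Theorem~\ref{thm:nXnGridFormulae}, and apply Equation~\eqref{eq:distinctTilingsS}. One small slip in exposition: for odd $n$ the reflections $f$ and $r^2f$ each fix an entire middle column or row of $n$ cells, not a single cell (the formulas carry factors $t_f^n$ and $t_{r^2f}^n$); since $t_f = t_{r^2f} = 0$ this does not affect your conclusion, but the distinction is precisely the point you flag in your final paragraph about axis versus diagonal reflections.
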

Note that the above sequence agrees with sequence in
Proposition \ref{seq:nXnGrid_r_f__f},
which has been added to the OEIS as sequence A367524.
\begin{proposition}
When $\orbid{r, f} = 1$, such as when
\[
  T = \left\{
    \tileAsym{0},
    \tileAsym{1},
    \tileAsym{2},
    \tileAsym{3},
    \tileAsym{4},
    \tileAsym{5},
    \tileAsym{6},
    \tileAsym{7}
  \right\},
\]
the number of tilings of the $n \times n$ grid up to $D_8$ action by
tiles that are stable under $180^\circ$ rotation is given
by
\[
  1, 538, 16777216, 35184378381312, 4722366482869645213696, \dots
  \label{seq:nXnGrid_r_f__id}
\]
\end{proposition}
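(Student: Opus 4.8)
The plan is to reduce everything to the fixed-point formulas of Theorem~\ref{thm:nXnGridFormulae} by first pinning down the tile-design counts $t_g$. Since $\orbid{r,f} = 1$ and all other orbit counts vanish, the set $T$ is a single $D_8$-orbit of a tile whose stabilizer is trivial; such an orbit has exactly $|D_8| = 8$ elements, and every one of them has trivial stabilizer. Hence $t_{\id} = 8$, while $t_g = 0$ for every nonidentity $g \in D_8$. This is consistent with the displayed expansions of $t_{\id}$, $t_f$, and $t_{r^2}$ above, all of whose nonidentity orbit-terms collapse to $0$.

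Next I would substitute these values into each fixed-point formula, equations~\eqref{eq:fxpt_S_id}--\eqref{eq:fxpt_S_rrrf}. The key observation is that for every $g \neq \id$, the formula for $\fxpt^{\SG}_g(n)$ is a product $t_{\id}^{(\cdots)} \cdot t_{g'}^{(\cdots)}$ whose second factor is a \emph{positive} power of some $t_{g'} = 0$ exactly when the symmetry $g$ fixes at least one cell of the $n \times n$ grid. Consequently $\fxpt^{\SG}_g(n)$ is nonzero only when $g$ acts \emph{without} fixed cells, in which case every orbit of cells is free and the count is a pure power of $t_{\id} = 8$.

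The third step is the cell-fixing bookkeeping, which I would read off from the proof of Theorem~\ref{thm:nXnGridFormulae}. The rotation $r^2$ and the axial reflections $f$, $r^2f$ fix a cell precisely when $n$ is odd (the center cell, the middle column, and the middle row respectively), and likewise $r$ and $r^3$ fix the center cell exactly when $n$ is odd; by contrast the diagonal reflections $rf$ and $r^3f$ always fix the $n$ cells lying on their axis, so $\fxpt^{\SG}_{rf}(n) = \fxpt^{\SG}_{r^3f}(n) = 0$ for all $n \geq 1$. Splitting into cases and applying Burnside's lemma, equation~\eqref{eq:distinctTilingsS}, then gives, for $n$ even,
\[
  \frac{1}{8}\left(8^{n^2} + 3 \cdot 8^{n^2/2} + 2 \cdot 8^{n^2/4}\right),
\]
and for $n$ odd only the identity survives, yielding $\tfrac{1}{8} \cdot 8^{n^2} = 8^{n^2-1}$. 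Evaluating at $n = 1, 2, 3, 4, 5$ recovers $1, 538, 16777216, 35184378381312, 8^{24}$, matching the stated sequence.

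The computation has no genuinely hard step; the only place to be careful is the case analysis of which symmetries are fixed-point-free, especially the observation that $rf$ and $r^3f$ contribute $0$ for \emph{all} $n$ (their formulas carry no even/odd split, yet the axis of $n$ fixed cells always forces the vanishing factor $t_{rf}^n = 0$). I would also verify the $n = 1$ edge case directly, where $8^{n^2-1} = 8^0 = 1$ agrees with the lone fixed tiling being counted once.
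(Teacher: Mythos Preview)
Your proof is correct and follows exactly the intended route: specialize the orbit data to get $t_{\id}=8$ and $t_g=0$ for $g\neq\id$, plug into the fixed-point formulas of Theorem~\ref{thm:nXnGridFormulae}, and average via Burnside. The paper does not give a separate proof for this proposition---it is simply an instance of Theorem~\ref{thm:nXnGridFormulae}---so your argument is precisely what is required, including the careful observation that $rf$ and $r^3f$ always contribute zero because their formulas carry the factor $t_{rf}^n$ (resp.\ $t_{r^3f}^n$) with $n\geq 1$.
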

This has been added to the OEIS as sequence A367525.
\subsubsection{Under diagonal and antidiagonal reflection}
When counting tilings of the grid up to ${\langle r^2, f \rangle}$, we have that
\begin{align}
  t_{\id}  &= \orb{r^2,f}{r^2,f}
          + 2 \orb{r^2,f}{f}
          + 2 \orb{r^2,f}{r^2f}
          + 2 \orb{r^2,f}{r^2} \\
  t_f      &= \orb{r^2,f}{r^2,f} + 2 \orb{r^2,f}{f} \\
  t_{r^2f} &= \orb{r^2,f}{r^2,f} + 2 \orb{r^2,f}{r^2f} \\
  t_{r^2}  &= \orb{r^2,f}{r^2,f} + 2 \orb{r^2,f}{r^2}
\end{align}
\begin{proposition}
When $\orb{r^2, rf}{r^2, rf} = 2$, such as when
\[
  T = \left\{
    \tileDiag[black!90!white],
    \tileDiag[black!10!white]
  \right\},
\]
the number of tilings of the $n \times n$ grid up to diagonal and antidiagonal
flipping by two colors of tiles that are stable under this symmetry is given
by
\[
  2, 9, 168, 16960, 8407040, 17180983296, 140737630961664, 4611686053860868096, \dots
  \label{seq:nXnGrid_rr_rf__rr_rf}
\]
\end{proposition}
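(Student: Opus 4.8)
The plan is to apply Burnside's lemma in the form of equation \eqref{eq:distinctTilingsS} to the symmetry group $R = \langle r^2, rf\rangle$. First I would identify this group explicitly: since $r^2\cdot rf = r^3f$, we have $R = \{\id, r^2, rf, r^3f\}$, a Klein four-group of order $4$ consisting of the identity, the $180^\circ$ rotation, and the diagonal and antidiagonal reflections. The hypothesis $\orb{r^2,rf}{r^2,rf} = 2$ means that $T$ consists of two orbits, each a singleton whose stabilizer is all of $R$; equivalently, every tile design is fixed by every element of $R$. Hence $t_{\id} = t_{r^2} = t_{rf} = t_{r^3f} = 2$, so the whole computation reduces to substituting the single value $2$ into the relevant fixed-point formulas.

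Next I would collect the four fixed-point counts from Theorem \ref{thm:nXnGridFormulae}. For the identity, equation \eqref{eq:fxpt_S_id} gives $\fxpt^{\SG}_{\id}(n) = 2^{n^2}$. The two diagonal reflections are governed by equations \eqref{eq:fxpt_S_rf} and \eqref{eq:fxpt_S_rrrf}, which carry no parity split and give $\fxpt^{\SG}_{rf}(n) = \fxpt^{\SG}_{r^3f}(n) = 2^{(n^2-n)/2}\cdot 2^{n} = 2^{(n^2+n)/2}$. The $180^\circ$ rotation is the only term with a parity case: equation \eqref{eq:fxpt_S_rr} yields $2^{n^2/2}$ when $n$ is even and $2^{(n^2-1)/2}\cdot 2 = 2^{(n^2+1)/2}$ when $n$ is odd.

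Finally I would assemble these via $\tfrac14\sum_{g\in R}\fxpt^{\SG}_g(n)$ and separate on the parity of $n$, obtaining
\[
  \frac{1}{4}\left(2^{n^2} + 2^{n^2/2} + 2^{(n^2+n)/2 + 1}\right) \qquad (n \text{ even}),
\]
\[
  \frac{1}{4}\left(2^{n^2} + 2^{(n^2+1)/2} + 2^{(n^2+n)/2 + 1}\right) \qquad (n \text{ odd}),
\]
and I would confirm these against the first terms, e.g.\ $n=1$ gives $(2+2+4)/4 = 2$ and $n=2$ gives $(16+4+16)/4 = 9$, matching the stated sequence. I do not expect any genuine obstacle: the one thing to get right is recognizing that the full-symmetry hypothesis collapses every $t_g$ to $2$, and that $rf$ and $r^3f$ contribute identically while $r^2$ alone carries the parity distinction; the remainder is bookkeeping with formulas already established.
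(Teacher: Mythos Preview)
Your proposal is correct and is exactly the approach the paper intends: these appendix propositions are not given individual proofs in the paper but are direct instantiations of Theorem~\ref{thm:nXnGridFormulae} together with equation~\eqref{eq:distinctTilingsS}, which is precisely what you carry out. Your identification of $R=\{\id,r^2,rf,r^3f\}$, the collapse of all $t_g$ to $2$, and the resulting parity-split closed form are all correct and match the listed values.
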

This has been added to the OEIS as sequence A367526.
\begin{proposition}
When $\orb{r^2, rf}{rf} = 1$, (resp $\orb{r^2, rf}{r^3f} = 1$) such as when
\[
  T = \left\{
    \tileNW,
    \tileSE
  \right\},
\]
the number of tilings of the $n \times n$ grid up to diagonal and antidiagonal
flipping by the orbit of a tile that is stable under antidiagonal
(resp. diagonal) flipping is given by
\[
  1, 7, 144, 16704, 8396800, 17180459008, 140737555464192, 4611686036680998912, \dots
  \label{seq:nXnGrid_rr_rf__rf}
\]
\end{proposition}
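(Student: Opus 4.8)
The plan is to evaluate the four summands of Burnside's formula \eqref{eq:distinctTilingsS} for the group $R = \langle r^2, rf\rangle = \{\id, r^2, rf, r^3f\}$ and then average. First I would read off the tile statistics of the generating orbit $T = \{\tileNW, \tileSE\}$. Under the right action of Definition \ref{def:groupActionOnTile}, the two designs are interchanged by $r^2$ and by the diagonal reflection $rf$, while each is fixed by the antidiagonal reflection $r^3f$. Hence $t_{\id} = 2$, $t_{r^2} = 0$, $t_{rf} = 0$, and $t_{r^3f} = 2$; in the orbit decomposition this is exactly the statement that only the count $\orb{r^2,rf}{r^3f}$ is nonzero. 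The parenthetical ``resp.'' case merely interchanges the roles of the two diagonal reflections, so it suffices to treat one of them.

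Next I would substitute these values into the fixed-point formulas of Theorem \ref{thm:nXnGridFormulae}. Equation \eqref{eq:fxpt_S_id} gives $\fxpt^\SG_{\id}(n) = 2^{n^2}$. Because $t_{r^2} = 0$, equation \eqref{eq:fxpt_S_rr} collapses to $\fxpt^\SG_{r^2}(n) = 2^{n^2/2}$ for $n$ even and $0$ for $n$ odd. The diagonal reflections are governed by equations \eqref{eq:fxpt_S_rf} and \eqref{eq:fxpt_S_rrrf}: since $t_{rf} = 0$ we get $\fxpt^\SG_{rf}(n) = 0$, whereas $\fxpt^\SG_{r^3f}(n) = t_{\id}^{(n^2-n)/2}\,t_{r^3f}^{\,n} = 2^{(n^2-n)/2}\cdot 2^{n} = 2^{(n^2+n)/2}$.

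Averaging over $R$ with $|R| = 4$ as in \eqref{eq:distinctTilingsS} then yields the closed form
\[
  \tfrac14\!\left(2^{n^2} + 2^{(n^2+n)/2}\right) \quad(n \text{ odd}), \qquad
  \tfrac14\!\left(2^{n^2} + 2^{n^2/2} + 2^{(n^2+n)/2}\right) \quad(n \text{ even}),
\]
and I would verify it reproduces $1, 7, 144, 16704, 8396800, 17180459008, \dots$ by evaluating at $n = 1, \dots, 6$. The diagonal-versus-antidiagonal symmetry makes the ``resp.'' sequence identical, since swapping $t_{rf}$ and $t_{r^3f}$ leaves the sum \eqref{eq:distinctTilingsS} unchanged; this is the counting shadow of the fact that $rf$ and $r^3f$ are conjugate in $D_8$.

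The substitution itself is routine, so I expect the only delicate step to be the first one: correctly determining the stabilizer of the generating tile, in particular verifying that it is $r^3f$ rather than $rf$ that fixes $\tileNW$ and $\tileSE$ under the right action, and then keeping the parity split of $\fxpt^\SG_{r^2}$ straight so that the even and odd rows of the table are assembled without error.
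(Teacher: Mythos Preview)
Your proof is correct and follows exactly the approach implicit in the paper: read off the tile statistics $(t_{\id},t_{r^2},t_{rf},t_{r^3f})=(2,0,0,2)$ for the given orbit, substitute into the four fixed-point formulas \eqref{eq:fxpt_S_id}, \eqref{eq:fxpt_S_rr}, \eqref{eq:fxpt_S_rf}, \eqref{eq:fxpt_S_rrrf} of Theorem~\ref{thm:nXnGridFormulae}, and average via \eqref{eq:distinctTilingsS}. Your care in identifying that the specific tile set $\{\tileNW,\tileSE\}$ actually has stabilizer $\langle r^3f\rangle$ rather than $\langle rf\rangle$, and that this is immaterial for the sequence by the symmetry swapping the two diagonal reflections, is exactly right.
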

This has been added to the OEIS as sequence A367527.
\begin{proposition}
When $\orb{r^2, rf}{r^2} = 1$, such as when
\[
  T = \left\{
    \tileVert,
    \tileHor
  \right\},
\]
the number of tilings of the $n \times n$ grid up to diagonal and antidiagonal
flipping by the orbit of a tile that is stable under $180^\circ$ rotation
is given by
\[
  1, 5, 136, 16448, 8390656, 17179934720, 140737496743936, 4611686019501129728, \dots
  \label{seq:nXnGrid_rr_rf__rr}
\]
\end{proposition}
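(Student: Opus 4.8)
The plan is to reduce the proposition to the fixed-point formulas of Theorem \ref{thm:nXnGridFormulae} by first translating the orbit-classification hypothesis into the four quantities $t_g$ for $g \in R = \langle r^2, rf\rangle = \{\id, r^2, rf, r^3f\}$, the ``diagonal'' Klein four-group (note $rf\cdot r^3f = r^2$, so the diagonal and antidiagonal reflections do generate this group). The hypothesis $\orb{r^2,rf}{r^2} = 1$, with all other orbit-type counts zero, says that $T$ is a single $R$-orbit whose representative $d$ has stabilizer $\langle r^2\rangle$. Hence the orbit has size $|R|/|\langle r^2\rangle| = 2$, so $T = \{d,\, d\cdot rf\}$, and since $r^2$ is central both tiles are fixed by $r^2$ while neither is fixed by $rf$ or $r^3f$. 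This yields $t_{\id} = 2$, $t_{r^2} = 2$, and $t_{rf} = t_{r^3f} = 0$. The example $T = \{\tileVert, \tileHor\}$ illustrates exactly this situation: the diagonal reflection $rf$ (transposition) swaps the two tiles while $r^2$ fixes each.

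Next I would substitute these values into the relevant equations of Theorem \ref{thm:nXnGridFormulae}. Equation \eqref{eq:fxpt_S_id} gives $\fxpt^{\SG}_{\id}(n) = 2^{n^2}$, and equation \eqref{eq:fxpt_S_rr} gives $\fxpt^{\SG}_{r^2}(n) = 2^{n^2/2}$ for even $n$ and $2^{(n^2-1)/2}\cdot 2 = 2^{(n^2+1)/2}$ for odd $n$. The key simplification is that both reflection contributions vanish: since $t_{rf} = t_{r^3f} = 0$, equations \eqref{eq:fxpt_S_rf} and \eqref{eq:fxpt_S_rrrf} give $\fxpt^{\SG}_{rf}(n) = \fxpt^{\SG}_{r^3f}(n) = t_{\id}^{(n^2-n)/2}\cdot 0^{\,n} = 0$ for every $n \geq 1$, so only the identity and $r^2$ terms survive.

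Finally I would apply Burnside's lemma in the form of equation \eqref{eq:distinctTilingsS} with $|R| = 4$, obtaining
\[
  \frac14\left(2^{n^2} + \fxpt^{\SG}_{r^2}(n)\right)
  = \frac14\left(2^{n^2} + 2^{\lceil n^2/2\rceil}\right),
\]
which equals $\tfrac14\!\left(2^{n^2} + 2^{n^2/2}\right)$ for even $n$ and $\tfrac14\!\left(2^{n^2} + 2^{(n^2+1)/2}\right)$ for odd $n$. Evaluating at $n = 1, 2, 3, 4, 5, \dots$ reproduces $1, 5, 136, 16448, 8390656, \dots$, matching the claimed table.

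I do not expect a genuine obstacle: the entire content lies in correctly reading off the stabilizer data $(t_{\id}, t_{r^2}, t_{rf}, t_{r^3f}) = (2,2,0,0)$ from the single-orbit hypothesis. The one point that warrants a remark is the vanishing of the diagonal-reflection terms, which should be justified conceptually rather than by treating $0^n$ formally: a tiling fixed under $rf$ would have to fill each of its $n$ diagonal fixed cells with a tile from $T^{rf} = \emptyset$, so no such tiling exists, and likewise for $r^3f$. With this observation the remaining computation is routine.
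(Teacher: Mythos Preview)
Your proposal is correct and follows exactly the approach the paper intends: these appendix propositions are stated without individual proofs because they are direct specializations of Theorem~\ref{thm:nXnGridFormulae}, obtained by reading off $(t_{\id},t_{r^2},t_{rf},t_{r^3f})=(2,2,0,0)$ from the single-orbit hypothesis and applying Burnside. Your extra remark justifying the vanishing of the $rf$ and $r^3f$ terms conceptually (rather than via $0^n$) is a welcome clarification that the paper leaves implicit.
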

This has been added to the OEIS as sequence A367528.
\begin{proposition}
When $\orbid{r^2, rf} = 1$, such as when
\[
  T = \left\{
    \tileAsym{0},
    \tileAsym{1},
    \tileAsym{2},
    \tileAsym{3}
  \right\},
\]
the number of tilings of the $n \times n$ grid up to diagonal and antidiagonal
flipping by the orbit of a tile that is not stable under any of these symmetries
is given by
\[
  1, 68, 65536, 1073758208, 281474976710656, 1180591620734591172608, \dots
  \label{seq:nXnGrid_rr_rf__id}
\]
\end{proposition}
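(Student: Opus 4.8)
The plan is to apply Burnside's lemma in the form of equation \eqref{eq:distinctTilingsS}, which reduces the count to the four fixed-point quantities $\fxpt^\SG_g(n)$ as $g$ ranges over $R = \langle r^2, rf \rangle = \{\id, r^2, rf, r^3f\}$, a copy of the Klein four-group (note $rf\cdot r^3f = r^2$, and $r^2$ commutes with $rf$). First I would translate the hypothesis $\orbid{r^2,rf} = 1$ into statements about the tile statistics $t_g$. Since the set of tile designs consists of a single orbit whose representative has trivial stabilizer, it has exactly $|R| = 4$ elements, and no element of it is fixed by any nontrivial symmetry in $R$; hence
\[
  t_{\id} = 4, \qquad t_{r^2} = t_{rf} = t_{r^3f} = 0.
\]

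Next I would substitute these values into the formulas of Theorem \ref{thm:nXnGridFormulae}. Equation \eqref{eq:fxpt_S_id} gives $\fxpt^\SG_{\id}(n) = 4^{n^2}$. Because $t_{r^2} = 0$, equation \eqref{eq:fxpt_S_rr} collapses to $\fxpt^\SG_{r^2}(n) = 4^{n^2/2}$ when $n$ is even and to $0$ when $n$ is odd. Because $t_{rf} = t_{r^3f} = 0$, equations \eqref{eq:fxpt_S_rf} and \eqref{eq:fxpt_S_rrrf} give $\fxpt^\SG_{rf}(n) = \fxpt^\SG_{r^3f}(n) = 0$ for every $n \geq 1$. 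Averaging over $R$ as in equation \eqref{eq:distinctTilingsS} then yields
\[
  \frac{1}{4}\sum_{g \in R}\fxpt^\SG_g(n) = \begin{cases}
    4^{\,n^2-1} & n \text{ odd} \\
    \frac14\!\left(4^{n^2} + 4^{n^2/2}\right) & n \text{ even}.
  \end{cases}
\]

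Finally I would confirm that this closed form reproduces the listed values: at $n=1$ we get $4^{0} = 1$, at $n=2$ we get $\frac14(256 + 16) = 68$, at $n=3$ we get $4^{8} = 65536$, and so on. I do not anticipate a genuine obstacle, since all of the combinatorial content is already packaged in Theorem \ref{thm:nXnGridFormulae}; the only point requiring care is the opening step, namely verifying that the trivial-stabilizer hypothesis encoded by $\orbid{r^2,rf} = 1$ forces every $t_g$ with $g \neq \id$ to vanish, which is immediate from the definition of an orbit with trivial stabilizer.
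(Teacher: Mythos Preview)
Your proposal is correct and is exactly the intended argument: the paper does not prove these appendix propositions separately but leaves them as direct specializations of Theorem~\ref{thm:nXnGridFormulae}, and your computation of $t_g$ from the single trivial-stabilizer orbit followed by substitution into equations \eqref{eq:fxpt_S_id}, \eqref{eq:fxpt_S_rr}, \eqref{eq:fxpt_S_rf}, and \eqref{eq:fxpt_S_rrrf} is precisely that specialization.
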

This has been added to the OEIS as sequence A367529.
\subsubsection{Under \texorpdfstring{$90^\circ$}{90 degree} rotation}
\begin{proposition}
When $\orb{r}{r} = 2$, such as when
\[
  T = \left\{
    \tileRot[black!90!white]{0},
    \tileRot[black!10!white]{0}
  \right\},
\]
the number of tilings of the $n \times n$ grid up to $90^\circ$ rotation by
two colors of tiles that are fixed under this symmetry are
\[
  2, 6, 140, 16456, 8390720, 17179934976, 140737496748032, 4611686019501162496, \dots
  \label{seq:nXnGrid_r__r}
\]
This is in the OEIS as A047937, which is column $2$ of A343095.
\end{proposition}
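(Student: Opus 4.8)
The plan is to treat this as a direct corollary of Burnside's lemma together with the fixed-point formulas of Theorem \ref{thm:nXnGridFormulae}, since essentially all of the work has already been carried out. First I would unpack the hypothesis: the condition $\orb{r}{r} = 2$ says that $T$ consists of two orbits whose stabilizer in $\langle r \rangle$ is the whole group, so $T$ is a set of exactly two tile designs, each fixed under every element of $\langle r \rangle \cong C_4$. Consequently $t_{\id} = t_r = t_{r^2} = t_{r^3} = 2$, and these are the only quantities that enter the fixed-point formulas we need.

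Next I would apply the counting formula \eqref{eq:distinctTilingsS} with $R = \langle r \rangle$, so that $|R| = 4$ and the number of distinct tilings equals
\[
  \frac{1}{4}\left(
    \fxpt^{\SG}_{\id}(n) +
    \fxpt^{\SG}_{r}(n) +
    \fxpt^{\SG}_{r^2}(n) +
    \fxpt^{\SG}_{r^3}(n)
  \right).
\]
I would then substitute equations \eqref{eq:fxpt_S_id}, \eqref{eq:fxpt_S_rr}, and \eqref{eq:fxpt_S_r} (using the equality $\fxpt^{\SG}_{r^3} = \fxpt^{\SG}_{r}$), specializing every $t_g$ to the value $2$.

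The remaining work is a short case split on the parity of $n$. For $n$ even the sum collapses to $\tfrac14\bigl(2^{n^2} + 2\cdot 2^{n^2/4} + 2^{n^2/2}\bigr)$, while for $n$ odd it becomes $\tfrac14\bigl(2^{n^2} + 2\cdot 2^{(n^2+3)/4} + 2^{(n^2+1)/2}\bigr)$. I would simplify each, confirm that the quotient by $4$ is an integer in both parities, and evaluate at $n = 1, \dots, 8$ to match the listed values $2, 6, 140, 16456, \dots$. I do not expect a genuine obstacle here: the single point requiring care is keeping the exponents straight across the two parity cases, since the odd case introduces the extra fixed cell at the center of the grid that accounts for the factors of $t_{r}$ and $t_{r^2}$.
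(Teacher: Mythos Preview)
Your proposal is correct and follows exactly the intended approach: the paper does not give a separate proof for this proposition, as it is simply an instantiation of Theorem~\ref{thm:nXnGridFormulae} and equation~\eqref{eq:distinctTilingsS} with $R = \langle r \rangle$ and $t_{\id} = t_r = t_{r^2} = t_{r^3} = 2$. Your parity split and the resulting closed forms are correct and reproduce the stated values.
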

\begin{proposition}
When $\orb{r}{r^2} = 1$, such as when
\[
  T = \left\{
    \tileRR{0},
    \tileRR{1}
  \right\},
\]
the number of tilings of the $n \times n$ grid up to $90^\circ$ rotation by
tiles that are fixed under $180^\circ$ rotations is given by
\[
  1, 6, 136, 16456, 8390656, 17179934976, 140737496743936, 4611686019501162496, \dots
  \label{seq:nXnGrid_r__rr}
\]
\end{proposition}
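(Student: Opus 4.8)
The plan is to read off the four tile-symmetry counts $t_{\id}, t_r, t_{r^2}, t_{r^3}$ that are forced by the hypothesis $\orb{r}{r^2} = 1$, substitute them into the square-grid fixed-point formulas of Theorem \ref{thm:nXnGridFormulae}, and then apply Burnside's lemma in the form of equation \eqref{eq:distinctTilingsS} with $R = \langle r \rangle \cong C_4$.

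First I would extract the $t_g$ values from the orbit datum. Since $T$ consists of a single $\langle r \rangle$-orbit whose tiles have stabilizer subgroup exactly $\langle r^2 \rangle$, the orbit has size $|\langle r \rangle|/|\langle r^2\rangle| = 2$, so $t_{\id} = |T| = 2$. Both tiles are fixed by $r^2$, giving $t_{r^2} = 2$. Because the stabilizer is precisely $\langle r^2\rangle$ and no larger, no tile is fixed by $r$; moreover any tile fixed by $r^3 = r^{-1}$ would also be fixed by $r$, so $t_r = t_{r^3} = 0$. This is the one step that genuinely requires care, and it is the crux of the whole argument.

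Next I would substitute these counts into equations \eqref{eq:fxpt_S_id}, \eqref{eq:fxpt_S_rr}, and \eqref{eq:fxpt_S_r}. This gives $\fxpt^{\SG}_{\id}(n) = 2^{n^2}$; it gives $\fxpt^{\SG}_{r^2}(n) = 2^{n^2/2}$ for even $n$ and $\fxpt^{\SG}_{r^2}(n) = 2^{(n^2-1)/2}\cdot 2 = 2^{(n^2+1)/2}$ for odd $n$; and it gives $\fxpt^{\SG}_r(n) = \fxpt^{\SG}_{r^3}(n) = 2^{n^2/4}$ for even $n$, which vanishes for odd $n$ because the relevant case of \eqref{eq:fxpt_S_r} is proportional to $t_r = 0$.

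Finally I would assemble these contributions through \eqref{eq:distinctTilingsS}. For even $n$ this yields $\tfrac14\bigl(2^{n^2} + 2^{n^2/2} + 2\cdot 2^{n^2/4}\bigr)$, and for odd $n$ it yields $\tfrac14\bigl(2^{n^2} + 2^{(n^2+1)/2}\bigr)$, the second expression being clean precisely because the vanishing of $t_r$ and $t_{r^3}$ kills the $r$ and $r^3$ terms of the Burnside sum. Evaluating at $n = 1, 2, 3, 4, 5$ reproduces $1, 6, 136, 16456, 8390656$, matching the stated sequence. Beyond this bookkeeping there is no obstacle, since every fixed-point count is supplied by Theorem \ref{thm:nXnGridFormulae}; the work is entirely in the correct translation of the single-orbit hypothesis into the tile counts.
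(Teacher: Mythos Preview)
Your proposal is correct and follows exactly the approach the paper intends: the propositions in the appendix are direct specializations of Theorem~\ref{thm:nXnGridFormulae} and equation~\eqref{eq:distinctTilingsS}, obtained by reading off the $t_g$ values from the orbit data and summing via Burnside. Your extraction of $t_{\id}=t_{r^2}=2$ and $t_r=t_{r^3}=0$, the resulting fixed-point counts, and the numerical checks are all accurate.
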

This has been added to the OEIS as sequence A367531.
\begin{proposition}
When $\orbid{r} = 1$, such as when
\[
  T = \left\{
    \tileAsym{0},
    \tileAsym{4},
    \tileAsym{2},
    \tileAsym{6}
  \right\},
\]
the number of tilings of the $n \times n$ grid up to $90^\circ$ rotation by
an asymmetric tile
\[
  1, 70, 65536, 1073758336, 281474976710656, 1180591620734591303680, \dots
  \label{seq:nXnGrid_r__id}
\]
\end{proposition}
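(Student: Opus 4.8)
The plan is to reduce the statement to the fixed-point formulas of Theorem~\ref{thm:nXnGridFormulae} specialized to the symmetry group $R = \langle r \rangle \cong C_4$, and then apply Burnside's lemma in the form of equation~\eqref{eq:distinctTilingsS}. First I would read off the tile statistics from the hypothesis $\orbid{r} = 1$. Since this single orbit has trivial stabilizer in $\langle r \rangle$, it must have size $|R| = 4$, so that $t_{\id} = 4$; and because no tile design in a trivially-stabilized orbit is fixed by any nonidentity element of $\langle r \rangle$, we obtain $t_r = t_{r^2} = t_{r^3} = 0$.

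Next I would substitute these four values into the relevant cases of equations~\eqref{eq:fxpt_S_id}, \eqref{eq:fxpt_S_rr}, and \eqref{eq:fxpt_S_r}. For every $n$ this gives $\fxpt^{\SG}_{\id}(n) = 4^{n^2}$. When $n$ is odd, the vanishing of $t_r$ and $t_{r^2}$ forces $\fxpt^{\SG}_{r}(n) = \fxpt^{\SG}_{r^2}(n) = \fxpt^{\SG}_{r^3}(n) = 0$, since each of these cases carries a factor of $t_r$ or $t_{r^2}$. When $n$ is even, the same three symmetries instead contribute $4^{n^2/4}$, $4^{n^2/2}$, and $4^{n^2/4}$ respectively, with no vanishing factor.

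Finally, applying equation~\eqref{eq:distinctTilingsS} with $|R| = 4$ yields a two-case closed form:
\[
  \frac{1}{4}\sum_{g \in \langle r \rangle}\fxpt^{\SG}_g(n) =
  \begin{cases}
    4^{n^2 - 1} & n \text{ odd} \\[4pt]
    \dfrac{1}{4}\!\left(4^{n^2} + 4^{n^2/2} + 2\cdot 4^{n^2/4}\right) & n \text{ even}.
  \end{cases}
\]
I would close by checking that this reproduces the listed values; for instance $n=1$ gives $4^{0} = 1$, $n=2$ gives $\tfrac14(256 + 16 + 8) = 70$, and $n=3$ gives $4^{8} = 65536$. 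There is no genuine difficulty here: the entire content is bookkeeping, and the only step demanding attention is recognizing that the trivial-stabilizer hypothesis simultaneously kills all three nonidentity fixed-point counts, so that the odd case collapses to the single power $4^{n^2-1}$ while the even case retains its three extra terms.
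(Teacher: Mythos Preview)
Your proposal is correct and follows exactly the approach the paper intends: these appendix propositions are not given separate proofs but are consequences of Theorem~\ref{thm:nXnGridFormulae} together with equation~\eqref{eq:distinctTilingsS}, specialized to $R=\langle r\rangle$ and the tile statistics $t_{\id}=4$, $t_r=t_{r^2}=t_{r^3}=0$. Your case split on the parity of $n$ and the resulting closed form are correct, and the numerical checks confirm agreement with the stated sequence.
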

This has been added to the OEIS as sequence A367532.
\subsubsection{Under diagonal (equivalently antidiagonal) reflection}
\begin{proposition}
When $\orb{rf}{rf} = 2$, such as when
\[
  T = \left\{ \tileSW[black!90!white], \tileSW[black!10!white] \right\}
  \qquad\text{or}\qquad
  T = \left\{ \tileSW, \tileNE \right\}
\]
the number of tilings of the $n \times n$ grid up to flipping over the
antidiagonal by
tiles that are fixed under that symmetry is given by
\[
  2, 12, 288, 33280, 16793600, 34360786944, 281475110928384, 9223372071214514176, \dots
  \label{seq:nXnGrid_rf__rf}
\]
\end{proposition}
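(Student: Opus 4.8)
The plan is to reduce the claim to the master Burnside formula for the $n \times n$ grid and then specialize the tile-design data. The symmetry group here is $R = \langle rf \rangle = \{\id, rf\}$, which has order $2$, so by Equation \eqref{eq:distinctTilingsS} the number of distinct tilings equals $\tfrac{1}{2}\bigl(\fxpt^{\SG}_{\id}(n) + \fxpt^{\SG}_{rf}(n)\bigr)$. All that remains is to compute these two fixed-point counts and simplify.

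First I would read off the relevant tile-design statistics from the hypothesis $\orb{rf}{rf} = 2$. This says the set $T$ consists of exactly two orbits, each represented by a tile design whose stabilizer in $\langle rf \rangle$ is all of $\langle rf \rangle$; equivalently, $T$ contains two distinct tile designs, both fixed under $rf$. Hence $t_{\id} = |T| = 2$ and $t_{rf} = |T^{rf}| = 2$. The two sample tile sets, $\left\{\tileSW[black!90!white], \tileSW[black!10!white]\right\}$ and $\left\{\tileSW, \tileNE\right\}$, both realize this data, since each listed tile is symmetric across the antidiagonal.

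Next I would substitute into Theorem \ref{thm:nXnGridFormulae}. Equation \eqref{eq:fxpt_S_id} gives $\fxpt^{\SG}_{\id}(n) = t_{\id}^{n^2} = 2^{n^2}$, and Equation \eqref{eq:fxpt_S_rf} gives
\[
  \fxpt^{\SG}_{rf}(n) = t_{\id}^{(n^2-n)/2}\, t_{rf}^{n} = 2^{(n^2-n)/2}\cdot 2^{n} = 2^{(n^2+n)/2},
\]
using $(n^2-n)/2 + n = (n^2+n)/2$. Averaging then yields the closed form
\[
  \frac{1}{2}\left(2^{n^2} + 2^{(n^2+n)/2}\right) = 2^{\,n^2-1} + 2^{\,(n^2+n)/2-1}.
\]

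Finally I would confirm that this expression reproduces the listed sequence: for $n = 1, \dots, 5$ it evaluates to $1+1=2$, $8+4=12$, $256+32=288$, $32768+512=33280$, and $16777216+16384=16793600$, matching the claim. There is no genuinely hard step here, since the whole argument is a direct specialization of an already-proved theorem; the only thing to watch is the bookkeeping, namely correctly extracting $t_{\id} = t_{rf} = 2$ from the orbit data and simplifying the exponent appearing in $\fxpt^{\SG}_{rf}(n)$.
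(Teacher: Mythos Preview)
Your proof is correct and is exactly the intended derivation: the paper does not spell out a separate proof for this proposition (it simply cites OEIS A200564), but the proposition is meant to be read as a direct specialization of Theorem~\ref{thm:nXnGridFormulae} with $R=\langle rf\rangle$ and $t_{\id}=t_{rf}=2$, which is precisely what you have done.
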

This is OEIS sequence A200564.
\begin{proposition}
When $\orbid{rf} = 1$, such as when
\[
  T = \left\{
    \tileAsym{0},
    \tileAsym{1}
  \right\},
\]
the number of tilings of the $n \times n$ grid up to flipping over the
antidiagonal by asymmetric tiles is given by
\[
  1, 8, 256, 32768, 16777216, 34359738368, 281474976710656, 9223372036854775808, \dots
  \label{seq:nXnGrid_rf__id}
\]
\end{proposition}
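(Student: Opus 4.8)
The plan is to apply Burnside's lemma over the two-element group $\langle rf \rangle$ and to observe that the reflection term drops out entirely. First I would pin down the relevant tile counts $t_g$. The hypothesis $\orbid{rf} = 1$ means $T$ consists of a single $\langle rf \rangle$-orbit whose representative has trivial stabilizer; since $rf$ has order two, this orbit has size two, so $|T| = 2$ and hence $t_{\id} = 2$. Because both tile designs are asymmetric (that is, their stabilizer in $\langle rf \rangle$ is trivial), neither is fixed by $rf$, which gives $t_{rf} = 0$. This vanishing is the only substantive observation in the argument.

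Next I would substitute these counts into the fixed-point formulas of Theorem \ref{thm:nXnGridFormulae}. Equation \eqref{eq:fxpt_S_id} gives $\fxpt^{\SG}_{\id}(n) = t_{\id}^{n^2} = 2^{n^2}$, while Equation \eqref{eq:fxpt_S_rf} gives $\fxpt^{\SG}_{rf}(n) = t_{\id}^{(n^2-n)/2}\,t_{rf}^{\,n} = 2^{(n^2-n)/2}\cdot 0^{\,n} = 0$ for every $n \geq 1$. I would note that $(n^2-n)/2 = n(n-1)/2$ is always an integer, so that formula applies uniformly with no parity split to check.

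Finally I would invoke Equation \eqref{eq:distinctTilingsS} with $R = \langle rf \rangle$, a group of order two:
\[
  \frac{1}{|R|}\sum_{g \in R} \fxpt^{\SG}_g(n)
  = \frac{1}{2}\left(2^{n^2} + 0\right)
  = 2^{\,n^2 - 1}.
\]
This closed form reproduces the tabulated exponents $0, 3, 8, 15, 24, \dots = n^2 - 1$, so the listed values $2^{0}, 2^{3}, 2^{8}, 2^{15}, \dots$ follow immediately.

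The derivation is otherwise entirely mechanical once the tile counts are fixed, so there is no genuine obstacle here; the single point that drives everything is the equality $t_{rf} = 0$, which collapses the reflection contribution and forces the clean answer $2^{n^2-1}$. As a sanity check I would verify the small cases $n = 1, 2, 3$ directly against the proposition's table to confirm the indexing is aligned.
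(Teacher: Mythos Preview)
Your argument is correct and is exactly the intended derivation: the paper does not give a standalone proof for this proposition but treats it as a direct specialization of Theorem~\ref{thm:nXnGridFormulae} with $R = \langle rf \rangle$, $t_{\id} = 2$, and $t_{rf} = 0$, which you have carried out cleanly to obtain the closed form $2^{n^2-1}$.
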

This is OEIS sequence A103488.

\subsection{The \texorpdfstring{$n \times m$}{n by m} cylinder} %
This section gives examples of every choice of symmetry of the $n \times m$
cylinder together with every essentially different set of tile designs that
consists of a single orbit (or two orbits, in the case of a fully symmetric tile).
Each sequence is annotated with its corresponding entry in the
On-Line Encyclopedia of Integer Sequences.
A table of all such sequences is given in Table \ref{tabl:nXmCylIndex}.
\begin{table}[ht]
\begin{tabular}{l|l|l|l|l|l|}
                                    & $\langle r^2, f \rangle $                    & $\langle f \rangle$                     & $\langle r^2f \rangle$                     & $\langle r^2 \rangle$                    & $\mathbbm{1}$                            \\
\hline
$\mathcal O_V$                      & \tableTableEntry{nXmCyl_rr_f__rr_f}{A368253} & \NA                                     & \NA                                        & \NA                                      & \NA                                      \\[10pt]
$\mathcal O_{\langle f \rangle}$    & \tableTableEntry{nXmCyl_rr_f__f}{A368254}    & \tableTableEntry{nXmCyl_f__f}{A368258}  & \NA                                        & \NA                                      & \NA                                      \\[10pt]
$\mathcal O_{\langle r^2f \rangle}$ & \tableTableEntry{nXmCyl_rr_f__rrf}{A368255}  & \NA                                     & \tableTableEntry{nXmCyl_rrf__rrf}{A368260} & \NA                                      & \NA                                      \\[10pt]
$\mathcal O_{\langle r^2 \rangle}$  & \tableTableEntry{nXmCyl_rr_f__rr}{A368256}   & \NA                                     & \NA                                        & \tableTableEntry{nXmCyl_rr__rr}{A368262} & \NA                                      \\[10pt]
$\mathcal O_\mathbbm{1}$            & \tableTableEntry{nXmCyl_rr_f__id}{A368257}   & \tableTableEntry{nXmCyl_f__id}{A368259} & \tableTableEntry{nXmCyl_rrf__id}{A368261}  & \tableTableEntry{nXmCyl_rr__id}{A368263} & \tableTableEntry{nXmCyl_id__id}{A368264}
\end{tabular}
\caption{An index of tables that describe the number of tilings of the $n \times m$ cylinder.}
\label{tabl:nXmCylIndex}
\end{table}
\subsubsection{Under horizontal and vertical reflection}
\begin{proposition}
  When $\orb{r^2, f}{r^2, f} = 2$, such as when
  \[
    T = \left\{\tileVert[black!90!white], \tileVert[white!90!black]\right\}
    \qquad\text{or}\qquad
    T = \left\{\tileVert[black!90!white], \tileHor[black!90!white]\right\}
  \]
  the number of tilings of the $n \times m$ cylinder up to horizontal and vertical
  reflection by tiles that are fixed under those actions is given by
  \[
    \begin{array}{r}
      n=1 \\ n=2 \\ n=3 \\ n=4 \\ n=5 \\ n=6 \\ n=7
    \end{array}
    \left|
    \begin{array}{rrrrrrr}
      2 & 3 & 6 & 10 & 20 & 36 & 72\\
      3 & 7 & 24 & 76 & 288 & 1072 & 4224\\
      4 & 13 & 74 & 430 & 3100 & 23052 & 179736\\
      6 & 34 & 378 & 4756 & 70536 & 1083664 & 17053728\\
      8 & 78 & 1884 & 53764 & 1689608 & 53762472 & 1718629200\\
      13 & 237 & 11912 & 709316 & 44900448 & 2865540112 & 183287416192\\
      18 & 687 & 77022 & 9608050 & 1227536100 & 157077883188 & 20105440563816\\
    \end{array}
    \right.
    \label{tabl:nXmCyl_rr_f__rr_f}
  \]
\end{proposition}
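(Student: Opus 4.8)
The plan is to reduce the claim to a direct substitution into the four cylinder fixed-point theorems, followed by an application of Burnside's lemma through equation \eqref{eq:distinctTilingsC}. The essential first step is to translate the hypothesis $\orb{r^2,f}{r^2,f} = 2$ (with every other orbit count equal to zero) into values of the $t_g$. Feeding this into the tile-counting relations for the group $\langle r^2, f\rangle$ recorded earlier in the excerpt, every relation collapses to the single common value
\[
  t_{\id} = t_f = t_{r^2f} = t_{r^2} = 2.
\]
This uniformity is precisely what makes the computation tractable: every exponent base appearing in any of the fixed-point formulas is simply $2$.

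Next I would substitute these values into the formulas for $\fxpt^\CY_{\id}$, $\fxpt^\CY_{r^2}$, $\fxpt^\CY_{f}$, and $\fxpt^\CY_{r^2f}$ supplied by the preceding theorems. A convenient simplification occurs in equations \eqref{eq:fxpt_C_rrf_1} and \eqref{eq:fxpt_C_rrf_2}: since $r^2f$ has order $2$, the element $(r^2f)^d$ equals $r^2f$ for odd $d$ and $\id$ for even $d$, but $t_{r^2f} = t_{\id} = 2$ in either case, so the parity dependence of that exponent base disappears entirely. Each of the four quantities thus becomes a divisor sum or a small case expression in powers of $2$; for instance $\fxpt^\CY_{\id}(n,m) = \sum_{d \mid n}\varphi(d)\,2^{nm/d}$.

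I would then assemble the number of distinct tilings via equation \eqref{eq:distinctTilingsC} as
\[
  \frac{1}{4n}\left(\fxpt^\CY_{\id}(n,m) + \fxpt^\CY_{r^2}(n,m) + \fxpt^\CY_{f}(n,m) + \fxpt^\CY_{r^2f}(n,m)\right),
\]
treating separately the parity cases for $n$ and $m$ that the subnumcases in those theorems require. For each parity combination I would simplify and check agreement with the displayed table; the small corners serve as quick sanity checks, e.g. $(n,m)=(1,1)$ gives $\tfrac14(2+2+2+2)=2$, $(n,m)=(2,1)$ gives $\tfrac18(6+6+6+6)=3$, and $(n,m)=(2,2)$ gives $\tfrac18(20+8+20+8)=7$, matching the tabulated values.

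The hard part will be organizational rather than conceptual. The four fixed-point formulas depend on the parities of $n$ and $m$ in genuinely different ways, so a clean argument must partition the $(n,m)$ grid into the appropriate parity classes and confirm that the divisor sums $\sum_{d\mid n}\varphi(d)2^{nm/d}$ and $\sum_{d\mid n}\varphi(d)2^{nm/\lcm(d,2)}$ combine, after division by $4n$, into the stated integers. Verifying every entry of the table then amounts to checking these divisor-sum identities term by term, which is routine but bookkeeping-intensive.
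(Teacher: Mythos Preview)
Your proposal is correct and matches the paper's approach exactly: the paper does not give a separate proof for this proposition, since it is simply the specialization of the cylinder fixed-point theorems and equation \eqref{eq:distinctTilingsC} to the case $t_{\id}=t_{r^2}=t_f=t_{r^2f}=2$, precisely as you outline. Your sanity checks at $(n,m)=(1,1),(2,1),(2,2)$ are all correct.
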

This has been added to the OEIS as sequence A368253.
\begin{proposition}
When $\orb{r^2, f}{f} = 1$, such as when
\[
  T = \left\{\tileU, \tileD\right\}
\]
the number of tilings of the $n \times m$ cylinder up to horizontal and vertical
reflection by tiles that are fixed under horizontal reflection is given by \[
  \begin{array}{r}
    n=1 \\ n=2 \\ n=3 \\ n=4 \\ n=5 \\ n=6 \\ n=7
  \end{array}
  \left|
  \begin{array}{rrrrrrr}
    1 & 3 & 4 & 10 & 16 & 36 & 64 \\
    2 & 7 & 20 & 76 & 272 & 1072 & 4160 \\
    2 & 13 & 60 & 430 & 2992 & 23052 & 178880 \\
    4 & 34 & 346 & 4756 & 70024 & 1083664 & 17045536 \\
    4 & 78 & 1768 & 53764 & 1685920 & 53762472 & 1718511232 \\
    8 & 237 & 11612 & 709316 & 44881328 & 2865540112 & 183286192832 \\
    9 & 687 & 75924 & 9608050 & 1227395664 & 157077883188 & 20105422588224
  \end{array}
  \right.
  \label{tabl:nXmCyl_rr_f__f}
\]
\end{proposition}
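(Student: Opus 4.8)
The plan is to reduce the table to a direct substitution into the four cylinder fixed-point theorems above, followed by the Burnside count of equation~\eqref{eq:distinctTilingsC}. First I would record the orbit data: $T = \left\{\tileU, \tileD\right\}$ is a single $\langle r^2, f\rangle$-orbit whose representative $\tileU$ is stabilized exactly by $\langle f\rangle$, so $\orb{r^2,f}{f} = 1$ while every other orbit count is $0$. By the Lemma asserting that the number of tilings depends only on the tuple $\bigoplus_S \mathcal{O}^R_S$, it suffices to compute with this explicit $T$. Feeding $\orb{r^2,f}{f}=1$ into the identities relating the $t_g$ to the orbit counts $\mathcal{O}^{\langle r^2,f\rangle}_S$ yields $t_{\id} = t_f = 2$ and $t_{r^2} = t_{r^2f} = 0$.

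Next I would substitute these four numbers into the theorems computing $\fxpt^\CY_{\id}$, $\fxpt^\CY_{r^2}$, $\fxpt^\CY_f$, and $\fxpt^\CY_{r^2f}$. Two observations carry most of the load. Because $t_{r^2} = t_{r^2f} = 0$ and the exponents attached to these factors are all positive, every summand containing $t_{r^2}$ or $t_{r^2f}$ vanishes; thus $\fxpt^\CY_{r^2}$ reduces to $n\,2^{nm/2}$ for $m$ even, to $\tfrac{n}{2}\,2^{nm/2}$ for $m$ odd and $n$ even, and to $0$ for $n,m$ both odd. Meanwhile the coincidence $t_f = t_{\id} = 2$ merges the two terms of $\fxpt^\CY_f$ into $\tfrac{n}{2}\,2^{nm/2}(1 + 2^m)$ when $n$ is even and $n\,2^{(nm+m)/2}$ when $n$ is odd.

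The one genuinely delicate ingredient is $\fxpt^\CY_{r^2f}$ with $m$ odd, where the surviving factor is $t_{(r^2f)^d}$. Since $r^2f$ has order $2$, we have $(r^2f)^d = r^2f$ for $d$ odd and $(r^2f)^d = \id$ for $d$ even, so $t_{(r^2f)^d} = 0$ on odd divisors of $n$ and $t_{(r^2f)^d} = 2$ on even ones. Every odd-$d$ summand therefore drops, and each even-$d$ summand simplifies via $\lcm(d,2) = d$ to $\varphi(d)\,2^{nm/d}$, leaving $\fxpt^\CY_{r^2f}(n,m) = \sum_{d \mid n,\ d\ \text{even}} \varphi(d)\,2^{nm/d}$ (an empty sum, hence $0$, when $n$ is odd). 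Keeping track of the divisor parities here is the main bookkeeping hurdle, and it is the place where a slipped $0^0$ convention or a miscounted exponent would most easily introduce an error, so I would confirm explicitly that each vanishing factor is raised to a strictly positive power.

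Finally I would assemble $\tfrac{1}{4n}\bigl(\fxpt^\CY_{\id} + \fxpt^\CY_{r^2} + \fxpt^\CY_f + \fxpt^\CY_{r^2f}\bigr)$ from equation~\eqref{eq:distinctTilingsC} (with $|R| = 4$), split into the four parity cases for $(n,m)$, and verify that the resulting closed forms reproduce the tabulated entries. Spot-checking a few small cells, for instance $(n,m) = (1,1) \mapsto 1$, $(2,1) \mapsto 2$, and $(2,2) \mapsto 7$, pins down the constants. As each contributing formula is already an established theorem, the remaining work is exactly the algebraic consolidation just outlined.
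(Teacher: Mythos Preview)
Your proposal is correct and is exactly the intended derivation: the paper does not give a separate proof of this appendix proposition, as the table is simply the specialization of the four cylinder fixed-point theorems (for $\id$, $r^2$, $f$, $r^2f$) to the tile data $t_{\id}=t_f=2$, $t_{r^2}=t_{r^2f}=0$, followed by the Burnside average of equation~\eqref{eq:distinctTilingsC}. Your handling of the parity cases and of the $t_{(r^2f)^d}$ factor is accurate, and the spot checks confirm the constants.
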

This has been added to the OEIS as sequence A368254.
\begin{proposition}
  When $\orb{r^2, f}{r^2f} = 1$, such as when
  \[
    T = \left\{\tileL, \tileR\right\}
  \]
  the number of tilings of the $n \times m$ cylinder up to horizontal and vertical
  reflection by tiles that are fixed under vertical reflection is given by \[
    \begin{array}{r}
      n=1 \\ n=2 \\ n=3 \\ n=4 \\ n=5 \\ n=6 \\ n=7
    \end{array}
    \left|
    \begin{array}{rrrrrrr}
      1 & 2 & 3 & 6 & 10 & 20 & 36 \\
      2 & 5 & 14 & 44 & 152 & 560 & 2144 \\
      2 & 9 & 50 & 366 & 2780 & 22028 & 175128 \\
      4 & 26 & 298 & 4244 & 66184 & 1050896 & 16787488 \\
      4 & 62 & 1692 & 52740 & 1679368 & 53696936 & 1718039376 \\
      9 & 205 & 11272 & 701124 & 44761184 & 2863442960 & 183253337472 \\
      10 & 623 & 75486 & 9591666 & 1227208420 & 157073688884 & 20105365066344
    \end{array}
    \right.
    \label{tabl:nXmCyl_rr_f__rrf}
  \]
\end{proposition}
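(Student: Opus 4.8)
The plan is to recognize that this proposition is a direct specialization of the general cylinder machinery rather than a fresh argument: the table is produced by the distinct-tilings formula \eqref{eq:distinctTilingsC} applied with $R = \langle r^2, f\rangle$, so $|R| = 4$ and the number of distinct tilings equals $\frac{1}{4n}\left(\fxpt^\CY_{\id}(n,m) + \fxpt^\CY_{r^2}(n,m) + \fxpt^\CY_{f}(n,m) + \fxpt^\CY_{r^2f}(n,m)\right)$. The whole argument therefore reduces to three steps: computing the four tile statistics $t_g$ for the given tile set, substituting them into the four cylinder fixed-point theorems of this subsection, and assembling the quotient.

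First I would read off the tile statistics from the orbit data. Since $\orb{r^2,f}{r^2f} = 1$ and every other orbit count vanishes, the $t_g$ formulas opening this subsection give $t_{\id} = 2$, $t_f = 0$, $t_{r^2} = 0$, and $t_{r^2f} = 2$. The vanishing of $t_f$ and $t_{r^2}$ is the structural feature that makes the computation tractable, since it annihilates every summand carrying a factor of $t_f$ or $t_{r^2}$.

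Next I would substitute into each fixed-point formula. Equation \eqref{eq:fxpt_C_id} gives $\fxpt^\CY_{\id}(n,m) = \sum_{d\mid n}\varphi(d)\,2^{nm/d}$. In \eqref{eq:fxpt_C_rr_1}--\eqref{eq:fxpt_C_rr_3} the $t_{r^2}$-bearing branches die, leaving $n\,2^{nm/2}$ when $m$ is even, $n\,2^{nm/2-1}$ when $m$ is odd and $n$ is even, and $0$ when both are odd; likewise in \eqref{eq:fxpt_C_f_1}--\eqref{eq:fxpt_C_f_2} the $t_f$ terms vanish, yielding $\tfrac{n}{2}2^{nm/2}$ for $n$ even and $0$ for $n$ odd. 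The one step demanding genuine care is $\fxpt^\CY_{r^2f}$: in the $m$-odd branch \eqref{eq:fxpt_C_rrf_2} the exponent involves $t_{(r^2f)^d}$, and here I would use that $(r^2f)^2 = \id$, so $(r^2f)^d = r^2f$ for odd $d$ and $(r^2f)^d = \id$ for even $d$. Because $t_{r^2f} = t_{\id} = 2$ coincide, we simply have $t_{(r^2f)^d} = 2$ for every $d$, and both cylinder $r^2f$ branches collapse to clean sums of powers of $2$.

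The hard part is thus organizational rather than conceptual: the parity case-splits for $\fxpt^\CY_{r^2}$, $\fxpt^\CY_f$, and $\fxpt^\CY_{r^2f}$ must be reconciled into the single displayed table according to the parities of $n$ and $m$. I would finish by forming $\frac{1}{4n}\sum_{g} \fxpt^\CY_g(n,m)$ and verifying the corner entries against the table — for instance $(n,m)=(2,2)$ gives $\tfrac{1}{8}(20 + 8 + 4 + 8) = 5$ and $(n,m)=(2,1)$ gives $\tfrac{1}{8}(6+2+2+6)=2$ — to confirm that the substitution and the case bookkeeping have been carried out correctly.
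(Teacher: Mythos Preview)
Your proposal is correct and follows precisely the approach the paper intends: these appendix propositions are stated without individual proofs because they are direct specializations of the general cylinder machinery in Section~\ref{sec:cylinder}, and you have carried out exactly that specialization---reading off $t_{\id}=t_{r^2f}=2$, $t_f=t_{r^2}=0$ from the orbit data, substituting into the four $\fxpt^\CY_g$ theorems, and assembling via \eqref{eq:distinctTilingsC}. Your spot-checks at $(n,m)=(2,2)$ and $(2,1)$ are right, and your observation that $t_{(r^2f)^d}=2$ uniformly (since $t_{r^2f}=t_{\id}=2$) is the one subtle point worth flagging, which you handled correctly.
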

This has been added to the OEIS as sequence A368255.
\begin{proposition}
  When $\orb{r^2, f}{r^2} = 1$, such as when
  \[
    T = \left\{\tileDiag, \tileAdiag\right\}
  \]
  the number of tilings of the $n \times m$ cylinder up to horizontal and vertical
  reflection by tiles that are fixed under $180^\circ$ rotation is given by \[
    \begin{array}{r}
      n=1 \\ n=2 \\ n=3 \\ n=4 \\ n=5 \\ n=6 \\ n=7
    \end{array}
    \left|
    \begin{array}{rrrrrrr}
      1 & 2 & 3 & 6 & 10 & 20 & 36 \\
      2 & 5 & 14 & 44 & 152 & 560 & 2144 \\
      2 & 9 & 52 & 366 & 2800 & 22028 & 175296 \\
      4 & 26 & 298 & 4244 & 66184 & 1050896 & 16787488 \\
      4 & 62 & 1704 & 52740 & 1679776 & 53696936 & 1718052480 \\
      8 & 205 & 11228 & 701124 & 44758448 & 2863442960 & 183253162688 \\
      9 & 623 & 75412 & 9591666 & 1227199056 & 157073688884 & 20105363867968
    \end{array}
    \right.
    \label{tabl:nXmCyl_rr_f__rr}
  \]
\end{proposition}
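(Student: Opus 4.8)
The plan is to read this proposition off the general cylinder machinery of Section~\ref{sec:cylinder}, since it is a pure specialization of those theorems to one particular $R$-set. Here the symmetry group is $R = \langle r^2, f\rangle = D_4$, and the hypothesis $\orb{r^2,f}{r^2} = 1$ together with the vanishing of every other orbit count determines the combinatorial type of $T$ completely; by the lemma that the number of tilings depends only on the tuple of orbit counts, any such $T$ (for instance the diagonal/antidiagonal pair shown) yields the same table.

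First I would translate the orbit data into the fixed-point counts $t_g$ using the relations between the $t_g$ and the orbit counts for $R = \langle r^2, f\rangle$ (identical to those displayed in the $n \times m$ grid case, since they depend only on the $R$-set). Substituting $\orb{r^2,f}{r^2} = 1$ and all other orbit counts equal to $0$ gives $t_{\id} = t_{r^2} = 2$ and $t_f = t_{r^2f} = 0$. Next I would feed these four numbers into the cylinder fixed-point formulas \eqref{eq:fxpt_C_id}, \eqref{eq:fxpt_C_rr_1}--\eqref{eq:fxpt_C_rr_3}, \eqref{eq:fxpt_C_f_1}--\eqref{eq:fxpt_C_f_2}, and \eqref{eq:fxpt_C_rrf_1}--\eqref{eq:fxpt_C_rrf_2}, branching on the parities of $n$ and $m$. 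The vanishing $t_f = t_{r^2f} = 0$ causes a useful simplification: in $\fxpt^\CY_f$ the term carrying $t_f$ drops out entirely, and in $\fxpt^\CY_{r^2f}$ (for $m$ odd) every divisor $d \mid n$ with $d$ odd contributes $t_{(r^2f)^d} = t_{r^2f} = 0$, so only the even-$d$ summands survive, those being precisely where the governing stabilizer exponent collapses to $\id$. Assembling via the distinct-tilings formula $\frac{1}{4n}\sum_{g \in D_4}\fxpt^\CY_g(n,m)$ from \eqref{eq:distinctTilingsC} then produces each table entry.

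The step I expect to be the main obstacle is the parity bookkeeping rather than any single hard idea: $\fxpt^\CY_{r^2}$ and $\fxpt^\CY_{r^2f}$ each split into several cases in $n$ and $m$, and within $\fxpt^\CY_{r^2f}$ one must additionally track the parity of each divisor of $n$, so care is needed to attribute each orbit of cells to the correct stabilizer exponent and to confirm that the $0$-valued terms really vanish. Once the cases are organized the computation is mechanical, and I would verify it by checking a few small entries directly against the table---for instance $(n,m) = (1,1)$, $(1,2)$, and $(2,1)$, where the Burnside sums evaluate to $4$, $8$, and $16$, giving $1$, $2$, and $2$ after dividing by $4n$.
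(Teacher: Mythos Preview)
Your proposal is correct and is exactly the approach the paper intends: these appendix propositions are not given separate proofs in the paper but are direct specializations of the cylinder theorems of Section~\ref{sec:cylinder} together with equation~\eqref{eq:distinctTilingsC}, which is precisely what you outline. Your computation of $t_{\id}=t_{r^2}=2$, $t_f=t_{r^2f}=0$ from the orbit data, the observation that the odd-$d$ summands in $\fxpt^\CY_{r^2f}$ vanish, and your spot checks at $(n,m)=(1,1),(1,2),(2,1)$ are all accurate.
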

This has been added to the OEIS as sequence A368256.
\begin{proposition}
  When $\orbid{r^2, f} = 1$, such as when
  \[
    T = \left\{\tileNW, \tileSW, \tileNE, \tileSE\right\}
  \]
  the number of tilings of the $n \times m$ cylinder up to horizontal and vertical
  reflection by asymmetric tiles is given by \[
    \begin{array}{r}
      n=1 \\ n=2 \\ n=3 \\ n=4 \\ n=5 \\ n=6
    \end{array}
    \left|
    \begin{array}{rrrrrr}
      1 & 6 & 16 & 72 & 256 & 1056 \\
      4 & 44 & 544 & 8384 & 131584 & 2100224 \\
      6 & 366 & 21856 & 1399512 & 89478656 & 5726711136 \\
      23 & 4244 & 1050128 & 268472384 & 68719870208 & 17592195482624 \\
      52 & 52740 & 53687104 & 54975896016 & 56294995342336 & 57646075552465728 \\
      194 & 701124 & 2863399264 & 11728132423744 & 48038396383286784 & 196765270153929688064
    \end{array}
    \right.
    \label{tabl:nXmCyl_rr_f__id}
  \]
\end{proposition}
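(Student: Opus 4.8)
The plan is to treat this proposition as a direct specialization of the general cylinder-counting machinery, so that no new ideas beyond the already-proven fixed-point theorems are needed. First I would determine the tile-symmetry counts $t_g$ for the orbit $T = \{\tileNW, \tileSW, \tileNE, \tileSE\}$. Since $\orbid{r^2,f} = 1$ asserts that this single orbit has trivial stabilizer, every nontrivial element of $\langle r^2, f\rangle$ moves each of the four triangular tiles off itself: $f$ swaps the two left-pointing tiles with the two right-pointing ones, $r^2$ pairs each tile with its antipode, and $r^2f$ swaps top with bottom. Hence $t_{\id} = 4$ while $t_{r^2} = t_f = t_{r^2f} = 0$. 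I would also record that $t_{(r^2f)^d} = 4$ when $d$ is even and $0$ when $d$ is odd, since $(r^2f)^d$ equals $\id$ or $r^2f$ according to the parity of $d$.

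Next I would substitute these values into the four fixed-point formulas: equation \eqref{eq:fxpt_C_id} and the theorems computing $\fxpt^\CY_{r^2}(n,m)$, $\fxpt^\CY_{f}(n,m)$, and $\fxpt^\CY_{r^2f}(n,m)$. Every summand carrying a positive power of $t_{r^2}$, $t_f$, or $t_{r^2f}$ collapses to zero. This leaves $\fxpt^\CY_{\id}(n,m) = \sum_{d \mid n}\varphi(d)\,4^{nm/d}$, with $\fxpt^\CY_{r^2}$ and $\fxpt^\CY_{f}$ each reducing to a single monomial $c\cdot 4^{nm/2}$ where the coefficient $c \in \{0,\, n/2,\, n\}$ is dictated by the parities of $n$ and $m$, and with $\fxpt^\CY_{r^2f}$ reducing to a divisor sum. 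The delicate term is $\fxpt^\CY_{r^2f}$ for $m$ odd: here only the even divisors $d \mid n$ survive, and for each such $d$ the two exponents combine as $4^{(nm-n)/d}\cdot 4^{n/d} = 4^{nm/d}$, so $\fxpt^\CY_{r^2f}(n,m) = \sum_{d \mid n,\ d \text{ even}}\varphi(d)\,4^{nm/d}$, which vanishes entirely when $n$ is odd.

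Finally I would assemble these pieces via the Burnside formula \eqref{eq:distinctTilingsC} with $|R| = 4$, namely $\frac{1}{4n}\bigl(\fxpt^\CY_{\id} + \fxpt^\CY_{r^2} + \fxpt^\CY_{f} + \fxpt^\CY_{r^2f}\bigr)(n,m)$, splitting into the four parity cases for $(n,m)$ and simplifying to a closed expression in each. Evaluating this expression for $1 \le n, m \le 6$ would then reproduce the displayed table; for example $(n,m) = (2,2)$ gives $\tfrac18(272 + 32 + 16 + 32) = 44$, matching the entry, and $(n,m) = (2,1)$ gives $\tfrac18(20 + 4 + 4 + 4) = 4$. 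The only real obstacle is bookkeeping: keeping the $\lcm(d,2)$ exponents and the parity-dependent vanishing of $t_{(r^2f)^d}$ straight across the several cases, since a single misassigned parity would silently corrupt a divisor sum. Beyond this careful case analysis, the argument is a routine evaluation of the general formulas.
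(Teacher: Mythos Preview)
Your proposal is correct and follows exactly the approach implicit in the paper: the proposition is presented there as a direct specialization of the general cylinder fixed-point theorems, obtained by substituting $t_{\id}=4$ and $t_{r^2}=t_f=t_{r^2f}=0$ into equations \eqref{eq:fxpt_C_id}--\eqref{eq:fxpt_C_rrf_2} and then applying \eqref{eq:distinctTilingsC}. Your handling of the parity cases, including the observation that for odd $m$ only even divisors survive in $\fxpt^\CY_{r^2f}$ and that the exponents recombine to $4^{nm/d}$, is exactly right.
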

This has been added to the OEIS as sequence A368257.
\subsubsection{Under horizontal reflection}
\begin{proposition}
  When $\orb{f}{f} = 2$, such as when
  \[
    T = \left\{\tileColor[black!90!white], \tileColor[black!10!white]\right\}
    \qquad \text{or} \qquad
    T = \left\{\tileU, \tileD\right\}
  \]
  the number of tilings of the $n \times m$ cylinder up to horizontal
  reflection two distinct tiles that are stable under horizontal reflection is
  given by
  \[
    \begin{array}{r}
      n=1 \\ n=2 \\ n=3 \\ n=4 \\ n=5 \\ n=6 \\ n=7
    \end{array}
    \left|
      \begin{array}{rrrrrrr}
         2 &    4 &      8 &       16 &         32 &           64 &            128 \\
         3 &   10 &     36 &      136 &        528 &         2080 &           8256 \\
         4 &   20 &    120 &      816 &       5984 &        45760 &         357760 \\
         6 &   55 &    666 &     9316 &     139656 &      2164240 &       34084896 \\
         8 &  136 &   3536 &   106912 &    3371840 &    107505280 &     3437022464 \\
        13 &  430 &  23052 &  1415896 &   89751728 &   5730905440 &   366571686592 \\
        18 & 1300 & 151848 & 19206736 & 2454791328 & 314154568000 & 40210845176448
      \end{array}
    \right.
    \label{tabl:nXmCyl_f__f}
  \]
\end{proposition}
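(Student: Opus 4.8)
The plan is to recognize this proposition as a direct instantiation of the cylinder counting machinery for the symmetry group $R = \langle f \rangle$, specialized to the tile statistics forced by the hypothesis $\orb{f}{f} = 2$. First I would read off the two relevant tile counts. Since $\langle f \rangle \cong C_2$ has exactly the two subgroups $\langle f \rangle$ and $\mathbbm 1$, and the hypothesis asserts $\orb{f}{f} = 2$ with no orbit of trivial stabilizer ($\orbid{f} = 0$), the identities $t_{\id} = \orb{f}{f} + 2\orbid{f}$ and $t_f = \orb{f}{f}$ recorded earlier give $t_{\id} = t_f = 2$. By the lemma that a tiling count depends only on the tuple of orbit-class counts, this simultaneously explains why the two displayed example sets $T$ (two solid colors, or the up/down triangles) yield identical tables.

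Next I would instantiate the distinct-tiling formula \eqref{eq:distinctTilingsC}. For $R = \langle f \rangle$, of order $2$, it reads
\[
  \frac{1}{2n}\Bigl(\fxpt^\CY_{\id}(n,m) + \fxpt^\CY_f(n,m)\Bigr).
\]
I would then substitute the closed forms already proved. Equation \eqref{eq:fxpt_C_id} gives $\fxpt^\CY_{\id}(n,m) = \sum_{d \mid n}\varphi(d)\,2^{nm/d}$, while equations \eqref{eq:fxpt_C_f_1} and \eqref{eq:fxpt_C_f_2}, evaluated at $t_{\id} = t_f = 2$, give
\[
  \fxpt^\CY_f(n,m) = \begin{cases}
    n\bigl(\tfrac12\, 2^{nm/2} + \tfrac12\, 2^{(nm+2m)/2}\bigr) & n \text{ even},\\[4pt]
    n\, 2^{(nm+m)/2} & n \text{ odd},
  \end{cases}
\]
where in the even case I have used $(nm-2m)/2 + 2m = (nm+2m)/2$ and in the odd case $(nm-m)/2 + m = (nm+m)/2$.

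The remaining work is purely computational: plug these into the displayed average and evaluate for $1 \le n,m \le 7$ to produce the table. I expect the only points requiring care to be the parity split of $\fxpt^\CY_f$ in $n$ and the bookkeeping of the divisor sum carried by the identity term; there is no genuine obstacle, since all of the structural content is supplied by the previously established cylinder theorems. As a sanity check one verifies a corner entry: for $(n,m) = (2,1)$ one has $\fxpt^\CY_{\id}(2,1) = 2^2 + \varphi(2)\,2^1 = 6$ and $\fxpt^\CY_f(2,1) = 2\bigl(\tfrac12\, 2 + \tfrac12\, 4\bigr) = 6$, giving $\tfrac{1}{4}(6+6) = 3$, which agrees with the tabulated value.
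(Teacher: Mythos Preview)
Your proposal is correct and takes essentially the same approach as the paper: this proposition is simply an instantiation of the cylinder formula \eqref{eq:distinctTilingsC} with $R=\langle f\rangle$, using the fixed-point counts from \eqref{eq:fxpt_C_id} and \eqref{eq:fxpt_C_f_1}--\eqref{eq:fxpt_C_f_2} at $t_{\id}=t_f=2$. The paper provides no separate proof for this appendix proposition, so your explicit derivation and sanity check are exactly what is needed.
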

This has been added to the OEIS as sequence A368258.
\begin{proposition}
  When $\orbid{f} = 1$, such as when
  \[
    T = \left\{\tileAsym{0}, \tileAsym{7}\right\}
    \qquad \text{or} \qquad
    T = \left\{\tileL, \tileR\right\}
  \]
  the number of tilings of the $n \times m$ cylinder up to horizontal
  reflection by a tile that is not stable under horizontal reflection is
  given by
  \[
    \begin{array}{r}
      n=1 \\ n=2 \\ n=3 \\ n=4 \\ n=5 \\ n=6 \\ n=7
    \end{array}
    \left|
      \begin{array}{rrrrrrr}
        1 & 2 & 4 & 8 & 16 & 32 & 64 \\
        2 & 6 & 20 & 72 & 272 & 1056 & 4160 \\
        2 & 12 & 88 & 688 & 5472 & 43712 & 349568 \\
        4 & 39 & 538 & 8292 & 131464 & 2098704 & 33560608 \\
        4 & 104 & 3280 & 104864 & 3355456 & 107374208 & 3435973888 \\
        9 & 366 & 22028 & 1399512 & 89489584 & 5726711136 & 366504577728 \\
        10 & 1172 & 149800 & 19173968 & 2454267040 & 314146179392 & 40210710958720
      \end{array}
    \right.
    \label{tabl:nXmCyl_f__id}
  \]
\end{proposition}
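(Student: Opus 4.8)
The plan is to recognize this table as a direct specialization of the general cylinder count, Equation \eqref{eq:distinctTilingsC}, to the symmetry group $R = \langle f \rangle$ together with the tile parameters forced by the hypothesis $\orbid{f} = 1$. First I would record those parameters. Since $\orbid{f} = 1$ and $\orb{f}{f} = 0$, the set of tile designs consists of a single \emph{free} orbit of size two under $\langle f \rangle$. Hence, by the relations for this symmetry group, $t_{\id} = \orb{f}{f} + 2\orbid{f} = 2$ counts both designs, while $t_f = \orb{f}{f} = 0$, since neither element of the orbit is fixed by the horizontal reflection.

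Next I would invoke the distinct-tiling formula for the cylinder, which for $R = \langle f \rangle$ (so $|R| = 2$ and $R = \{\id, f\}$) reads
\[
  \frac{1}{2n}\left(\fxpt^\CY_{\id}(n,m) + \fxpt^\CY_{f}(n,m)\right),
\]
and substitute the two fixed-point formulas established earlier. Equation \eqref{eq:fxpt_C_id} gives $\fxpt^\CY_{\id}(n,m) = \sum_{d \mid n} \varphi(d)\, 2^{nm/d}$ once $t_{\id} = 2$ is inserted. For $\fxpt^\CY_{f}$ the key observation is that every summand carrying a factor of $t_f$ vanishes, because $t_f = 0$ and $m \ge 1$ force $t_f^{m} = t_f^{2m} = 0$. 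Reading off the two branches, Equation \eqref{eq:fxpt_C_f_1} collapses to $\tfrac{n}{2}\, 2^{nm/2}$ when $n$ is even, and Equation \eqref{eq:fxpt_C_f_2} collapses to $0$ when $n$ is odd.

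Finally I would assemble the two parities: for odd $n$ the count is $\tfrac{1}{2n}\sum_{d \mid n}\varphi(d)\, 2^{nm/d}$, and for even $n$ it is $\tfrac{1}{2n}\bigl(\sum_{d \mid n}\varphi(d)\, 2^{nm/d} + \tfrac{n}{2}\, 2^{nm/2}\bigr)$; evaluating at $1 \le n, m \le 7$ reproduces the table, which I would confirm on a few entries, such as $(n,m) = (2,2)$, where the formula yields $\tfrac14(16 + 4 + 4) = 6$. There is no genuine obstacle here, as the argument is a mechanical substitution into Equations \eqref{eq:fxpt_C_id}, \eqref{eq:fxpt_C_f_1}, \eqref{eq:fxpt_C_f_2}, and \eqref{eq:distinctTilingsC}. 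The only point demanding care is the degeneracy introduced by $t_f = 0$: one must verify that the vanishing factors $0^{m}$ and $0^{2m}$ really are zero (which holds precisely because $m \ge 1$) and that the parity split on $n$ is inherited verbatim from the reflection fixed-point theorem.
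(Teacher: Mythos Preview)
Your proposal is correct and follows exactly the approach the paper intends: the proposition is a direct specialization of the general cylinder formulas, and you have correctly identified $t_{\id}=2$, $t_f=0$ from the relations $t_{\id}=\orb{f}{f}+2\orbid{f}$ and $t_f=\orb{f}{f}$, then substituted into Equations~\eqref{eq:fxpt_C_id}, \eqref{eq:fxpt_C_f_1}, \eqref{eq:fxpt_C_f_2}, and \eqref{eq:distinctTilingsC}. The paper does not give a separate proof of this proposition, treating it (like the other appendix propositions) as an immediate consequence of the general theory; your write-up makes that specialization explicit and is entirely in line with the paper's framework.
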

This has been added to the OEIS as sequence A368259.
\subsubsection{Under vertical reflection}
\begin{proposition}
  When $\orb{r^2f}{r^2f} = 2$, such as when
  \[
    T = \left\{\tileColor[black!90!white], \tileColor[black!10!white]\right\}
    \qquad\text{or}\qquad
    T = \left\{\tileL, \tileR\right\}
  \]
  the number of tilings of the $n \times m$ cylinder up to vertical
  reflection by two distinct tiles that are stable under vertical reflection is
  given by \[
    \begin{array}{r}
      n=1 \\ n=2 \\ n=3 \\ n=4 \\ n=5 \\ n=6 \\ n=7
    \end{array}
    \left|
      \begin{array}{rrrrrrr}
        2 & 3 & 6 & 10 & 20 & 36 & 72 \\
        3 & 7 & 24 & 76 & 288 & 1072 & 4224 \\
        4 & 14 & 100 & 700 & 5560 & 43800 & 350256 \\
        6 & 40 & 564 & 8296 & 131856 & 2098720 & 33566784 \\
        8 & 108 & 3384 & 104968 & 3358736 & 107377488 & 3436078752 \\
        14 & 362 & 22288 & 1399176 & 89505984 & 5726689312 & 366505626368 \\
        20 & 1182 & 150972 & 19175140 & 2454416840 & 314146329192 & 40210730132688
      \end{array}
    \right.
    \label{tabl:nXmCyl_rrf__rrf}
  \]
\end{proposition}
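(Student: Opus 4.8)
The plan is to recognize this proposition as a direct specialization of the general cylinder count in equation \eqref{eq:distinctTilingsC} to the symmetry group $R=\langle r^2f\rangle$, so that the only real work is substituting the correct tile counts into the fixed-point formulas already established. First I would translate the hypothesis $\orb{r^2f}{r^2f}=2$ into values of the $t_g$. Since $T$ has only the two designs and both are singleton orbits, we have $\orbid{r^2f}=0$; because both designs are stable under $r^2f$, every orbit of the $R$-action has size $1$, so $t_{\id}=\orb{r^2f}{r^2f}=2$ and $t_{r^2f}=\orb{r^2f}{r^2f}=2$. The key structural observation is that $(r^2f)^d=r^2f$ when $d$ is odd and $(r^2f)^d=\id$ when $d$ is even, and in either case $t_{(r^2f)^d}=2$; this makes the computation uniform in $d$.

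Next I would apply the distinct-tilings formula for the cylinder with $|R|=2$,
\[
  \frac{1}{2n}\Bigl(\fxpt^\CY_{\id}(n,m)+\fxpt^\CY_{r^2f}(n,m)\Bigr),
\]
and substitute the explicit expressions from equations \eqref{eq:fxpt_C_id}, \eqref{eq:fxpt_C_rrf_1}, and \eqref{eq:fxpt_C_rrf_2}. With $t_{\id}=2$ this yields, for $m$ even,
\[
  \frac{1}{2n}\left(\sum_{d\mid n}\varphi(d)\,2^{nm/d}+\sum_{d\mid n}\varphi(d)\,2^{nm/\lcm(d,2)}\right),
\]
and for $m$ odd the same leading sum together with $\sum_{d\mid n}\varphi(d)\,2^{(nm-n)/\lcm(d,2)}\,2^{n/d}$ in place of the second summand. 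These are closed forms depending only on $n$, $m$, and their parities.

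Finally I would tabulate these expressions for $1\le n\le 7$ and $1\le m\le 7$ and match them against the stated table. As sanity checks, $(n,m)=(1,1)$ gives $\tfrac12(2+2)=2$, $(n,m)=(2,1)$ gives $\tfrac14(6+6)=3$, and $(n,m)=(1,2)$ gives $\tfrac12(4+2)=3$, all agreeing with the displayed entries. I do not expect any conceptual obstacle: the argument is pure substitution into results proved earlier. The one place that demands care is the exponent bookkeeping of $nm/\lcm(d,2)$ versus $nm/d$ across the parity of $m$, together with confirming that $t_{(r^2f)^d}=2$ holds irrespective of the parity of $d$ so that both parity cases collapse into the clean forms above; once that is verified, evaluating the divisor sums for each small $(n,m)$ is routine arithmetic.
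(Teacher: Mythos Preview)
Your proposal is correct and matches the paper's approach: the proposition is stated in the appendix without an explicit proof, being a direct specialization of the general cylinder formula \eqref{eq:distinctTilingsC} with $R=\langle r^2f\rangle$ and $t_{\id}=t_{r^2f}=2$, exactly as you describe. Your observation that $t_{(r^2f)^d}=2$ regardless of the parity of $d$ is the right simplification, and your sanity checks are all accurate.
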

This has been added to the OEIS as sequence A368260.
\begin{proposition}
  When $\orbid{r^2f} = 1$, such as when
  \[
    T = \left\{\tileAsym{0}, \tileAsym{5}\right\}
    \qquad\text{or}\qquad
    T = \left\{\tileU, \tileD\right\}
  \]
  the number of tilings of the $n \times m$ cylinder up to vertical
  reflection by a tile that is not stable under vertical reflection is
  given by \[
    \begin{array}{r}
      n=1 \\ n=2 \\ n=3 \\ n=4 \\ n=5 \\ n=6 \\ n=7
    \end{array}
    \left|
      \begin{array}{rrrrrrr}
        1 & 3 & 4 & 10 & 16 & 36 & 64 \\
        2 & 7 & 20 & 76 & 272 & 1072 & 4160 \\
        2 & 14 & 88 & 700 & 5472 & 43800 & 349568 \\
        4 & 40 & 532 & 8296 & 131344 & 2098720 & 33558592 \\
        4 & 108 & 3280 & 104968 & 3355456 & 107377488 & 3435973888 \\
        8 & 362 & 21944 & 1399176 & 89484128 & 5726689312 & 366504228224 \\
        10 & 1182 & 149800 & 19175140 & 2454267040 & 314146329192 & 40210710958720
      \end{array}
    \right.
    \label{tabl:nXmCyl_rrf__id}
  \]
\end{proposition}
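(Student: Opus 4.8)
The plan is to read off the tile-design parameters from the orbit structure and then substitute them into the cylinder fixed-point formulas already established, combining the results through the Burnside-type count \eqref{eq:distinctTilingsC}. Since $T = \{d, d \cdot r^2f\}$ is a single orbit of size two with trivial stabilizer, every tile is fixed by $\id$ but none is fixed by $r^2f$; hence $t_{\id} = 2$ and $t_{r^2f} = 0$. Because $r^2f$ has order two, $(r^2f)^d$ equals $\id$ when $d$ is even and $r^2f$ when $d$ is odd, so $t_{(r^2f)^d} = 2$ for even $d$ and $t_{(r^2f)^d} = 0$ for odd $d$. These three observations determine every quantity entering the formulas.

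First I would substitute $t_{\id} = 2$ into \eqref{eq:fxpt_C_id} to obtain $\fxpt^\CY_{\id}(n,m) = \sum_{d \mid n} \varphi(d)\, 2^{nm/d}$. Next I would substitute into the two cases of $\fxpt^\CY_{r^2f}$. For $m$ even, \eqref{eq:fxpt_C_rrf_1} gives $\sum_{d \mid n} \varphi(d)\, 2^{nm/\lcm(d,2)}$ directly. For $m$ odd, \eqref{eq:fxpt_C_rrf_2} gives $\sum_{d \mid n} \varphi(d)\, 2^{(nm-n)/\lcm(d,2)}\, t_{(r^2f)^d}^{n/d}$; here every odd-$d$ term vanishes because $t_{(r^2f)^d} = 0$ and $n/d \geq 1$, while for each even $d$ we have $\lcm(d,2) = d$ and the exponents recombine as $(nm-n)/d + n/d = nm/d$, leaving $\sum_{d \mid n,\ d\text{ even}} \varphi(d)\, 2^{nm/d}$ (which is empty, and hence zero, when $n$ is odd).

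Then I would assemble the final count via \eqref{eq:distinctTilingsC} with $R = \langle r^2f \rangle$, so that $|R| = 2$ and the number of distinct tilings equals $\tfrac{1}{2n}\bigl(\fxpt^\CY_{\id}(n,m) + \fxpt^\CY_{r^2f}(n,m)\bigr)$. Finally I would confirm agreement with the table by evaluating small cases: for $n = 1$ (only the divisor $d = 1$) this yields $2^{m-1}$ when $m$ is odd and $\tfrac12\bigl(2^m + 2^{m/2}\bigr)$ when $m$ is even, matching the first row $1,3,4,10,16,36,64$; a spot-check such as $(n,m) = (2,1)$ gives $\tfrac{1}{4}(6 + 2) = 2$, again as tabulated.

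The bookkeeping is routine once the parameters are fixed, and the substitutions into the earlier theorems carry almost all of the weight. The one genuinely delicate point is the $m$-odd computation, where I must verify that the vanishing of $t_{r^2f}$ eliminates \emph{precisely} the odd-$d$ contributions and that the surviving even-$d$ exponents collapse cleanly to $nm/d$. This is the step I would check most carefully, since a parity slip there would silently corrupt the entire table.
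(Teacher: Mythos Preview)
Your proposal is correct and follows exactly the paper's intended approach: the proposition is simply an instantiation of the general cylinder formulas, and the paper offers no separate proof for it beyond the framework you invoke. Your identification of the parameters $t_{\id}=2$, $t_{r^2f}=0$ and the subsequent substitution into \eqref{eq:fxpt_C_id}, \eqref{eq:fxpt_C_rrf_1}, \eqref{eq:fxpt_C_rrf_2}, and \eqref{eq:distinctTilingsC} is precisely how the table is generated, and your handling of the odd-$m$ case (where odd-$d$ terms vanish and even-$d$ exponents collapse to $nm/d$) is accurate.
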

This has been added to the OEIS as sequence A368261.
\subsubsection{Under \texorpdfstring{$180^\circ$}{180 degree} rotation}
\begin{proposition}
  When $\orb{r^2}{r^2} = 2$, such as when
  \[
    T = \left\{\tileColor[black!90!white], \tileColor[black!10!white]\right\}
    \qquad\text{or}\qquad
    T = \left\{\tileRR{0}, \tileRR{1}\right\}
  \]
  the number of tilings of the $n \times m$ cylinder up to $180^\circ$ rotation
  by two distinct tiles that are stable under $180^\circ$ rotation is
  given by \[
    \begin{array}{r}
      n=1 \\ n=2 \\ n=3 \\ n=4 \\ n=5 \\ n=6 \\ n=7
    \end{array}
    \left|
      \begin{array}{rrrrrrr}
        2 & 3 & 6 & 10 & 20 & 36 & 72 \\
      3 & 7 & 24 & 76 & 288 & 1072 & 4224 \\
      4 & 16 & 104 & 720 & 5600 & 43968 & 350592 \\
      6 & 43 & 570 & 8356 & 131976 & 2099728 & 33568800 \\
      8 & 120 & 3408 & 105376 & 3359552 & 107390592 & 3436104960 \\
      13 & 382 & 22284 & 1400536 & 89505968 & 5726776672 & 366505626304 \\
      18 & 1236 & 150824 & 19182160 & 2454398112 & 314147227968 & 40210727735936
      \end{array}
    \right.
    \label{tabl:nXmCyl_rr__rr}
  \]
\end{proposition}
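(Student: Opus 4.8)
The plan is to apply the general cylinder counting formula, equation \eqref{eq:distinctTilingsC}, specialized to $R = \langle r^2 \rangle \cong C_2$, and then substitute the fixed-point formulas already established. First I would record the combinatorial data of the tile set: since each of the two orbits consists of a single tile design fixed by $r^2$, we have $t_{\id} = 2$ (the total number of tile designs) and $t_{r^2} = 2$ (both designs are stabilized by $180^\circ$ rotation). Because $R = \{\id, r^2\}$ has order $2$, equation \eqref{eq:distinctTilingsC} expresses the table entry as
\[
  \frac{1}{2n}\left(\fxpt^\CY_{\id}(n,m) + \fxpt^\CY_{r^2}(n,m)\right).
\]

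Next I would substitute the two fixed-point formulas. For the identity term, equation \eqref{eq:fxpt_C_id} with $t_{\id} = 2$ gives
\[
  \fxpt^\CY_{\id}(n,m) = \sum_{d \mid n} \varphi(d)\, 2^{nm/d}.
\]
For the $r^2$ term I would apply the three-case formula \eqref{eq:fxpt_C_rr_1}--\eqref{eq:fxpt_C_rr_3} with $t_{\id} = t_{r^2} = 2$. Using $2^{(nm-2)/2}\cdot 2^2 = 2\cdot 2^{nm/2}$ and $2^{(nm-1)/2}\cdot 2 = 2^{(nm+1)/2}$, the three cases collapse to
\[
  \fxpt^\CY_{r^2}(n,m) = \begin{cases}
    n\, 2^{nm/2}              & m \text{ even},\\
    \tfrac{3n}{2}\, 2^{nm/2}  & m \text{ odd},\ n \text{ even},\\
    n\, 2^{(nm+1)/2}          & m,\,n \text{ odd}.
  \end{cases}
\]

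Finally I would combine the two pieces over the three parity regimes and confirm that the resulting closed form reproduces the tabulated integers. Spot-checking the corner $(n,m)=(1,1)$ gives $\tfrac{1}{2}(2+2)=2$, the entry $(n,m)=(2,1)$ gives $\tfrac{1}{4}(6+6)=3$, and $(n,m)=(3,3)$ gives $\tfrac{1}{6}(528+96)=104$, all matching the array. I expect no serious obstacle: every ingredient is an already-proved theorem, so the substance is the bookkeeping of parity cases and verifying that the arithmetic aligns with the displayed table. The one place warranting care is the middle case $m$ odd, $n$ even, where averaging over cyclic shifts produced the factor $\tfrac12$ in \eqref{eq:fxpt_C_rr_2}; I would double-check that this halving, followed by the division by $2n$, is handled consistently, since an error there would corrupt every even-$n$, odd-$m$ entry of the table.
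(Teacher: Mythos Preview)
Your proposal is correct and is exactly the intended derivation: the paper does not give a separate proof for this proposition, since it is a direct specialization of equation \eqref{eq:distinctTilingsC} together with the fixed-point formulas \eqref{eq:fxpt_C_id} and \eqref{eq:fxpt_C_rr_1}--\eqref{eq:fxpt_C_rr_3}, with $t_{\id}=t_{r^2}=2$. Your parity-case bookkeeping and spot checks are accurate.
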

This has been added to the OEIS as sequence A368262.
\begin{proposition}
  When $\orbid{r^2} = 1$, such as when
  \[
    T = \left\{\tileAsym{0}, \tileAsym{2}\right\}
    \qquad\text{or}\qquad
    T = \left\{\tileU, \tileD\right\}
  \]
  the number of tilings of the $n \times m$ cylinder up to $180^\circ$ rotation
  by a tiles that is not stable under $180^\circ$ rotation is
  given by \[
    \begin{array}{r}
      n=1 \\ n=2 \\ n=3 \\ n=4 \\ n=5 \\ n=6 \\ n=7
    \end{array}
    \left|
      \begin{array}{rrrrrrr}
        1 & 3 & 4 & 10 & 16 & 36 & 64 \\
        2 & 7 & 20 & 76 & 272 & 1072 & 4160 \\
        2 & 16 & 88 & 720 & 5472 & 43968 & 349568 \\
        4 & 43 & 538 & 8356 & 131464 & 2099728 & 33560608 \\
        4 & 120 & 3280 & 105376 & 3355456 & 107390592 & 3435973888 \\
        9 & 382 & 22028 & 1400536 & 89489584 & 5726776672 & 366504577728 \\
        10 & 1236 & 149800 & 19182160 & 2454267040 & 314147227968 & 40210710958720
      \end{array}
    \right.
    \label{tabl:nXmCyl_rr__id}
  \]
\end{proposition}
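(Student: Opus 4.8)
The plan is to reduce the statement to the cylinder fixed-point formulas already proved under $\langle r^2\rangle$ and then apply Burnside's lemma. First I would record the only two tile statistics these formulas depend on. The hypothesis $\orbid{r^2}=1$, with all other orbit counts (in particular $\orb{r^2}{r^2}$) equal to zero, says that $T$ is a single orbit with trivial stabilizer; since $\langle r^2\rangle$ has order $2$, this orbit consists of exactly two designs on which $r^2$ acts freely by swapping. Consequently $t_{\id}=|T|=2$ and $t_{r^2}=0$, because no design is fixed by $r^2$. By the lemma that the tiling count depends only on the tuple of orbit types, these two values are all that is needed.

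Next I would assemble the two relevant fixed-point counts. Substituting $t_{\id}=2$ into equation \eqref{eq:fxpt_C_id} gives
\[
  \fxpt^\CY_{\id}(n,m) = \sum_{d\mid n}\varphi(d)\,2^{nm/d}.
\]
For the rotational part I would invoke the three-case theorem for $\fxpt^\CY_{r^2}(n,m)$ and use $t_{r^2}=0$ to kill every term carrying a factor of $t_{r^2}$. Equation \eqref{eq:fxpt_C_rr_1} survives unchanged, equation \eqref{eq:fxpt_C_rr_2} collapses to its first summand, and equation \eqref{eq:fxpt_C_rr_3} vanishes entirely, yielding
\[
  \fxpt^\CY_{r^2}(n,m) = \begin{cases}
    n\,2^{nm/2}            & m \text{ even}\\
    \tfrac{n}{2}\,2^{nm/2} & m \text{ odd},\ n \text{ even}\\
    0                      & m,n \text{ both odd.}
  \end{cases}
\]

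Finally I would feed these into the cylinder counting formula \eqref{eq:distinctTilingsC} with $R=\langle r^2\rangle$ and $|R|=2$, so that the number of distinct tilings is
\[
  \frac{1}{2n}\bigl(\fxpt^\CY_{\id}(n,m)+\fxpt^\CY_{r^2}(n,m)\bigr),
\]
a direct application of Burnside's lemma to $\cylinderSymmetries$. To close I would confirm that this closed form reproduces the displayed table, checking a handful of entries such as $(n,m)=(1,1)$ giving $\tfrac12(2+0)=1$, $(n,m)=(2,2)$ giving $\tfrac14(20+8)=7$, and $(n,m)=(3,3)$ giving $\tfrac16(528+0)=88$.

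There is no deep obstacle here; all the content lives in the earlier theorems and this proposition is simply a specialization of them. The only steps demanding care are bookkeeping: verifying that setting $t_{r^2}=0$ annihilates precisely the intended terms in each parity case, and reconciling the resulting expression with the table. Integrality of $\tfrac{1}{2n}\bigl(\fxpt^\CY_{\id}+\fxpt^\CY_{r^2}\bigr)$ needs no separate argument, since Burnside's lemma guarantees that it counts orbits and is therefore a non-negative integer.
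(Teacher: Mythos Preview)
Your proposal is correct and follows exactly the approach the paper intends: the appendix propositions are not given separate proofs in the paper precisely because they are direct specializations of the general cylinder theorems, and your argument carries out that specialization cleanly. Your identification of $t_{\id}=2$, $t_{r^2}=0$, the case analysis for $\fxpt^\CY_{r^2}$, and the spot-checks against the table are all accurate.
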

This has been added to the OEIS as sequence A368263.
\subsubsection{Under cylindrical action only}
\begin{proposition}
  When $\mathcal{O}^{\mathbbm 1}_{\mathbbm 1} = 2$, such as when
  \[
    T = \left\{\tileColor[black!80!white], \tileColor[black!20!white]\right\}
  \]
  the number of tilings of the $n \times m$ cylinder
  by two distinct tiles is given by \[
    \begin{array}{r}
      n=1 \\ n=2 \\ n=3 \\ n=4 \\ n=5 \\ n=6 \\ n=7
    \end{array}
    \left|
      \begin{array}{rrrrrrr}
        2 & 4 & 8 & 16 & 32 & 64 & 128 \\
        3 & 10 & 36 & 136 & 528 & 2080 & 8256 \\
        4 & 24 & 176 & 1376 & 10944 & 87424 & 699136 \\
        6 & 70 & 1044 & 16456 & 262416 & 4195360 & 67113024 \\
        8 & 208 & 6560 & 209728 & 6710912 & 214748416 & 6871947776 \\
        14 & 700 & 43800 & 2796976 & 178962784 & 11453291200 & 733008106880 \\
        20 & 2344 & 299600 & 38347936 & 4908534080 & 628292358784 & 80421421917440
      \end{array}
    \right.
    \label{tabl:nXmCyl_id__id}
  \]
\end{proposition}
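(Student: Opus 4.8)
The plan is to recognize this Proposition as the simplest instance of the cylinder framework developed in Section~\ref{sec:cylinder}: the case in which the dihedral symmetry group is trivial, $R = \mathbbm{1}$, so that we count tilings of the $n \times m$ cylinder up to the cylindrical action $\Z/n\Z$ alone. The bookkeeping hypothesis $\mathcal{O}^{\mathbbm 1}_{\mathbbm 1} = 2$ says the set of tile designs is a disjoint union of exactly two orbits, each a singleton with trivial stabilizer; since the only subgroup of $\mathbbm{1}$ is itself, this simply means $t_{\id} = 2$ is the total number of tile designs. Everything then reduces to formulas already proved.

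First I would apply Equation \eqref{eq:distinctTilingsC} with $R = \mathbbm{1}$. Because $|R| = 1$ and its only element is $\id$, the count of distinct tilings collapses to
\[
  \frac{1}{n}\,\fxpt^\CY_{\id}(n,m).
\]
Next I would substitute $\fxpt^\CY_{\id}(n,m)$ from Equation \eqref{eq:fxpt_C_id}, namely $\sum_{d \mid n}\varphi(d)\,t_{\id}^{\,nm/d}$, and set $t_{\id} = 2$ to obtain the closed form
\[
  \frac{1}{n}\sum_{d \mid n}\varphi(d)\,2^{\,nm/d}.
\]

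To confirm this matches the displayed table, I would verify a couple of entries directly. For $n = 1$ the only divisor is $d = 1$, giving $2^{m}$, which reproduces the first row $2, 4, 8, 16, \dots$; for $n = 2$ the divisors $1, 2$ give $\tfrac{1}{2}\bigl(2^{2m} + 2^{m}\bigr)$, yielding $3, 10, 36, 136, \dots$ in agreement with the second row. A single general check that the summand is always an integer (guaranteed because $\fxpt^\CY_{\id}$ counts a set of tilings, so $\frac{1}{n}\fxpt^\CY_{\id}$ counts orbits under $\Z/n\Z$ by Burnside's lemma) removes any worry about the division by $n$.

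Since both ingredients—the fixed-point count $\fxpt^\CY_{\id}$ and the Burnside reduction in Equation \eqref{eq:distinctTilingsC}—are already established, there is no substantive obstacle here. The only care required is the observation that $\mathcal{O}^{\mathbbm 1}_{\mathbbm 1} = 2$ forces $t_{\id} = 2$ and that the trivial symmetry group contributes exactly one summand to the Burnside average. The Proposition is therefore an immediate corollary of the earlier cylinder theorems.
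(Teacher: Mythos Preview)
Your proposal is correct and is precisely the argument the paper's framework intends: this appendix proposition has no separate proof in the paper, being an immediate specialization of Equation~\eqref{eq:distinctTilingsC} with $R=\mathbbm{1}$ together with Equation~\eqref{eq:fxpt_C_id} and $t_{\id}=2$. Your derivation and sanity checks match the paper's setup exactly.
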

This has been added to the OEIS as sequence A368264.
                           %

\subsection{The \texorpdfstring{$n \times m$}{n by m} torus}    %
This section gives examples of every choice of symmetry of the $n \times m$
torus together with every essentially different set of tile designs that
consists of a single orbit (or two orbits, in the case of a fully symmetric tile).
Each sequence is annotated with its corresponding entry in the
On-Line Encyclopedia of Integer Sequences.
A table of all such sequences is given in Table \ref{tabl:nXmTorusIndex}.
\begin{table}[ht]
\begin{tabular}{l|l|l|l|l|}
                                    & $\langle r^2, f \rangle$                       & $\langle f \rangle$                       & $\langle r^2 \rangle$                      & $\mathbbm{1}$ \\
\hline
$\mathcal O_V$                      & \tableTableEntry{nXmTorus_rr_f__rr_f}{A222188} & \NA                                       & \NA                                        & \NA                                        \\[10pt]
$\mathcal O_{\langle f \rangle}$    & \tableTableEntry{nXmTorus_rr_f__f}   {A368302} & \tableTableEntry{nXmTorus_f__f} {A368305} & \NA                                        & \NA                                        \\[10pt]
$\mathcal O_{\langle r^2 \rangle}$  & \tableTableEntry{nXmTorus_rr_f__rr}  {A368303} & \NA                                       & \tableTableEntry{nXmTorus_rr__rr}{A368307} & \NA                                        \\[10pt]
$\mathcal O_\mathbbm{1}$            & \tableTableEntry{nXmTorus_rr_f__id}  {A368304} & \tableTableEntry{nXmTorus_f__id}{A368306} & \tableTableEntry{nXmTorus_rr__id}{A368308} & \tableTableEntry{nXmTorus_id__id}{A184271}
\end{tabular}
\caption{An index of tables that describe the number of tilings of the $n \times m$ torus.}
\label{tabl:nXmTorusIndex}
\end{table}

\subsubsection{Under horizontal and vertical reflection}
\begin{proposition}
  When $\orb{r^2,f}{r^2,f} = 2$, such as when
  \[
    T = \left\{\tileColor[black!90!white], \tileColor[black!10!white]\right\}
  \]
  the number of tilings of the $n \times m$ torus up to horizontal and vertical
  reflection
  by two distinct tiles with both horizontal and vertical reflectional symmetry
  is given by the following table:
  \[
  \begin{array}{r}
    n=1 \\ n=2 \\ n=3 \\ n=4 \\ n=5 \\ n=6
  \end{array}
  \left|
    \begin{array}{rrrrrrr}
      2 & 3 & 4 & 6 & 8 & 13 & 18 \\
      3 & 7 & 13 & 34 & 78 & 237 & 687 \\
      4 & 13 & 36 & 158 & 708 & 4236 & 26412 \\
      6 & 34 & 158 & 1459 & 14676 & 184854 & 2445918 \\
      8 & 78 & 708 & 14676 & 340880 & 8999762 & 245619576 \\
      13 & 237 & 4236 & 184854 & 8999762 & 478070832 & 26185264801 \\
      18 & 687 & 26412 & 2445918 & 245619576 & 26185264801 & 2872221202512
    \end{array}
  \right.
  \label{tabl:nXmTorus_rr_f__rr_f}
  \]
\end{proposition}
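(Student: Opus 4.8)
The plan is to treat this proposition as a direct specialization of the general toroidal counting machinery, so that almost all of the work lies in pinning down the tile statistics $t_g$ and then invoking the fixed-point theorems already proved. First I would record that the two tiles here are solid-colored squares, each of which is fixed by every element of $\langle r^2, f\rangle$; hence each forms a singleton orbit with full stabilizer, giving $\orb{r^2,f}{r^2,f} = 2$ and $\orb{r^2,f}{f} = \orb{r^2,f}{r^2f} = \orb{r^2,f}{r^2} = \orbid{r^2,f} = 0$. Substituting these into the four displayed equations for $t_{\id}$, $t_f$, $t_{r^2f}$, and $t_{r^2}$ at the start of this subsection yields $t_{\id} = t_f = t_{r^2f} = t_{r^2} = 2$. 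Moreover, because each tile is fixed by \emph{every} power of $f$ and of $r^2f$, every term of the form $t_{f^c}$ or $t_{(r^2f)^d}$ appearing in the fixed-point formulas also equals $2$.

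With this in hand, the second step is purely substitution. I would plug $t_{\id} = 2$ into Theorem \ref{thm:fxpt_Rt_id} to obtain
\[
  \fxpt^\RT_{\id}(n,m) = \sum_{c \mid m}\sum_{d \mid n}\varphi(c)\varphi(d)\, 2^{mn/\lcm(c,d)},
\]
and likewise specialize Theorem \ref{thm:fxpt_Rt_rr} for $\fxpt^\RT_{r^2}(n,m)$ and Theorem \ref{thm:fxpt_Rt_f} for both $\fxpt^\RT_f(n,m)$ and $\fxpt^\RT_{r^2f}(n,m)$, in each case replacing every $t$-symbol by $2$. Since all tile counts collapse to the single value $2$, each of these fixed-point sums reduces to a parity-dependent, divisor-weighted sum of powers of $2$.

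The third step assembles the count via Burnside's lemma in the form already established: by Theorem \ref{thm:nXmTorusFormula} with $R = \langle r^2, f\rangle$ and $|R| = 4$, the number of distinct tilings is
\[
  \frac{1}{4nm}\left(\fxpt^\RT_{\id}(n,m) + \fxpt^\RT_{r^2}(n,m) + \fxpt^\RT_{f}(n,m) + \fxpt^\RT_{r^2f}(n,m)\right).
\]
Evaluating this expression for $1 \le n,m \le 7$ and checking it against the stated table completes the verification.

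The conceptual content is therefore minimal; the main obstacle is bookkeeping. The several fixed-point formulas each split into cases on the parities of $n$ and $m$, and two of them carry nested $\varphi$-weighted sums over divisors of $n$ and $m$. Keeping the parity cases consistent across all four summands, and correctly evaluating the $\lcm$-controlled exponents in the divisor sums, is where errors would most easily creep in; I would guard against this by first simplifying each $\fxpt^\RT_g$ symbolically (for instance, noting that the even--even branch of $\fxpt^\RT_{r^2}$ collapses to $\tfrac74\, nm\, 2^{nm/2}$) before tabulating, and by cross-checking the $g = \id$ summand divided by $nm$ against the pure cyclic-shift sequence that Theorem \ref{thm:fxpt_Rt_id} specializes to.
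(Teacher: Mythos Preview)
Your proposal is correct and is exactly the approach the paper intends: the proposition is stated as a direct specialization of Theorems \ref{thm:fxpt_Rt_id}--\ref{thm:fxpt_Rt_f} and \ref{thm:nXmTorusFormula} with all $t_g = 2$, and the paper offers no proof beyond citing OEIS A222188. The only slip is a reference---the displayed equations for $t_{\id}, t_f, t_{r^2f}, t_{r^2}$ in terms of the orbit counts appear at the start of the \emph{grid} subsection on $\langle r^2,f\rangle$, not the torus subsection---but the content is unaffected.
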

This is given by OEIS sequence A222188.
\begin{proposition}
  When $\orb{r^2,f}{f} = 1$, such as when
  \[
    T = \left\{\tileU, \tileD\right\}
  \]
  the number of tilings of the $n \times m$ torus up to horizontal and vertical
  reflection
  by a tile horizontal (but not vertical) reflectional symmetry
  is given by the following table:
  \[
  \begin{array}{r}
    n=1 \\ n=2 \\ n=3 \\ n=4 \\ n=5 \\ n=6 \\ n=7
  \end{array}
  \left|
    \begin{array}{rrrrrrr}
      1 & 2 & 2 & 4 & 4 & 9 & 10 \\
      2 & 5 & 9 & 26 & 62 & 205 & 623 \\
      2 & 8 & 22 & 120 & 600 & 3936 & 25556 \\
      4 & 22 & 126 & 1267 & 14164 & 181782 & 2437726 \\
      4 & 44 & 592 & 13600 & 337192 & 8965354 & 245501608 \\
      8 & 135 & 3936 & 178366 & 8980642 & 477655760 & 26184041441 \\
      9 & 362 & 25314 & 2404372 & 245479140 & 26179947021 & 2872203226920 \\
    \end{array}
  \right.
  \label{tabl:nXmTorus_rr_f__f}
  \]
\end{proposition}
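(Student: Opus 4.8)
The plan is to evaluate the master torus formula of Theorem~\ref{thm:nXmTorusFormula} after extracting the tile statistics $t_g$ from the orbit hypothesis. Since $\langle r^2, f\rangle$ is the Klein four-group, it is abelian, so every subgroup is self-conjugate; thus the single orbit is $\{d, d\cdot r^2\}$ and \emph{both} of its tiles have stabilizer exactly $\langle f\rangle$. Indeed $d\cdot r^2$ is fixed by $f$ because $r^2 f = f r^2$, while neither tile is fixed by $r^2$ or by $r^2 f$. Reading this off gives
\[
  t_{\id} = 2, \qquad t_f = 2, \qquad t_{r^2} = 0, \qquad t_{r^2f} = 0.
\]
I would also record two consequences special to this tuple: because $f^c = f$ for odd $c$ and $f^c = \id$ for even $c$, and because $t_f = t_{\id} = 2$, we get $t_{f^c} = 2$ for every $c$; dually $(r^2f)^d = r^2f$ for odd $d$ and $\id$ for even $d$, so $t_{(r^2f)^d} = 0$ for odd $d$ and $t_{(r^2f)^d} = 2$ for even $d$.

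Because the number of tilings depends only on the tuple $\bigoplus_S \mathcal{O}^R_S$, the concrete representative $T = \{\tileU, \tileD\}$ is admissible, and Theorem~\ref{thm:nXmTorusFormula} gives the count as
\[
  \frac{1}{4nm}\Bigl(
    \fxpt^\RT_{\id}(n,m)
    + \fxpt^\RT_{r^2}(n,m)
    + \fxpt^\RT_{f}(n,m)
    + \fxpt^\RT_{r^2f}(n,m)
  \Bigr).
\]
Next I would substitute the four fixed-point formulas from Theorems~\ref{thm:fxpt_Rt_id}, \ref{thm:fxpt_Rt_rr}, and~\ref{thm:fxpt_Rt_f}. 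Using $t_{r^2}=0$, the $r^2$-contribution collapses to $\tfrac34\, nm\, 2^{nm/2}$ when $n$ and $m$ are both even, to $0$ when both are odd, and to $\tfrac12\, nm\, 2^{nm/2}$ in the mixed case. In $\fxpt^\RT_{f}$ the factor $t_{f^c}$ is identically $2$, so both inner terms become powers of $2$ and the $c$-sum simplifies; in $\fxpt^\RT_{r^2f}$ only the even divisors $d\mid n$ survive, since the odd-$d$ summands carry a positive power of $t_{r^2f}=0$. No indeterminate $0^0$ occurs, because each vanishing base sits under a strictly positive exponent ($n/d$ or $2n/d$). Collecting terms and dividing by $4nm$ produces the tabulated closed form.

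The main obstacle is purely organizational rather than conceptual: the three fixed-point formulas are split by the parities of $n$ and $m$ and further by the parities of the divisors $c\mid m$ and $d\mid n$, so assembling them into a single expression and verifying integrality requires a careful case analysis of how the coefficients $\tfrac34$ and $\tfrac12$ combine with $nm$ and with the totient-weighted divisor sums. I expect no genuine difficulty, so as a sanity check I would evaluate the assembled formula at a few small entries --- for example $n=m=2$ giving $5$, and the deliberately asymmetric pair $(n,m)=(1,6)$ versus $(6,1)$ giving $9$ and $8$ --- both confirming the table and catching any slip; the failure of $n\leftrightarrow m$ symmetry is exactly what one expects from a tile with horizontal but not vertical reflective symmetry.
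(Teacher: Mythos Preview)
Your approach is correct and is exactly what the paper does (implicitly): these appendix propositions are not given separate proofs but are direct specializations of Theorems~\ref{thm:fxpt_Rt_id}, \ref{thm:fxpt_Rt_rr}, \ref{thm:fxpt_Rt_f}, and~\ref{thm:nXmTorusFormula} to the tile statistics $(t_{\id},t_{r^2},t_f,t_{r^2f})=(2,0,2,0)$, which you derived correctly. One small imprecision: in $\fxpt^\RT_{r^2f}$ for even $m$, the odd-$d$ summands do \emph{not} vanish entirely, since the first half $\tfrac12 t_{\id}^{nm/\lcm(d,2)}$ carries no factor of $t_{(r^2f)^d}$; only the second half dies, so be careful when you collect terms in that case.
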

This has been added to the OEIS as sequence A368302.
\begin{proposition}
  When $\orb{r^2,f}{r^2} = 1$, such as when
  \[
    T = \left\{\tileDiag, \tileAdiag\right\}
  \]
  the number of tilings of the $n \times m$ torus up to horizontal and vertical
  reflection
  by a tile with $180^\circ$ rotational symmetry
  is given by the following table:
  \[
  \begin{array}{r}
    n=1 \\ n=2 \\ n=3 \\ n=4 \\ n=5 \\ n=6 \\ n=7
  \end{array}
  \left|
    \begin{array}{rrrrrrr}
      1 & 2 & 2 & 4 & 4 & 8 & 9 \\
      2 & 5 & 8 & 22 & 44 & 135 & 362 \\
      2 & 8 & 24 & 120 & 612 & 3892 & 25482 \\
      4 & 22 & 120 & 1203 & 13600 & 177342 & 2404372 \\
      4 & 44 & 612 & 13600 & 337600 & 8962618 & 245492244 \\
      8 & 135 & 3892 & 177342 & 8962618 & 477371760 & 26179772237 \\
      9 & 362 & 25482 & 2404372 & 245492244 & 26179772237 & 2872202028544
    \end{array}
  \right.
  \label{tabl:nXmTorus_rr_f__rr}
  \]
\end{proposition}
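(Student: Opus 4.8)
The plan is to apply the master Burnside formula of Theorem~\ref{thm:nXmTorusFormula} with $R = \langle r^2, f\rangle \cong D_4$, for which the count of distinct tilings is $\frac{1}{4nm}\bigl(\fxpt^\RT_{\id}(n,m) + \fxpt^\RT_{r^2}(n,m) + \fxpt^\RT_{f}(n,m) + \fxpt^\RT_{r^2f}(n,m)\bigr)$, and to reduce everything to the fixed-point formulas already established in Theorems~\ref{thm:fxpt_Rt_id}, \ref{thm:fxpt_Rt_rr}, and~\ref{thm:fxpt_Rt_f}. The only input specific to this proposition is the tuple of values $t_g$ for the tile set in question, so I would begin by computing these.

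First I would determine the $t_g$. Since $\orb{r^2,f}{r^2} = 1$ is the only nonzero orbit count, the orbit has size $|\langle r^2, f\rangle|/|\langle r^2\rangle| = 2$, realized by $T = \{\tileDiag, \tileAdiag\}$, where each tile is stabilized by $\langle r^2\rangle$ and the two are interchanged by both $f$ and $r^2f$. Feeding $\orb{r^2,f}{r^2} = 1$ (all other orbit counts zero) into the relations expressing $t_{\id}, t_f, t_{r^2f}, t_{r^2}$ in terms of the orbit counts $\orb{r^2,f}{\cdot}$ for the group $\langle r^2, f\rangle$ gives $t_{\id} = t_{r^2} = 2$ and $t_f = t_{r^2f} = 0$. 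These two facts---that $t_{r^2}$ coincides with $t_{\id}$ and that both reflection counts vanish---are exactly what drives the simplification.

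Next I would substitute. In $\fxpt^\RT_{f}$ and $\fxpt^\RT_{r^2f}$ the exponentiated factors $t_{f^c}^{\,2m/c}$ and $t_{(r^2f)^d}^{\,2n/d}$ equal a power of $t_f$ or $t_{r^2f}$ precisely when $c$ (resp.\ $d$) is odd, so every odd-index summand is annihilated; for even $c$ one has $f^c = \id$ and $\lcm(2,c) = c$, and the two halves of the even-$c$ summand merge into a single clean term $\varphi(c)\,t_{\id}^{nm/c}$ (and symmetrically for $r^2f$). In $\fxpt^\RT_{r^2}$ the identity $t_{r^2} = t_{\id}$ collapses each parity case to an explicit power of $2$ times a rational coefficient, while $\fxpt^\RT_{\id}$ is already a $\varphi$-weighted divisor double-sum in $t_{\id} = 2$. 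Assembling the four pieces in the Burnside average and confirming that the $4nm$ denominator clears---which it must, since the quantity counts an integer number of orbits---yields a closed form whose values I would tabulate and match against the stated entries; as sanity checks, the $n=m=1$ corner gives $\tfrac14(2+2+0+0)=1$, and the displayed table is symmetric in $n\leftrightarrow m$, consistent with transposition swapping $f$ and $r^2f$ while fixing $T$.

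The routine but genuinely error-prone part---and hence the main obstacle---is the parity-and-divisor bookkeeping: tracking, across the cases ($n,m$ both even; both odd; mixed), which summands survive the vanishing of $t_f$ and $t_{r^2f}$, and verifying that the surviving even-$c$ (resp.\ even-$d$) terms recombine as claimed. I expect no conceptual difficulty beyond this, since all four fixed-point formulas may be quoted verbatim from the earlier theorems; the remaining work is disciplined case analysis and arithmetic simplification.
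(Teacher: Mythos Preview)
Your proposal is correct and is exactly the approach the paper intends: these appendix propositions are direct specializations of Theorem~\ref{thm:nXmTorusFormula} together with the fixed-point formulas of Theorems~\ref{thm:fxpt_Rt_id}, \ref{thm:fxpt_Rt_rr}, and~\ref{thm:fxpt_Rt_f}, applied with the particular tile-symmetry data $t_{\id}=t_{r^2}=2$, $t_f=t_{r^2f}=0$ that you correctly derived from the orbit-count relations. The paper does not supply a separate proof for this proposition, so your plan to substitute, simplify by parity, and tabulate is precisely what is required.
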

This has been added to the OEIS as sequence A368303.
\begin{proposition}
  When $\orbid{r^2,f} = 1$, such as when
  \[
    T = \left\{\tileNW, \tileSW, \tileNE, \tileSE\right\}
  \]
  the number of tilings of the $n \times m$ torus up to horizontal and vertical
  reflection
  by a tile with no symmetry
  is given by the following table:
  \[
  \begin{array}{r}
    n=1 \\ n=2 \\ n=3 \\ n=4 \\ n=5 \\ n=6
  \end{array}
  \left|
    \begin{array}{rrrrrr}
      1 & 4 & 6 & 23 & 52 & 194 \\
      4 & 28 & 194 & 2196 & 26524 & 351588 \\
      6 & 194 & 7296 & 350573 & 17895736 & 954495904 \\
      23 & 2196 & 350573 & 67136624 & 13744131446 & 2932037300956 \\
      52 & 26524 & 17895736 & 13744131446 & 11258999068672 & 9607679419823148 \\
      194 & 351588 & 954495904 & 2932037300956 & 9607679419823148 & 32794211709502184448 \\
    \end{array}
  \right.
  \label{tabl:nXmTorus_rr_f__id}
  \]
\end{proposition}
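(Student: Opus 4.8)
The plan is to obtain the table as a direct specialization of the general torus counting machinery. First I would record the combinatorial data of the tile set. Since $\orbid{r^2,f} = 1$ asserts that $T$ is a single $\langle r^2, f\rangle$-orbit whose representative has trivial stabilizer, the orbit--stabilizer theorem gives $|T| = |D_4| = 4$, and no tile is fixed by any nontrivial element of $R = \langle r^2, f\rangle$. Hence the relevant fixed-point counts are $t_{\id} = 4$ and $t_{r^2} = t_f = t_{r^2f} = 0$. By the lemma asserting that the tiling count depends only on the orbit tuple $\bigoplus_S \mathcal{O}^R_S$, it suffices to evaluate the formulas at these values; the specific choice $T = \{\tileNW,\tileSW,\tileNE,\tileSE\}$ is immaterial beyond exhibiting that such a $T$ exists.

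Next I would invoke Theorem \ref{thm:nXmTorusFormula}, which writes the number of distinct tilings of the $n \times m$ torus up to $R = \langle r^2,f\rangle$ (with $|R| = 4$) as
\[
  \frac{1}{4nm}\left(\fxpt^\RT_{\id}(n,m) + \fxpt^\RT_{r^2}(n,m) + \fxpt^\RT_{f}(n,m) + \fxpt^\RT_{r^2f}(n,m)\right),
\]
and substitute the explicit fixed-point formulas supplied by Theorems \ref{thm:fxpt_Rt_id}, \ref{thm:fxpt_Rt_rr}, and \ref{thm:fxpt_Rt_f}.

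The core step is tracking the vanishing of the nontrivial fixed-point contributions. With $t_{\id} = 4$, the sum $\fxpt^\RT_{\id}(n,m) = \sum_{c\mid m}\sum_{d\mid n}\varphi(c)\varphi(d)\,4^{nm/\lcm(c,d)}$ survives intact. In the $r^2$-formula every summand carrying a positive power of $t_{r^2}$ dies, leaving $\tfrac34 nm\,4^{nm/2}$ when $n,m$ are both even, $0$ when both are odd, and $\tfrac12 nm\,4^{nm/2}$ otherwise. The delicate point is in the $f$- and $r^2f$-formulas, where the governing factor is $t_{f^c}$ (resp.\ $t_{(r^2f)^d}$): because $f$ and $r^2f$ have order $2$, one has $t_{f^c} = t_{\id} = 4$ for even $c$ but $t_{f^c} = t_f = 0$ for odd $c$. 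Consequently only the even-index summands persist, and there the product $t_{\id}^{\bullet}t_{f^c}^{\bullet}$ collapses to a single clean power of $4$. I would carry out this reduction branch by branch on the parity of $n$ and $m$.

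The main obstacle is purely this bookkeeping: one must confirm which $0^k$ factors annihilate their summands, check that no surviving term is evaluated at an illegal (non-integer) exponent within a given parity case, and collect the remaining powers of $4$ into a closed form. Once the four simplified sums are assembled and divided by $4nm$, I would certify the result by evaluating the expression at small cases --- for instance $(n,m) = (1,1),\,(1,2),\,(2,2)$, which yield $1,\,4,\,28$ --- matching the tabulated entries and thereby validating the full table.
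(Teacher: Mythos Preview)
Your proposal is correct and follows exactly the approach the paper intends: these appendix propositions are not proved individually in the paper but are direct specializations of Theorems \ref{thm:fxpt_Rt_id}, \ref{thm:fxpt_Rt_rr}, \ref{thm:fxpt_Rt_f}, and \ref{thm:nXmTorusFormula} with the tile parameters $t_{\id}=4$, $t_{r^2}=t_f=t_{r^2f}=0$. Your identification of the key subtlety---that $t_{f^c}$ and $t_{(r^2f)^d}$ equal $4$ or $0$ according to the parity of $c$ and $d$---and your spot checks at $(1,1)$, $(1,2)$, $(2,2)$ are precisely the right way to certify the table.
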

This has been added to the OEIS as sequence A368304.
\subsubsection{Under horizontal (equivalently vertical) reflection}
\begin{proposition}
  When $\orb{f}{f} = 2$, such as when
  \[
    T = \left\{\tileColor[black!90!white], \tileColor[black!10!white]\right\}
    \qquad\text{or}\qquad
    T = \left\{\tileU, \tileD\right\},
  \]
  the number of tilings of the $n \times m$ torus up to horizontal reflection
  by two distinct tiles with horizontal reflectional symmetry
  is given by the following table:
  \[
  \begin{array}{r}
  n=1 \\ n=2 \\ n=3 \\ n=4 \\ n=5 \\ n=6 \\ n=7
  \end{array}
  \left|
    \begin{array}{rrrrrrr}
      2 & 3 & 4 & 6 & 8 & 14 & 20 \\
      3 & 7 & 14 & 40 & 108 & 362 & 1182 \\
      4 & 13 & 44 & 218 & 1200 & 7700 & 51112 \\
      6 & 34 & 226 & 2386 & 27936 & 361244 & 4869276 \\
      8 & 78 & 1184 & 26892 & 674384 & 17920876 & 491003216 \\
      13 & 237 & 7700 & 354680 & 17950356 & 955180432 & 52367383810 \\
      18 & 687 & 50628 & 4804062 & 490958280 & 52359294854 & 5744406453840
    \end{array}
  \right.
  \label{tabl:nXmTorus_f__f}
  \]
\end{proposition}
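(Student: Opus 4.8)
The plan is to recognize this proposition as a direct specialization of the general toroidal counting machinery, so that the table entries are obtained by substituting the appropriate tile statistics into fixed-point formulas that are already proved. First I would invoke Theorem \ref{thm:nXmTorusFormula}: since the symmetry group here is $R = \langle f \rangle$ with $|R| = 2$, the number of distinct tilings of the $n \times m$ torus is
\[
  \frac{1}{2nm}\left(\fxpt^\RT_{\id}(n,m) + \fxpt^\RT_f(n,m)\right),
\]
so the whole problem reduces to evaluating these two fixed-point sums for the tile set at hand.

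The second step is to read off the tile statistics forced by the hypothesis $\orb{f}{f} = 2$. This says that $T$ consists of exactly two tile designs, each forming an orbit of size one whose stabilizer is all of $\langle f \rangle$; hence $t_{\id} = 2$ and $t_f = 2$, and every other orbit count vanishes. Because $f$ has order $2$, for each divisor $c \mid m$ we have $f^c = \id$ when $c$ is even and $f^c = f$ when $c$ is odd, and in either case $t_{f^c} = 2$. This uniformity is the key simplification: every base that appears in the exponentials of Theorems \ref{thm:fxpt_Rt_id} and \ref{thm:fxpt_Rt_f} collapses to $2$.

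Third, I would substitute. From Theorem \ref{thm:fxpt_Rt_id},
\[
  \fxpt^\RT_{\id}(n,m) = \sum_{c \mid m}\sum_{d \mid n}\varphi(c)\varphi(d)\, 2^{mn/\lcm(c,d)},
\]
and from Theorem \ref{thm:fxpt_Rt_f}, with $t_{\id} = t_{f^c} = 2$, the two branches become
\[
  \fxpt^\RT_f(n,m) = \begin{cases}
    \displaystyle n \sum_{c \mid m}\varphi(c)\left(\tfrac12\, 2^{nm/\lcm(2,c)} + \tfrac12\, 2^{(n-2)m/\lcm(2,c) + 2m/c}\right) & n \text{ even},\\[10pt]
    \displaystyle n \sum_{c \mid m}\varphi(c)\, 2^{(n-1)m/\lcm(2,c) + m/c} & n \text{ odd}.
  \end{cases}
\]
Feeding both into the Burnside average above produces a closed form for each $(n,m)$, and the claimed table is then the evaluation of that expression over $1 \le n, m \le 7$. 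By the lemma asserting that the tiling count depends only on the orbit-type tuple $\bigoplus_S \orb{f}{}$, it suffices to verify these values for the representative statistics $t_{\id} = t_f = 2$; a spot check such as $(n,m) = (2,1)$, where $\fxpt^\RT_{\id} = \fxpt^\RT_f = 6$ and the average gives $\tfrac{1}{4}(6+6) = 3$, confirms the bookkeeping and also shows why the table is \emph{not} symmetric in $n$ and $m$, since $f$ reflects columns and so treats the two directions asymmetrically.

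Since all of the structural work was carried out in the fixed-point theorems, there is no conceptual obstacle here. The only points requiring care are (i) confirming that $t_{f^c} = 2$ for every divisor $c$, so that the parity-of-$c$ distinction inside $\fxpt^\RT_f$ genuinely disappears, and (ii) correctly merging the even-$n$ and odd-$n$ branches when assembling the table. The former is the step I would treat as the main thing to get right, because a miscount of the stabilizer of $f^c$ would silently corrupt every entry.
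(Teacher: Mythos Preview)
Your proposal is correct and is exactly the approach the paper takes: these propositions are not proved separately but are obtained by specializing the general machinery---Theorem~\ref{thm:nXmTorusFormula} combined with the fixed-point formulas of Theorems~\ref{thm:fxpt_Rt_id} and~\ref{thm:fxpt_Rt_f}---to the tile statistics $t_{\id}=t_f=2$, and then tabulating the resulting expression. Your identification of the key simplification (that $t_{f^c}=2$ for every divisor $c$) and your spot check at $(n,m)=(2,1)$ are both on target.
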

This has been added to the OEIS as sequence A368305.
\begin{proposition}
  When $\orbid{f} = 1$, such as when
  \[
  T = \left\{\tileL, \tileR\right\},
  \]
  the number of tilings of the $n \times m$ torus up to horizontal reflection
  by a tile that does not have horizontal reflectional symmetry
  is given by the following table:
  \[
  \begin{array}{r}
  n=1 \\ n=2 \\ n=3 \\ n=4 \\ n=5 \\ n=6 \\ n=7
  \end{array}
  \left|
    \begin{array}{rrrrrrr}
      1 & 2 & 2 & 4 & 4 & 8 & 10 \\
      2 & 5 & 8 & 24 & 56 & 190 & 596 \\
      2 & 9 & 32 & 186 & 1096 & 7356 & 49940 \\
      4 & 26 & 182 & 2130 & 26296 & 350316 & 4794376 \\
      4 & 62 & 1096 & 26380 & 671104 & 17899020 & 490853416 \\
      9 & 205 & 7356 & 350584 & 17897924 & 954481360 & 52357796826 \\
      10 & 623 & 49940 & 4795870 & 490853416 & 52357896710 & 5744387279872
    \end{array}
  \right.
  \label{tabl:nXmTorus_f__id}
  \]
\end{proposition}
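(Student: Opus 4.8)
The plan is to obtain every entry of the table by specializing the master counting formula of Theorem~\ref{thm:nXmTorusFormula} to the group $R = \langle f \rangle \cong C_2$ together with the given $R$-set. First I would extract the only two tile statistics that the formula requires. Since $\orbid{f} = 1$ records a single orbit of size $2$ with trivial stabilizer (the two tiles are interchanged by the horizontal reflection $f$), we have $t_{\id} = 2$, and because no tile is fixed by $f$ we have $t_f = 0$. As $|R| = 2$, these are the only values of $t_g$ that ever appear.

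Next I would invoke Theorem~\ref{thm:nXmTorusFormula}, which expresses the count as
\[
  \frac{1}{2nm}\bigl(\fxpt^\RT_{\id}(n,m) + \fxpt^\RT_f(n,m)\bigr),
\]
and substitute the closed forms from Theorems~\ref{thm:fxpt_Rt_id} and~\ref{thm:fxpt_Rt_f}. The identity contribution is immediate from Theorem~\ref{thm:fxpt_Rt_id} after setting $t_{\id} = 2$, namely
\[
  \fxpt^\RT_{\id}(n,m) = \sum_{c \mid m}\sum_{d \mid n}\varphi(c)\varphi(d)\,2^{mn/\lcm(c,d)}.
\]
For the reflective contribution, the key simplification is that $f^c = f$ when $c$ is odd and $f^c = \id$ when $c$ is even, so $t_{f^c} = 0$ for odd $c$ and $t_{f^c} = 2$ for even $c$. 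This annihilates precisely those summands of $\fxpt^\RT_f(n,m)$ that carry a factor of $t_{f^c}$ with $c$ odd. I would then record what survives in the two parities of $n$ separately, using $\lcm(2,c) = 2c$ for odd $c$ and $\lcm(2,c) = c$ for even $c$.

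Finally I would assemble the two contributions, divide by $2nm$, and confirm agreement with the displayed table. As sanity checks I would verify a few corners by hand: for $(n,m) = (1,1)$ one gets $\tfrac{1}{2}(2 + 0) = 1$, and for $(n,m) = (2,1)$ one gets $\tfrac{1}{4}(6 + 2) = 2$, both matching the table. Integrality of every entry is automatic, since the expression is the Burnside average over a group of order $2nm$. The only genuine bookkeeping obstacle is tracking exactly which summands drop out once $t_f = 0$; beyond that, the argument is a routine substitution into already-proved fixed-point formulas, inheriting its parity case-split directly from Theorem~\ref{thm:fxpt_Rt_f}, so no new idea is needed.
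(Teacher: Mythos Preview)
Your proposal is correct and follows exactly the approach the paper intends: the proposition is simply an instance of Theorem~\ref{thm:nXmTorusFormula} with $R=\langle f\rangle$, $t_{\id}=2$, $t_f=0$, and the paper provides no separate proof beyond the general fixed-point theorems you cite. Your handling of the parity of $c$ in $t_{f^c}$ and the resulting collapse of summands is precisely the bookkeeping required to read off the table.
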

This has been added to the OEIS as sequence A368306.
\subsubsection{Under \texorpdfstring{$180^\circ$}{180 degree} rotation}
\begin{proposition}
  When $\orb{r^2}{r^2} = 2$, such as when
  \[
    T = \left\{\tileColor[black!90!white], \tileColor[black!10!white]\right\}
    \qquad\text{or}\qquad
    T = \left\{\tileDiag, \tileAdiag\right\},
  \]
  the number of tilings of the $n \times m$ torus up to $180^\circ$ rotation
  by two distinct tiles with $180^\circ$ rotational symmetry
  is given by the following table:
  \[
  \begin{array}{r}
  n=1 \\ n=2 \\ n=3 \\ n=4 \\ n=5 \\ n=6 \\ n=7
  \end{array}
  \left|
    \begin{array}{rrrrrrr}
      2 & 3 & 4 & 6 & 8 & 13 & 18 \\
      3 & 7 & 13 & 34 & 78 & 237 & 687 \\
      4 & 13 & 48 & 224 & 1224 & 7696 & 50964 \\
      6 & 34 & 224 & 2302 & 27012 & 353384 & 4806078 \\
      8 & 78 & 1224 & 27012 & 675200 & 17920860 & 490984488 \\
      13 & 237 & 7696 & 353384 & 17920860 & 954677952 & 52359294790 \\
      18 & 687 & 50964 & 4806078 & 490984488 & 52359294790 & 5744404057088
    \end{array}
  \right.
  \label{tabl:nXmTorus_rr__rr}
  \]
\end{proposition}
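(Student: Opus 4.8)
The plan is to specialize the general torus machinery of Theorem~\ref{thm:nXmTorusFormula} to the group $R = \langle r^2 \rangle$ together with the orbit data $\orb{r^2}{r^2} = 2$ and $\orbid{r^2} = 0$. First I would record the tile invariants. For the group $\langle r^2\rangle$ the relations $t_{\id} = \orb{r^2}{r^2} + 2\orbid{r^2}$ and $t_{r^2} = \orb{r^2}{r^2}$ immediately give $t_{\id} = t_{r^2} = 2$. Since $|R| = 2$, Theorem~\ref{thm:nXmTorusFormula} then expresses the $(n,m)$ entry as
\[
  \frac{1}{2nm}\left(\fxpt^\RT_{\id}(n,m) + \fxpt^\RT_{r^2}(n,m)\right).
\]
That the two example tile sets displayed in the statement yield the same table requires no separate computation: both realize the tuple with $\orb{r^2}{r^2} = 2$ and every other orbit count zero, so the earlier lemma asserting that the tiling count depends only on the orbit--stabilizer tuple forces identical answers.

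Next I would substitute the explicit fixed-point sums with $t_{\id} = t_{r^2} = 2$. From Theorem~\ref{thm:fxpt_Rt_id},
\[
  \fxpt^\RT_{\id}(n,m) = \sum_{c\mid m}\sum_{d\mid n}\varphi(c)\varphi(d)\,2^{mn/\lcm(c,d)},
\]
and from Theorem~\ref{thm:fxpt_Rt_rr} I would expand its three parity branches. The key observation is that because $t_{\id} = t_{r^2}$, a size-one orbit (a fixed cell) and a size-two orbit contribute the \emph{same} factor of $2$, so each branch collapses to a single monomial: the both-even case becomes $\tfrac74\,nm\,2^{nm/2}$, the mixed-parity case becomes $\tfrac32\,nm\,2^{nm/2}$, and the both-odd case becomes $nm\,2^{(nm+1)/2}$. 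Dividing the sum of the two fixed-point quantities by $2nm$ then gives a closed form depending only on the parity class of $(n,m)$.

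Finally I would confirm that this closed form reproduces the tabulated values by checking a few entries directly; for instance $(1,1)$ gives $\tfrac12(2+2)=2$, $(2,2)$ gives $\tfrac{1}{8}(28+28)=7$, and $(3,3)$ gives $\tfrac{1}{18}(576+288)=48$, matching the table. A useful structural sanity check is that the table is symmetric across its main diagonal, which is immediate from the formula: $\fxpt^\RT_{\id}$ is symmetric under $n \leftrightarrow m$ by interchanging the summation indices $c$ and $d$, while $\fxpt^\RT_{r^2}$ depends on $(n,m)$ only through the product $nm$ and the parities of $n$ and $m$.

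The main obstacle here is bookkeeping rather than conceptual. The only place where care is genuinely needed is verifying that all three parity branches of $\fxpt^\RT_{r^2}$ truly collapse under the coincidence $t_{\id} = t_{r^2}$, and keeping the $\varphi$-weighted double divisor sum for $\fxpt^\RT_{\id}$ correctly indexed against the divisor lattices of $n$ and $m$. Since every ingredient is already established in Theorems~\ref{thm:fxpt_Rt_id}, \ref{thm:fxpt_Rt_rr}, and~\ref{thm:nXmTorusFormula}, the proof is ultimately a direct specialization, and the table is simply the tabulation of the resulting expression.
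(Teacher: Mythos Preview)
Your proposal is correct and follows exactly the approach the paper intends: these appendix propositions are tabulations obtained by specializing Theorem~\ref{thm:nXmTorusFormula} with the fixed-point formulas of Theorems~\ref{thm:fxpt_Rt_id} and~\ref{thm:fxpt_Rt_rr} to the data $R=\langle r^2\rangle$, $t_{\id}=t_{r^2}=2$. Your simplification of the three parity branches under $t_{\id}=t_{r^2}$ and your spot-checks are all accurate.
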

This has been added to the OEIS as sequence A368307.
\begin{proposition}
  When $\orbid{r^2} = 1$, such as when
  \[
    T = \left\{\tileU, \tileD\right\}
    \qquad\text{or}\qquad
    T = \left\{\tileSW, \tileNE\right\},
  \]
  the number of tilings of the $n \times m$ torus up to $180^\circ$ rotation
  by a tile without $180^\circ$ rotational symmetry
  is given by the following table:
  \[
  \begin{array}{r}
  n=1 \\ n=2 \\ n=3 \\ n=4 \\ n=5 \\ n=6 \\ n=7
  \end{array}
  \left|
    \begin{array}{rrrrrrr}
      1 & 2 & 2 & 4 & 4 & 9 & 10 \\
      2 & 5 & 9 & 26 & 62 & 205 & 623 \\
      2 & 9 & 32 & 192 & 1096 & 7440 & 49940 \\
      4 & 26 & 192 & 2174 & 26500 & 351336 & 4797886 \\
      4 & 62 & 1096 & 26500 & 671104 & 17904476 & 490853416 \\
      9 & 205 & 7440 & 351336 & 17904476 & 954546880 & 52358246214 \\
      10 & 623 & 49940 & 4797886 & 490853416 & 52358246214 & 5744387279872
    \end{array}
  \right.
  \label{tabl:nXmTorus_rr__id}
  \]
\end{proposition}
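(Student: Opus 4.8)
The plan is to specialize the general $n \times m$ torus counting formula to the tile statistics forced by the hypothesis, and then read the table off the resulting closed form. First I would record the orbit data: the assumption $\orbid{r^2} = 1$ together with the absence of any $r^2$-stabilized orbit means $\orb{r^2}{r^2} = 0$. For a $\langle r^2 \rangle$-set these orbit counts determine the fixed-tile counts through the identities $t_{\id} = \orb{r^2}{r^2} + 2\,\orbid{r^2}$ and $t_{r^2} = \orb{r^2}{r^2}$, so here $t_{\id} = 2$ and $t_{r^2} = 0$. Because the earlier lemma shows the number of tilings depends only on the tuple of orbit counts indexed by stabilizer conjugacy class, the two sample sets $\left\{\tileU,\tileD\right\}$ and $\left\{\tileSW,\tileNE\right\}$, each a single orbit of a tile with no $180^\circ$ symmetry, necessarily produce the same table, which is what the phrase ``such as when'' asserts.

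Next I would invoke Theorem \ref{thm:nXmTorusFormula} with $R = \langle r^2 \rangle$, so that the number of distinct tilings equals
\[
  \frac{1}{2nm}\left(\fxpt^\RT_{\id}(n,m) + \fxpt^\RT_{r^2}(n,m)\right).
\]
Substituting $t_{\id} = 2$ into Theorem \ref{thm:fxpt_Rt_id} yields $\fxpt^\RT_{\id}(n,m) = \sum_{c \mid m}\sum_{d \mid n}\varphi(c)\varphi(d)\,2^{\,mn/\lcm(c,d)}$. Substituting $t_{\id} = 2$ and $t_{r^2} = 0$ into Theorem \ref{thm:fxpt_Rt_rr} collapses the three parity branches: every term carrying a positive power of $t_{r^2}$ is annihilated, leaving $\fxpt^\RT_{r^2}(n,m) = \tfrac34\,nm\,2^{nm/2}$ when $n$ and $m$ are both even, $0$ when both are odd, and $\tfrac12\,nm\,2^{nm/2}$ in the mixed-parity case.

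Finally I would combine these two pieces and evaluate at each $(n,m)$ with $1 \le n,m \le 7$ to obtain the displayed table; spot checks such as $(n,m) = (1,1)$ giving $\tfrac12(2+0) = 1$ and $(n,m) = (2,2)$ giving $\tfrac18(28+12) = 5$ confirm the entries. The only step demanding genuine care is the parity bookkeeping in $\fxpt^\RT_{r^2}$: one must verify that setting $t_{r^2} = 0$ really kills the correction term in each branch of Theorem \ref{thm:fxpt_Rt_rr} and that the surviving leading coefficients $\tfrac34$, $0$, and $\tfrac12$ are matched to the correct parity regime. Beyond that obstruction the result is a direct substitution into theorems already proved, with the remaining work being the routine arithmetic verification of the tabulated integers.
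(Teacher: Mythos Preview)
Your proposal is correct and follows precisely the approach the paper intends: these appendix propositions carry no separate proofs because they are direct specializations of Theorems~\ref{thm:fxpt_Rt_id}, \ref{thm:fxpt_Rt_rr}, and \ref{thm:nXmTorusFormula} to the relevant tile statistics, and you have carried out that specialization accurately (including the parity case split and the spot checks).
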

This has been added to the OEIS as sequence A368308.
\subsubsection{Under toroidal action only}
\begin{proposition}
  When $\mathcal{O}^{\mathbbm 1}_{\mathbbm 1} = 2$, such as when
  \[
  T = \left\{\tileColor[black!90!white], \tileColor[white!90!black]\right\},
  \]
  the number of tilings of the $n \times m$ grid up to
  cyclic shifting of rows and columns
  by any two distinct tile designs is given by the following table:
  \[
  \begin{array}{r}
  n=1 \\ n=2 \\ n=3 \\ n=4 \\ n=5 \\ n=6 \\ n=7
  \end{array}
  \left|
    \begin{array}{rrrrrrr}
      2 & 3 & 4 & 6 & 8 & 14 & 20 \\
      3 & 7 & 14 & 40 & 108 & 362 & 1182 \\
      4 & 14 & 64 & 352 & 2192 & 14624 & 99880 \\
      6 & 40 & 352 & 4156 & 52488 & 699600 & 9587580 \\
      8 & 108 & 2192 & 52488 & 1342208 & 35792568 & 981706832 \\
      14 & 362 & 14624 & 699600 & 35792568 & 1908897152 & 104715443852 \\
      20 & 1182 & 99880 & 9587580 & 981706832 & 104715443852 & 11488774559744
    \end{array}
  \right.
  \label{tabl:nXmTorus_id__id}
  \]
\end{proposition}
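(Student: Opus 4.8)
The plan is to recognize this proposition as the specialization of the general torus-counting machinery to the trivial dihedral group $R = \mathbbm{1}$ together with $t_{\id} = 2$. First I would observe that counting tilings ``up to cyclic shifting of rows and columns'' with no rotations or reflections corresponds exactly to taking $R = \mathbbm{1}$, so that the acting group is $\torusSymmetries = (\grid) \rtimes \mathbbm{1}$, which is just the toroidal action $\grid$. Since $\mathbbm{1}$ contains only the identity, Theorem \ref{thm:nXmTorusFormula} collapses its Burnside sum to the single term
\[
  \frac{1}{nm}\fxpt^\RT_{\id}(n,m).
\]

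Next I would translate the hypothesis $\mathcal{O}^{\mathbbm 1}_{\mathbbm 1} = 2$. Because $R$ is trivial, every tile design automatically has trivial stabilizer and forms its own singleton orbit; hence $\mathcal{O}^{\mathbbm 1}_{\mathbbm 1} = 2$ says precisely that there are two tile designs, i.e.\ $t_{\id} = 2$. Substituting $t_{\id} = 2$ into the formula of Theorem \ref{thm:fxpt_Rt_id} yields
\[
  \fxpt^\RT_{\id}(n,m) = \sum_{c \mid m}\sum_{d \mid n}\varphi(c)\varphi(d)\, 2^{mn/\lcm(c,d)},
\]
so that the number of distinct tilings equals
\[
  \frac{1}{nm}\sum_{c \mid m}\sum_{d \mid n}\varphi(c)\varphi(d)\, 2^{mn/\lcm(c,d)}.
\]

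Finally I would confirm the displayed table by evaluating this closed form at small values of $(n,m)$; for example, $(n,m)=(2,2)$ gives $\tfrac14\bigl(2^4 + 2^2 + 2^2 + 2^2\bigr) = 7$, matching the corresponding entry. Since both input theorems are already established, the argument is entirely a matter of specialization and bookkeeping; the only point warranting care---and the closest thing to an obstacle---is verifying that the toroidal-only setting really is the $R = \mathbbm{1}$ case of Theorem \ref{thm:nXmTorusFormula}, so that the global normalizing factor is $nm$ rather than $nm\lvert R\rvert$ for some nontrivial $R$. Once that identification is made, the result follows directly from Theorems \ref{thm:fxpt_Rt_id} and \ref{thm:nXmTorusFormula}.
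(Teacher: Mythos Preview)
Your proposal is correct and follows exactly the approach implicit in the paper: these appendix propositions are simply specializations of Theorems~\ref{thm:fxpt_Rt_id} and~\ref{thm:nXmTorusFormula} to particular choices of $R$ and tile set, and you have carried out that specialization for $R = \mathbbm{1}$, $t_{\id} = 2$ accurately. The paper does not give separate proofs for these propositions, so your explicit derivation and sanity check at $(n,m)=(2,2)$ is, if anything, more detailed than what the paper provides.
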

This is OEIS sequence A184271.

\subsection{The \texorpdfstring{$n \times n$}{n by n} torus}
\label{apss:nXnTorusSequences}
This section gives examples of every choice of symmetry of the $n \times n$
torus together with every essentially different set of tile designs that
consists of a single orbit (or two orbits, in the case of a fully symmetric tile).
Each sequence is annotated with its corresponding entry in the
On-Line Encyclopedia of Integer Sequences.
A table of all such sequences is given in Table \ref{tabl:nXnTorusIndex}.
\begin{table}[ht]
\begin{tabular}{l|l|l|l|l|}
                                       & $\langle r, f \rangle$                 & $\langle r^2, rf \rangle$                & $\langle r \rangle$               & $\langle rf \rangle$ \\
\hline
$\mathcal O_{\langle r, f \rangle}$    & \seqTableEntry{nXnTorus_r_f__r_f}{A255016}   & \NA                                            & \NA                                     & \NA                                      \\[10pt]
$\mathcal O_{\langle r^2, f \rangle}$  & \seqTableEntry{nXnTorus_r_f__rr_f}{A367533}  & \NA                                            & \NA                                     & \NA                                      \\[10pt]
$\mathcal O_{\langle r^2, rf \rangle}$ & \seqTableEntry{nXnTorus_r_f__rr_rf}{A295223} & \seqTableEntry{nXnTorus_rr_rf__rr_rf}{A368139} & \NA                                     & \NA                                      \\[10pt]
$\mathcal O_{\langle r \rangle}$       & \seqTableEntry{nXnTorus_r_f__r}{A367534}     & \NA                                            & \seqTableEntry{nXnTorus_r__r}{A368143}  & \NA                                      \\[10pt]
$\mathcal O_{\langle f \rangle}$       & \seqTableEntry{nXnTorus_r_f__f}{A367535}     & \NA                                            & \NA                                     & \NA                                      \\[10pt]
$\mathcal O_{\langle rf \rangle}$      & \seqTableEntry{nXnTorus_r_f__rf}{A367536}    & \seqTableEntry{nXnTorus_rr_rf__rf}{A368140}    & \NA                                     & \seqTableEntry{nXnTorus_rf__rf}{A255015} \\[10pt]
$\mathcal O_{\langle r^2 \rangle}$     & \seqTableEntry{nXnTorus_r_f__rr}{A367537}    & \seqTableEntry{nXnTorus_rr_rf__rr}{A368141}    & \seqTableEntry{nXnTorus_r__rr}{A368144} & \NA                                      \\[10pt]
$\mathcal O_{\mathbbm 1}$              & \seqTableEntry{nXnTorus_r_f__id}{A367538}    & \seqTableEntry{nXnTorus_rr_rf__id}{A368142}    & \seqTableEntry{nXnTorus_r__id}{A368145} & \seqTableEntry{nXnTorus_rf__id}{A367530}
\end{tabular}
\caption{An index of tables that describe the number of tilings of the $n \times n$ torus.}
\label{tabl:nXnTorusIndex}
\end{table}

\subsubsection{Under the symmetries of the square}
\begin{proposition}
When $\orb{r, f}{r, f} = 2$, such as when
\[
  T = \left\{\tileColor[black!90!white], \tileColor[white!90!black]\right\},
\]
the number of tilings of the $n \times n$ torus up to symmetries of the square
by
two distinct tiles are fixed under all symmetries of the square is given by
\[
  2, 6, 26, 805, 172112, 239123150, 1436120190288, 36028817512382026, \dots
  \label{seq:nXnTorus_r_f__r_f}
\]
\end{proposition}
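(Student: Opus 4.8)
The plan is to apply the counting formula of Theorem~\ref{thm:nXnTorusFormula} directly, exploiting the hypothesis that both tile designs are fixed by the entire group $D_8$. The condition $\orb{r,f}{r,f} = 2$ means that the set $T$ consists of exactly two tile designs, each having full stabilizer $\langle r, f \rangle = D_8$. Consequently every tile design is fixed by every $g \in R = D_8$, so $t_g = 2$ for all $g$, and hence $t_{g^k} = 2$ for every element $g$ and every exponent $k$. Feeding this into the cell-orbit fixed-point theorem (the one asserting $|X^s| = \prod_{\vartheta \in \Theta_s} t_{g^{|\vartheta|}}$) collapses the product: for any symmetry $s$ we obtain $|X^s| = 2^{|\Theta_s|}$, where $|\Theta_s|$ is the number of orbits into which $\langle s \rangle$ partitions the cells. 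Thus the entire computation reduces to counting cell-orbits under each symmetry and raising $2$ to that count.

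Next I would assemble the eight summands $\fxpt^\ST_g(n)$ for $g \in D_8$ by substituting $t_g = 2$ into the fixed-point formulas already proved earlier in this section. For instance, the identity contributes $\fxpt^\ST_{\id}(n) = \sum_{d_1 \mid n}\sum_{d_2 \mid n}\varphi(d_1)\varphi(d_2)\,2^{n^2/\lcm(d_1,d_2)}$, while each rotation and reflection formula simplifies because every tile-exponent $t_{(\cdot)}$ appearing in those formulas is now simply $2$. Combining the eight terms through Burnside's lemma in the form given by Theorem~\ref{thm:nXnTorusFormula}, the number of distinct tilings becomes
\[
  \frac{1}{8n^2}\sum_{g \in D_8} \fxpt^\ST_g(n).
\]
Evaluating this at $n = 1, 2, 3, \dots$ should reproduce $2, 6, 26, 805, 172112, \dots$. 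Since $T$ is nothing more than two colors fixed by the whole group, this is precisely the Ethier--Lee count of two-colorings of the $n \times n$ torus up to the symmetries of the square together with cyclic shifting of rows and columns, which is OEIS sequence A255016.

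The main obstacle is the bookkeeping across the parity cases and divisor sums rather than any conceptual difficulty. Each of the rotation and reflection fixed-point formulas branches on the parity of $n$, and internally on the divisors $d \mid n$ weighted by $\varphi(d)$; correctly merging the even-$n$ and odd-$n$ branches of all eight terms, and confirming that the factor $n^2$ in the denominator cancels cleanly against the leading factors of $n$ or $n^2$ appearing in each fixed-point sum, demands careful accounting. No single step is deep, so the risk lies entirely in the combinatorial arithmetic; a convenient sanity check is to confirm agreement with the small terms of A255016 by direct enumeration.
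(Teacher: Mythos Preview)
Your proposal is correct and is precisely the intended argument: the paper does not give a separate proof of this proposition but simply presents it as an instantiation of Theorem~\ref{thm:nXnTorusFormula} together with the eight $\fxpt^\ST_g(n)$ formulas, specialized to $t_g = 2$ for all $g \in D_8$, and then identifies the resulting sequence with OEIS A255016. Your observation that this recovers the Ethier--Lee two-coloring count is exactly the paper's remark at the start of Section~\ref{sec:torus}.
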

This is OEIS sequence A255016.
\begin{proposition}
When $\orb{r, f}{r^2, f} = 1$, such as when
\[
  T = \left\{ \tileVert, \tileHor \right\},
\]
the number of tilings of the $n \times n$ torus up to symmetries of the square
by a tile that is fixed under horizontal and vertical reflections
is given by
\[
  1, 4, 18, 733, 170440, 239035502, 1436110601256, 36028815364865610, \dots
  \label{seq:nXnTorus_r_f__rr_f}
\]
\end{proposition}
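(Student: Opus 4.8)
The plan is to treat this as a direct specialization of the general machinery rather than a fresh computation: once the quantities $t_g = |T^g|$ are known for $T = \left\{\tileVert, \tileHor\right\}$, the stated sequence is produced term-by-term by Theorem~\ref{thm:nXnTorusFormula}. First I would record the orbit data. Since $T$ is a single $D_8$-orbit of a tile whose stabilizer is $\langle r^2, f\rangle$, we have $\orb{r,f}{r^2,f} = 1$ and every other orbit count in this subsection equal to $0$. Substituting this into the relations listed at the head of the subsection yields $t_{\id} = t_f = t_{r^2f} = t_{r^2} = 2$. On the other hand, a direct inspection of the two tiles shows that the $90^\circ$ rotations $r, r^3$ and the diagonal reflections $rf, r^3f$ each interchange $\tileVert$ and $\tileHor$, so $t_r = t_{r^3} = t_{rf} = t_{r^3f} = 0$.

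Next I would substitute these values into the seven fixed-point formulas for $\fxpt^\ST_g(n)$ established earlier in Section~\ref{sec:torus}. The vanishing of $t_r$ and $t_{rf}$ gives the key simplifications. In $\fxpt^\ST_r(n) = \fxpt^\ST_{r^3}(n)$ the odd-$n$ case is identically zero, since a tiling stable under a $90^\circ$ rotation would force the central cell into $T^r = \emptyset$, and the even-$n$ case loses its second summand, leaving $\tfrac{n^2}{2}\,t_{\id}^{n^2/4}$. Similarly, in $\fxpt^\ST_{rf}(n)$ every odd-$d$ term drops out, so only the even-$d$ contributions $\varphi(d)\,t_{\id}^{n^2/(2d)}$ survive; in particular $\fxpt^\ST_{rf}(n) = \fxpt^\ST_{r^3f}(n) = 0$ for odd $n$. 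Because $t_f = t_{\id} = 2$ one also has $t_{f^c} = 2$ for every $c$, and likewise $t_{(r^2f)^d} = 2$ for every $d$, so the divisor-sum formulas $\fxpt^\ST_{\id}$, $\fxpt^\ST_{r^2}$, $\fxpt^\ST_f$, and $\fxpt^\ST_{r^2f}$ collapse to totient-weighted sums of powers of $2$ with no genuine dependence on the reduced symmetry type.

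Finally I would assemble the eight fixed-point counts and apply Theorem~\ref{thm:nXnTorusFormula}, dividing $\sum_{g \in D_8}\fxpt^\ST_g(n)$ by $8n^2$. As a sanity check, for $n=1$ the four nonzero terms give $(2+2+2+2)/8 = 1$, and for $n=2$ one gets $(4\cdot 28 + 4\cdot 4)/32 = 4$, matching the first two entries; the higher entries follow from the same evaluation. The main obstacle here is organizational rather than conceptual: one must carefully track the parity cases and the $\sum_{d\mid n}$ ranges in the formulas for $\fxpt^\ST_f$, $\fxpt^\ST_{r^2f}$, and $\fxpt^\ST_{rf}$, and confirm that the reductions of $f^c$, $(r^2f)^d$, and $(rf)^d$ into $\{\id, f, r^2f, rf\}$ are applied consistently, so that the numerous powers of $2$ and totient weights combine—after division by $8n^2$—into the stated integers.
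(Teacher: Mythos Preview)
Your proposal is correct and is exactly the approach the paper intends: these appendix propositions are stated without proof because they are direct specializations of Theorem~\ref{thm:nXnTorusFormula} and the $\fxpt^\ST_g$ formulas of Section~\ref{sec:torus}, and you carry out precisely that specialization (computing $t_g$ from the single-orbit data, collapsing the $t_r = t_{rf} = 0$ cases, and verifying the first terms). The only quibble is that the $t_g$--orbit relations you invoke are actually written out at the head of the $n\times n$ \emph{grid} subsection rather than the torus subsection, but since these relations concern only the tile set $T$ they apply unchanged here.
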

This has been added to the OEIS as sequence A367533.
\begin{proposition}
When $\orb{r, f}{r^2, rf} = 1$, such as when
\[
T = \left\{\tileDiag, \tileAdiag\right\},
\]
the number of tilings of the $n \times n$ torus up to symmetries of the square
by a tile that is fixed under diagonal and antidiagonal reflections
is given by
\[
  1, 4, 18, 669, 170440, 238773358, 1436110601256, 36028800332480074, \dots
  \label{seq:nXnTorus_r_f__rr_rf}
\]
\end{proposition}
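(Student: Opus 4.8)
The plan is to apply Theorem~\ref{thm:nXnTorusFormula} directly with $R = \langle r, f\rangle = D_8$, so that the desired count equals $\frac{1}{8n^2}\sum_{g \in D_8}\fxpt^\ST_g(n)$. The only genuinely problem-specific input is the tuple of fixed-tile counts $t_g$, which by the classification lemma is determined entirely by the orbit data $\orb{r,f}{r^2,rf} = 1$ together with the vanishing of all other orbit counts. So the first step is to pin down every $t_g$.

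The stabilizer $\langle r^2, rf\rangle = \{\id, r^2, rf, r^3f\}$ is an index-$2$, hence normal, subgroup of $D_8$, so it coincides with each of its conjugates; therefore both tile designs in the single orbit, which has size $8/4 = 2$, have stabilizer exactly this subgroup. Consequently $T^g = T$ for $g \in \{\id, r^2, rf, r^3f\}$ and $T^g = \emptyset$ otherwise, giving $t_{\id} = t_{r^2} = t_{rf} = t_{r^3f} = 2$ and $t_r = t_{r^3} = t_f = t_{r^2f} = 0$. (The values $t_{\id} = t_{r^2} = 2$ and $t_f = t_{r^2f} = 0$ also fall out of the $t_g$ relations recorded for $\langle r, f\rangle$.)

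Next I would substitute these into the fixed-point formulas $\fxpt^\ST_g(n)$ established in the preceding theorems, one symmetry at a time. The simplifications come from the fact that $g^d$ cycles through the powers of $g$, so the powered factors collapse: in the divisor sums, $t_{f^d}$ and $t_{(r^2f)^d}$ equal $t_{\id} = 2$ for even $d$ and vanish for odd $d$, whereas $t_{(rf)^d}$ and $t_{(r^3f)^d}$ equal $2$ for every $d$ (being $t_{rf} = t_{r^3f} = 2$ for odd $d$ and $t_{\id} = 2$ for even $d$). Since $t_r = t_f = t_{r^2f} = 0$, the $90^\circ$-rotation count $\fxpt^\ST_r = \fxpt^\ST_{r^3}$ collapses to $\tfrac{n^2}{2}t_{\id}^{n^2/4}$ for even $n$ and vanishes for odd $n$; the $f$- and $r^2f$-sums vanish for odd $n$ and keep their leading $\tfrac12 t_{\id}$-power terms together with the even-$d$ corrections for even $n$; and only the diagonal reflections $\fxpt^\ST_{rf}, \fxpt^\ST_{r^3f}$ retain full divisor sums. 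Assembling the eight contributions yields a single divisor-sum expression in $n$; evaluating at $n = 1, \dots, 8$ should reproduce $1, 4, 18, 669, 170440, \dots$. I would confirm this at least through $n = 2$ by hand, where the counts $\fxpt^\ST_g(2)$ are $28, 4, 28, 4, 12, 20, 12, 20$, summing to $128$ and giving $128/(8 \cdot 2^2) = 4$.

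The main obstacle is purely the bookkeeping in the divisor-sum and parity case analysis: keeping straight, for each $g$, which branch of the piecewise formula applies for a given parity of $n$ and which $t_{g^d}$ survive, and then checking that the resulting expression simplifies to one clean divisor-sum formula matching the listed sequence. No conceptual difficulty arises once the $t_g$ are fixed, since the reduction to Burnside's lemma and the per-symmetry orbit analysis are already in place; the remaining work is entirely algebraic verification.
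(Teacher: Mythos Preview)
Your proposal is correct and matches the paper's approach exactly: the appendix propositions are stated without proof because they are direct instantiations of Theorem~\ref{thm:nXnTorusFormula}, and the only input beyond that theorem is the tuple $(t_g)_{g\in D_8}$, which you have computed correctly from the stabilizer $\langle r^2, rf\rangle$. Your $n=2$ check ($28+4+28+4+12+20+12+20=128$, giving $128/32=4$) is accurate and is precisely the kind of verification the paper's framework is designed to reduce these statements to.
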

This is OEIS sequence A295223.
\begin{proposition}
When $\orb{r, f}{r} = 1$, such as when
\[
T = \left\{\tileRot{0}, \tileRot{1}\right\},
\]
the number of tilings of the $n \times n$ torus up to symmetries of the square
by a tile that is fixed under $90^\circ$ rotations
is given by
\[
  1, 4, 14, 613, 168832, 238686222, 1436101016320, 36028798185029194, \dots
  \label{seq:nXnTorus_r_f__r}
\]
\end{proposition}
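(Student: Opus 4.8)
The plan is to apply the master Burnside formula of Theorem~\ref{thm:nXnTorusFormula}, which writes the number of distinct tilings of the $n \times n$ torus up to $R = \langle r, f \rangle = D_8$ as $\frac{1}{8n^2}\sum_{g \in D_8}\fxpt^\ST_g(n)$. Each fixed-point count $\fxpt^\ST_g(n)$ occurring here has already been expressed in the fixed-point theorems of this subsection entirely in terms of the tile profile $(t_g)_{g \in D_8}$. The whole proof therefore reduces to: (i) reading this profile off the hypothesis $\orb{r,f}{r} = 1$; (ii) substituting it into the seven fixed-point formulas; and (iii) summing and evaluating.

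First I would pin down the profile. Since $T$ is a single $D_8$-orbit whose representative $d$ has stabilizer $R_d = \langle r \rangle$, the orbit $\{d, d \cdot f\}$ has size $|D_8|/|\langle r \rangle| = 2$, so $t_{\id} = 2$. The second tile $d \cdot f$ has stabilizer the conjugate $f^{-1}\langle r \rangle f = \langle r \rangle$, so both tiles are fixed by all of $\langle r \rangle$, while each reflection interchanges $d$ and $d \cdot f$. Hence
\[
  t_{\id} = t_r = t_{r^2} = t_{r^3} = 2,
  \qquad
  t_f = t_{rf} = t_{r^2f} = t_{r^3f} = 0.
\]
Because the tiling count depends only on the tuple $\bigoplus_{S}\mathcal{O}^R_S$ (the orbit-counting lemma), this profile is all the answer can depend on, which is exactly why the displayed representative $T$ may be used without loss of generality.

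Next I would substitute. The rotation contributions simplify immediately: since $t_r = t_{r^2} = t_{r^3} = t_{\id} = 2$, every factor $t_r$, $t_{r^2}$ in $\fxpt^\ST_{r^2}(n)$ and in $\fxpt^\ST_{r}(n) = \fxpt^\ST_{r^3}(n)$ may be replaced by $2$, collapsing each case into sums of powers of $2$, and $\fxpt^\ST_{\id}(n) = \sum_{d_1 \mid n}\sum_{d_2 \mid n}\varphi(d_1)\varphi(d_2)\,2^{n^2/\lcm(d_1,d_2)}$. The reflection contributions carry the real work. Although $t_f = t_{rf} = 0$, the exponents in those divisor sums involve $t_{f^d}$, $t_{(r^2f)^d}$, and $t_{(rf)^d}$, and each of these equals $t_{\id} = 2$ when $d$ is even but equals $0$ when $d$ is odd; since an odd-$d$ summand then carries a factor $0^{n/d}$ with $n/d \ge 1$, it vanishes, leaving only the even-$d$ terms in $\fxpt^\ST_f(n)$, $\fxpt^\ST_{r^2f}(n)$, $\fxpt^\ST_{rf}(n)$, and $\fxpt^\ST_{r^3f}(n)$. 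Carrying this even/odd split correctly through all four reflection formulas is the \emph{main obstacle}; the rest is routine arithmetic.

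Finally I would assemble
\[
  \fxpt^\ST_{\id}(n) + 2\fxpt^\ST_{r}(n) + \fxpt^\ST_{r^2}(n)
  + \fxpt^\ST_{f}(n) + \fxpt^\ST_{r^2f}(n)
  + \fxpt^\ST_{rf}(n) + \fxpt^\ST_{r^3f}(n),
\]
divide by $8n^2$, and evaluate at $n = 1, 2, 3, \dots$ to recover $1, 4, 14, 613, 168832, \dots$. For instance, at $n = 1$ the sum is $2 + 2\cdot 2 + 2 = 8$, giving $8/8 = 1$. That the quotient is an integer for every $n$ is guaranteed by Burnside's lemma and provides a convenient internal check on the parity bookkeeping.
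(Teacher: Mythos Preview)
Your proposal is correct and is exactly the intended derivation: the paper does not give a separate proof of this appendix proposition, since it is obtained directly by reading off the tile profile $(t_g)$ from the hypothesis $\orb{r,f}{r}=1$ and plugging into the $\fxpt^\ST_g$ formulas assembled in Theorem~\ref{thm:nXnTorusFormula}. Your identification of the profile ($t_g=2$ for rotations, $t_g=0$ for reflections, using normality of $\langle r\rangle$ in $D_8$) and the parity split of the reflection divisor sums are both right, and the $n=1$ sanity check is correct.
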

This has been added to the OEIS as sequence A367534.
\begin{proposition}
When $\orb{r, f}{f} = 1$, such as when
\[
  T = \left\{\tileU, \tileD, \tileL, \tileR \right\},
\]
the number of tilings of the $n \times n$ torus up to symmetries of the square
by a tile that is fixed under horizontal (respectively vertical) reflections
is given by
\[
  1, 16, 3692, 33570410, 5629501212064, 16397105856182791856, \dots
  \label{seq:nXnTorus_r_f__f}
\]
\end{proposition}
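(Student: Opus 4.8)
The plan is to recognize this proposition as a direct specialization of the master count in Theorem~\ref{thm:nXnTorusFormula}: once the fixed-tile counts $t_g$ are known for the specific set $T = \{\tileU,\tileD,\tileL,\tileR\}$, the number of distinct tilings of the $n\times n$ torus up to $\langle r,f\rangle = D_8$ is obtained by substituting the $t_g$ into the fixed-point formulas $\fxpt^\ST_g(n)$ and averaging via Burnside. So the whole argument reduces to (i) identifying the orbit data, (ii) reading off the $t_g$, (iii) simplifying the seven fixed-point formulas, and (iv) checking the listed terms.

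First I would establish the orbit structure. The four tiles form a single $D_8$-orbit of size $4$: applying $r^2$ exchanges $\tileU \leftrightarrow \tileD$ and $\tileL \leftrightarrow \tileR$, while $r$ carries the horizontal pair to the vertical pair. The stabilizer of $\tileU$ is $\langle f\rangle$ and the stabilizer of $\tileL$ is $\langle r^2f\rangle$; since $r^{-1}fr = r^2f$, these subgroups are conjugate, so the orbit has stabilizer conjugate to $\langle f\rangle$, giving $\orb{r,f}{f}=1$ with every other orbit-type count equal to $0$. From this I would read off the counts directly from the definition $t_g = |T^g|$: namely $t_{\id}=4$; the elements $f$ and $r^2f$ each fix exactly two tiles (the horizontally- resp. vertically-symmetric pair), so $t_f = t_{r^2f} = 2$; and $t_g = 0$ for $g \in \{r,r^2,r^3,rf,r^3f\}$, since no triangle is carried to itself by a $90^\circ$ or $180^\circ$ rotation or by a diagonal reflection. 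As a sanity check, Burnside on $T$ itself gives $\tfrac18(4+2+2)=1$ orbit, as required.

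Next I would substitute these values into the fixed-point formulas of the preceding theorems, where many terms collapse: $t_{r^2}=0$ removes the odd-$n$ branch of $\fxpt^\ST_{r^2}$, the vanishing of $t_r$ and $t_{r^2}$ removes the odd-$n$ branch and the correction term of $\fxpt^\ST_{r}=\fxpt^\ST_{r^3}$, and $t_{rf}=t_{r^3f}=0$ forces every odd-$d$ summand of $\fxpt^\ST_{rf}$ and $\fxpt^\ST_{r^3f}$ to vanish, leaving only even divisors. The surviving contributions are $\fxpt^\ST_{\id}(n)=\sum_{d_1\mid n}\sum_{d_2\mid n}\varphi(d_1)\varphi(d_2)4^{n^2/\lcm(d_1,d_2)}$ together with the reduced even/odd branches of the reflection and rotation counts, in which the powers $t_{f^d}$ and $t_{(r^2f)^d}$ equal $2$ when $d$ is odd and $4$ when $d$ is even.

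By Theorem~\ref{thm:nXnTorusFormula} the answer is then $\tfrac{1}{8n^2}\bigl(\fxpt^\ST_{\id}+\fxpt^\ST_{r^2}+\fxpt^\ST_f+\fxpt^\ST_{r^2f}+2\fxpt^\ST_r+\fxpt^\ST_{rf}+\fxpt^\ST_{r^3f}\bigr)$, and I would finish by evaluating this at $n=1,2,3,\dots$; for instance $n=1$ gives $\tfrac18(4+2+2)=1$ and $n=2$ gives $\tfrac{1}{32}(304+48+64+64+16+8+8)=16$, matching the stated sequence. The only delicate point is the bookkeeping in the reflection sums --- tracking $t_{f^d}$ and $t_{(r^2f)^d}$ across the parity of $d$ and of $n$, and remembering that the conjugacy of $\langle f\rangle$ and $\langle r^2f\rangle$ makes both reflection counts equal to $2$ --- rather than any conceptual difficulty.
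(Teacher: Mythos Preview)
Your proposal is correct and follows exactly the approach the paper intends: the appendix propositions are not given individual proofs in the paper but are direct specializations of Theorem~\ref{thm:nXnTorusFormula}, obtained by computing the $t_g$ for the given tile set and substituting into the $\fxpt^\ST_g$ formulas. Your identification of the orbit structure, the values $t_{\id}=4$, $t_f=t_{r^2f}=2$, and $t_g=0$ otherwise, and your numerical checks at $n=1,2$ are all accurate.
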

This has been added to the OEIS as sequence A367535.
\begin{proposition}
When $\orb{r, f}{rf} = 1$, such as when
\[
  T = \left\{\tileNW, \tileSW, \tileNE, \tileSE\right\},
\]
the number of tilings of the $n \times n$ torus up to symmetries of the square
by a tile that is fixed under antidiagonal (respectively diagonal) reflections
is given by
\[
  1, 17, 3692, 33572458, 5629501212064, 16397105857614447792, \dots
  \label{seq:nXnTorus_r_f__rf}
\]
\end{proposition}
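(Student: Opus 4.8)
The plan is to invoke the master formula of Theorem~\ref{thm:nXnTorusFormula}, which expresses the number of distinct tilings of the $n \times n$ torus up to $R = \langle r, f \rangle = D_8$ as $\frac{1}{8n^2}\sum_{g \in D_8} \fxpt^\ST_g(n)$. Since every component fixed-point count $\fxpt^\ST_g(n)$ has already been established as an explicit function of the tile-symmetry counts $t_{\id}, t_r, t_{r^2}, t_{r^3}, t_f, t_{rf}, t_{r^2f}, t_{r^3f}$, the whole argument reduces to (i) pinning down these counts for the specific set $T = \{\tileNW, \tileSW, \tileNE, \tileSE\}$ and (ii) substituting them into those formulas and simplifying.

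First I would determine the $t_g$. Because $T$ is a single $D_8$-orbit whose representative has stabilizer conjugate to $\langle rf \rangle$, so that $\orb{r,f}{rf} = 1$ and every other orbit count vanishes, the expansions of $t_{\id}$, $t_f = t_{r^2f}$, and $t_{r^2}$ recorded at the start of this subsection immediately give $t_{\id} = 4$ and $t_{r^2} = t_f = t_{r^2f} = 0$; the analogous expansions for $t_{rf}$ and $t_{r^3f}$ give $t_{rf} = t_{r^3f} = 2$, while $t_r = t_{r^3} = 0$ since no stabilizer occurring in the orbit contains a rotation. Concretely, the four tiles split into two that are fixed by $rf$ (with stabilizer $\langle rf \rangle$) and two that are fixed by $r^3f$ (with stabilizer $\langle r^3f \rangle$), as can be read directly from the pictures.

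Next I would substitute these values into the seven fixed-point formulas. The vanishing of $t_r, t_{r^2}, t_f, t_{r^2f}$ collapses most of the case distinctions: the summands of $\fxpt^\ST_r(n)$ and $\fxpt^\ST_{r^2}(n)$ that carry a factor $t_r$ or $t_{r^2}$ disappear, and in $\fxpt^\ST_f(n)$ and $\fxpt^\ST_{r^2f}(n)$ only the even divisors $d$ contribute a nonzero second summand, since $f^d$ (respectively $(r^2f)^d$) equals the identity exactly when $d$ is even and otherwise the factor $t_{f^d} = t_f = 0$ (respectively $t_{(r^2f)^d} = t_{r^2f} = 0$) annihilates the term. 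The genuinely nontrivial contributions come from $\fxpt^\ST_{\id}(n)$, a double divisor sum in $4^{n^2/\lcm(d_1,d_2)}$, and from $\fxpt^\ST_{rf}(n) = \fxpt^\ST_{r^3f}(n)$, whose odd-$d$ summands retain the factor $t_{rf}^{n/d} = 2^{n/d}$.

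The step I expect to be the main obstacle is the careful bookkeeping of which divisor-sum terms persist, in particular the dependence of $t_{f^d}$ and $t_{(r^2f)^d}$ on the parity of $d$: getting this right is what separates the surviving powers of $t_{\id} = 4$ from the terms killed by a zero tile count. Once the simplified expressions are in hand I would confirm the claim by evaluating at small $n$; for instance at $n = 1$ only $\fxpt^\ST_{\id}$, $\fxpt^\ST_{rf}$, and $\fxpt^\ST_{r^3f}$ are nonzero, giving the sum $4 + 2 + 2 = 8$ and hence $8/8 = 1$, while at $n = 2$ the eight fixed-point counts sum to $544$, giving $544/32 = 17$, matching the stated sequence. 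The remaining entries follow from the same substitution, which is routine once the $t_g$ are fixed.
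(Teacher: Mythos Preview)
Your proposal is correct and follows exactly the paper's approach: these appendix propositions are simply instantiations of Theorem~\ref{thm:nXnTorusFormula} with the tile-symmetry counts $t_g$ determined by the single orbit of type $\orb{r,f}{rf}$, and your identification $t_{\id}=4$, $t_{rf}=t_{r^3f}=2$, with all other $t_g=0$, together with your small-$n$ checks, is precisely what the paper intends.
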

This has been added to the OEIS as sequence A367536.
\begin{proposition}
When $\orb{r, f}{r^2} = 1$, such as when
\[
  T = \left\{\tileRR{0},\tileRR{1},\tileRR{2},\tileRR{3}\right\},
\]
the number of tilings of the $n \times n$ torus up to symmetries of the square
by a tile that is fixed under $180^\circ$ rotations
is given by
\[
  1, 23, 3776, 33601130, 5629507922944, 16397105889110874288, \dots
  \label{seq:nXnTorus_r_f__rr}
\]
\end{proposition}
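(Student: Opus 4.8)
The plan is to reduce the claim to a direct evaluation of the Burnside average in Theorem~\ref{thm:nXnTorusFormula} with $R = \langle r, f\rangle = D_8$, the only real work being to pin down the fixed-tile counts $t_g$ from the orbit data and then watch the fixed-point formulas collapse. Since $\orb{r,f}{r^2} = 1$ is the only nonzero orbit count, the relations expressing $t_g$ in terms of orbit counts for $\langle r,f\rangle$ (the same ones used for the $n\times n$ grid, valid here because $t_g$ depends only on the $R$-set $T$) give $t_{\id} = t_{r^2} = 4$ and $t_f = t_{r^2f} = 0$. The remaining values $t_r = t_{r^3} = t_{rf} = t_{r^3f} = 0$ follow from the fact that every tile in the orbit has stabilizer exactly $\langle r^2\rangle$, so no element of $D_8$ outside $\langle r^2\rangle$ fixes any tile design.

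Next I would substitute these six vanishing values, together with $t_{\id}=t_{r^2}=4$, into the seven fixed-point formulas for the $n \times n$ torus. The key simplification is that every summand carrying a positive power of a vanishing $t_g$ drops out. Concretely, $\fxpt^\ST_{r}(n) = \fxpt^\ST_{r^3}(n)$ collapses to $0$ for odd $n$ and to $\tfrac{n^2}{2}\,4^{n^2/4}$ for even $n$, since the $t_r^2 t_{r^2}$ term dies; in the divisor sums for $\fxpt^\ST_f = \fxpt^\ST_{r^2f}$ and for $\fxpt^\ST_{rf} = \fxpt^\ST_{r^3f}$, the odd-$d$ summands vanish because they contain $t_{f^d}=t_f=0$ or $t_{rf}=0$ raised to a positive power, leaving only the even-$d$ contributions (so both quantities vanish entirely when $n$ is odd). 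Only $\fxpt^\ST_{\id}$ and $\fxpt^\ST_{r^2}$ retain their full shape.

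Finally I would assemble the eight surviving fixed-point expressions and divide by $n^2\lvert D_8\rvert = 8n^2$ as prescribed by Theorem~\ref{thm:nXnTorusFormula}, then match the leading terms against the stated sequence. As a sanity check, $n=1$ gives $\fxpt^\ST_{\id}(1)=\fxpt^\ST_{r^2}(1)=4$ with all other fixed-point counts zero, so the count is $8/8 = 1$; and $n=2$ gives $\fxpt^\ST_{\id}(2)=\fxpt^\ST_{r^2}(2)=304$, $\fxpt^\ST_f(2)=\fxpt^\ST_{r^2f}(2)=48$, and $\fxpt^\ST_{r}(2)=\fxpt^\ST_{r^3}(2)=\fxpt^\ST_{rf}(2)=\fxpt^\ST_{r^3f}(2)=8$, whose average $736/32 = 23$ matches the second term.

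The argument is bookkeeping rather than conceptual, so the main obstacle is organizing the parity and divisor case splits correctly and confirming that each alleged vanishing term genuinely carries a strictly positive exponent on a zero $t_g$. The subtle spot is the boundary exponents (for instance $n^2 - 2n = 0$ at $n = 2$), where a factor $4^0 = 1$ survives but is still multiplied by a vanishing $t_{f^d}$, so the term drops out for the right reason; I would verify this explicitly before compressing the sums into closed form.
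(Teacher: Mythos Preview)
Your approach is exactly the one the paper implicitly uses: these appendix propositions are just direct evaluations of Theorem~\ref{thm:nXnTorusFormula} at particular $t_g$-vectors, and your derivation of $t_{\id}=t_{r^2}=4$ with all other $t_g=0$ is correct, as are both numerical checks.

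One description is inaccurate, though it did not contaminate your numbers. You write that in $\fxpt^\ST_f$ (and $\fxpt^\ST_{r^2f}$) ``the odd-$d$ summands vanish.'' For even $n$ the summand at each $d$ has two pieces,
\[
\frac12\,t_{\id}^{\,n^2/\lcm(2,d)} \;+\; \frac12\,t_{\id}^{\,(n^2-2n)/\lcm(2,d)}\,t_{f^d}^{\,2n/d},
\]
and only the second piece dies for odd $d$; the first piece carries no $t_f$ and survives. Indeed, your $n=2$ value $\fxpt^\ST_f(2)=48$ already reflects this: dropping the entire $d=1$ summand would give $32$, not $48$. By contrast your claim \emph{is} correct for $\fxpt^\ST_{rf}$ and $\fxpt^\ST_{r^3f}$, whose odd-$d$ case is a single term with a positive power of $t_{rf}$; and your assertion that all four of these fixed-point sums vanish when $n$ is odd is also correct, since then every divisor is odd. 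Tighten the wording for $\fxpt^\ST_f$ and the write-up will match the computation you actually performed.
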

This has been added to the OEIS as sequence A368137.
\begin{proposition}
When $\orbid{r, f} = 1$, such as when
\[
  T = \left\{
    \tileAsym{0},\tileAsym{1},\tileAsym{2},\tileAsym{3},
    \tileAsym{4},\tileAsym{5},\tileAsym{6},\tileAsym{7}
  \right\},
\]
the number of tilings of the $n \times n$ torus up to symmetries of the square
by a tile that is fixed under only the identity
is given by
\[
  1, 154, 1864192, 2199026796168, 188894659314785812480, \dots
  \label{seq:nXnTorus_r_f__id}
\]
\end{proposition}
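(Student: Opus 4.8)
The plan is to derive this sequence as a direct instantiation of Theorem \ref{thm:nXnTorusFormula}, exploiting that the single orbit consists of completely asymmetric tile designs. First I would record the two structural facts that drive every simplification. Since the orbit of an asymmetric tile under $\langle r, f \rangle = D_8$ has trivial stabilizer, it has size $8$, so $t_{\id} = 8$; and since \emph{no} tile design in $T$ is fixed by any nonidentity element, $t_g = 0$ for every $g \in D_8 \setminus \{\id\}$. These two facts turn the general fixed-point formulas into formulas in the single parameter $t_{\id} = 8$.

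Next I would substitute these values into the seven fixed-point formulas $\fxpt^\ST_g(n)$ established earlier in this subsection. The key observation is that every summand carrying a factor $t_g$ with $g \neq \id$ vanishes, leaving only the ``pure $t_{\id}$'' contributions. Concretely, $\fxpt^\ST_{\id}(n) = \sum_{d_1 \mid n}\sum_{d_2 \mid n}\varphi(d_1)\varphi(d_2)\,8^{n^2/\lcm(d_1,d_2)}$ is unchanged; in $\fxpt^\ST_{r^2}$ and $\fxpt^\ST_r = \fxpt^\ST_{r^3}$ the $t_{r^2}$- and $t_r$-terms drop, leaving $\tfrac{3n^2}{4}\,8^{n^2/2}$ and $\tfrac{n^2}{2}\,8^{n^2/4}$ respectively when $n$ is even, and $0$ when $n$ is odd. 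The only genuinely delicate bookkeeping lives in $\fxpt^\ST_f = \fxpt^\ST_{r^2f}$ and $\fxpt^\ST_{rf} = \fxpt^\ST_{r^3f}$, where the relevant factors are $t_{f^d}$, $t_{(r^2f)^d}$, and $t_{(rf)^d}$: here I must track the parity of $d$, since an even power collapses to $\id$ (contributing $t_{\id} = 8$) while an odd power remains a reflection (contributing $0$). Carrying this through shows that for odd $n$ all reflection and rotation terms vanish outright, so the whole count reduces to $\tfrac{1}{8n^2}\fxpt^\ST_{\id}(n)$.

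Finally I would assemble the pieces via Burnside's lemma as packaged in Theorem \ref{thm:nXnTorusFormula}, grouping the eight elements of $D_8$ into the weighted sum $\tfrac{1}{8n^2}\bigl(\fxpt^\ST_{\id} + \fxpt^\ST_{r^2} + 2\fxpt^\ST_r + 2\fxpt^\ST_f + 2\fxpt^\ST_{rf}\bigr)$, and then verify agreement with the listed values. For $n = 1$ only the identity term survives, giving $\tfrac{1}{8}\cdot 8 = 1$; for $n = 3$ one finds $\fxpt^\ST_{\id}(3) = 8^9 + 8^4$ and hence $(8^9 + 8^4)/72 = 1864192$; and for the even case $n = 2$ the surviving contributions in each formula combine to total $4928$, so that $4928/32 = 154$. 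I would record these checks and the general closed form to complete the verification.

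I expect no serious conceptual obstacle here, as the proposition is a specialization of the general machinery rather than a new argument. The one place demanding genuine care is the parity analysis of the $t_{g^d}$ factors in the reflection formulas, where conflating ``$d$ even'' with ``$d$ odd'' would spuriously retain or delete terms; once that is pinned down, the remaining content is the routine arithmetic matching against the stated sequence, which I would carry out for the first several values of $n$.
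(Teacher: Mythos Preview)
Your proposal is correct and is precisely the approach the paper intends: the proposition is stated without its own proof because it is a direct instantiation of Theorem~\ref{thm:nXnTorusFormula} together with the $\fxpt^{\ST}_g$ formulas, specialized to $t_{\id}=8$ and $t_g=0$ for $g\neq\id$. Your parity bookkeeping for the $t_{g^d}$ factors and your numerical checks at $n=1,2,3$ are accurate.
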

This has been added to the OEIS as sequence A368138.
\subsubsection{Under diagonal and antidiagonal reflection}
\begin{proposition}
When $\orb{r^2, rf}{r^2, rf} = 2$, such as when
\[
  T = \left\{\tileColor[black!90!white], \tileColor[white!90!black]\right\}
  \qquad\text{or}\qquad
  T = \left\{\tileDiag, \tileAdiag \right\},
\]
the number of tilings of the $n \times n$ torus up to diagonal and
antidiagonal rotations by
two distinct tiles that are symmetric under both reflections is given by
\[
  2, 6, 36, 1282, 340880, 477513804, 2872221202512, 72057600262282324, \dots
  \label{seq:nXnTorus_rr_rf__rr_rf}
\]
\end{proposition}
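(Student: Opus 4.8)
The plan is to apply Theorem~\ref{thm:nXnTorusFormula}, the Burnside count for the $n \times n$ torus, to the symmetry group $R = \langle r^2, rf \rangle$. First I would identify this group explicitly: since $(rf)^2 = \id$ and $r^2 \cdot rf = r^3f$, it is the Klein four-group
\[
  R = \{\id, r^2, rf, r^3f\}
\]
of order $4$, consisting of the $180^\circ$ rotation together with the diagonal and antidiagonal reflections. Next I would record the tile counts $t_g$. The hypothesis $\orb{r^2,rf}{r^2,rf} = 2$ says that $T$ has two tile designs, each with stabilizer equal to all of $R$; hence every element of $R$ fixes both designs, so
\[
  t_{\id} = t_{r^2} = t_{rf} = t_{r^3f} = 2.
\]

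With these values, Theorem~\ref{thm:nXnTorusFormula} gives the count as
\[
  \frac{1}{4n^2}\Bigl(\fxpt^{\ST}_{\id}(n) + \fxpt^{\ST}_{r^2}(n) + \fxpt^{\ST}_{rf}(n) + \fxpt^{\ST}_{r^3f}(n)\Bigr),
\]
and I would substitute the four fixed-point formulas established earlier in this subsection. Setting $t_g = 2$ everywhere, the identity contributes the double divisor sum $\sum_{d_1 \mid n}\sum_{d_2 \mid n}\varphi(d_1)\varphi(d_2)\,2^{\,n^2/\lcm(d_1,d_2)}$, the rotation $r^2$ contributes its parity-dependent closed form with all $t$'s replaced by $2$, and the two reflections $rf$ and $r^3f$ contribute equal divisor sums over $d \mid n$ split according to whether $d$ is odd or even. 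The equality of the last two summands is automatic here because $rf$ and $r^3f$ are conjugate in $D_8$ and $t_{rf} = t_{r^3f}$, so they may be merged into a single factor of $2$.

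The remaining work is the elementary simplification and the check that the resulting closed form reproduces the tabulated values. The only case distinction is the parity of $n$, which already appears in the $r^2$, $rf$, and $r^3f$ formulas; everything else reduces to evaluating divisor sums. As sanity checks I would verify the first entries directly: at $n = 1$ every fixed-point count is $2$, giving $8/4 = 2$; at $n = 2$ one finds $\fxpt^{\ST}_{\id}(2) = 28$, $\fxpt^{\ST}_{r^2}(2) = 28$, and $\fxpt^{\ST}_{rf}(2) = \fxpt^{\ST}_{r^3f}(2) = 20$, so the total is $(28 + 28 + 20 + 20)/16 = 6$; and at $n = 3$ the four counts $576, 288, 216, 216$ sum to $1296 = 36 \cdot 36$, matching the third term. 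I expect no substantive obstacle: the entire content of the proposition is the specialization $t_g \equiv 2$ combined with a direct application of Burnside's lemma, so the only care needed is to keep the $\lcm$-exponents and the odd/even divisor split organized cleanly.
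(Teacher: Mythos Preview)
Your proposal is correct and is exactly the approach implicit in the paper: the proposition is a direct specialization of Theorem~\ref{thm:nXnTorusFormula} to the group $R = \langle r^2, rf\rangle = \{\id, r^2, rf, r^3f\}$ with $t_g \equiv 2$, and the paper offers no separate argument beyond quoting the fixed-point formulas and evaluating. Your identification of the group, the tile counts, and your numerical checks for $n=1,2,3$ are all accurate.
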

This has been added to the OEIS as sequence A368139.
\begin{proposition}
When $\orb{r^2, rf}{rf} = 1$, such as when
\[
  T = \left\{\tileSW, \tileNE\right\}
\]
the number of tilings of the $n \times n$ torus up to diagonal and
antidiagonal rotations by
a tile that is symmetric only under antidiagonal reflections is given by
\[
  1, 4, 22, 1154, 337192, 477360876, 2872203226920, 72057597041056852, \dots
  \label{seq:nXnTorus_rr_rf__rf}
\]
\end{proposition}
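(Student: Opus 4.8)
The plan is to specialize the machinery already developed for the $n \times n$ torus to the group $R = \langle r^2, rf\rangle$ and the specific tile set $T = \left\{\tileSW, \tileNE\right\}$, and then invoke Burnside's lemma in the form of Theorem~\ref{thm:nXnTorusFormula}. First I would pin down the group: inside $D_8$ we have $\langle r^2, rf\rangle = \{\id,\, r^2,\, rf,\, r^3f\} \cong C_2 \times C_2$, an abelian group of order $4$, so the quantity to compute is $\frac{1}{4n^2}\bigl(\fxpt^\ST_{\id}(n) + \fxpt^\ST_{r^2}(n) + \fxpt^\ST_{rf}(n) + \fxpt^\ST_{r^3f}(n)\bigr)$.

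Next I would record the tile statistics $t_g$ for the given set. Since $T$ is a single orbit whose stabilizer is $\langle rf\rangle$ (that is, $\orb{r^2,rf}{rf} = 1$ and all other orbit counts vanish), and since $R$ is abelian so that conjugacy of subgroups is trivial, each tile has stabilizer exactly $\langle rf\rangle$. The unique nontrivial element of $R$ fixing the tiles is therefore $rf$, which acts by $(x,y) \mapsto (y,x)$ and indeed sends $\tileSW \mapsto \tileSW$ and $\tileNE \mapsto \tileNE$, whereas both $r^2$ and $r^3f$ interchange $\tileSW$ and $\tileNE$. Hence $t_{\id} = 2$, $t_{rf} = 2$, and $t_{r^2} = t_{r^3f} = 0$. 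By the lemma asserting that the tiling count depends only on the orbit–stabilizer tuple, these four numbers—together with the powers $(rf)^d$ and $(r^3f)^d$, which equal $rf$ or $r^3f$ for odd $d$ and $\id$ for even $d$—determine the entire answer.

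Then I would substitute into the four fixed-point formulas established above for the square torus. Three simplifications result: because $t_{r^2} = 0$ enters $\fxpt^\ST_{r^2}(n)$ with exponent $1$, that term vanishes outright for odd $n$ and collapses to $n^2 \cdot \tfrac34 \cdot 2^{n^2/2}$ for even $n$; because $t_{r^3f} = 0$, every odd-$d$ summand of $\fxpt^\ST_{r^3f}(n)$ dies, leaving only $n\sum_{d \mid n,\, d\text{ even}} \varphi(d)\, 2^{n^2/(2d)}$; and because $t_{\id} = t_{rf} = 2$, the odd-$d$ summands of $\fxpt^\ST_{rf}(n)$ combine via $2^{(n^2-n)/(2d)} \cdot 2^{n/d} = 2^{(n^2+n)/(2d)}$. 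The remaining contribution $\fxpt^\ST_{\id}(n) = \sum_{d_1 \mid n}\sum_{d_2 \mid n}\varphi(d_1)\varphi(d_2)\, 2^{n^2/\lcm(d_1,d_2)}$ is carried over unchanged.

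Finally I would assemble the four pieces and verify that division by $4n^2$ reproduces the stated sequence. I expect the only real obstacle to be bookkeeping rather than anything conceptual: one must read off the $t_g$ correctly (confirming that $rf$ alone among the nontrivial elements fixes the tiles) and then keep the parity-of-$n$ and parity-of-$d$ casework straight across the four sums. As a sanity check I would evaluate the closed form at $n = 1, 2, 3$, where the four contributions sum to $4$, $64$, and $792$, and division by $4$, $16$, and $36$ yields $1$, $4$, and $22$, matching the first three entries of the sequence.
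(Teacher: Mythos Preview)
Your proposal is correct and follows exactly the approach the paper intends: these appendix propositions are direct specializations of the general fixed-point formulas for the square torus (Theorems in Section~\ref{sec:torus}) combined with Theorem~\ref{thm:nXnTorusFormula}, and you have correctly identified the tile statistics $t_{\id}=t_{rf}=2$, $t_{r^2}=t_{r^3f}=0$ and carried out the substitution and small-$n$ verification accurately.
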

This has been added to the OEIS as sequence A368140.
\begin{proposition}
When $\orb{r^2, rf}{r^2} = 1$, such as when
\[
  T = \left\{\tileRR{0}, \tileRR{2}\right\}
\]
the number of tilings of the $n \times n$ torus up to diagonal and
antidiagonal rotations by
a tile that is symmetric only under $180^\circ$ rotations is given by
\[
  1, 4, 24, 1154, 337600, 477339020, 2872202028544, 72057595967315028, \dots
  \label{seq:nXnTorus_rr_rf__rr}
\]
\end{proposition}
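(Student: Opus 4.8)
The plan is to recognize this proposition as a direct specialization of the general machinery already established for the square torus. First I would identify the symmetry group $R = \langle r^2, rf \rangle$, noting that since $r^2$ is central in $D_8$ and $(rf)^2 = \id$, this is the Klein four-group $\{\id, r^2, rf, r^3f\}$ of order $4$, in which $rf$ and $r^3f$ are conjugate. By Theorem \ref{thm:nXnTorusFormula}, the number of distinct tilings is
\[
  \frac{1}{4n^2}\left(\fxpt^\ST_{\id}(n) + \fxpt^\ST_{r^2}(n) + \fxpt^\ST_{rf}(n) + \fxpt^\ST_{r^3f}(n)\right),
\]
and the conjugacy of $rf$ and $r^3f$ lets me combine the last two fixed-point terms once the tile counts are in hand.

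Next I would read off the tile counts $t_g$ for the given set of tile designs. By the lemma that the number of tilings depends only on the tuple of orbit counts, it suffices to use the concrete representative $T = \{\tileRR{0}, \tileRR{2}\}$, a single orbit whose stabilizer is $\langle r^2 \rangle$. Because $r^2$ is central, both tiles in this orbit are stabilized by exactly $\{\id, r^2\}$, while $rf$ (and hence $r^3f = r^2 \cdot rf$) interchanges them. This gives $t_{\id} = t_{r^2} = 2$ and $t_{rf} = t_{r^3f} = 0$.

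I would then substitute these values into the fixed-point formulas already proved: the double divisor sum for $\fxpt^\ST_{\id}(n)$, the parity-split formula for $\fxpt^\ST_{r^2}(n)$, and the divisor-sum formula for $\fxpt^\ST_{rf}(n)$. Since $t_{rf} = 0$ appears with a positive exponent $n/d \geq 1$ in every odd-$d$ term, those terms vanish, so $\fxpt^\ST_{rf}(n) = 0$ when $n$ is odd and reduces to the sum over the even divisors of $n$ otherwise. Assembling the pieces under Burnside's lemma yields the claimed formula. The computation is entirely routine; the only points needing care are extracting the tile counts correctly from the orbit's stabilizer and confirming that no $0^0$ ambiguity arises (it does not, as the exponents on $t_{rf}$ are always at least $1$). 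I would close by checking the opening terms: for $n = 1, 2, 3$ the formula gives $\tfrac14(2+2+0) = 1$, $\tfrac{1}{16}(28+28+8) = 4$, and $\tfrac{1}{36}(576+288+0) = 24$, matching the stated sequence.
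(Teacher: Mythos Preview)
Your approach is correct and is exactly the intended route: the proposition is a direct specialization of Theorem~\ref{thm:nXnTorusFormula} together with the fixed-point formulas for $\fxpt^\ST_{\id}$, $\fxpt^\ST_{r^2}$, $\fxpt^\ST_{rf}$, and $\fxpt^\ST_{r^3f}$, applied with $t_{\id}=t_{r^2}=2$ and $t_{rf}=t_{r^3f}=0$; the paper provides no separate proof for these appendix propositions beyond this implicit specialization.

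One small slip worth flagging: the Klein four-group $\langle r^2, rf\rangle$ is abelian, so $rf$ and $r^3f$ are \emph{not} conjugate in $R$ (only in the ambient $D_8$). Your combination of the two terms is still valid, but the correct justification is simply that the paper's formulas for $\fxpt^\ST_{rf}$ and $\fxpt^\ST_{r^3f}$ are structurally identical with $t_{rf}$ swapped for $t_{r^3f}$, and here both vanish.
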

This has been added to the OEIS as sequence A368141.
\begin{proposition}
When $\orbid{r^2, rf} = 1$, such as when
\[
  T = \left\{\tileR, \tileU, \tileL, \tileD\right\}
\]
the number of tilings of the $n \times n$ torus up to diagonal and
antidiagonal rotations by
a tile that is asymmetric is given by
\[
  1, 23, 7296, 67124308, 11258999068672, 32794211700912270688, \dots
  \label{seq:nXnTorus_rr_rf__id}
\]
\end{proposition}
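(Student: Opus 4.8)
The plan is to specialize the master count of Theorem~\ref{thm:nXnTorusFormula}, namely $\frac{1}{n^2|R|}\sum_{g\in R}\fxpt^\ST_g(n)$, to the group $R=\langle r^2,rf\rangle$ and to the given tile set. First I would record that $R=\langle r^2,rf\rangle=\{\id,r^2,rf,r^3f\}$ is a group of order $4$ (using $r^2\cdot rf=r^3f$ together with $(rf)^2=(r^3f)^2=\id$ in $D_8$), so that the Burnside sum runs over exactly these four elements and the only fixed-point formulas needed are the four already established for $\fxpt^\ST_{\id}$, $\fxpt^\ST_{r^2}$, $\fxpt^\ST_{rf}$, and $\fxpt^\ST_{r^3f}$.

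Next I would compute the tile statistics $t_g$ for the single-orbit asymmetric set, e.g.\ $T=\{\tileR,\tileU,\tileL,\tileD\}$. Since $\orbid{r^2,rf}=1$, the set is a single orbit of a tile $d$ whose stabilizer in $R$ is trivial, so $|T|=|R|=4$ and hence $t_{\id}=4$. For any $h\neq\id$ and any orbit element $d\cdot g$, the equation $(d\cdot g)\cdot h=d\cdot g$ forces $d\cdot(ghg^{-1})=d$, and triviality of the stabilizer of $d$ gives $ghg^{-1}=\id$, which is impossible; therefore no tile is fixed by a nontrivial element, and $t_{r^2}=t_{rf}=t_{r^3f}=0$.

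With these values in hand I would substitute into the four formulas. In $\fxpt^\ST_{\id}(n)=\sum_{d_1\mid n}\sum_{d_2\mid n}\varphi(d_1)\varphi(d_2)\,t_{\id}^{n^2/\lcm(d_1,d_2)}$ only $t_{\id}=4$ appears, so this term survives intact. In $\fxpt^\ST_{r^2}(n)$ every summand carrying a factor of $t_{r^2}$ vanishes, leaving $0$ when $n$ is odd and $\tfrac34 n^2\,4^{n^2/2}$ when $n$ is even; in $\fxpt^\ST_{rf}(n)$ and $\fxpt^\ST_{r^3f}(n)$ the odd-$d$ summands die because $t_{rf}^{n/d}=t_{r^3f}^{n/d}=0^{\,n/d}=0$ (here $n/d\ge 1$, so no $0^0$ ambiguity arises), leaving only the even-$d$ divisor terms $n\sum_{d\mid n,\ 2\mid d}\varphi(d)\,4^{n^2/(2d)}$. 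Feeding these into $\frac{1}{4n^2}\sum_{g\in R}\fxpt^\ST_g(n)$ and checking the first two cases ($n=1$ gives $\tfrac14(4)=1$, and $n=2$ gives $\tfrac{1}{16}(304+48+8+8)=23$) confirms agreement with the listed sequence.

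The step I expect to require the most care is the bookkeeping in the last paragraph: one must be certain that \emph{every} term containing $t_{r^2}$, $t_{rf}$, or $t_{r^3f}$ genuinely drops out across all parity cases and divisor ranges—in particular verifying that the exponents $n/d$ multiplying the vanishing $t$-values are always strictly positive, so that these contributions are exactly $0$—and then confirming that the surviving divisor sums assemble into the integer values claimed. Once the vanishing is justified, the remainder is a direct (if bulky) evaluation of the divisor sums, with no further conceptual obstacle.
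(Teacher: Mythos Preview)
Your proposal is correct and is exactly the approach the paper intends: the proposition is simply an instance of Theorem~\ref{thm:nXnTorusFormula} with $R=\langle r^2,rf\rangle$ and the tile statistics $t_{\id}=4$, $t_{r^2}=t_{rf}=t_{r^3f}=0$ plugged into the already-established formulas for $\fxpt^\ST_{\id}$, $\fxpt^\ST_{r^2}$, $\fxpt^\ST_{rf}$, and $\fxpt^\ST_{r^3f}$. The paper provides no separate proof for this proposition (it is listed in the appendix as a direct application), and your verification for $n=1,2$ confirms the specialization is carried out correctly.
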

This has been added to the OEIS as sequence A368142.
\subsubsection{Under \texorpdfstring{$90^\circ$}{90 degree} rotation}
\begin{proposition}
When $\orb{r}{r} = 2$, such as when
\[
  T = \left\{\tileColor[black!90!white], \tileColor[white!90!black]\right\}
  \qquad\text{or}\qquad
  T = \left\{\tileRot{0}, \tileRot{1} \right\},
\]
the number of tilings of the $n \times n$ torus up to $90^\circ$ rotations by
two distinct tiles that are symmetric under $90^\circ$ rotations is given by
\[
  2, 6, 28, 1171, 337664, 477339616, 2872202032640, 72057595967392816, \dots
  \label{seq:nXnTorus_r__r}
\]
\end{proposition}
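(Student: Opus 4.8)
The plan is to specialize the general fixed-point formulas for the $n \times n$ torus under the group $R = \langle r \rangle \cong C_4$ and then invoke Burnside's lemma in the form of Theorem \ref{thm:nXnTorusFormula}. First I would translate the hypothesis $\orb{r}{r} = 2$ into concrete tile counts. By the classification lemma, the number of tilings depends only on the tuple of orbit counts, so I may take $T$ to be any two-element $R$-set in which both tile designs are fixed by all of $\langle r \rangle$. This immediately gives $t_{\id} = t_r = t_{r^2} = t_{r^3} = 2$, since every tile is stable under each power of $r$.

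Next I would substitute these four values into the fixed-point formulas already established for the square torus: the double divisor sum for $\fxpt^\ST_{\id}(n)$, the parity-split formula for $\fxpt^\ST_{r^2}(n)$, and the common formula $\fxpt^\ST_{r}(n) = \fxpt^\ST_{r^3}(n)$ given in equations \eqref{eq:fxpt_St_r_odd} and \eqref{eq:fxpt_St_r_even}. With $t_{\id} = 2$ the identity term collapses to $\fxpt^\ST_{\id}(n) = \sum_{d_1 \mid n}\sum_{d_2 \mid n}\varphi(d_1)\varphi(d_2)\,2^{n^2/\lcm(d_1,d_2)}$, while the rotation terms become single closed forms within each parity class (for odd $n$ the fixed cell contributes a factor $t_r = 2$ or $t_{r^2}=2$; for even $n$ the $\tfrac12,\tfrac34,\tfrac14$ weighted combinations apply).

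Then I would assemble the count through Theorem \ref{thm:nXnTorusFormula}, which for $|R| = 4$ and $\fxpt^\ST_{r^3} = \fxpt^\ST_{r}$ reads
\[
  \frac{1}{4n^2}\left(\fxpt^\ST_{\id}(n) + \fxpt^\ST_{r^2}(n) + 2\,\fxpt^\ST_{r}(n)\right).
\]
Evaluating this for $n = 1, 2, \ldots, 8$, selecting the odd or even branch of the rotation formulas at each step, yields the listed sequence; for example $n=1$ gives $(2+2+2+2)/4 = 2$ and $n=2$ gives $(28+28+20+20)/16 = 6$.

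There is no deep obstacle here: the entire argument is a mechanical specialization of theorems proved earlier in Section \ref{sec:torus}. The only real care needed is bookkeeping — correctly reading off the even/odd branch of the $\fxpt^\ST_{r^2}$ and $\fxpt^\ST_{r}$ formulas for each $n$, and evaluating the nested totient-weighted divisor sum in $\fxpt^\ST_{\id}(n)$ without arithmetic slips. I expect the divisor-sum evaluation for the larger $n$ (those with several divisors) to be the most error-prone step, though it remains entirely routine.
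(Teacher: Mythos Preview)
Your proposal is correct and follows exactly the approach the paper takes: these appendix propositions are not given separate proofs but are direct specializations of Theorem \ref{thm:nXnTorusFormula} together with the fixed-point formulas $\fxpt^\ST_{\id}$, $\fxpt^\ST_{r^2}$, and $\fxpt^\ST_{r}=\fxpt^\ST_{r^3}$, evaluated at $t_{\id}=t_r=t_{r^2}=t_{r^3}=2$. Your sample computations for $n=1,2$ are correct, and the rest is the same routine substitution.
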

This has been added to the OEIS as sequence A368143.
\begin{proposition}
When $\orb{r}{r^2} = 1$, such as when
\[
  T = \left\{\tileRR{0}, \tileRR{1} \right\},
\]
the number of tilings of the $n \times n$ torus up to $90^\circ$ rotations by
a tile that is symmetric under $180^\circ$ rotations is given by
\[
  1, 4, 24, 1155, 337600, 477339104, 2872202028544, 72057595967327280, \dots
  \label{seq:nXnTorus_r__rr}
\]
\end{proposition}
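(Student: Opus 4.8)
The plan is to treat this proposition as a specialization of the master formula of Theorem~\ref{thm:nXnTorusFormula} to the symmetry group $R = \langle r\rangle \cong C_4$. Since $|R| = 4$, the number of distinct tilings is the Burnside average $\frac{1}{4n^2}\sum_{g\in R}\fxpt^\ST_g(n)$ taken over $g \in \{\id, r, r^2, r^3\}$, so everything reduces to evaluating the four relevant fixed-point counts on the data of this particular tile set.

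First I would determine the combinatorial type of $T$. The group $\langle r\rangle$ has precisely three conjugacy classes of subgroups, $\langle r\rangle$, $\langle r^2\rangle$, and $\mathbbm 1$, and by the classification lemma the tiling count depends only on the tuple $(\orb{r}{r}, \orb{r}{r^2}, \orbid{r})$. The hypothesis fixes this tuple as $(0,1,0)$: a single orbit of size $[\langle r\rangle:\langle r^2\rangle]=2$, both of whose tiles are stabilized exactly by $\langle r^2\rangle$. Reading off the fixed-tile counts by orbit--stabilizer, as in the displayed $t_g$ identities for the $\langle r^2,f\rangle$ case, I get $t_{\id} = 2$, $t_{r^2} = 2$, and $t_r = t_{r^3} = 0$; the vanishing of $t_r$ is exactly the condition that no tile has $90^\circ$ rotational symmetry.

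Next I would substitute these values into the fixed-point formulas supplied by the $n \times n$ torus theorems. The double-divisor sum for $\fxpt^\ST_{\id}(n)$ and the parity-split formula for $\fxpt^\ST_{r^2}(n)$ carry over with $t_{\id} = t_{r^2} = 2$, while $t_r = 0$ collapses $\fxpt^\ST_r(n) = \fxpt^\ST_{r^3}(n)$ to $0$ for odd $n$ and to $\tfrac12 n^2 t_{\id}^{n^2/4}$ for even $n$ (the second summand of the even branch is killed by the factor $t_r^2$). Assembling these and dividing by $4n^2$ gives a piecewise-closed expression, which I would then evaluate at $n = 1,2,3,\dots$; a short check already reproduces $1,4,24,1155,\dots$, matching the stated sequence.

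The work here is essentially bookkeeping rather than conceptual. The only genuine step to justify carefully is that the orbit tuple $(0,1,0)$ forces $t_r = 0$ while keeping both $t_{r^2}$ and $t_{\id}$ equal to $2$, since it is precisely this pattern of fixed-tile counts that separates this entry from its neighbours in the $\langle r\rangle$ column of Table~\ref{tabl:nXnTorusIndex}. The mild nuisance I expect is carrying the parity case split through the final division by $4n^2$ and confirming that the resulting average is an integer for every $n$.
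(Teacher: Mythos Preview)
Your proposal is correct and follows exactly the approach implicit in the paper: these appendix propositions carry no separate proof and are obtained by specializing Theorem~\ref{thm:nXnTorusFormula} with $R=\langle r\rangle$, reading off $t_{\id}=t_{r^2}=2$ and $t_r=t_{r^3}=0$ from the orbit data $(\orb{r}{r},\orb{r}{r^2},\orbid{r})=(0,1,0)$, and substituting into the fixed-point formulas for $\fxpt^\ST_{\id}$, $\fxpt^\ST_{r}$, $\fxpt^\ST_{r^2}$, $\fxpt^\ST_{r^3}$. Your identification of how $t_r=0$ kills the odd-$n$ branch and the second summand of the even-$n$ branch of $\fxpt^\ST_r$ is exactly right, and your spot-checks at small $n$ confirm the arithmetic.
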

This has been added to the OEIS as sequence A368144.
\begin{proposition}
When $\orbid{r} = 1$, such as when
\[
  T = \left\{\tileNW, \tileSW, \tileNE, \tileSE \right\},
\]
the number of tilings of the $n \times n$ torus up to $90^\circ$ rotations by
a tile asymmetric with respect to rotations is given by
\[
  1, 23, 7296, 67124336, 11258999068672, 32794211700912314368, \dots
  \label{seq:nXnTorus_r__id}
\]
\end{proposition}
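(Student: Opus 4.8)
The plan is to derive this sequence as a direct specialization of Theorem~\ref{thm:nXnTorusFormula} with $R = \langle r \rangle$, the cyclic group of order $4$. First I would record the tile-design statistics forced by the hypothesis $\orbid{r} = 1$: since $T$ is a single $\langle r \rangle$-orbit whose stabilizer is trivial, the orbit has size $4$, so $t_{\id} = |T| = 4$, while no tile is fixed by any nontrivial rotation, giving $t_r = t_{r^2} = t_{r^3} = 0$. These four numbers are exactly the inputs that the fixed-point formulas $\fxpt^\ST_{\id}$, $\fxpt^\ST_{r^2}$, and $\fxpt^\ST_{r} = \fxpt^\ST_{r^3}$ consume.

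Next I would substitute into the Burnside average
\[
  \frac{1}{4n^2}\left(\fxpt^\ST_{\id}(n) + \fxpt^\ST_r(n) + \fxpt^\ST_{r^2}(n) + \fxpt^\ST_{r^3}(n)\right),
\]
using $\fxpt^\ST_r(n) = \fxpt^\ST_{r^3}(n)$ and the explicit expressions already established. The key simplification comes from the vanishing $t$-values: every summand in $\fxpt^\ST_{r^2}(n)$ and $\fxpt^\ST_r(n)$ that carries a factor of $t_{r^2}$ or $t_r$ drops out. In particular, for odd $n$ both $\fxpt^\ST_{r^2}(n)$ and $\fxpt^\ST_r(n)$ vanish entirely, since their odd-$n$ branches are proportional to $t_{r^2}$ and $t_r$ respectively; hence the count reduces to $\tfrac{1}{4n^2}\fxpt^\ST_{\id}(n)$. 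For even $n$, only the leading $\tfrac34 t_{\id}^{n^2/2}$ term of $\fxpt^\ST_{r^2}(n)$ and the $\tfrac12 t_{\id}^{n^2/4}$ term of $\fxpt^\ST_r(n)$ survive.

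Finally I would confirm the sequence by evaluating the simplified formula at small $n$ with $t_{\id} = 4$. For instance $\fxpt^\ST_{\id}(1) = 4$ gives $1$; the $n = 2$ case yields $\tfrac{1}{16}\left(304 + 2\cdot 8 + 48\right) = \tfrac{368}{16} = 23$; and for $n = 3$ one gets $\tfrac{1}{36}\fxpt^\ST_{\id}(3) = \tfrac{262656}{36} = 7296$, matching the stated values. The only point requiring care, rather than a genuine obstacle, is bookkeeping: tracking which branch (odd or even $n$) of each fixed-point theorem applies and confirming that the trivial-stabilizer hypothesis forces precisely $t_r = t_{r^2} = t_{r^3} = 0$, so that all rotation contributions for odd $n$ disappear. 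The remainder is a mechanical substitution into results already proved.
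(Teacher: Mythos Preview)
Your proposal is correct and follows precisely the approach implicit in the paper: these appendix propositions are direct specializations of Theorem~\ref{thm:nXnTorusFormula}, and you correctly identify the tile statistics $t_{\id}=4$, $t_r=t_{r^2}=t_{r^3}=0$, substitute them into the fixed-point formulas $\fxpt^\ST_{\id}$, $\fxpt^\ST_{r}$, $\fxpt^\ST_{r^2}$, and verify the resulting Burnside average at small $n$. Your numerical checks for $n=1,2,3$ are accurate.
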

This has been added to the OEIS as sequence A368145.
\subsubsection{Under diagonal (equivalently antidiagonal) reflection}
\begin{proposition}
When $\orb{rf}{rf} = 2$, such as when
\[
  T = \left\{\tileColor[black!90!white], \tileColor[white!90!black]\right\},
\]
the number of tilings of the $n \times n$ torus up to transposition by
two distinct tiles that are fixed under  is given by
\[
  2, 6, 44, 2209, 674384, 954623404, 5744406453840, 144115192471496836, \dots
  \label{seq:nXnTorus_rf__rf}
\]
\end{proposition}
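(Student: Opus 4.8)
The plan is to recognize this proposition as a direct specialization of the machinery already developed for the $n \times n$ torus, rather than as a new computation from scratch. Counting tilings up to transposition corresponds to taking the symmetry group $R = \langle rf \rangle \cong C_2$ together with the toroidal action, so the full acting group is $(\grid) \rtimes \langle rf \rangle$. Since the given tile set $T = \left\{\tileColor[black!90!white], \tileColor[white!90!black]\right\}$ consists of exactly two designs, each of which is fixed by $rf$, the hypothesis $\orb{rf}{rf} = 2$ translates precisely into the numerical data $t_{\id} = 2$ and $t_{rf} = 2$. These are the only two statistics that feed into the relevant fixed-point formulas, so the entire proof reduces to substitution and evaluation.

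By Theorem \ref{thm:nXnTorusFormula} with $R = \langle rf\rangle$, the number of distinct tilings equals $\frac{1}{2n^2}\left(\fxpt^\ST_{\id}(n) + \fxpt^\ST_{rf}(n)\right)$, so it suffices to insert $t_{\id} = t_{rf} = 2$ into the two already-proved fixed-point formulas. For the identity term I would use $\fxpt^\ST_{\id}(n) = \sum_{d_1 \mid n}\sum_{d_2 \mid n}\varphi(d_1)\varphi(d_2)\, 2^{n^2/\lcm(d_1,d_2)}$ (the $n\times n$ case of Theorem \ref{thm:fxpt_Rt_id} obtained by setting $m=n$), and for the transposition term the formula for $\fxpt^\ST_{rf}(n)$ that was split over divisors $d \mid n$ according to the parity of $d$. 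A convenient simplification worth recording is that when $t_{rf} = t_{\id}$ the odd-$d$ summand collapses, since $t_{\id}^{(n^2-n)/(2d)}t_{rf}^{n/d} = 2^{(n^2+n)/(2d)}$; this turns the evaluation of $\fxpt^\ST_{rf}(n)$ into a pure $\varphi$-weighted sum of powers of two with no interplay between the two tile statistics.

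Finally I would evaluate the resulting expression at $n = 1, 2, 3, \dots$. For instance $n=1$ gives $\tfrac{1}{2}(2+2)=2$, $n=2$ gives $\tfrac{1}{8}(28+20)=6$, and $n=3$ gives $\tfrac{1}{18}(576+216)=44$, matching the first three entries of the claimed sequence, and the same mechanical substitution continues for larger $n$. The only genuine work is the routine but slightly fiddly bookkeeping of the nested divisor/totient sums together with the parity split in $\fxpt^\ST_{rf}$; the point to watch is that every exponent stays integral, which is guaranteed by the parity analysis underlying the earlier fixed-point theorems. Since no new idea is needed beyond assembling those theorems, I expect the hardest part to be simply carrying the arithmetic across enough terms to confirm a termwise match with OEIS \texttt{A255015}.
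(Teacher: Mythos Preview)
Your proposal is correct and matches the paper's intent: this proposition lives in the appendix as a direct specialization of Theorem~\ref{thm:nXnTorusFormula} with $R=\langle rf\rangle$ and $t_{\id}=t_{rf}=2$, and the paper provides no separate proof beyond that implicit instantiation. Your explicit verification of the small cases $n=1,2,3$ and the simplification $t_{\id}^{(n^2-n)/(2d)}t_{rf}^{n/d}=2^{(n^2+n)/(2d)}$ are exactly the kind of bookkeeping the paper leaves to the reader.
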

This is OEIS sequence A255015.
\begin{proposition}
When $\orbid{rf} = 1$, such as when
\[
  T = \left\{\tileU, \tileL\right\},
  \qquad\text{or}\qquad
  T = \left\{\tileNW, \tileSE\right\},
\]
the number of tilings of the $n \times n$ torus up to transposition by
tiles that are asymmetric with respect to this transposition is given by
\[
  1, 4, 32, 2081, 671104, 954448620, 5744387279872, 144115188176529540, \dots
  \label{seq:nXnTorus_rf__id}
\]
\end{proposition}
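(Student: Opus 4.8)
The plan is to recognize this as a direct specialization of the toroidal counting machinery to the group $R = \langle rf \rangle \cong C_2$, whose only conjugacy classes of subgroups are $\mathbbm 1$ and $\langle rf \rangle$ itself. First I would invoke the earlier Lemma asserting that the number of tilings depends only on the tuple of orbit counts $\mathcal{O}^R_S$, so that it suffices to extract the tile statistics $t_{\id}$ and $t_{rf}$ from the hypothesis $\orbid{rf} = 1$ (with $\orb{rf}{rf} = 0$). An orbit with trivial stabilizer has size $2$, while an orbit stabilized by all of $\langle rf \rangle$ would have size $1$; hence a single free orbit gives $t_{\id} = 2$ and $t_{rf} = 0$. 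I would also note, directly from Definition \ref{def:groupActionOnTile}, that the sample sets $\{\tileU,\tileL\}$ and $\{\tileNW,\tileSE\}$ are each a single $\langle rf \rangle$-orbit with trivial stabilizer, so they realize this tuple.

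Next I would apply Theorem \ref{thm:nXnTorusFormula}, which expresses the number of distinct tilings up to $R = \langle rf \rangle$ as
\[
  \frac{1}{2n^2}\left(\fxpt^\ST_{\id}(n) + \fxpt^\ST_{rf}(n)\right),
\]
and substitute the already-proven closed forms. The identity term becomes
\[
  \fxpt^\ST_{\id}(n) = \sum_{d_1 \mid n}\sum_{d_2 \mid n}\varphi(d_1)\varphi(d_2)\,2^{\,n^2/\lcm(d_1,d_2)},
\]
while the transposition term is the divisor sum with the odd/even case split. The key simplification is that with $t_{rf} = 0$ every odd-$d$ summand of $\fxpt^\ST_{rf}(n)$ carries the factor $t_{rf}^{n/d} = 0^{n/d}$, and since $d \mid n$ forces $n/d \geq 1$ this factor genuinely vanishes (there is no $0^0$ ambiguity), leaving only even divisors:
\[
  \fxpt^\ST_{rf}(n) = n\sum_{\substack{d \mid n \\ d \text{ even}}} \varphi(d)\,2^{\,n^2/(2d)}.
\]

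Having assembled the closed form, I would finish by confirming it reproduces the listed sequence, which is purely arithmetic. For instance $n=1$ gives $\tfrac{1}{2}(2+0)=1$, $n=2$ gives $\tfrac{1}{8}(28+4)=4$, and $n=3$ gives $\tfrac{1}{18}(576+0)=32$, matching the first three terms, with the same substitution continuing for larger $n$. I do not anticipate a genuine obstacle here, since all of the fixed-point formulas were established in the preceding theorems; the only points demanding care are the correct handling of the $t_{rf}=0$ convention that kills the odd-divisor contributions, and the bookkeeping that the one surviving family of terms indeed sums to the quoted integers.
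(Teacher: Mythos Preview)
Your proposal is correct and is precisely the intended approach: the paper presents these appendix propositions as direct specializations of Theorem~\ref{thm:nXnTorusFormula} together with the fixed-point formulas for $\fxpt^\ST_{\id}$ and $\fxpt^\ST_{rf}$, without giving separate proofs. Your extraction of $t_{\id}=2$, $t_{rf}=0$, the observation that odd-$d$ summands vanish, and the numerical verification of the first terms constitute exactly the derivation the paper's framework is set up to produce.
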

This has been added to the OEIS as sequence A367530.
                         %

\appendixpagenumbering
\section{Illustrations}
This section of the appendix gives illustrations corresponding to all of the
sequences and tables described in Appendix \ref{apn:sequencesAndTables}, which
shows an example of the tilings arising from
all valid choices of $R \leq D_8$ and all sets of tile designs
consisting of a single orbit.
\subsection{The \texorpdfstring{$n \times m$}{n by m} grid}     %
\subsubsection{Under horizontal and vertical reflection}
\nXmGridIllustration{rr_f__rr_f}{2x3}{
  The $24$ ways of tiling the $2\times 3$ grid up to
  $D_4 = \langle r^2, f\rangle$
  from a set of tile designs that consists of two orbits
  both of which contain an element with stabilizer subgroup
  $D_4$.
}
\nXmGridIllustration{rr_f__f}{3x2}{
  The $24$ ways of tiling the $3\times 2$ grid up to
  $D_4 = \langle r^2, f\rangle$
  from a set of tile designs that consists of one orbit
  containing an element whose stabilizer subgroup is
  $\langle f \rangle \leq D_4$.
}
\nXmGridIllustration{rr_f__rr}{3x2}{
  The $20$ ways of tiling the $3\times 2$ grid up to
  $D_4 = \langle r^2, f\rangle$
  from a set of tile designs that consists of one orbit
  containing an element whose stabilizer subgroup is
  $\langle r^2 \rangle \leq D_4$.
}
\nXmGridIllustration{rr_f__id}{2x2}{
  The $76$ ways of tiling the $2\times 2$ grid up to
  $D_4 = \langle r^2, f\rangle$
  from a set of tile designs that consists of one orbit
  containing an element whose stabilizer subgroup is
  $\mathbbm 1 \leq D_4$.
}
\subsubsection{Under horizontal (equivalently vertical) reflection}
\nXmGridIllustration{f__f}{3x2}{
  The $40$ ways of tiling the $3\times 2$ grid up to
  $\langle f\rangle$
  from a set of tile designs that consists of two orbits
  both of which contain an element with stabilizer subgroup
  $\langle f \rangle$.
}
\nXmGridIllustration{f__id}{3x2}{
  The $32$ ways of tiling the $3\times 2$ grid up to
  $\langle f \rangle$
  from a set of tile designs that consists of one orbit
  containing an element whose stabilizer subgroup is
  $\mathbbm 1 \leq \langle f \rangle$.
}

\subsubsection{Under \texorpdfstring{$180^\circ$}{180 degree} rotation}
\nXmGridIllustration{rr__rr}{3x2}{
  The $36$ $3 \times 2$ grids up to
  $\langle r^2 \rangle$
  from a set of tile designs that consists of two orbits
  both of which contain an element with stabilizer subgroup
  $\langle r^2 \rangle$.
}

\nXmGridIllustration{rr__id}{2x2}{
  The $10$ $2 \times 2$ grids up to
  $\langle r^2 \rangle$
  from a set of tile designs that consists of one orbit
  containing an element whose stabilizer subgroup is
  $\mathbbm 1 \leq \langle r^2 \rangle$.
}

\subsection{The \texorpdfstring{$n \times n$}{n by n} grid}     %
\subsubsection{Under symmetries of the square}
\nXnGridIllustration{r_f__r_f}{2x2}{
  The $6$ distinct ways of tiling the $2 \times 2$ grid up to
  $D_8 = \langle r, f \rangle$
  from a set of tile designs that consists of two orbits
  both of which contain an element with stabilizer subgroup
  $D_8$.
}

\nXnGridIllustration{r_f__rr_f}{2x2}{
  The $4$ distinct ways of tiling the $2 \times 2$ grid up to
  $D_8 = \langle r, f \rangle$
  from a set of tile designs that consists of one orbit
  containing an element whose stabilizer subgroup is
  $\langle r^2, f \rangle \leq D_8$.
}

\nXnGridIllustration{r_f__rr_rf}{2x2}{
  The $6$ distinct ways of tiling the $2 \times 2$ grid up to
  $D_8 = \langle r, f \rangle$
  from a set of tile designs that consists of one orbit
  containing an element whose stabilizer subgroup is
  $\langle r^2, rf \rangle \leq D_8$.
}

\nXnGridIllustration{r_f__r}{3x3}{
  The $70$ distinct ways of tiling the $3 \times 3$ grid up to
  $D_8 = \langle r, f \rangle$
  from a set of tile designs that consists of one orbit
  containing an element whose stabilizer subgroup is
  $\langle r \rangle \leq D_8$.
}

\nXnGridIllustration{r_f__f}{2x2}{
  The $39$ distinct ways of tiling the $2 \times 2$ grid up to
  $D_8 = \langle r, f \rangle$
  from a set of tile designs that consists of one orbit
  containing an element whose stabilizer subgroup is
  $\langle f \rangle \leq D_8$.
}

\nXnGridIllustration{r_f__rf}{2x2}{
  The $43$ distinct ways of tiling the $2 \times 2$ grid up to
  $D_8 = \langle r, f \rangle$
  from a set of tile designs that consists of one orbit
  containing an element whose stabilizer subgroup is
  $\langle rf \rangle \leq D_8$.
}

\nXnGridIllustration{r_f__rr}{2x2}{
  The $39$ distinct ways of tiling the $2 \times 2$ grid up to
  $D_8 = \langle r, f \rangle$
  from a set of tile designs that consists of one orbit
  containing an element whose stabilizer subgroup is
  $\langle r^2 \rangle \leq D_8$.
}

\nXnGridIllustration{r_f__id}{2x2}{
  $50$ of the $538$ distinct ways of tiling the $2 \times 2$ grid up to
  $D_8 = \langle r, f \rangle$
  from a set of tile designs that consists of one orbit
  containing an element whose stabilizer subgroup is
  $\mathbbm 1 \leq D_8$.
}

\subsubsection{Under diagonal and antidiagonal reflection}

\nXnGridIllustration{rr_rf__rr_rf}{3x3}{
  The $168$ tilings of the $3 \times 3$ grid up to
  $\langle r^2, rf \rangle$
  from a set of tile designs that consists of two orbits
  both of which contain an element with stabilizer subgroup
  $\langle r^2, rf \rangle$.
}

\nXnGridIllustration{rr_rf__rf}{3x3}{
  The $144$ tilings of the $3 \times 3$ grid up to
  $\langle r^2, rf \rangle$
  from a set of tile designs that consists of one orbit
  containing an element whose stabilizer subgroup is
  $\langle rf \rangle \leq \langle r^2, rf \rangle$.
}

\nXnGridIllustration{rr_rf__rr}{2x2}{
  The $5$ tilings of the $2 \times 2$ grid up to
  $\langle r^2, rf \rangle$
  from a set of tile designs that consists of one orbit
  containing an element whose stabilizer subgroup is
  $\langle r^2 \rangle \leq \langle r^2, rf \rangle$.
}

\nXnGridIllustration{rr_rf__id}{2x2}{
  The $68$ tilings of the $2 \times 2$ grid up to
  $\langle r^2, rf \rangle$
  from a set of tile designs that consists of one orbit
  containing an element whose stabilizer subgroup is
  $\mathbbm 1 \leq \langle r^2, rf \rangle$.
}

\subsubsection{Under \texorpdfstring{$90^\circ$}{90 degree} rotation}
\nXnGridIllustration{r__r}{3x3}{
  The $140$ tilings of the $3 \times 3$ grid up to
  $\langle r \rangle$
  from a set of tile designs that consists of two orbits
  both of which contain an element with stabilizer subgroup
  $\langle r \rangle$.
}
\nXnGridIllustration{r__rr}{3x3}{
  The $136$ tilings of the $3 \times 3$ grid up to
  $\langle r \rangle$
  from a set of tile designs that consists of one orbit
  containing an element whose stabilizer subgroup is
  $\langle r^2 \rangle \leq \langle r \rangle$.
}
\nXnGridIllustration{r__id}{2x2}{
  The $70$ tilings of the $2 \times 2$ grid up to
  $\langle r \rangle$
  from a set of tile designs that consists of one orbit
  containing an element whose stabilizer subgroup is
  $\mathbbm 1 \leq \langle r \rangle$.
}
\subsubsection{Under diagonal (equivalently antidiagonal) reflection}
\nXnGridIllustration{rf__rf}{2x2}{
  The $12$ tilings of the $2 \times 2$ grid up to
  $\langle rf \rangle$
  from a set of tile designs that consists of two orbits
  both of which contain an element with stabilizer subgroup
  $\langle rf \rangle$.
}
\nXnGridIllustration{rf__id}{2x2}{
  The $8$ tilings of the $2 \times 2$ grid up to
  $\langle rf \rangle$
  from a set of tile designs that consists of one orbit
  containing an element whose stabilizer subgroup is
  $\mathbbm 1 \leq \langle rf \rangle$.
}

\subsection{The \texorpdfstring{$n \times m$}{n by m} cylinder} %
\subsubsection{Under horizontal and vertical reflection}
\nXmCylinderIllustration{rr_f__rr_f}{2x3}{
  The $24$ distinct ways of tiling the $2 \times 3$ cylinder up to
  $D_4 = \langle r^2, f \rangle$
  from a set of tile designs that consists of two orbits
  both of which contain an element with stabilizer subgroup
  $D_4$.
}
\nXmCylinderIllustration{rr_f__f}{2x3}{
  The $20$ distinct ways of tiling the $2 \times 3$ cylinder up to
  $D_4 = \langle r^2, f \rangle$
  from a set of tile designs that consists of one orbit
  containing an element whose stabilizer subgroup is
  $\langle f \rangle \leq D_4$.
}
\nXmCylinderIllustration{rr_f__rrf}{4x2}{
  The $26$ distinct ways of tiling the $4 \times 2$ cylinder up to
  $D_4 = \langle r^2, f \rangle$
  from a set of tile designs that consists of one orbit
  containing an element whose stabilizer subgroup is
  $\langle r^2f \rangle \leq D_4$.
}
\nXmCylinderIllustration{rr_f__rr}{3x2}{
  The $9$ distinct ways of tiling the $3 \times 2$ cylinder up to
  $D_4 = \langle r^2, f \rangle$
  from a set of tile designs that consists of one orbit
  containing an element whose stabilizer subgroup is
  $\langle r^2 \rangle \leq D_4$.
}
\nXmCylinderIllustration{rr_f__id}{2x2}{
  The $20$ distinct ways of tiling the $2 \times 2$ cylinder up to
  $D_4 = \langle r^2, f \rangle$
  from a set of tile designs that consists of one orbit
  containing an element whose stabilizer subgroup is
  $\mathbbm 1 \leq D_4$.
}

\subsubsection{Under horizontal reflection}
\nXmCylinderIllustration{f__f}{3x2}{
  The $20$ distinct ways of tiling the $3 \times 2$ cylinder up to
  $\langle f \rangle$
  from a set of tile designs that consists of two orbits
  both of which contain an element with stabilizer subgroup
  $\langle f \rangle$.
}
\nXmCylinderIllustration{f__id}{2x3}{
  The $20$ distinct ways of tiling the $2 \times 3$ cylinder up to
  $\langle f \rangle$
  from a set of tile designs that consists of one orbit
  containing an element whose stabilizer subgroup is
  $\mathbbm 1 \leq D_4$.
}
\subsubsection{Under vertical reflection}
\nXmCylinderIllustration{rrf__rrf}{2x3}{
  The $24$ distinct ways of tiling the $2 \times 3$ cylinder up to
  $\langle r^2f \rangle$
  from a set of tile designs that consists of two orbits
  both of which contain an element with stabilizer subgroup
  $\langle r^2f \rangle$.
}
\nXmCylinderIllustration{rrf__id}{2x3}{
  The $20$ distinct ways of tiling the $2 \times 3$ cylinder up to
  $\langle r^2f \rangle$
  from a set of tile designs that consists of one orbit
  containing an element whose stabilizer subgroup is
  $\mathbbm 1 \leq D_4$.
}
\subsubsection{Under \texorpdfstring{$180^\circ$}{180 degree} rotation}
\nXmCylinderIllustration{rr__rr}{3x2}{
  The $16$ distinct ways of tiling the $3 \times 2$ cylinder up to
  $\langle r^2 \rangle$
  from a set of tile designs that consists of two orbits
  both of which contain an element with stabilizer subgroup
  $\langle r^2 \rangle$.
}
\nXmCylinderIllustration{rr__id}{2x3}{
  The $20$ distinct ways of tiling the $2 \times 3$ cylinder up to
  $\langle r^2 \rangle$
  from a set of tile designs that consists of one orbit
  containing an element whose stabilizer subgroup is
  $\mathbbm 1 \leq \langle r^2 \rangle$.
}
\subsubsection{Under cylindrical action only}
\nXmCylinderIllustration{id__id}{2x2}{
  The $10$ distinct ways of tiling the $2 \times 2$ cylinder
  from a set of tile designs that consists of two orbits,
  each containing a single tile design.
}

\subsection{The \texorpdfstring{$n \times m$}{n by m} torus}    %
\subsubsection{Under horizontal and vertical reflection}
\nXmTorusIllustration{rr_f__rr_f}{3x2}{
  The $13$ distinct ways of tiling the $3 \times 2$ torus up to
  $\langle r^2, f \rangle$
  from a set of tile designs that consists of two orbits
  both of which contain an element with stabilizer subgroup
  $D_4$.
}
\nXmTorusIllustration{rr_f__f}{3x2}{
  The $8$ distinct ways of tiling the $3 \times 2$ torus up to
  $\langle r^2, f\rangle$
  from a set of tile designs that consists of one orbit
  containing an element whose stabilizer subgroup is
  $\langle f \rangle \leq D_4$.
}
\nXmTorusIllustration{rr_f__rr}{3x2}{
  The $8$ distinct ways of tiling the $3 \times 2$ torus up to
  $\langle r^2, f\rangle$
  from a set of tile designs that consists of one orbit
  containing an element whose stabilizer subgroup is
  $\langle r^2 \rangle \leq D_4$.
}
\nXmTorusIllustration{rr_f__id}{2x2}{
  The $28$ distinct ways of tiling the $2 \times 2$ torus up to
  $\langle r^2, f\rangle$
  from a set of tile designs that consists of one orbit
  containing an element whose stabilizer subgroup is
  $\mathbbm 1 \leq D_4$.
}
\subsubsection{Under horizontal (equivalently vertical) reflection}
\nXmTorusIllustration{f__f}{2x2}{
  The $7$ distinct ways of tiling the $2 \times 2$ torus up to
  $\langle f \rangle$
  from a set of tile designs that consists of two orbits
  both of which contain an element with stabilizer subgroup
  $\langle f \rangle$.
}
\nXmTorusIllustration{f__id}{3x2}{
  The $9$ distinct ways of tiling the $3 \times 2$ torus up to
  $\langle f \rangle$
  from a set of tile designs that consists of one orbit
  containing an element whose stabilizer subgroup is
  $\mathbbm 1 \leq \langle f \rangle$.
}
\subsubsection{Under \texorpdfstring{$180^\circ$}{180 degree} rotation}
\nXmTorusIllustration{rr__rr}{3x2}{
  The $13$ distinct ways of tiling the $3 \times 2$ torus up to
  $\langle f \rangle$
  from a set of tile designs that consists of two orbits
  both of which contain an element with stabilizer subgroup
  $\langle f \rangle$.
}
\nXmTorusIllustration{rr__id}{3x2}{
  The $9$ distinct ways of tiling the $3 \times 2$ torus
  $\langle r^2 \rangle$
  from a set of tile designs that consists of one orbit
  containing an element whose stabilizer subgroup is
  $\mathbbm 1 \leq \langle r^2 \rangle$.
}
\subsubsection{Under toroidal action only}
\nXmTorusIllustration{id__id}{3x2}{
  The $15$ distinct ways of tiling the $3 \times 2$ torus
  from a set of tile designs that consists of two orbits,
  each containing a single tile design.
}

\subsection{The \texorpdfstring{$n \times n$}{n by n} torus}
\label{apss:nXnTorusIllustrations}
\subsubsection{Under the symmetries of the square}
\nXnTorusIllustration{r_f__r_f}{3x3}{
  The $26$ ways of tiling the $3 \times 3$ torus up to
  $D_8 = \langle r, f \rangle$
  from a set of tile designs that consists of two orbits
  both of which contain an element with stabilizer subgroup
  $D_8$.
}
\nXnTorusIllustration{r_f__rr_f}{3x3}{
  The $18$ ways of tiling the $3 \times 3$ torus up to
  $D_8 = \langle r, f \rangle$
  from a set of tile designs that consists of one orbit
  containing an element whose stabilizer subgroup is
  $\langle r^2, f \rangle \leq D_8$.
}
\nXnTorusIllustration{r_f__rr_rf}{3x3}{
  The $18$ ways of tiling the $3 \times 3$ torus up to
  $D_8 = \langle r, f \rangle$
  from a set of tile designs that consists of one orbit
  containing an element whose stabilizer subgroup is
  $\langle r^2, rf \rangle \leq D_8$.
}
\nXnTorusIllustration{r_f__r}{2x2}{
  The $4$ ways of tiling the $2 \times 2$ torus up to
  $D_8 = \langle r, f \rangle$
  from a set of tile designs that consists of one orbit
  containing an element whose stabilizer subgroup is
  $\langle r \rangle \leq D_8$.
}
\nXnTorusIllustration{r_f__f}{2x2}{
  The $16$ ways of tiling the $2 \times 2$ torus up to
  $D_8 = \langle r, f \rangle$
  from a set of tile designs that consists of one orbit
  containing an element whose stabilizer subgroup is
  $\langle f \rangle \leq D_8$.
}
\nXnTorusIllustration{r_f__rf}{2x2}{
  The $17$ ways of tiling the $2 \times 2$ torus up to
  $D_8 = \langle r, f \rangle$
  from a set of tile designs that consists of one orbit
  containing an element whose stabilizer subgroup is
  $\langle rf \rangle \leq D_8$.
}
\nXnTorusIllustration{r_f__rr}{2x2}{
  The $23$ ways of tiling the $2 \times 2$ torus up to
  $D_8 = \langle r, f \rangle$
  from a set of tile designs that consists of one orbit
  containing an element whose stabilizer subgroup is
  $\langle r^2 \rangle \leq D_8$.
}
\nXnTorusIllustration{r_f__id}{2x2}{
  The $154$ ways of tiling the $2 \times 2$ torus up to
  $D_8 = \langle r, f \rangle$
  from a set of tile designs that consists of one orbit
  containing an element whose stabilizer subgroup is
  $\mathbbm 1 \leq D_8$.
}

\subsubsection{Under diagonal and antidiagonal reflection}
\nXnTorusIllustration{rr_rf__rr_rf}{3x3}{
  The $36$ ways of tiling the $3 \times 3$ torus up to
  $\langle r^2, rf \rangle$
  from a set of tile designs that consists of two orbits
  both of which contain an element with stabilizer subgroup
  $\langle r^2, rf \rangle$.
}
\nXnTorusIllustration{rr_rf__rf}{3x3}{
  The $22$ ways of tiling the $3 \times 3$ torus up to
  $\langle r^2, rf \rangle$
  from a set of tile designs that consists of one orbit
  containing an element whose stabilizer subgroup is
  $\langle rf \rangle \leq \langle r^2, rf \rangle$.
}
\nXnTorusIllustration{rr_rf__rr}{3x3}{
  The $24$ ways of tiling the $3 \times 3$ torus up to
  $\langle r^2, rf \rangle$
  from a set of tile designs that consists of one orbit
  containing an element whose stabilizer subgroup is
  $\langle r^2 \rangle \leq \langle r^2, rf \rangle$.
}
\nXnTorusIllustration{rr_rf__id}{2x2}{
  The $23$ ways of tiling the $2 \times 2$ torus up to
  $\langle r^2, rf \rangle$
  from a set of tile designs that consists of one orbit
  containing an element whose stabilizer subgroup is
  $\mathbbm 1 \leq \langle r^2, rf \rangle$.
}
\subsubsection{Under \texorpdfstring{$90^\circ$}{90 degree} rotation}
\nXnTorusIllustration{r__r}{3x3}{
  The $28$ ways of tiling the $3 \times 3$ torus up to
  $\langle r \rangle$
  from a set of tile designs that consists of two orbits
  both of which contain an element with stabilizer subgroup
  $\langle r \rangle$.
}
\nXnTorusIllustration{r__rr}{3x3}{
  The $24$ ways of tiling the $3 \times 3$ torus up to
  $\langle r \rangle$
  from a set of tile designs that consists of one orbit
  containing an element whose stabilizer subgroup is
  $\langle r^2 \rangle \leq \langle r \rangle$.
}
\nXnTorusIllustration{r__id}{2x2}{
  The $23$ ways of tiling the $2 \times 2$ torus up to
  $\langle r \rangle$
  from a set of tile designs that consists of one orbit
  containing an element whose stabilizer subgroup is
  $\mathbbm 1 \leq \langle r \rangle$.
}
\subsubsection{Under diagonal (equivalently antidiagonal) reflection}
\nXnTorusIllustration{rf__rf}{3x3}{
  The $44$ ways of tiling the $3 \times 3$ torus up to
  $\langle rf \rangle$
  from a set of tile designs that consists of two orbits
  both of which contain an element with stabilizer subgroup
  $\langle rf \rangle$.
}
\nXnTorusIllustration{rf__id}{3x3}{
  The $32$ ways of tiling the $3 \times 3$ torus up to
  $\langle rf \rangle$
  from a set of tile designs that consists of one orbit
  containing an element whose stabilizer subgroup is
  $\mathbbm 1 \leq \langle rf \rangle$.
}

\end{document}